\tikzset{
  commutative diagrams/.cd, 
  arrow style=tikz, 
  diagrams={>=stealth}
}
\theoremstyle{definition}
\def\@tocline#1#2#3#4#5#6#7{\relax
      \ifnum #1>\c@tocdepth 
      \else
        \par \addpenalty\@secpenalty\addvspace{#2}%
        \begingroup \hyphenpenalty\@M
        \@ifempty{#4}{%
          \@tempdima\csname r@tocindent\number#1\endcsname\relax
        }{%
          \@tempdima#4\relax
        }%
        \parindent\z@ \leftskip#3\relax \advance\leftskip\@tempdima\relax
        \rightskip\@pnumwidth plus4em \parfillskip-\@pnumwidth
        #5\leavevmode\hskip-\@tempdima
          \ifcase #1
           \or\or \hskip 1em \or \hskip 2em \else \hskip 3em \fi%
          #6\nobreak\relax
        \dotfill\hbox to\@pnumwidth{\@tocpagenum{#7}}\par
        \nobreak
        \endgroup
      \fi}
\newcounter{marginnote}
\DeclareMathAlphabet{\mathpzc}{OT1}{pzc}{m}{it}
\newcounter{mainresults}
\theoremstyle{definition}
\newtheorem{theorem}{Theorem}[section]
\newtheorem{corollary}[theorem]{Corollary}
\newtheorem{lemma}[theorem]{Lemma}
\newtheorem{proposition}[theorem]{Proposition}
\newtheorem{remark}[theorem]{Remark}
\newtheorem{assumptions}[theorem]{Assumption}
\newtheorem*{runningexample*}{Running example}
\newtheorem*{aside*}{Aside}
\newtheorem{definition}[theorem]{Definition}
\newtheorem{example}[theorem]{Example}
\newtheorem{notation}[theorem]{Notation}
\newtheorem{proposition-definition}[theorem]{Proposition-Definition}
\newtheorem{maintheorem}[mainresults]{Theorem}
\DeclareMathOperator{\ev}{ev}
\DeclareMathOperator{\Hom}{Hom}
\DeclareMathOperator{\Bl}{Bl}
\DeclareMathOperator{\Pic}{Pic}
\DeclareMathOperator{\frPic}{\mathfrak{Pic}}
\newcommand{\bcd}{\begin{center}\begin{tikzcd}}
\newcommand{\ecd}{\end{tikzcd}\end{center}}
\newcommand{\cC}{\mathcal{C}}
\newcommand{\cD}{\mathcal{D}}
\newcommand{\cE}{\mathcal{E}}
\newcommand{\cF}{\mathcal{F}}
\newcommand{\cL}{\mathcal{L}}
\newcommand{\cO}{\mathcal{O}}
\newcommand{\cQ}{\mathcal{Q}}
\newcommand{\cM}{\mathcal{M}}
\newcommand{\cX}{\mathcal{X}}
\newcommand{\cW}{\mathcal{W}}
\newcommand{\sA}{\mathscr{A}}
\newcommand{\CC}{\mathbb{C}}
\newcommand{\EE}{\mathbb{E}}
\newcommand{\LL}{\mathbb{L}}
\newcommand{\NN}{\mathbb{N}}
\newcommand{\OO}{\mathcal{O}}
\newcommand{\PP}{\mathbb{P}}
\newcommand{\QQ}{\mathbb{Q}}
\newcommand{\RR}{\mathbb{R}}
\newcommand{\TT}{\mathbb{T}}
\newcommand{\ZZ}{\mathbb{Z}}
\newcommand{\AAA}{\mathbb{A}}
\newcommand{\GG}{\mathbb{G}}
\newcommand{\agm}{[\AAA^1/\GG_{m}]}
\newcommand{\vir}{\text{\rm vir}}
\newcommand{\frM}{\mathfrak{M}}
\newcommand{\pt}{\text{pt}}
\newcommand{\Spec}{\operatorname{Spec}}
\newcommand{\GIT}{/\!\!/}
\NewDocumentCommand{\compatibilitydatum}{m m m m m m O{} O{} O{}}{
\begin{equation*} \begin{tikzcd}[ampersand replacement=\&]
  \: \arrow{r} \& {#1} \arrow{r} \arrow{d}{#7} \& {#2} \arrow{r} \arrow{d}{#8} \& {#3} \arrow{r}{[1]} \arrow{d}{#9} \& \: \\
  \: \arrow{r} \& {#4} \arrow{r} \& {#5} \arrow{r} \& {#6} \arrow{r} \& \:
\end{tikzcd} \end{equation*}}
\NewDocumentCommand{\commutingsquare}{m m m m o O{} O{} O{} O{}}{
\begin{equation}\begin{tikzcd}[ampersand replacement=\&] \label{#5}
  #1 \arrow{r}{#6} \arrow{d}{#7} \& #2 \arrow{d}{#8} \\
  #3 \arrow{r}{#9} \& #4
\end{tikzcd}\IfValueTF{#5}{\label{#5}}{} \end{equation}}
\NewDocumentCommand{\Cartesiansquare}{m m m m O{} O{} O{} O{}}{
\begin{equation*}\begin{tikzcd}[ampersand replacement=\&]
  #1 \arrow{r}{#5} \arrow{d}{#6} \arrow[dr, phantom, "\square"] \& #2 \arrow{d}{#7} \\
  #3 \arrow{r}{#8} \& #4
\end{tikzcd} \end{equation*}}
\NewDocumentCommand{\Cartesiansquarelabel}{m m m m m O{} O{} O{} O{}}{
\begin{tikzcd}[ampersand replacement=\&]
  #1 \arrow{r}{#6} \arrow{d}{#7} \arrow[dr, phantom, "\square"] \& #2 \arrow{d}{#8} \\
  #3 \arrow{r}{#9} \& #4
\end{tikzcd}\IfValueTF{#5}{\label{#5}}{}
}
\NewDocumentCommand{\triangleofspaces}{m m m O{} O{} O{}}{
\begin{tikzcd} [ampersand replacement=\&]
#1 \arrow{r}{#4} \arrow[bend right]{rr}{#5} \& #2 \arrow{r}{#6} \& #3
\end{tikzcd}}
\begin{document}

\title{The local/logarithmic Correspondence and the Degeneration Formula for Quasimaps}

\author[Cobos Rabano]{Alberto Cobos Rabano}
\address{School of Mathematics and Statistics, Hicks Building, Hounsfield Rd,
Sheffield S3 7RH,UK}
\email{acobosrabano1@sheffield.ac.uk}
\author[Manolache]{Cristina Manolache}
\address{School of Mathematics and Statistics, Hicks Building, Hounsfield Rd,  Sheffield S3 7RH,UK}
\email{c.manolache@sheffield.ac.uk}
\author[Shafi]{Qaasim Shafi}
\address{School of Mathematics\\
	Watson Building \\
	University of Birmingham\\
        Edgbaston
	B15 2TT\\
	UK}
\email{m.q.shafi@bham.ac.uk}

\begin{abstract}
    We study the relationship between the enumerative geometry of rational curves in local geometries and various versions of maximal contact logarithmic curve counts. Our approach is via quasimap theory, and we show versions of the \cite{vangarrel2019local} local/logarithmic correspondence for quasimaps, and in particular for normal crossings settings, where the Gromov-Witten theoretic formulation of the correspondence fails. The results suggest a link between different formulations of relative Gromov-Witten theory for simple normal crossings divisors via the mirror map. The main results follow from a rank reduction strategy, together with a new degeneration formula for quasimaps.
\end{abstract}

\maketitle
\tableofcontents

\newpage

\section{Introduction}

Let $X$ be a smooth projective variety and $D = D_1 + \dots + D_r \subset X$ be a simple normal crossings divisor whose irreducible components are nef. The \emph{local/logarithmic correspondence} connects two enumerative theories associated to $(X,D)$: rational curves in $X$ with maximal tangency to $D$, and rational curves in the total space of $\cO_{X}(-D)$. This correspondence was conjectured by Takahashi \cite{takahashi2001log} and proved by Gathmann \cite{gathmann2003relative}, for the Gromov--Witten theory of $X= \PP^2$ and $D$ a smooth cubic, and then generalised in \cite{vangarrel2019local} to arbitrary $X$ and $D$ any \emph{smooth}, nef divisor.

\subsection{Results}

We prove a quasimap local/logarithmic correspondence, for simple normal crossings divisors, under a mild positivity assumption.  Let $X = W \GIT G$ be a GIT quotient and let $D = D_1 + \dots + D_r \subset X$ be a simple normal crossings divisor satisfying Assumptions \ref{assumptions:absolute_quasimaps} and \ref{assumptions:log_quasimaps}.

This includes toric varieties and partial flag varieties, with any simple normal crossings divisor, as well as complete intersections in these spaces with divisors pulled back from the ambient space. There are two different directions in which to generalise the local/logarithmic correspondence for smooth divisors. When tangency is imposed with respect to all divisor components simultaneously we prove a local/logarithmic correspondence for arbitrary $r$.

\begin{maintheorem}[\Cref{qloglocalcorner}]\label{Theorem : local log corner}
Suppose $D_1 \cap \dots \cap D_r \neq \emptyset$ and the components of $D$ are very ample. Then we have the following equality of virtual classes
\begin{equation*}
     [\cQ_{0,(\underline{d},0)}^{\log}(X|D,\beta)]^{\vir} = \prod_{i=1}^r(-1)^{d_i+1} \cdot \ev_1^* D_i \cap [\cQ_{0,2}(\oplus_{i=1}^r\cO_{X}(-D_i),\beta)]^{\vir},
\end{equation*}
where on the left hand side maximal tangency is imposed to $D_1 \cap \dots \cap D_r$ at a single point.
\end{maintheorem}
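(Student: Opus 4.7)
The plan is to induct on the number $r$ of divisor components carrying tangency, using the new degeneration formula for quasimaps as the primary tool. The base case $r = 1$ is the smooth-divisor local/logarithmic correspondence for quasimaps, the natural quasimap analogue of \cite{vangarrel2019local} and presumed to be established earlier in the paper.

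For the inductive step from $r-1$ to $r$, I would degenerate $X$ to the normal cone of $D_r$,
\[
\widetilde{X} = X \cup_{D_r} P_r, \qquad P_r = \PP_{D_r}(\cO_{D_r} \oplus \cO_{D_r}(-D_r)),
\]
and extend the divisors $D_1, \dots, D_{r-1}$ strictly transversally to $D_{r,0}$ and $D_{r,\infty}$, which is possible because $D_1 \cap \cdots \cap D_r \neq \emptyset$. Applying the degeneration formula for log quasimaps, the maximal-tangency marking is forced onto the $P_r$-component since it carries positive contact order with $D_r$. A dimension and positivity analysis, leveraging the very ample hypothesis on the $D_i$, should then isolate a unique surviving splitting: the $X$-side contributes a two-pointed log quasimap of class $\beta$ with a contact of order $d_r$ at the gluing node with $D_r$, while the $P_r$-side contributes a fibre-class log quasimap carrying the tangency data $(d_1, \dots, d_r)$ at the marking on $D_{r,\infty}$ and matching contact $d_r$ at the node.

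The $P_r$-side is a fibre-class computation that evaluates (for instance via the rank-one correspondence applied relatively to $D_{r,\infty}$) to a factor $(-1)^{d_r+1} \cdot \ev^* D_r$ attached to the gluing node. The $X$-side fits the inductive hypothesis with $r-1$ divisors and yields $\prod_{i=1}^{r-1}(-1)^{d_i+1} \cdot \ev_1^*(D_1 \cdots D_{r-1}) \cap [\cQ_{0,2}(\oplus_{i=1}^{r-1} \cO_X(-D_i), \beta)]^{\vir}$. Gluing the two sides along the node absorbs the remaining $\cO_X(-D_r)$ summand into the local obstruction bundle and reproduces the right-hand side of the theorem.

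\emph{Main obstacle.} The principal difficulty will be the vanishing of unwanted terms in the degeneration formula. Unlike stable maps, quasimaps admit base points, so the curve class and contact data can a priori distribute across the two components in ways that pure cohomological positivity arguments do not immediately exclude. Controlling base-point lengths at the gluing node and forcing only the balanced splitting to survive is the technical core of the argument, and is where the very ample hypothesis on the $D_i$ should be used essentially. A secondary, more bookkeeping difficulty is the careful extension of $D_1, \dots, D_{r-1}$ across $P_r$ and the verification that the inductive hypothesis applies cleanly to the $X$-side contribution with the gluing node playing the role of the ``first'' tangent marking.
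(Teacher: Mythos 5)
Your proposal takes a genuinely different route from the paper, and it contains a real gap that would require substantial new machinery to close.

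The paper does not induct on $r$ via degeneration. Instead it uses a \emph{rank reduction} argument: because all tangency is imposed at a single marked point, there are only two markings, so the source curves in genus-$0$ stable quasimaps have at most two components. This combinatorial constraint ensures that the log quasimap space for the full s.n.c. divisor $H' = \sum H_i'$ on $\PP = \prod_i \PP^{N_i}$ is the \emph{ordinary} (and not just fine-saturated) fibre product over $\cQ_{0,2}(\PP,\underline d)$ of the rank-one log quasimap spaces for each $H_i'$ (\Cref{thm:cartesian_log_qmap_snc_r}). The virtual class then factors as a product, the Whitney sum formula on the local side matches this factorisation, and the smooth-divisor correspondence \Cref{qlocallog} is applied one factor at a time (\Cref{Prop : rank reduction r}). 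Finally the general $X$ is handled by pulling back along $X \to \PP$ using the very ampleness of the $D_i$. So the degeneration formula enters only once, for the $r=1$ toric base case, and all higher rank is handled by the fibre-product argument.

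Your inductive plan would require a degeneration formula that handles more structure than the one proved in the paper. \Cref{Th: degeneration} applies to a \emph{toric} double point degeneration $W \to \AAA^1$ and only tracks the divisorial log structure coming from the special fibre; it has no room for an additional simple normal crossings divisor $D_1 + \cdots + D_{r-1}$ extended across $W$. Carrying out your induction would require a degeneration formula for log quasimaps where the target already carries an s.n.c. log structure transverse to the special fibre, and no such statement is established in the paper. A second issue is that your claim that the $X$-side ``fits the inductive hypothesis with $r-1$ divisors'' overlooks the contact of order $d_r$ with $D_r$ imposed at the gluing node: the $X$-side contribution is still a rank-$r$ log quasimap space (tangency to $D_1,\dots,D_{r-1}$ at the marking plus tangency $d_r$ to $D_r$ at the node), so the degree in $r$ does not actually decrease. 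The paper sidesteps both difficulties by never degenerating in the higher rank case at all; the full strength of the ``single-marking'' corner hypothesis is precisely what makes the fibre-product reduction go through for all $r$, as flagged in the remark following \Cref{sncloglocalunobstructed} explaining why the multi-marking version is restricted to $r=2$.
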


For $r=1$, in Gromov--Witten theory, this is the strong form of the smooth local/logarithmic correspondence (see ~\cite[Conjecture 18]{tseng2023mirror}). The original form is obtained by using the divisor equation and is the main theorem of \cite{vangarrel2019local}. The authors of \cite{vangarrel2019local} conjectured a \emph{different} generalisation for $D$ an s.n.c. divisor, where maximal tangency is imposed to the components of the divisor at distinct points. Figure \ref{fig:cornervsvGGR} exhibits the difference between the generalisations. Although there is some numerical evidence for either generalisation \cite{bousseau2024stable,bousseau2022log}, the authors of \cite{nabijou2022gromov} showed that even if $D = D_1 + D_2$, both generalisations can fail in Gromov--Witten theory. In addition to Theorem \ref{Theorem : local log corner}, we show that the strong form version of the conjecture of \cite{vangarrel2019local} is true for quasimaps for $r=2$.

\begin{maintheorem}[\Cref{snclocallog}]\label{Theorem : local log rank 2}
Suppose that $D=D_1 + D_2$ has two components, which are very ample.  
Then we have the following equality of virtual classes 
$$[\cQ_{0,(d_1,d_2)}^{\log}(X|D_1 + D_2,\beta)]^{\vir} = (-1)^{d_1+d_2} \cdot \prod_{i=1}^2 \ev_i^*D_i \cap [\cQ_{0,2}(\oplus_{i=1}^2\cO_{X}(-D_i),\beta)]^{\vir}.$$
\end{maintheorem}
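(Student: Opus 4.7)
The strategy is a rank reduction: we apply the smooth ($r=1$) quasimap local/logarithmic correspondence twice, once for each divisor, using the new quasimap degeneration formula as the bridge at each step.

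First, apply the degeneration formula to the deformation of $X$ to the normal cone of $D_2$: the central fibre is $X \cup_{D_2} \PP_{D_2}(\cO_{D_2}(-D_2) \oplus \cO_{D_2})$, and the marked point carrying log tangency $d_2$ to $D_2$ is forced onto the infinity section of the projective bundle component. The formula splits $[\qlog]^{\vir}$ into pieces glued over $D_2$, and the $r=1$ case of Theorem \ref{Theorem : local log corner} applied to the projective bundle component identifies its contribution with local quasimaps into $\cO_X(-D_2)$, producing a sign $(-1)^{d_2+1}$ and the evaluation class $\ev_2^* D_2$ at marked point $2$. The residue is a moduli space of mixed log/local quasimaps: quasimaps into the total space of $\cO_X(-D_2)$ carrying log tangency $d_1$ to $\pi^* D_1$ at marked point $1$. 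Repeating the procedure with $D_1$ in place of $D_2$, now inside the total space of $\cO_X(-D_2)$, converts the log tangency at marked point $1$ into a local condition and yields $[\cQ_{0,2}(\oplus_{i=1}^2 \cO_X(-D_i), \beta)]^{\vir}$ with an additional sign $(-1)^{d_1+1}$ and class $\ev_1^* D_1$. Combining the two steps gives total sign $(-1)^{d_1+d_2+2} = (-1)^{d_1+d_2}$ and total class $\prod_{i=1}^2 \ev_i^* D_i$, as claimed.

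The principal technical obstacle is the second step, in which one runs the smooth correspondence with ambient target the total space of a line bundle over $X$ rather than $X$ itself. This requires extending the $r=1$ correspondence and the degeneration formula to this mixed log/local setting, checking that the obstruction theories match under the extension and that non-dominant strata in the degeneration formula contribute zero. The latter vanishing is where the very ampleness of $D_1$ and $D_2$ enters essentially: it rules out curves that distribute their contact orders with $D_i$ across several components in ways that would break the rank reduction, and it also ensures that the intermediate log/local moduli space is proper and has the expected virtual dimension.
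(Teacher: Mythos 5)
Your proposal takes a genuinely different route from the paper, and it has a gap you should be aware of.

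The paper does \emph{not} iterate the degeneration formula. It first reduces (by virtual pullback along the embedding induced by very ampleness) to the model case $\PP = \PP^{N_1}\times\PP^{N_2}$ with $H_1', H_2'$ coordinate hyperplanes, and then proves the structural fact (\Cref{thm:cartesian_log_qmap_snc}) that
\[
\cQ_{0,(d_1,d_2)}^{\log}(\PP\,|\,H_1'+H_2') \;=\; \cQ_{0,(d_1,0)}^{\log}(\PP\,|\,H_1') \times_{\cQ_{0,2}(\PP)} \cQ_{0,(0,d_2)}^{\log}(\PP\,|\,H_2')
\]
as ordinary stacks (not just fs log stacks). The proof of this is the key step: in genus $0$ with only $2$ markings, quasimap stability forces the source curve to have at most two components, which makes the relevant map of tropicalisations combinatorially flat, so fs fibre product agrees with ordinary fibre product. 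Once this is in place, the rank-two statement follows by multiplying the two smooth correspondences (each proven once and for all by degeneration in rank $1$) and using the Whitney sum formula for the local Euler class. This is precisely the mechanism that fails in Gromov--Witten theory (rational tails appear) and is the real reason the quasimap statement survives; very ampleness enters only in the pullback to the model case, not in controlling stray strata.

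By contrast, your plan iterates the degeneration formula twice, converting one tangency at a time into a local insertion, passing through a ``mixed log/local'' moduli space of quasimaps to the total space of $\cO_X(-D_2)$ with tangency to $\pi^*D_1$. Three things stand in the way. First, the paper's degeneration formula (\Cref{Th: degeneration}) is established only for \emph{toric} double point degenerations; the deformation to the normal cone of a general $X$ along a general $D_2$ is not toric, so you cannot invoke it directly in the first step -- you would have to carry out the reduction to a toric model first, which the paper does anyway. Second, the intermediate ``mixed log/local'' moduli space and its virtual class, together with a version of the $r=1$ correspondence over the non-compact line bundle $\cO_X(-D_2)$ and its compatible obstruction theory, would all have to be constructed and verified; none of this is in the paper and it is nontrivial (under the degeneration of $X$ to the normal cone of $D_2$, the divisor $D_1$ itself degenerates, so the logarithmic structure on the total space must track both the central fibre and the transform of $D_1$ simultaneously). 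Third, and most importantly, your claim that ``very ampleness... rules out curves that distribute their contact orders'' misidentifies the mechanism: the vanishing of the problematic strata is a consequence of genus-$0$ stability with two markings, i.e.\ the absence of rational tails for quasimaps, which is exactly the content of the Cartesian-square theorem. Without articulating that input, the iterated degeneration is under no obligation to close up without correction terms -- indeed in stable maps it does not, which is the whole point of the Nabijou--Ranganathan counterexamples the paper is addressing.
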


In fact,~\cite[Conjecture 18]{tseng2023mirror} gives a much more general local/logarithmic conjecture for simple normal crossings divisors. The situations in Theorems \ref{Theorem : local log corner} and \ref{Theorem : local log rank 2} are the two extremes.

In order to prove \Cref{Theorem : local log corner} (as well as \Cref{Theorem : local log rank 2}), we reduce to the case where $r=1$ in 
\Cref{thm:cartesian_log_qmap_snc,thm:cartesian_log_qmap_snc_r}
by showing that, for products of projective spaces relative toric boundary divisors, the simple normal crossings logarithmic quasimap spaces can be obtained as fibre products of logarithmic quasimap spaces for smooth divisors. Then, following the strategy of \cite{vangarrel2019local}, we prove \Cref{Theorem : local log corner} in the case where $r=1$ using a degeneration formula for quasimaps which we develop. 

Let $W \rightarrow \AAA^1$ be a toric double point degeneration. An example is the deformation to the normal cone of a smooth projective toric variety along a smooth toric divisor. Let $X_0$ denote the central fibre, which is a union of two toric varieties $X_1$ and $X_2$ glued along a common toric divisor $D$.

\begin{maintheorem}[Degeneration formula for quasimaps, \Cref{Th: degeneration}]\label{Theorem : Degeneration}
    \begin{equation*}
        [\cQ_{g,n}^{\log}(X_0,\beta)]^{\vir} = \sum_{\widetilde{\Gamma}} \frac{m_{\widetilde{\Gamma}}}{|\mathrm{Aut}(\widetilde{\Gamma})|} j_{\widetilde{\Gamma},*} c_{\widetilde{\Gamma}}^* \, \Delta^! \left(\prod_{V \in V(\widetilde{\Gamma})} [\cQ^{\log}_{g_V,\alpha_V}(X_{r(V)}|D,\beta_V)]^{\vir}\right)
    \end{equation*}
\end{maintheorem}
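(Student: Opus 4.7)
My approach is to adapt the strategy used for proving degeneration formulas in relative and logarithmic Gromov--Witten theory to the quasimap setting. The key steps are: realise both classes as restrictions of a single relative virtual class over $\AAA^1$, stratify the central fibre by combinatorial type $\widetilde{\Gamma}$, identify each stratum with a fibre product of log quasimap spaces to the components, and compare obstruction theories to compute each stratum's contribution.

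First, I would construct the moduli stack $\cQ^{\log}_{g,n}(W/\AAA^1,\beta)$ of logarithmic quasimaps from $n$-pointed genus-$g$ prestable curves to the total family $W \to \AAA^1$. This carries a relative perfect obstruction theory and a relative virtual class, whose restriction to $t = 0 \in \AAA^1$ equals $[\cQ^{\log}_{g,n}(X_0,\beta)]^{\vir}$ by deformation invariance of virtual classes along a Cartier divisor. The restriction to $t \neq 0$ recovers the absolute theory of the smooth fibre, so the formula can equivalently be read as a recipe for the smooth-fibre invariants.

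Next, I would stratify this central fibre by decorated bipartite graphs $\widetilde{\Gamma}$: each vertex $V$ corresponds to a maximal sub-curve of the source that maps entirely into a single component $X_{r(V)}$ of $X_0 = X_1 \cup_D X_2$, labelled with genus $g_V$, curve class $\beta_V$, distribution of markings, and contact data $\alpha_V$ along $D$; edges correspond to nodes of the source mapping to $D$, carrying matching contact orders from the two sides. For each $\widetilde{\Gamma}$, cutting source curves at these nodes produces a gluing morphism
\begin{equation*}
j_{\widetilde{\Gamma}} \colon \prod_{V \in V(\widetilde{\Gamma})} \cQ^{\log}_{g_V,\alpha_V}(X_{r(V)}|D,\beta_V) \longrightarrow \cQ^{\log}_{g,n}(X_0,\beta),
\end{equation*}
with image in the stratum indexed by $\widetilde{\Gamma}$. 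A crucial point, specific to the quasimap setting, is that basepoints of the quasimaps must not accumulate at nodes mapping into $D$; this is precisely what \Cref{assumptions:log_quasimaps} guarantees, and it ensures that the moduli problem splits cleanly over the vertices of $\widetilde{\Gamma}$.

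The heart of the argument, and the main obstacle, is the comparison of perfect obstruction theories: I would show that the pullback of the central-fibre obstruction theory via $j_{\widetilde{\Gamma}}$ agrees with the natural product obstruction theory on the factors, up to the normal bundle of the diagonal used in $\Delta^!$ (which enforces matching of the evaluation maps at each edge-node). This requires a careful analysis of the logarithmic cotangent complex near a stratum, using balancedness of contact orders and the fact that the log structure on $X_0$ records the node-smoothing direction. Once this is established, each stratum contributes $c_{\widetilde{\Gamma}}^* \, \Delta^!\!\left(\prod_V [\cQ^{\log}_{g_V,\alpha_V}(X_{r(V)}|D,\beta_V)]^{\vir}\right)$, the multiplicity $m_{\widetilde{\Gamma}}$ arises as the product of edge contact orders measuring the ramification of the node-smoothing map along $\AAA^1$, and $|\mathrm{Aut}(\widetilde{\Gamma})|$ corrects overcounting from relabelling the combinatorial data. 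Summing over all $\widetilde{\Gamma}$ assembles to the claimed formula.
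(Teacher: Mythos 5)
Your high-level strategy---realise the central fibre class as a specialisation of a family class over $\AAA^1$, decompose by bipartite graphs, and compare obstruction theories to extract each stratum's contribution---is the same two-step (decomposition + gluing) scheme the paper follows. But your sketch misses the single subtlety the paper identifies as the central new difficulty in the quasimap setting, and it has a couple of structural inaccuracies that matter.

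The main gap: you treat the gluing morphism as if quasimaps to the components $X_{r(V)}$ of the central fibre were the usual quasimaps to those toric varieties. This is false. Unlike stable maps, quasimap moduli spaces depend on the GIT presentation of the target, and a quasimap to $W$ that factors through the special fibre carries \emph{strictly more} data than a quasimap to $X_{r(V)}$ with its own toric presentation (see \Cref{ex:P1_relative_infinity} and \Cref{example ghost 1}). The scheme-theoretic gluing statement (\Cref{Lemma : scheme theoretic gluing quasimaps}) only works if the vertex spaces $\cQ^{\log}_{g_V,\alpha_V}(X_{r(V)}|D,\beta_V)$ are taken with the GIT presentation induced from the embedding $X_{r(V)} \hookrightarrow W$, and as a consequence the curve classes $\beta_V$ live in $H_2(W)$, not $H_2(X_{r(V)})$, so that ``ghost'' classes (\Cref{def:ghost_class}) appear in the index set of the sum. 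Your cutting-and-gluing bijection does not exist with the naive vertex moduli spaces, so without this adjustment the proof does not go through.

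There are also two smaller but genuine inaccuracies. First, you assert that $m_{\widetilde{\Gamma}}$ is the product of edge contact orders; in the paper's normalisation $m_{\widetilde{\Gamma}}$ is the \emph{least common multiple} of the weights, and the ``missing'' factor $\prod_E w_E / \operatorname{lcm}(w_E)$ appears instead as the degree of the \'etale morphism $c_{\widetilde{\Gamma}}$ that one pulls back along. Second, you describe $j_{\widetilde{\Gamma}}$ as a map from the product of vertex spaces into $\cQ^{\log}_{g,n}(X_0,\beta)$; in the paper $j_{\widetilde{\Gamma}}$ is the inclusion of the stratum $\cQ^{\log}_{g,n}(X_0,\widetilde{\Gamma})$, and a \emph{separate} morphism $c_{\widetilde{\Gamma}}$ goes from the stratum to the fibre product $\bigtimes_V \cQ_V$---this distinction matters because the formula involves $j_{\widetilde{\Gamma},*}\,c_{\widetilde{\Gamma}}^*$, not a single map. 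Finally, the decomposition step in the paper is not just ``stratifying by combinatorial type'': it works by virtually pulling back the decomposition of the fundamental class of the Artin stack $\frM^{\log}_{g,n}(\sA_0)$ proved in \cite{abramovich2020decomposition} (see \Cref{Theorem : Universal Decomposition}), and this is what produces the multiplicities and the index set cleanly; your sketch leaves this mechanism implicit, which is where the errors above creep in.
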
 

Both the proof and application of the degeneration formula for quasimaps entail additional subtleties, when compared with stable maps. The main one is based on the fact that quasimap spaces do not embed along closed embeddings, see \cite[Remark 2.3.3]{battistella2021relative}. As a consequence, if $V$ is a toric subvariety of a toric variety $X$, then quasimaps to $X$ which factor through $V$ contain more information than quasimaps to $V$ with its toric presentation, as we illustrate in \Cref{example ghost 1}. We show in \Cref{ex:P1_relative_infinity} that this phenomenon is relevant when studying toric double point degenerations, and it forces us to consider non-toric presentations for the irreducible components of the special fibre. As a result, the output of the degeneration formula, \Cref{Th: degeneration}, may contain curve classes which live on the degeneration family, rather than on the irreducible components of the special fibre. We call such curve classes \emph{ghost} classes (see \Cref{def:ghost_class}). For the application to the local/logarithmic correspondence, these contributions are ruled out in \Cref{no ghosts}. To prove \Cref{no ghosts} we show that these contributions have a trivial excess line bundle.

\subsection{Motivation}

Relative or logarithmic Gromov–Witten theory \cite{chen2014stable,gross2013logarithmic,abramovich2014stable,Li_relative_morphisms} has significantly shaped the landscape of enumerative geometry. The resulting invariants (and their generalisations) have proved important in mirror symmetry constructions \cite{gross2022canonical}, degeneration formulas \cite{ranganathan2022logarithmic,gross2023remarks,kim2021degeneration,Li_degeneration_formula, abramovich2021punctured}, tropical correspondence theorems such as \cite{mandel2020descendant,gross2018intersection,graefnitz2022tropical,nishinou2006toric} as well as understanding the tautological ring of the moduli space of curves \cite{graber2005relative,ranganathan2023logarithmic,molcho2023case, holmes2022logarithmic}. Consequently, computing logarithmic Gromov--Witten invariants is an important problem in enumerative algebraic geometry. 

An interesting feature of the local/logarithmic correspondence is that the two sides can be understood by qualitatively different techniques, especially when the logarithmic divisor is genuinely simple normal crossings. Thus, conceptually moving to one side and then using its features can be profitable. Although there have been many numerical generalisations and variations of this correspondence \cite{bousseau2021holomorphic,vangarrel2024gromovwitten, bousseau2021stable,bousseau2024stable,bousseau2022log}, the original cycle level correspondence holds in the case where the divisor is smooth. On the other hand, the major difficulty in computing logarithmic Gromov--Witten invariants, at least in genus zero, lies in the case where the divisor is genuinely simple normal crossings.

\subsection{The role of quasimaps}
There have been conjectures to generalise the local/logarithmic correspondence to the case where the divisor is simple normal crossings, but as we have remarked these conjectures are false in general, in Gromov--Witten theory \cite{nabijou2022gromov}. Moreover, the authors noticed ~\cite[Remark 5.4]{nabijou2022gromov} that one point of view for the underlying reason for the failure of these conjectures was the presence of excess components in the moduli space containing \emph{rational tails}. Quasimap theory \cite{marian2011moduli,ciocan2010moduli,ciocan2014stable} provides an alternative framework for counting curves which excludes rational tails, and so it is conceivable that these are no longer counterexamples in the quasimap setting. 

The origins of quasimap theory are intertwined with its connections to mirror symmetry \cite{ciocan2014wall} and the main applications of the theory have been wall-crossing formulas which relate Gromov--Witten invariants to quasimap invariants \cite{ciocan2014wall,ciocan2017higher,zhou2022quasimap}. These wall-crossing formulas reduce computations in Gromov--Witten theory to quasimap theory, which are typically easier. Indeed, almost all results that allow us to compute genus-zero
Gromov–Witten invariants \cite{givental1998mirror} rely at heart on the wall-crossing formula. Recently, the third author has developed a theory of logarithmic quasimaps \cite{shafi2024logarithmic} with the expectation of proving \emph{logarithmic} wall-crossing formulas, first proposed in  \cite{battistella2021relative}. A consequence of \Cref{Theorem : local log corner} and \Cref{Theorem : local log rank 2}, which confirm the predictions of \cite{nabijou2022gromov}, is that the techniques of ordinary Gromov--Witten/quasimap theory can be imported over to compute logarithmic quasimap invariants in the case where the divisor is genuinely simple normal crossings. Combining this with a conjectural wall-crossing should be a powerful method for computing logarithmic Gromov--Witten invariants.

\subsection{Quasimaps and the degeneration formula}

The degeneration formula for double point degenerations \cite{Li_degeneration_formula, chen2014degeneration,kim2021degeneration,abramovich2016orbifold} has been a well used tool in Gromov--Witten theory since its first introduction to the subject \cite{maulik2006topological,okounkov2009gromov,pandharipande2017gromovpairs}. For all the same reasons the degeneration formula for quasimaps, \Cref{Theorem : Degeneration}, will provide similar computational capabilities. In this paper we use the degeneration formula to prove the local/logarithmic correspondence for smooth pairs. There is a related direction to this, which is the prospect of investigating holomorphic anomaly equations in the relative setting, for which quasimaps are better suited. 

In the absolute setting, for Calabi-Yau threefolds, the wall-crossing formula coincides with the mirror map and hence quasimap invariants are exactly equal to the B-model invariants of the mirror. This link has been exploited to give a direct geometric proof, for local $\PP^2$, of the holomorphic anomaly equation \cite{lho2018stable}, which gives profound structure to the Gromov--Witten theory of a Calabi--Yau, coming from the B-model. Using the double point degeneration formula, the authors of \cite{bousseau2021holomorphic} prove a holomorphic anomaly equation for the logarithmic Gromov–Witten theory of $\PP^2$
relative an elliptic curve. This suggests that more general holomorphic anomaly equations in relative geometries could be pursued using the degeneration formula for quasimaps, after generalising this beyond the toric setting.

\subsection{Future directions}

\subsubsection{Local/logarithmic correspondence in higher genus}

There are two ways in which one can generalise the original smooth local/logarithmic correspondence. Either to increase the rank of the divisor, as we have addressed in the present paper, or to generalise from rational curves to higher genus curves, where on the logarithmic side one adds a $\lambda_g$ class for the dimensions to match. In Gromov--Witten theory both generalisations fail on the nose, for the higher genus situation there are correction terms described in \cite{bousseau2021holomorphic}. One can speculate about whether, analogous to the situation for higher rank divisors, there are instances where the correction terms vanish for the quasimap local/logarithmic correspondence in higher genus.

\subsubsection{Holomorphic anomaly equations}

In the Gromov--Witten situation, the relationship between local and logarithmic invariants in higher genus is used to prove a holomorphic anomaly equation for $\PP^2$ relative to an elliptic curve. Since quasimaps are better suited to understanding holomorphic anomaly equations, a natural is to pursue this for other geometries.

In order to accomplish this, the degeneration formula would need to be generalised beyond the toric setting. This should be possible however. The main difficulty in quasimap theory is the restriction of the theory to spaces admitting a certain GIT presentation. Even the deformation to the normal cone of a toric variety along a smooth, non-toric divisor is no longer toric. On the other hand, one can rewrite the total space, as well as more general blow-ups, as a GIT quotient, see, for example,  ~\cite[Proposition 6.2]{coates2022abelian}.

\subsubsection{Orbifold/logarithmic correspondence}

The local/logarithmic correspondence concerns invariants with maximal contact to the divisor which makes it fertile testing ground, but many of the ideas pursued here should extend to a more general context. An alternative approach to tangency involves replacing components of divisor with roots \cite{cadman2007using}, and using the framework of orbifold Gromov--Witten theory to produce invariants \cite{tseng2023gromov}. There has been effort to compare the logarithmic theories with the resulting orbifold theory \cite{abramovich2017relative,battistella2023gromovwitten,tseng2020higher}, since the latter is significantly easier to compute in practice. However, it computes a different set of invariants in general, which do not immediately interact with degeneration formulas and lacks some of the conceptual advantages of the logarithmic theory. Furthermore, no known structure organises the difference between the theories. 

Our work here sheds some light on this difference. One can view the local theory defined by an s.n.c. divisor as a sector of this orbifold by the results of \cite{battistella2023local}, and so our results suggest that the difference between the computable orbifold theory and the conceptually better logarithmic theory, might be controlled in part by mirror maps. There is a related proposal in \cite{you2022relative}. 

Taking these issues together, it seems reasonable that orbifold and logarithmic Gromov--Witten theory for pairs should be compared via the quasimap theory, where at least in some sectors such as the one we study here, the invariants coincide. The general picture appears to be more complicated, but the significantly simplified combinatorics of quasimaps provides a clear path to computations.

\subsection{Acknowledgements}
The suggestion of a quasimap local/logarithmic correspondence arose from discussions with Michel van Garrel, and we would like to wholeheartedly thank him here. We are very grateful to Yannik Sch\"uler for his explanations on a localisation computation in \cite{Bryan_Pandharipande}, which helped us prove \Cref{Prop : qlocal P1 comp}. We thank Leo Herr and Navid Nabijou for useful discussions and Dhruv Ranganathan for discussions and very helpful comments on the draft. We thank Navid Nabijou, Helge Ruddat and Fenglong You for their comments on a previous version.

C.M. was partially supported by a Royal Society Dorothy Hodgkin Fellowship. Q.S. is supported by UKRI Future Leaders Fellowship through
grant number MR/T01783X/1. 

\section{Absolute quasimaps}

Here we recall the definition of absolute GIT quasimaps from \cite{ciocan2014stable}.

\begin{assumptions}\label{assumptions:absolute_quasimaps}
Let $Z= \Spec A$ be an affine algebraic variety with an action by a reductive algebraic group $G$. Let $\theta$ be a character inducing a linearisation for the action. We insist that
\begin{itemize}
	\item $Z^s = Z^{ss}$
	\item $Z^s$ is nonsingular
	\item $G$ acts freely on $Z^s$
	\item $Z$ has only l.c.i singularities
\end{itemize}
\end{assumptions}

Then quasimaps to $Z \GIT G$ are defined as follows. 

\begin{definition}\label{absquasimapdef}
Fix non-negative integers $g,n$ and $\beta \in \Hom(\Pic^G Z, \ZZ)$. An $n$-marked \textit{stable quasimap} of genus $g$ and degree $\beta$ to $Z \GIT G$ is 
	\begin{equation*}
	\left((C, p_1, \dots, p_n), u \right)
	\end{equation*}	
where
	\begin{enumerate}
		\item $(C, p_1, \dots, p_n)$ is an $n$-marked prestable curve of genus $g$.
		\item $u: C \rightarrow [Z/G]$ of class $\beta$  (i.e. $\beta(L) = \deg_C u^*L$).
	\end{enumerate}
	which satisfy 
	\begin{enumerate}
		\item (Non-degeneracy) there is a finite (possibly empty) set $B \subset C$, distinct from the nodes and markings, such that $\forall c \in C \setminus B$ we have $u(c) \in Z^s$. 
		\item (Stability) $\omega_C(p_1 + \cdots + p_n) \otimes L_{\theta}^{\epsilon}$ is ample $\forall \epsilon \in \QQ_{>0}$, where $L_{\theta}$ comes from the linearisation.
	\end{enumerate}
\end{definition}

\begin{theorem}[\hspace{-0.01em}\cite{ciocan2014stable}]
    The moduli stack of stable quasimaps $\cQ_{g,n}(X,\beta)$ is a Deligne--Mumford stack. If $X$ is proper then so is $\cQ_{g,n}(X,\beta)$. Moreover, it admits a perfect obstruction over $\frM_{g,n}$ leading to a virtual fundamental class $[\cQ_{g,n}(X,\beta)]^{\vir}$.
\end{theorem}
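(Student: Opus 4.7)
The plan is to establish the three assertions in turn: algebraicity and the Deligne--Mumford property; properness when $X$ is proper; and construction of the relative perfect obstruction theory over $\frM_{g,n}$.

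For algebraicity, I would realise $\cQ_{g,n}(X,\beta)$ as an open substack of the mapping stack parametrising, over a base $S$, a prestable $n$-pointed curve $(C,p_1,\dots,p_n) \to S$ together with a morphism $u : C \to [Z/G]$ of class $\beta$; equivalently, a principal $G$-bundle $P \to C$ with a $G$-equivariant morphism $P \to Z$. Algebraicity of such a Hom stack is standard given the degree constraint, which supplies quasi-compactness. Both non-degeneracy and $\epsilon$-stability are open conditions, cutting out $\cQ_{g,n}(X,\beta)$ as an open substack. For the Deligne--Mumford property, the key is finiteness of automorphism groups: on components where $\omega_C(\sum p_i)$ is already ample, the curve-side automorphisms are finite; on the remaining rational bridges and tails the $\epsilon$-stability condition forces $L_\theta$ to have positive degree, so $u$ hits $Z^s$ non-trivially, and the free action of $G$ on $Z^s$ kills any continuous $G$-bundle lift.

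For properness, assume $X$ is proper and verify the valuative criteria over DVRs. Boundedness for fixed $\beta$ follows from the projective embedding given by a high power of $L_\theta$, together with the standard boundedness of stable maps to projective space with bounded degree. Separatedness follows from the observation that two extensions of a quasimap over $\Spec R$ must agree on the complement of finitely many points (the base points and any exceptional components of the underlying families), and the quasimap data is determined by its restriction to the stable locus. For existence, start with $u_K : C_K \to [Z/G]$ over the fraction field, spread out to a prestable family $C \to \Spec R$, extend $u_K$ as a rational map, and resolve its indeterminacy by a sequence of blow-ups along the central fibre. Then contract the resulting $\epsilon$-unstable rational tails using $\omega_{C/R}(\sum p_i) \otimes L_\theta^\epsilon$ to produce a genuine stable quasimap limit.

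For the perfect obstruction theory relative to $\frM_{g,n}$, let $\pi : \cC \to \cQ_{g,n}(X,\beta)$ be the universal curve and $u : \cC \to [Z/G]$ the universal map. The candidate is
\[ \EEb := (R\pi_* u^* T_{[Z/G]})^\vee \longrightarrow \LL_{\cQ_{g,n}(X,\beta)/\frM_{g,n}}. \]
Under Assumption \ref{assumptions:absolute_quasimaps}, the tangent complex of $[Z/G]$ is represented by $[\mathfrak{g} \otimes \cO \to T_Z]$ on a neighbourhood of $Z^s$, with perfect amplitude in $[-1,0]$; the non-degeneracy condition ensures that $u$ factors into $[Z^s/G]$ away from a finite set disjoint from nodes and markings, so $u^* T_{[Z/G]}$ is perfect of amplitude $[-1,0]$ on $\cC$. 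Pushing forward along the relative curve $\pi$ preserves perfectness and shifts the amplitude by at most one, so after dualising one obtains an obstruction theory of amplitude $[-1,0]$ and the Behrend--Fantechi machinery produces $[\cQ_{g,n}(X,\beta)]^{\vir}$. The most delicate point, and the main obstacle, is the existence part of the valuative criterion: the blow-up-then-contract procedure must yield an $\epsilon$-stable quasimap of the prescribed degree $\beta$ with all base points genuinely disjoint from nodes and markings, and one must verify uniqueness of the resulting stable limit up to Deligne--Mumford equivalence.
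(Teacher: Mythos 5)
The statement you are asked to prove is not proved in this paper: it is quoted as a black box from Ciocan-Fontanine--Kim--Maulik (the reference \cite{ciocan2014stable}), and all subsequent arguments in the paper build on it. So there is no ``paper's own proof'' against which to compare; your task here amounts to reconstructing the argument of the cited reference, which you have sketched at a plausible level.

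Your outline of algebraicity, the Deligne--Mumford property, and properness follows the expected strategy (open substack of a Hom-stack, finite automorphisms from stability and the free action on $Z^s$, valuative criterion via spreading out, blowing up and contracting $\epsilon$-unstable rational tails so that base points develop on the limit). These are genuinely delicate -- particularly the existence half of the valuative criterion, where the quasimap limit is not produced by stable-map-style stable reduction but by absorbing degree into base points -- and you flag this appropriately.

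There is, however, a gap in the obstruction-theory step. You write that $u^*T_{[Z/G]}$ is perfect of amplitude $[-1,0]$ and that pushing forward along $\pi$ ``shifts the amplitude by at most one, so after dualising one obtains an obstruction theory of amplitude $[-1,0]$''. This does not follow: $R\pi_*$ of a complex of amplitude $[-1,0]$ on a family of curves has amplitude $[-1,1]$, whose dual again has amplitude $[-1,1]$, which is not a perfect obstruction theory in the required range. The missing input is that $u^*T_{[Z/G]}$ actually has \emph{no} cohomology in degree $-1$: with the presentation $T_{[Z/G]} \simeq [\mathfrak{g}\otimes\cO_Z \to T_Z]$, the sheaf $\cH^{-1}(u^*T_{[Z/G]})$ is a subsheaf of the locally free sheaf $u^*(\mathfrak{g}\otimes\cO)$, hence torsion-free on $\cC$; but by non-degeneracy, $u$ lands in $[Z^s/G]$ away from a finite set, and there the map $\mathfrak{g}\otimes\cO \to T_Z$ is injective because $G$ acts freely on $Z^s$, so $\cH^{-1}$ is supported on a finite set, hence torsion. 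Being both torsion and torsion-free it vanishes, so $u^*T_{[Z/G]}$ is concentrated in degree $0$, $R\pi_* u^*T_{[Z/G]}$ has amplitude $[0,1]$, and its dual is a genuine perfect obstruction theory. (This is exactly the kind of argument the present paper runs in Proposition \ref{prop : perfectness} and Corollary \ref{cor: no neg coh} for the logarithmic version, so matching that step would close the gap.)
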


In an earlier iteration of the theory in \cite{ciocan2010moduli}, \textit{toric} quasimaps are defined for toric varieties using the description of the functor of points of a smooth toric variety in terms of line bundle-section pairs according to rays of the fan, proved in \cite{cox1995functor}. In the general context of \Cref{absquasimapdef}, toric quasimaps are recovered by taking the standard GIT presentation coming from toric geometry. 

We will prove a degeneration formula in a toric context, which means the double point degeneration will be a \emph{toric morphism}. However, we will still require the more general definition of quasimaps, \Cref{absquasimapdef}, even in this context, because the process of gluing a quasimap is more subtle than in the case of maps (see the discussion and examples in \Cref{subsec:quasimaps_special_fibre}). Specifically in order to glue together quasimaps to the components of the central fibre we require the GIT presentation induced by the embedding in the total space, rather than the ordinary toric presentation.

\begin{definition}
    A \emph{toric double point degeneration} $W \rightarrow \AAA^1$ is a double point degeneration with $W$ a toric variety and $W \rightarrow \AAA^1$ a toric morphism. By double point degeneration we mean that $W \rightarrow \AAA^1$ is a flat, proper morphism, with $W$ smooth. The general fibre is smooth and the fibre over $0 \in \AAA^1$ is a reduced union of two irreducible components glued along a common smooth divisor.
\end{definition}

\begin{notation}\label{notation:degenerations}
    We denote the general fibre of a toric double point degeneration $W\to \AAA^1$ by $X$. The central fibre $X_0$ is a union of the two components $X_1$ and $X_2$ glued along a common divisor smooth $D$. Necessarily the fan of $W$ has two distinguished rays which map surjectively to the ray in the fan of $\AAA^1$. These rays correspond to the divisors $X_1$ and $X_2$ of $W$, and will typically be denoted by $\rho_1$ and $\rho_2$. All the other rays are contained in a codimension 1 sublattice, as in \Cref{fig:fan_double_point_degeneration}. We will denote the toric GIT presentation of $W$ by $\AAA^M \GIT \GG_m^s$ and the ambient stack $[\AAA^M/ \GG_m^s]$ by $\cW$.
\end{notation}

\begin{figure}
    \centering
    \includegraphics[width=5cm]{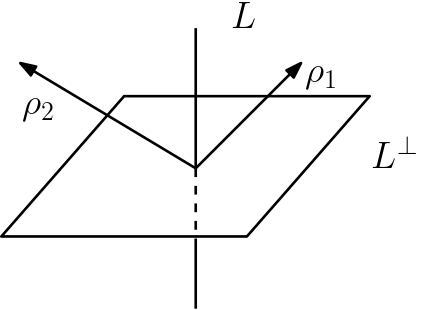}
    \caption{A double point degeneration has two special rays $\rho_1,\rho_2$, which project to the generator of the fan of $\AAA^1$ inside a line $L$. All other rays are contained in the hyperplane $L^\perp$.}
    \label{fig:fan_double_point_degeneration}
\end{figure}

\begin{remark}
    Just as in Gromov--Witten theory one can consider quasimaps over a base. Since this is not addressed in the literature we just make the observation that for a double point degeneration $W \rightarrow \AAA^1$ we have that $\cQ_{g,n}(W/\AAA^1,\beta) = \cQ_{g,n}(W,\beta)$, because any quasimap to $W$ necessarily lives in a fibre over $\AAA^1$. 
\end{remark}

\subsection{Quasimaps to a double point degeneration} \label{subsec:quasimaps_special_fibre}

The degeneration formula involves the study of quasimaps to a double point degeneration $W\to \AAA^1$, which we assume to be toric. Let $\AAA^M \GIT \GG_m^s$ be the toric GIT presentation of $W$. We define quasimaps to $X_0$ via the following Cartesian diagram.

\[
    \begin{tikzcd}
     \cQ_{g,n}(X_0,\beta) \arrow{r}\arrow{d} & \cQ_{g,n}(W,\beta)\arrow{d}\\
    0 \arrow{r} & \AAA^1
    \end{tikzcd}
\]

The degeneration formula expresses will express the virtual class of $[\cQ_{g,n}(X_0,\beta)]^{\vir}$, defined via Gysin pullback in the above diagram, with the virtual classes of logarithmic quasimap spaces of $X_1$ and $X_2$ relative to $D$. For stable maps, one has that a map to $X_0$ is the same as a collection of maps to $X_i$ gluing at the nodes. The analogue statement for quasimaps is true only if we take the GIT presentation $X_i \simeq \AAA^{M-1}\GIT \GG_m^s$ induced by $W$, which in general does not agree with the standard toric presentation. 
\Cref{ex:P1_relative_infinity} provides a concrete example of this phenomenon.

\begin{figure}
    \includegraphics[width=5cm]{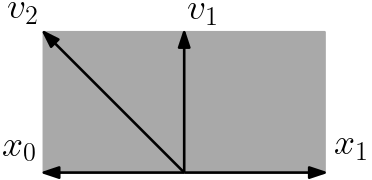}
		\caption{The fan of $W=\Bl_{\infty\times 0}(\PP^1\times \AAA^1)$.}
		\label{fig:fan_P1_relative_infinity}
    \end{figure}
    
\begin{example}\label{ex:P1_relative_infinity}
    Consider the deformation to the normal cone of $\PP^1$ relative $\infty$, $W=\Bl_{\infty\times 0}(\PP^1\times\AAA^1)$. 
    The fan of $W$, pictured in \Cref{fig:fan_P1_relative_infinity}, induces the following GIT presentation
    \[
        W = \AAA^4\GIT \GG_m^2, \, \, \mathrm{where} \, \,  (\lambda,\mu)\cdot (x_0,x_1,v_1,v_2) = (\lambda\mu x_0,\lambda x_1,\mu v_1, \mu^{-1}v_2).
    \]
    Setting $v_1=0$ in the presentation of $W$ induces the following presentation of $X_1 = \PP^1$:
    \[
        X_1 = \AAA^3 \GIT \GG_m^2.
    \]
    On the other hand, $\PP^1$ has the following toric presentation
    \[
        \PP^1 = \AAA^2 \GIT \GG_m  \, \, \mathrm{where} \, \, \alpha\cdot (z_0,z_1) = (\alpha z_0, \alpha z_1).
    \] 
    There is a  morphism $f : \left[\AAA^3/\GG_m^2\right] \to \left[\AAA^2/\GG_m\right]$ given by
    \begin{align*}
        (x_0,x_1,v_2) &\mapsto (x_0\cdot v_2, x_1)\\
        (\lambda, \mu)&\mapsto  \lambda
    \end{align*}
    and a morphism $g: \left[\AAA^2/\GG_m\right] \to \left[\AAA^3/\GG_m^2\right]$ given by 
    \begin{align*}
        (z_0,z_1) &\mapsto (1,z_1,0,z_0)\\
        \alpha &\mapsto  (\alpha, \alpha^{-1}).
\end{align*}
Note that $f\circ g = \mathrm{id}$ but $g\circ f\neq \mathrm{id}$ (one must restrict to the stable locus for $f$ and $g$ to be inverse). As a consequence, every quasimap to $\PP^1$ with the toric presentation appears as the restriction of a quasimap to $\PP^1$ with the presentation induced by $W$, but the converse is not true. 
\end{example}

Loosely speaking, \Cref{ex:P1_relative_infinity} shows that a quasimap to $X_1$ with the presentation induced by $W$ requires strictly more information than with the standard presentation. For instance, the degree of a quasimap to $X_i$ in the first case lies in $H_2(W,\ZZ)$, which in general may differ from $H_2(X_i,\ZZ)$. 

\begin{example}\label{example ghost 1}
    For any $k \geq 0$, consider the quasimap 
    \begin{align*}
        q_k : \PP^1 &\dashrightarrow \PP^1\times\PP^1\\
        [s\colon t] &\mapsto ([0\colon s^k],[s\colon t])
    \end{align*}
    Note that $x = [0\colon 1]$ is a basepoint of $q_k$ for $k\geq 1$. We can think of $q_k$ as giving a quasimap to $\PP^1$ with the presentation
    \[
    \PP^1 = \AAA^1 \times \AAA^2 \GIT (\GG_m \times \GG_m) \, \, \mathrm{where} \, \, (\lambda, \mu) \cdot (x_1, y_0, y_1) = (\lambda x_1, \mu y_0, \mu y_1)
    \]
    On the other hand, since $q_k$ factors through $[0\colon 1]\times \PP^1$, there is an obvious induced quasimap to $\PP^1$ with the standard presentation given by 
        \begin{align*}
         \PP^1 &\dashrightarrow \PP^1\\
        [s\colon t] &\mapsto [s\colon t].
    \end{align*}
    But this is independent of $k$ and so it is impossible to recover back the original quasimap $q_k$.
\end{example}

\begin{definition}\label{def:ghost_class}
    Let $W$ be a toric variety and $\iota\colon V\hookrightarrow W$ the closure of a torus orbit. A \textit{ghost class} of $V\hookrightarrow W$ is a curve class in $H_2(W,\ZZ)\setminus \iota_\ast (H_2(V,\ZZ))$.
\end{definition}

For example, the quasimap $q_k$ from \Cref{example ghost 1} has degree $(k,1)$, which is a ghost class unless $k=0$. The following example describes ghost classes when the ambient $W$ is the deformation to the normal cone of a hyperplane in projective space, which we will encounter when dealing with the degeneration formula in \Cref{sec:log-local_correspondence}.

\begin{example}\label{deformationPN}
    Let $W(\PP^N,H) = \Bl_{H\times 0} (\PP^N\times\AAA^1)$ be the deformation to the normal cone of a hyperplane $H$ in $\PP^N$, which is a toric double point degeneration with toric GIT presentation 
    \[
        \AAA^{N+3}\GIT \GG_m^2, \, \, \mathrm{where} \, \,  (\lambda,\mu)\cdot (x_0,x_1,\ldots, x_N,v_1,v_2) = (\lambda\mu x_0,\lambda x_1,\ldots, \lambda x_N,\mu v_1, \mu^{-1}v_2).
    \]
    The fan of $W(\PP^N,H)$ is pictured in \Cref{fig:fan_P1_relative_infinity} for $N=1$ and in \Cref{fig:fan_degeneration_normal_cone_P2} for $N=2$. We compute the ghost classes of the inclusion of $X_1\simeq \PP^N$ in $W(\PP^N,H)$. 

    \begin{figure}
        \centering
        \includegraphics[width=7cm]{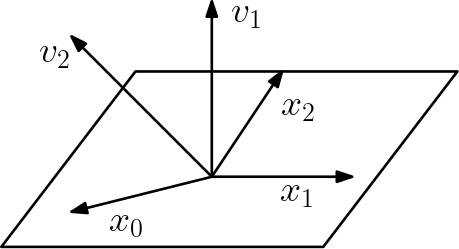}
        \caption{The fan of $W(\PP^2,H) = \Bl_{H\times 0} \PP^2\times \AAA^1$.}
        \label{fig:fan_degeneration_normal_cone_P2}
    \end{figure}

    Let $H_0,\ldots, H_N$ be the toric divisors on $\PP^N$ with $H=H_0$, and let $\tilde{H}_i$ be the strict transform of $H\times \AAA^1$. The toric divisors of $W(\PP^N,H)$ are: 
    \begin{itemize}
        \item $\widetilde{H}_1,\ldots, \widetilde{H}_N$; 
        \item $\widetilde{H}$, the infinity section;
        \item $E$, the exceptional divisor;
        \item $\PP^N$ embedded in the special fiber.
    \end{itemize}
    They satisfy the following relations in $\Pic(W(\PP^N,H))$:
    \begin{itemize}
        \item $[\tilde{H}_1] = \ldots = [\tilde{H}_N] = [\tilde{H}]+[E]$ and
        \item $[E]+[\PP^N]=0$.
    \end{itemize}

    In order to describe ghost classes of $\PP^N$, we need to describe the morphism 
    \begin{equation}\label{eq:morphism_curve_classes}
        H_2(\PP^N,\ZZ) \to H_2(W(\PP^N,H),\ZZ)
    \end{equation}
    induced by the inclusion $\PP^N\hookrightarrow W(\PP^N,H)$ in the special fibre. On the one hand, $H_2(\PP^N,\ZZ) \simeq \ZZ \ell$, with $\ell$ the class of a line. On the other hand,  $H_2(W(\PP^N,H),\ZZ) \simeq \ZZ l \oplus \ZZ f$ with $f$ the class of a fiber $X_2\to H$ and $l$ the class of a line in the general fibre of $W(\PP^N,H)\to\AAA^1$. We fix this basis of $H_2(W(\PP^N,H),\ZZ)$ from now on. Note that a class $(d,e)\in H_2(W(\PP^N,H),\ZZ)$ is effective if and only if $d\geq 0$ and $d+e\geq 0$.

    The transpose $\varphi\colon \Pic(W(\PP^N,H)) \to \Pic(\PP^N)$ of the morphism in \eqref{eq:morphism_curve_classes} is determined by $\varphi([\widetilde{H}_1]) = [H_0]$ and $\varphi([\PP^N]) = -[H_0]$. Combining this with the fact that the basis $\{[\widetilde{H}_1],[\PP^N]\}$ of $\Pic(W(\PP^N,H))$ and $\{l,f\}$ of $H_2(W(\PP^N,H))$ are dual,
    we conclude that
    \begin{align*}
        H_2(\PP^N,\ZZ) &\to H_2(W(\PP^N,H),\ZZ)\\
        \ell &\mapsto (1,-1).
    \end{align*}
    This means that an effective curve class $(d,e)\in H_2(W(\PP^N,H),\ZZ)$ is ghost for $\PP\hookrightarrow W(\PP^N,H)$ if and only if $d\neq -e$. 
\end{example}

\section{Logarithmic quasimaps}
In this section we will introduce logarithmic quasimaps to a toric double point degeneration analogous to the logarithmic quasimaps of \cite{shafi2024logarithmic}. We show that this moduli space admits a virtual fundamental class using the construction of \cite{behrend1997intrinsic}. First, we recall the main theorem of \cite{shafi2024logarithmic} in the specific case where the divisor is smooth. 

\begin{assumptions}\label{assumptions:log_quasimaps}
 Let $X = Z\GIT G$ be a GIT quotient and $D \subset X$ a simple normal crossings divisor. In addition to the requirements of the previous section we impose that $Z \GIT G$ is a subvariety of a vector space quotient $V \GIT G$ from which $D$ is pulled back. 
\end{assumptions}

Suppose that $D$ is a smooth divisor and let $\sA = [\AAA^1 /\GG_m]$.
\begin{theorem}[\hspace{-0.01em}{\cite[Theorems 0.1 $\&$ 0.2]{shafi2024logarithmic}}]\label{quasimaps obs} 
 Let, $g,n \in \NN$, let $\beta$ a quasimap degree and let $\alpha \in \NN^{n}$ such that $\sum_{i=1}^n \alpha_i = D \cdot \beta$. The moduli space $\cQ^{\log}_{g,\alpha}(X|D,\beta)$ parametrising logarithmic quasimaps to $(X,D)$ is a proper Deligne--Mumford stack. If $u$ is the universal map, and $\pi$ is the projection from the universal curve, then the complex $(\mathbf{R}\pi_*u^*\mathbb{T}^{\log}_{\cX})^{\vee}$ is a perfect obstruction theory for the morphism $\cQ^{\log}_{g,\alpha}(X|D,\beta)\to \frM^{\log}_{g,\alpha}(\sA)$.
\end{theorem}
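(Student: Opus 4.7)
The plan is to follow the standard template for moduli of prestable maps, handling the two assertions — properness/DM-ness and existence of the perfect obstruction theory — separately, with the whole argument essentially reduced to checking that the setup of \cite{shafi2024logarithmic} applies to GIT quasimaps with tangency along the smooth components of $D$.

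For the first part I would start with the natural forgetful morphism to the non-logarithmic quasimap space $\cQ_{g,n}(X,\beta)$, which is already known to be a proper Deligne--Mumford stack by \cite{ciocan2014stable}. The logarithmic enhancement factors through the stack $\frM_{g,\alpha}^{\op{log}}(\sA)$ of logarithmic prestable curves with tangency data $\alpha$ along the smooth divisor $D$, which is algebraic and log smooth over $\frM_{g,n}$. The additional data over the ordinary quasimap space is essentially a minimal log structure on the source together with a tropical refinement determined by $\alpha$ — a finite combinatorial choice — so the added morphism is representable, quasi-finite and proper onto its image. Deligne--Mumford-ness follows. Properness is then checked by the valuative criterion: for a one-parameter family over the generic point of a trait, basepoints admit no rational tails to contract so the curve is essentially fixed, and one applies the standard log enhancement argument (modify the base curve tropically so that contact orders extend) to produce the limit. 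Stability and the non-degeneracy condition guarantee uniqueness of the limit.

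For the second part, the perfect obstruction theory is built via Behrend--Fantechi applied relative to $\frM_{g,\alpha}^{\op{log}}(\sA)$. Because $\cX = [Z/G]$ with its pulled-back log structure is log smooth over the base, the log tangent complex $\mathbb{T}^{\op{log}}_{\cX}$ is a locally free sheaf on $\cX$. Pulling back along the universal map $u:\cC\to\cX$ and pushing forward along $\pi:\cC\to \cQ^{\op{log}}_{g,\alpha}(X|D,\beta)$ produces a two-term complex whose dual is the candidate obstruction theory, and functoriality of the (log) cotangent complex for the square
\[
\begin{tikzcd}
\cC \arrow{r}{u}\arrow{d}{\pi} & \cX \arrow{d}\\
\cQ^{\op{log}}_{g,\alpha}(X|D,\beta)\arrow{r} & \frM_{g,\alpha}^{\op{log}}(\sA)
\end{tikzcd}
\]
produces the required morphism to the relative log cotangent complex of $\cQ^{\op{log}}_{g,\alpha}(X|D,\beta)\to \frM_{g,\alpha}^{\op{log}}(\sA)$.

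The main obstacle is verifying that this is genuinely a perfect obstruction theory — that the induced map on $h^0$ is an isomorphism and on $h^{-1}$ is surjective — in the presence of basepoints, where the quasimap need not meet $D$ transversely. The key point is that, even though $u$ may map a basepoint into deeper strata of $D$, $\mathbb{T}^{\op{log}}_{\cX}$ is nevertheless locally free on all of $\cX$, so $u^*\mathbb{T}^{\op{log}}_{\cX}$ is a vector bundle on $\cC$ and $\mathbf{R}\pi_*$ of it is two-term (higher pushforwards vanish since fibres of $\pi$ are curves). Standard log deformation theory then identifies $\pi_* u^*\mathbb{T}^{\op{log}}_{\cX}$ and $R^1\pi_* u^*\mathbb{T}^{\op{log}}_{\cX}$ with the log deformations and obstructions of the map fixing the base curve and tangency data. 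Once this is done, the argument essentially reduces to the one in \cite{shafi2024logarithmic}, applied here in the specific case of a single smooth component.
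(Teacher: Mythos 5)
This statement is cited from~\cite[Theorems 0.1 \& 0.2]{shafi2024logarithmic} and is not re-proved in the present paper; the closest analogue proved here is \Cref{prop : perfectness} for the double point degeneration, which explicitly follows~\cite[Propositions 4.5 \& 4.7]{shafi2024logarithmic}. Your sketch has the correct broad architecture (fibre product over the universal log target, Behrend--Fantechi applied relative to $\frM^{\log}_{g,\alpha}(\sA)$, the log cotangent complex of the universal diagram), but it contains one genuine gap.

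The gap is the assertion that $\mathbb{T}^{\log}_{\cX}$ is a locally free sheaf on $\cX$ because $\cX$ is log smooth. Here $\cX = [Z/G]$ (or a quotient of $\AAA^M$ by a torus) is a quotient \emph{stack}, not a DM or scheme target, so even when it is log smooth the log tangent complex has a nontrivial $\mathcal{H}^{-1}$ coming from the Lie algebra of $G$. One sees this explicitly in the paper: the log Euler sequence in the proof of \Cref{prop:obstruction_theory_X_over_Pic} exhibits $\mathbb{T}^{\log}_{\cX}$ as a two-term complex $[\cO^{\oplus s}_{\cX} \to \oplus_{\rho\neq\rho_0}\cO_{\cX}(D_\rho)\oplus\cO_{\cX}]$ in degrees $[-1,0]$ (and in the relative case of \Cref{prop:log_tangent_curlyW_A1} it lives in $[-1,1]$). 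So $u^*\mathbb{T}^{\log}_{\cX}$ is a priori also a two-term complex, and your conclusion that $\mathbf{R}\pi_*$ is two-term does not follow directly. The nontrivial step you omit is exactly the one in~\cite[Proposition 4.7]{shafi2024logarithmic} (reproduced in the proof of \Cref{prop : perfectness} here): by the generic non-degeneracy condition, the universal quasimap $u$ lands in the stable locus away from a finite set of points, and on the stable locus the $G$-action is free, so $\mathcal{H}^{-1}(\mathbf{L}u^*\mathbb{T}^{\log}_{\cX})$ is a subsheaf of a locally free sheaf supported on a finite set, hence vanishes. Only then is $\mathbf{L}u^*\mathbb{T}^{\log}_{\cX}$ concentrated in degree 0, which makes $\mathbf{R}\pi_*$ of it live in $[0,1]$ and the dual complex perfect in $[-1,0]$.

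Secondarily, your valuative-criterion argument for properness is heavier than what is needed: the log quasimap space is \emph{defined} as the fibre product of $\cQ_{g,n}(X,\beta)$ (already proper and DM by~\cite{ciocan2014stable}) with $\frM^{\log}_{g,\alpha}(\sA)$ over $\frM_{g,\alpha}(\sA)$, and the forgetful morphism $\epsilon_2:\frM^{\log}_{g,\alpha}(\sA)\to\frM_{g,\alpha}(\sA)$ is representable, finite and proper. Properness and DM-ness follow immediately by base change, without any need to argue directly with degenerating families or tropical modifications.
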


Now we will move to the case of a double point degneration, in the toric setting.

Let $W \rightarrow \AAA^1$ be a toric double point degeneration. Equipping $W$ (resp. $\AAA^1$) with the divisorial logarithmic structure with respect to the central fibre (resp. the origin) makes $W \rightarrow \AAA^1$ into a logarithmically smooth morphism. The Artin fans \cite{abramovich2018birational} of the respective targets are $\sA^2 = [\AAA^2/\GG_m^2]$ and  $\sA = \agm$. Consequently, we have a diagram
\begin{equation}\label{eq:diagram_W_maps_to_curly_A_prime}
    \begin{tikzcd}
W \arrow[r] \arrow[d] & \mathscr{A}^2 \arrow[d] \\
\AAA^1 \arrow[r]        & \mathscr{A}            
\end{tikzcd}
\end{equation}
This diagram is not Cartesian, but $W$ maps to the fibre product $\sA'= \sA^2 \times_{\sA} \AAA^1$, which lives over $\AAA^1$.

As in Notation \ref{notation:degenerations}, $W \subset \cW$, the ambient quotient stack. It follows that $W \rightarrow \sA^2$ factors through $\mathcal{W}$.

\begin{definition}
  Let $C \rightarrow \mathcal{W}$ be a quasimap to $W$ (over $\AAA^1$). Define the \emph{induced map} $C \rightarrow \sA'$ via the composition $$C \rightarrow \mathcal{W} \rightarrow \sA'.$$
\end{definition}

The morphism $\sA' \rightarrow \AAA^1$ is a logarithmic morphism. Write $\frM^{\log}_{g,n}(\sA'/\AAA^1)$ for the moduli space of minimal prestable logarithmic maps to $\sA'/\AAA^1$. 
\begin{definition}\label{logquasimapdef}
Let $g,n$ be non-negative integers, let $\beta$ be an effective curve class on $W$. Define $\cQ^{\log}_{g,n}(W/\AAA^1,\beta)$ as the fibre product of stacks 
\begin{equation*}
\begin{tikzcd}
\cQ^{\log}_{g,n}(W/\AAA^1,\beta) \arrow[d] \arrow[r] & {\frM_{g,n}^{\log}(\mathscr{A}'/\AAA^1)} \arrow[d, "\epsilon_2"] \\
\cQ_{g,n}(W/\AAA^1,\beta) \arrow[r, "\epsilon_1"]  & \frM_{g,n}(\mathscr{A}'/\AAA^1)        
\end{tikzcd}
\end{equation*}
Here $\frM_{g,n}(\sA'/\AAA^1)$ is the stack of maps from $n$-marked, genus $g$ prestable curves to $\sA'$ over $\AAA^1$. The morphism $\epsilon_1$ is given by associating to a stable quasimap the induced map to $\sA'$,
and $\epsilon_2$ is given by forgetting the logarithmic structure.
\end{definition}

\begin{lemma}
   The moduli space $\cQ^{\log}_{g,n}(W/\AAA^1,\beta)$ is a Deligne--Mumford stack and $\cQ^{\log}_{g,n}(W/\AAA^1,\beta) \rightarrow \cQ_{g,n}(W/\AAA^1,\beta)$ is proper.
\end{lemma}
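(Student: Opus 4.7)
The plan is to reduce both assertions to properties of the morphism $\epsilon_2$ via the fibre product in \Cref{logquasimapdef}. By the absolute quasimap theory recalled in Section 2, $\cQ_{g,n}(W/\AAA^1,\beta)$ is already a Deligne--Mumford stack. Hence the fibre product $\cQ^{\log}_{g,n}(W/\AAA^1,\beta)$ will automatically be Deligne--Mumford as soon as $\epsilon_2$ is representable by Deligne--Mumford stacks, and the left vertical projection will be proper as soon as $\epsilon_2$ is proper. So the entire lemma reduces to showing that
\[
  \epsilon_2 \colon \frM^{\log}_{g,n}(\sA'/\AAA^1) \longrightarrow \frM_{g,n}(\sA'/\AAA^1)
\]
is representable and proper.

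For representability, I would argue that $\sA' = \sA^2 \times_{\sA} \AAA^1$ has the structure of a logarithmic Artin stack whose underlying stack is smooth and whose characteristic monoid sheaf is combinatorial, inherited from that of $\sA^2$. Given a prestable curve $C$ together with an underlying map to $\underline{\sA'}$, specifying a minimal logarithmic enhancement amounts to specifying a minimal monoid structure on the tropical data of $C$ compatible with the cones of $\sA'$; this data is rigid and depends only on the combinatorial type. This is the content of the minimality construction of Abramovich--Chen, Chen, and Gross--Siebert applied to the Artin fan setting, and is precisely how the logarithmic quasimap framework of \cite{shafi2024logarithmic} is set up.

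For properness, I would verify the valuative criterion. Given a DVR $R$ with fraction field $K$, an underlying map $C_R \to \underline{\sA'}$ over $\Spec R$, and a minimal log enhancement over $\Spec K$, the tropical combinatorics of the special fibre determine a unique extension of the minimal log structure across the closed point. This is standard for log maps to Artin fans, appearing essentially in \cite{abramovich2014stable,chen2014stable,gross2013logarithmic}, and transfers to the relative setting over $\AAA^1$ without difficulty. Once $\epsilon_2$ is representable and proper, base change along $\epsilon_1$ yields both conclusions of the lemma.

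The main obstacle I anticipate is that $\sA'$ is defined as a genuine fibre product rather than as the Artin fan of $W/\AAA^1$, so one must check that it retains the combinatorial structure required for the standard minimality and properness arguments. This should follow from the fact that the two sides of the fibre product defining $\sA'$ are themselves well-behaved logarithmic stacks over $\AAA^1$, so $\sA'$ inherits a stratification by cones on which the usual minimal log map theory applies verbatim.
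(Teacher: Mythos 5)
Your argument is essentially identical to the paper's: the paper also reduces both claims to the representability and properness of $\epsilon_2$, and then simply cites \cite[2.6-2.8]{shafi2024logarithmic} for those facts, whereas you sketch why they hold via minimality and the valuative criterion. This is the same approach, just with more of the underlying mechanism spelled out.
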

\begin{proof}
    Follows from the fact that $\epsilon_2$ is representable and proper (see ~\cite[2.6-2.8]{shafi2024logarithmic}).
\end{proof}

Next we show that $\cQ^{\log}_{g,n}(W/\AAA^1)$ admits a perfect obstruction theory over $\frM^{\log}_{g,n}(\sA'/\AAA^1)$ following a similar argument to ~\cite[Section 4]{shafi2024logarithmic}. Note that we have a diagram

\begin{equation*}
\begin{tikzcd}
\cC_{\cQ} \arrow[d, "\pi"] \arrow[r] \arrow[rr, "u", bend left] & \cC_{\frM} \arrow[d] \arrow[rr, bend right] & {\mathcal{W}} \arrow[r] & \sA' \arrow[d] \\
\cQ^{\log}_{g,n}(W/\AAA^1,\beta) \arrow[r]                                                   & \frM^{\log}_{g,n}(\sA'/\AAA^1) \arrow[rr]                             &             & \AAA^1        
\end{tikzcd}
\end{equation*}

\begin{lemma}
    There is a morphism in the derived category of $\cQ^{\log}_{g,n}(W/\AAA^1,\beta)$
\begin{equation*}
   \phi: \mathbf{R} \pi_{*}(\mathbf{L}u^* \LL^{\log}_{\mathcal{W}/\AAA^1} \otimes \omega_{\pi}) \rightarrow \LL_{\cQ^{\log}_{g,n}(W/\AAA^1,\beta)/\frM_{g,n}^{\log}(\mathscr{A}'/\AAA^1)}.
\end{equation*}
\end{lemma}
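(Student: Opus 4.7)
The plan is to produce $\phi$ in two stages, mirroring the construction in \cite[Section 4]{shafi2024logarithmic} for the smooth divisor case: first I construct an intermediate morphism on the universal curve $\cC_{\cQ}$, and then I apply $\mathbf{R}\pi_\ast(-\otimes\omega_\pi)$ together with the projection formula and the Grothendieck--Verdier trace to land in $\LL_{\cQ^{\log}/\frM^{\log}}$.

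For the curve-level step, functoriality of the log cotangent complex applied to the sequence $\cC_{\cQ} \xrightarrow{u} \cW \to \AAA^1$ gives a natural morphism $\mathbf{L}u^\ast \LL^{\log}_{\cW/\AAA^1} \to \LL^{\log}_{\cC_{\cQ}/\AAA^1}$, and the transitivity triangle for the factorisation $\cC_{\cQ} \to \cC_{\frM} \to \AAA^1$ provides a further map $\LL^{\log}_{\cC_{\cQ}/\AAA^1} \to \LL^{\log}_{\cC_{\cQ}/\cC_{\frM}}$. By the Cartesian definition of $\cQ^{\log}$ in \Cref{logquasimapdef}, the morphism $\cQ^{\log} \to \frM^{\log}$ is strict (it carries the pullback log structure), and therefore so is the base change $\cC_{\cQ} = \cC_{\frM} \times_{\frM^{\log}} \cQ^{\log} \to \cC_{\frM}$. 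This yields the identifications
$$\LL^{\log}_{\cC_{\cQ}/\cC_{\frM}} \;=\; \LL_{\cC_{\cQ}/\cC_{\frM}} \;\simeq\; \pi^\ast \LL_{\cQ^{\log}/\frM^{\log}},$$
producing a morphism $\mathbf{L}u^\ast \LL^{\log}_{\cW/\AAA^1} \to \pi^\ast \LL_{\cQ^{\log}/\frM^{\log}}$ in $D(\cC_{\cQ})$. Tensoring with $\omega_\pi$ and applying $\mathbf{R}\pi_\ast$, the projection formula then simplifies $\mathbf{R}\pi_\ast(\pi^\ast \LL_{\cQ^{\log}/\frM^{\log}} \otimes \omega_\pi)$ to $\LL_{\cQ^{\log}/\frM^{\log}} \otimes^{\mathbf L} \mathbf{R}\pi_\ast \omega_\pi$, and composition with the Grothendieck--Verdier trace $\mathbf{R}\pi_\ast \omega_\pi[1] \to \cO_{\cQ^{\log}}$ (using the shift conventions of \cite{shafi2024logarithmic}) produces the desired $\phi$.

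The main subtlety lies in the interaction with the log structure on $\cW$. Because the square in \eqref{eq:diagram_W_maps_to_curly_A_prime} is not Cartesian, one cannot identify $\LL^{\log}_{\cW/\AAA^1}$ with a pullback from $\sA'$. The construction above sidesteps this by treating $\cW$ only as a log stack over $\AAA^1$, and routing its interaction with $\frM^{\log}_{g,n}(\sA'/\AAA^1)$ through the induced morphism $\cC_{\cQ} \to \sA'$, which factors through $\cC_{\frM}$ by the definition of $\epsilon_1$ in \Cref{logquasimapdef}. With this factorisation in hand, and the strictness of $\cC_{\cQ} \to \cC_{\frM}$ ensuring the interchange between log and ordinary relative cotangent complexes, the remaining steps are formal.
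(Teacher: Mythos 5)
Your proposal is correct and follows essentially the same route as the paper: both arguments hinge on building a morphism $\mathbf{L}u^*\LL^{\log}_{\cW/\AAA^1}\to\pi^*\LL_{\cQ^{\log}/\frM^{\log}}$ on the universal curve and then applying $\mathbf{R}\pi_*(-\otimes\omega_\pi)$. The only cosmetic difference is where the log-to-ordinary translation happens — the paper invokes \cite[Corollary 5.25]{olsson2003logarithmic} upfront to identify $\LL^{\log}_{\cW/\AAA^1}\cong\LL_{\cW/\sA'}$ and then works with ordinary cotangent complexes over $\sA'$, whereas you stay with log cotangent complexes over $\AAA^1$ and convert at the end via strictness of $\cC_{\cQ}\to\cC_{\frM}$; these two bookkeeping choices lead to the same morphism.
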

\begin{proof}
   We have that $\LL^{\log}_{\mathcal{W}/\AAA^1} \cong \LL_{\mathcal{W}/\sA'}$ by ~\cite[Corollary 5.25]{olsson2003logarithmic} and so there is a morphism $$\mathbf{L}u^*\LL^{\log}_{\mathcal{W}/\AAA^1} \rightarrow \LL_{\cC_{\cQ/\cC_{\frM}}} \cong \mathbf{L}\pi^* \LL_{\cQ/\frM}.$$
    By tensoring with the dualising sheaf and applying $\mathbf{R}\pi_*$ we get 
    \[ 
        \phi: \mathbf{R} \pi_{*}(\mathbf{L}u^* \LL^{\log}_{\mathcal{W}/\AAA^1} \otimes \omega_{\pi}) \rightarrow \LL_{\cQ/\frM}. \qedhere
    \]
\end{proof}

\begin{proposition}\label{prop : perfectness}
    The morphism $\phi$ defines an obstruction theory and $\mathbf{R} \pi_{*}(\mathbf{L}u^* \LL^{\log}_{\mathcal{W}/\AAA^1} \otimes \omega_{\pi})$ is of perfect amplitude contained in $[-1,0]$.
\end{proposition}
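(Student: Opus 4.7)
My approach will follow closely the proof of the analogous perfectness statement in \cite[Section 4]{shafi2024logarithmic}, with modifications to account for the relative setting over $\AAA^1$ arising from the double point degeneration. The statement breaks naturally into two independent parts: establishing the perfect amplitude $[-1,0]$ of the complex $\mathbf{R}\pi_{*}(\mathbf{L}u^* \LL^{\log}_{\mathcal{W}/\AAA^1} \otimes \omega_{\pi})$, and verifying that $\phi$ is an obstruction theory in the sense of Behrend-Fantechi.

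For the perfectness, I would first invoke the identification $\LL^{\log}_{\mathcal{W}/\AAA^1}\cong \LL_{\mathcal{W}/\sA'}$ already used in the previous lemma, which reduces the question to one about an ordinary cotangent complex. I would then argue that the morphism $\mathcal{W} \to \sA'$ is smooth: both $\mathcal{W} = [\AAA^M/\GG_m^s]$ and $\sA' = \sA^2 \times_\sA \AAA^1$ are smooth Artin stacks, and the morphism is induced by the toric GIT presentation of $W$ through the distinguished rays $\rho_1,\rho_2$ of \Cref{notation:degenerations}. It follows that $\LL_{\mathcal{W}/\sA'}$ is a locally free sheaf concentrated in degree zero, hence $\mathbf{L}u^*\LL^{\log}_{\mathcal{W}/\AAA^1}$ is a vector bundle on $\cC_\cQ$. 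Since $\pi$ is a flat proper family of prestable nodal curves, the derived pushforward of a vector bundle twisted by the relative dualising sheaf has perfect amplitude in $[-1,0]$ by the standard two-term resolution argument on a family of at most one-dimensional curves.

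For the obstruction-theory property, I would first treat the non-logarithmic case $\cQ_{g,n}(W/\AAA^1,\beta) \to \frM_{g,n}(\sA'/\AAA^1)$. The quasimap stack is an open substack of $\mathfrak{Hom}_{\frM_{g,n}(\sA'/\AAA^1)}(\cC_\frM,\mathcal{W})$ cut out by the non-degeneracy and stability conditions of \Cref{absquasimapdef}, and the machinery of \cite{behrend1997intrinsic} applied to the smooth morphism $\mathcal{W}\to \sA'$ furnishes $\phi$ as a relative obstruction theory. I would then upgrade to the logarithmic version by using the defining Cartesian square of \Cref{logquasimapdef}: because $\epsilon_2$ is representable and strict, the obstruction theory pulls back cleanly to $\cQ^{\log}_{g,n}(W/\AAA^1,\beta)$ over $\frM^{\log}_{g,n}(\sA'/\AAA^1)$ along the base change.

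The main obstacle I anticipate is the verification that $\mathcal{W}\to \sA'$ is smooth of the expected relative dimension, which requires one to correctly interpret the fibre product $\sA' = \sA^2\times_\sA \AAA^1$ against the toric combinatorics of \Cref{fig:fan_double_point_degeneration}; in particular, one must check that the non-Cartesian diagram \eqref{eq:diagram_W_maps_to_curly_A_prime} factors through $\sA'$ via a smooth morphism rather than merely a log-smooth one. Once this is in place, the remainder is a formal adaptation of the argument from \cite[Section 4]{shafi2024logarithmic}, with the Artin stack $\mathcal{W}$ playing the role of the ambient stack $\cX$ there and $\sA'$ replacing $\sA$.
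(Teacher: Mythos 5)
Your proposal correctly identifies the two components of the argument (obstruction theory property and perfect amplitude), correctly uses the identification $\LL^{\log}_{\cW/\AAA^1}\cong\LL_{\cW/\sA'}$, and the approach to obstruction-theory-ness via the non-logarithmic space followed by strict base change along $\epsilon_2$ is reasonable and close in spirit to the paper's citation of \cite[Proposition 4.5]{shafi2024logarithmic}.

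However, there is a genuine gap in the perfect amplitude argument. You assert that since $\cW\to\sA'$ is a smooth morphism of smooth Artin stacks, $\LL_{\cW/\sA'}$ is a locally free sheaf concentrated in degree zero, and hence $\mathbf{L}u^*\LL^{\log}_{\cW/\AAA^1}$ is a vector bundle on the universal curve. This is false. The morphism $\cW=[\AAA^M/\GG_m^s]\to\sA'$ is not representable for $s\geq 2$ (the kernel of the induced group homomorphism $\GG_m^s\to\GG_m^2$ is nontrivial), so even if it is smooth, its cotangent complex has a contribution in degree $+1$ coming from relative automorphisms. Indeed, \Cref{prop:log_tangent_curlyW_A1} exhibits $\TT^{\log}_{\cW/\AAA^1}$ explicitly as a three-term complex supported in $[-1,1]$, whose $H^{-1}$ is the sheaf of relative infinitesimal automorphisms. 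Taking $\mathbf{R}\pi_*$ of a complex with cohomology in both degrees $-1$ and $0$ on a one-dimensional family yields output in $[-1,1]$, not $[-1,0]$, so the two-term resolution argument does not close.

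The missing idea, which is the content of \Cref{cor: no neg coh} and the heart of the paper's proof, is that the pullback $\mathbf{L}u^*\TT^{\log}_{\cW/\AAA^1}$ along the universal quasimap has cohomology concentrated in degree $0$, not merely in $[-1,0]$. This is a consequence of the generic non-degeneracy condition in \Cref{absquasimapdef}: the quasimap lands in the stable locus $Z^s$ away from a finite set of points, and on $Z^s$ the group acts freely, so $H^{-1}$ of the tangent complex vanishes there; since $H^{-1}(\mathbf{L}u^*\TT^{\log}_{\cW/\AAA^1})$ is a subsheaf of a locally free sheaf on a family of curves and vanishes on a dense open, it vanishes everywhere. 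Only after this vanishing is established can one conclude that $\mathbf{R}\pi_*$ of the (now degree-zero-supported) complex has amplitude $[0,1]$, giving the claimed $[-1,0]$ on the dual. Your proposal bypasses this step entirely, and the shortcut via smoothness of $\cW\to\sA'$ does not substitute for it.
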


\begin{proof}
    The proof follows the argument of ~\cite[Proposition 4.5]{shafi2024logarithmic}. The only change to the argument is that we are working over the base $\AAA^1$ and so ~\cite[Theorem 5.9]{olsson2003logarithmic} instead gets applied to the following diagram
\begin{equation*}
    \begin{tikzcd}
\cC_T \arrow[rd] \arrow[rdd] \arrow[r] & \cC_{\bar{T}} \arrow[rd, dashed] \arrow[rdd] &                              \\
                                       & \cW \times T \arrow[d] \arrow[r]             & \cW \times \bar{T} \arrow[d] \\
                                       & \AAA^1 \times T \arrow[r]                    & \AAA^1 \times \bar{T}       
\end{tikzcd}
\end{equation*}
Let $\mathbb{T}^{\log}_{\cW/\AAA^1}$ denote the derived dual of $\LL_{\cW/\AAA^1}^{\log}$.

To prove that $\phi$ defines a perfect obstruction theory we use an argument similar to \cite[Proposition 4.7] {shafi2024logarithmic}. More precisely, by \Cref{cor: no neg coh} and generic non-degeneracy on the space of quasimaps, we have that the complex $\mathbf{L}u^*\mathbb{T}^{\log}_{\cW/\AAA^1}$ has cohomology supported in $0$. This shows that the complex $\mathbf{R}\pi_*\mathbf{L}u^*\mathbb{T}^{\log}_{\cW/\AAA^1} \cong (\mathbf{R} \pi_{*}(\mathbf{L}u^* \LL^{\log}_{\mathcal{W}/\AAA^1}\otimes \omega_{\pi}))^{\vee}$ is of perfect amplitude in $[0,1]$.
\end{proof}

\Cref{cor: no neg coh} will follow from \Cref{prop:log_tangent_curlyW_A1} below.

\begin{proposition}\label{prop:log_tangent_curlyW_A1} 
Let $W\to\AAA^1$ be a toric double point degeneration and let $\cW=[\AAA^M/\GG_m^s]$ be the ambient stack. Let $\{D_{\rho_i}\}_{i=1}^M$ denote the toric divisors on $W$ with $X_1 = D_{\rho_1}$ and $X_2 = D_{\rho_2}$.
Then the logarithmic tangent complex of $\cW$ relative to $\AAA^1$ is quasi-isomorphic to the following complex supported in $[-1,1]$ 
\begin{equation}
\TT_{\cW/\AAA^1}^{\log} = 
[
\begin{tikzcd}
\OO_{\cW}^{\oplus s} \arrow{r}{f_1} &\oplus_{\rho\neq \rho_1,\rho_2\in\Sigma_W(1)}\cO_{\cW} (D_{\rho})  \oplus_{i=1}^{2} \cO_{\cW} \arrow{r}{f_2} & \cO_{\cW}
\end{tikzcd}
],
\end{equation}
where $f_2$ maps the generator of each $\OO_{\cW} (D_{\rho})$ to 0 and is the identity on each of the trivial factors. 
\end{proposition}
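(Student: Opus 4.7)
The plan is to identify $\TT^{\log}_{\cW/\AAA^1}$ as the homotopy fibre of the log derivative map, via the exact triangle
\[
\TT_{\cW/\AAA^1}^{\log} \to \TT_{\cW}^{\log} \to f^*\TT_{\AAA^1}^{\log} \xrightarrow{[1]},
\]
where $f:\cW\to\AAA^1$ is the composition $\cW\to W \to \AAA^1$ (well defined because $W$ lives in one fibre of $\AAA^1$). Both endpoints are easy to describe, and the essential content is identifying the map between them so as to compute the fibre.

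For the right-hand endpoint, $\AAA^1$ carries the divisorial log structure at the origin, so $f^*\TT^{\log}_{\AAA^1}\cong \cO_\cW$ is concentrated in degree $0$, generated by the pullback of $t\partial_t$. For $\TT^{\log}_{\cW}$, I use the presentation $\cW=[\AAA^M/\GG_m^s]$ and the fact that the log structure on $\cW$, induced from $W$ relative to the central fibre, is divisorial at $D_{\rho_1}+D_{\rho_2}$. The log tangent of the ambient $\AAA^M$ is then freely generated over $\cO_{\AAA^M}$ by $\partial_{x_\rho}$ for $\rho\neq \rho_1,\rho_2$ and by $x_{\rho_i}\partial_{x_{\rho_i}}$ for $i=1,2$. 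A $\GG_m^s$-weight calculation shows that $\partial_{x_\rho}$ transforms with weight $-\chi_\rho$ (dual to $x_\rho$), so that the equivariant line bundle $\cO_{\AAA^M}\cdot\partial_{x_\rho}$ descends to $\cO_\cW(D_\rho)$, while $x_{\rho_i}\partial_{x_{\rho_i}}$ has weight zero and descends to $\cO_\cW$. Combining with the quotient-stack presentation
\[
\TT^{\log}_{\cW} \;=\; \bigl[\mathfrak{g}\otimes\cO_\cW \xrightarrow{\text{action}} \TT^{\log}_{\AAA^M}\bigr]
\]
in degrees $[-1,0]$ yields the first two terms of the proposition; the differential $f_1$ sends the $i$-th basis vector of $\cO_\cW^s$ to the fundamental vector field $\sum_\rho \chi_i(\rho)\, x_\rho\partial_{x_\rho}$, whose decomposition in the above basis has $\chi_i(\rho)\,x_\rho\in\cO_\cW(D_\rho)$ and $\chi_i(\rho_i)\in \cO_\cW$ as components.

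The final step is to compute the log derivative $\TT^{\log}_{\cW}\to f^*\TT^{\log}_{\AAA^1}$. Because $W\to \AAA^1$ is a toric double point degeneration with the two distinguished rays $\rho_1,\rho_2$ mapping to the generator of the fan of $\AAA^1$, the coordinate $t$ pulls back to the monomial $x_{\rho_1}x_{\rho_2}$; the $\GG_m^s$-invariance of this monomial is the identity $\chi_{\rho_1}+\chi_{\rho_2}=0$. One then checks $x_{\rho_i}\partial_{x_{\rho_i}}(t)=t$ and $\partial_{x_\rho}(t)=0$ for $\rho\neq\rho_1,\rho_2$, which is exactly the map $f_2$ of the statement: zero on the $\cO_\cW(D_\rho)$ summands and the identity on the two trivial summands. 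Taking the shifted cone of this map of two-term complexes produces a three-term complex in degrees $[-1,0,1]$ of the required shape, and the identity $f_2\circ f_1=\chi_{\rho_1}+\chi_{\rho_2}=0$ confirms that it really is a complex. The main obstacle is purely bookkeeping: one must track the $\GG_m^s$-weights carefully enough to see that $\cO\cdot\partial_{x_\rho}$ descends to $\cO_\cW(+D_\rho)$ rather than $\cO_\cW(-D_\rho)$, which ultimately rests on the convention that $\cO_\cW(D_\rho)$ is the line bundle for which $x_\rho$ is a global section.
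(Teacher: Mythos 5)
Your proof is correct and follows essentially the same route as the paper: the same exact triangle $\TT^{\log}_{\cW/\AAA^1}\to\TT^{\log}_{\cW}\to f^*\TT^{\log}_{\AAA^1}$ (the paper phrases this as $\TT_{\cW/\sA'}\to\TT_{\cW/\sA^2}\to\epsilon^*\TT_{\sA'/\sA^2}$ via the Artin fans), the same Euler-sequence presentation of the middle term coming from the quotient-stack description $\cW=[\AAA^M/\GG_m^s]$, and the same cone computation with $f_2\circ f_1=0$ coming from the invariance of $t=x_{\rho_1}x_{\rho_2}$, i.e.\ $\chi_{\rho_1}+\chi_{\rho_2}=0$. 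One small wobble worth tightening: you first assert $\partial_{x_\rho}$ has weight $-\chi_\rho$ and immediately conclude $\cO\cdot\partial_{x_\rho}$ descends to $\cO_\cW(+D_\rho)$, then flag in the closing sentence that this sign is exactly the thing one must check carefully; the resolution is that the equivariant structure the line $\cO\cdot\partial_{x_\rho}\subset T_{\AAA^M}$ inherits from pushforward of vector fields has fibre weight $+\chi_\rho$ (even though $\partial_{x_\rho}$ as an operator lowers degree by $\chi_\rho$), which is what makes it descend to $\cO_\cW(D_\rho)$, matching the explicit weight-matrix computation in the paper.
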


\begin{proof} 
Recall that we have a diagram
\begin{equation}\label{eq:diagram_log_tangent_W}
    \begin{tikzcd}
\cW \arrow[r,"\epsilon"]&\mathscr{A} '\arrow[r] \arrow{d}{\alpha} & \mathscr{A}^2 \arrow[d] \\
&\AAA^1 \arrow[r]        & \mathscr{A}            
\end{tikzcd}
\end{equation}
The logarithmic tangent complex of $\cW$ is equivalently, the relative tangent complex $\TT_{\cW/\sA'}$. The top row of diagram \eqref{eq:diagram_log_tangent_W} induces an exact triangle
\[
    \TT_{\cW/\sA'}\to \TT_{\cW/\sA^2} \to \mathbf{L}\epsilon^*\TT_{\sA'/\sA^2} \to .
\]
Since $\sA^2\to \sA$ is flat, $\TT_{\sA'/\sA^2} = \alpha^\ast \TT_{\AAA^1/\sA}$ is quasi-isomorphic to $[\cO]$.

On the other hand, we can compute $\TT_{\cW/\sA^2}$ from the exact triangle
\[
    \TT_{\cW/\sA^2}\to \TT_{\cW} \to \TT_{\sA^2}.
\]
The complex $\TT_{\cW}$ is quasi-isomorphic to
\begin{equation}\label{eq:tangent_complex_curly_W}
    [\OO^{\oplus s} \to \oplus_{\rho\in\Sigma_W(1)} \OO(D_{\rho})]
\end{equation}
induced by the weight matrix of the action of $\GG_m^s$ on $\AAA^M$, which can be made explicit as follows. Using that $W$ is a toric double point degeneration with special fibre $D_{\rho_1}+D_{\rho_2}$, we can find a basis of $\Pic(W)$ of the form $\{D_{\rho_1},D_{\rho_{i_2}}, \ldots, D_{\rho_{i_s}}\}$ with $i_j\notin \{1,2\}$. This choice of basis for $\Pic(W)$ induces a choice of generators $\{\lambda_1, \ldots, \lambda_s\}$ of $\Hom(\Pic(W),\ZZ)\otimes_\ZZ \CC^*\simeq \GG_m^s$.  

With this choice of generators, the action has expression
\[
    \lambda_i\cdot (x_1,\ldots, x_M) = (\lambda_i^{a_{i,1}} x_1, \ldots, \lambda_i^{a_{i,M}} x_M)
\]
such that the weight matrix $(a_{i,j})_{1\leq i\leq s, 1\leq j\leq M}$ for the action of $\GG_m^s$ in this basis satisfies that $a_{1,1} = 1$, $a_{1,2} = -1$ and $a_{i,1} = a_{i,2} = 0$ for each $2\leq i\leq s$. Then the morphism in \cref{eq:tangent_complex_curly_W} is given by 
\[
    1_i\mapsto (y_{i,j})_{j=1}^M \text{ with } y_{i,j} =  \partial_{\lambda_i} (\lambda_i^{a_{i,j}} x_j)\mid_{\lambda_i = 1} = \begin{cases}
        0 & \text{if } a_{i,j} = 0\\
        a_{i,j} x_j & \text{if } a_{i,j} \neq 0
    \end{cases}
\]
Using this presentation, we get that $\TT_{\cW/\sA^2}$ is quasi-isomorphic to the complex
\[
    [\cO^{\oplus s} \to \oplus_{\rho\neq\rho_1,\rho_2}\cO(D_{\rho})\oplus\cO^{\oplus 2}]
\]
with morphism
\begin{equation}\label{eq:tangent_complex_curly_W_relative_curly_A_2}
    1_i \mapsto \begin{cases}
        (-y_{1,j})_{j=1}^M \oplus (-1,1)& \text{if } i= 1\\
        (-y_{i,j})_{j=1}^M \oplus (0,0)& \text{if } i\neq 1
    \end{cases}
\end{equation}

The result is obtained by computing the cone of $\TT_{\cW/\sA^2} \to \TT_{\sA'/\sA^2}$.
\end{proof}
\begin{corollary}\label{cor: no neg coh}
   The complex $\mathbf{L}u^*\mathbb{T}^{\log}_{\cW/\AAA^1}$ is a perfect complex in $[-1,0]$ and it has cohomology supported in degree $0$. 
\end{corollary}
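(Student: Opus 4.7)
The plan is to read off the conclusion directly from the explicit three-term model provided by \Cref{prop:log_tangent_curlyW_A1}, namely
$$\TT^{\log}_{\cW/\AAA^1} \simeq \Bigl[\, \cO_{\cW}^{\oplus s} \xrightarrow{f_1} \bigoplus_{\rho\neq \rho_1,\rho_2} \cO_{\cW}(D_\rho) \oplus \cO_{\cW}^{\oplus 2} \xrightarrow{f_2} \cO_{\cW} \,\Bigr]$$
in degrees $[-1,1]$, and then use non-degeneracy of the quasimap to kill the $H^{-1}$ contribution.

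The first step is to truncate the three-term complex. Since $f_2$ restricts to the identity on each of the two trivial summands $\cO_{\cW}$ of the middle term, the inclusion of (either) trivial factor splits $f_2$. Hence $f_2$ is surjective and $\ker(f_2)$ is locally free, being a direct summand of $E^0$. The differential $f_1$ factors through $\ker(f_2)$ (it is a complex), yielding a quasi-isomorphism
$$\TT^{\log}_{\cW/\AAA^1} \simeq \bigl[\,\cO_{\cW}^{\oplus s} \xrightarrow{\bar f_1} \ker(f_2)\,\bigr]$$
concentrated in degrees $[-1,0]$ and made of locally free sheaves. Consequently $\mathbf{L}u^\ast$ is computed termwise, giving a perfect two-term model for $\mathbf{L}u^\ast \TT^{\log}_{\cW/\AAA^1}$ in $[-1,0]$; in particular $H^1=0$.

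To upgrade this to cohomology supported only in degree $0$, I would show that $u^\ast \bar f_1$ is injective. Composing with $\ker(f_2)\hookrightarrow E^0$, it suffices to prove injectivity of $u^\ast f_1$. Reading the proof of \Cref{prop:log_tangent_curlyW_A1}, the map $f_1$ into the $\bigoplus \cO_\cW(D_\rho)$-part is precisely the infinitesimal $\GG_m^s$-action on $\AAA^M$ in terms of the standard coordinates, which is injective at any point with trivial stabilizer. By \Cref{assumptions:absolute_quasimaps}, $G=\GG_m^s$ acts freely on the stable locus $Z^s$, so $f_1$ is injective at every geometric point of $[Z^s/G]$. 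The non-degeneracy condition in \Cref{absquasimapdef} then ensures that $u^\ast f_1$ is injective at all but finitely many points of $C$.

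The main subtle point is turning generic injectivity into actual injectivity of the sheaf map. For this I would invoke that $C$ is a reduced (possibly nodal) prestable curve and that the source $\cO_C^{\oplus s}$ is locally free: any sheaf map from a locally free sheaf on a reduced Noetherian scheme has torsion-free kernel, and a torsion-free subsheaf vanishing on a dense open must be zero. This gives $H^{-1}(\mathbf{L}u^\ast\TT^{\log}_{\cW/\AAA^1})=0$, which together with the perfectness obtained above yields the corollary. The only place where a careful check is really needed is matching the $\bigoplus \cO_\cW(D_\rho)$-component of $f_1$ with the infinitesimal action, so as to legitimately apply the freeness in \Cref{assumptions:absolute_quasimaps}; this follows straight from the explicit formulas \eqref{eq:tangent_complex_curly_W} and \eqref{eq:tangent_complex_curly_W_relative_curly_A_2}.
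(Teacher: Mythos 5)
Your proposal is correct and follows exactly the route the paper intends: the authors do not write out a proof of the corollary, but state that it follows from Proposition \ref{prop:log_tangent_curlyW_A1}, and in the proof of Proposition \ref{prop : perfectness} they invoke generic non-degeneracy of quasimaps to kill $H^{-1}$ — the two ingredients you combine. The splitting of $f_2$ via the trivial summands, the reduction to a two-term locally free complex in $[-1,0]$, and the dense-open-plus-torsion-free argument on the reduced curve are all as they should be.

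One wrinkle worth ironing out. You write that ``the map $f_1$ into the $\bigoplus \cO_\cW(D_\rho)$-part is precisely the infinitesimal $\GG_m^s$-action on $\AAA^M$,'' but that projection of $f_1$ only sees the coordinates $x_\rho$ with $\rho\neq\rho_1,\rho_2$, and on its own it need not be injective at a point with trivial $\GG_m^s$-stabiliser (the vector field $\partial_{\lambda_1}$ acts nontrivially only on $x_1,x_2$, which are discarded here). To genuinely conclude injectivity of $f_1$ on $[Z^s/G]$ one must also use the $\cO_\cW^{\oplus 2}$ component, which by equation \eqref{eq:tangent_complex_curly_W_relative_curly_A_2} sends the first basis vector to $(-1,1)$ and the others to $(0,0)$: if $(c_1,\dots,c_s)\in\ker f_1$ at a stable point, the trivial-factor component forces $c_1=0$, and then the $\bigoplus\cO(D_\rho)$-component is the infinitesimal action of the subtorus spanned by $\lambda_2,\dots,\lambda_s$, which is injective because the full $\GG_m^s$-action is free. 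With that correction in place, your dense-open argument and the torsion-free kernel observation finish the proof exactly as you describe.
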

\begin{remark}\label{remark: vfc central fibre}
    The moduli space $\cQ^{\log}_{g,n}(W/\AAA^1,\beta)$ lives over $\AAA^1$. We denote the central fibre by $\cQ^{\log}_{g,n}(X_0,\beta)$, where $X_0$ is the central fibre of $W$. Let $i_0 : 0 \hookrightarrow \AAA^1$ be the natural embedding. There is an induced pullback perfect obstruction theory on $\cQ^{\log}_{g,n}(X_0,\beta)$ where the associated virtual class coincides with $i_0^![\cQ^{\log}_{g,n}(W/\AAA^1,\beta)]^{\vir}$, by functoriality of virtual pullbacks ~\cite[Theorem 4.8]{manolache12virtual}. 
\end{remark}

\subsection{The Ciocan-Fontanine--Kim obstruction theory}\label{cfk obs}

In the following we define an analogue of the obstruction theory given in ~\cite[Section 5.3]{ciocan2010moduli} for the situation of \Cref{quasimaps obs} as well as of \Cref{prop : perfectness}. 

\begin{definition}\label{def:Pic_curly_A}
   Let $g,n, d$ be non-negative integers and let $\alpha\in\mathbb{N}^n$. Let $\frPic_{g,\alpha,d}^{\log}(\sA)$ be defined to be the fibre product
\begin{equation*}
\begin{tikzcd}
\frPic_{g,\alpha,d}^{\log}(\sA) \arrow[d] \arrow[r]                   & {\frPic_{g,n,d}} \arrow[d] \\
{\frM_{g,\alpha}^{\log}(\sA)} \arrow[r] & {\frM_{g,n}}            
\end{tikzcd}
\end{equation*}
where $\frPic_{g,n,d}$ is the Picard stack over $\frM_{g,n}$ of degree $d$ line bundles. 
\end{definition}

\begin{definition}\label{def:basis_CFK_obs_th}
Let $X$ be a smooth projective toric variety with GIT presentation $\AAA^M\GIT\GG_m^s$ induced by the fan $\Sigma$. Let $D_{i}$ denote the divisor induced by the character of $\GG_m^s$ given by the $i^{\mathrm{th}}$ projection and let $d_i = \beta\cdot D_{i}$. 

We define
\begin{equation}\label{eq:log_Pic_fibre_product}
\frPic^{\log,s}_{g,\alpha,\underline{d}}(\sA):=\frPic^{\log}_{g,n,d_1}(\sA)\times_{\frM^{\log}_{g,n}(\sA)}\ldots\times_{\frM^{\log}_{g,n}(\sA)}\frPic^{\log}_{g,n,d_s}(\sA).
\end{equation}
Let $q:\frPic^{\log,s}_{g,\alpha,\underline{d}}(\sA)\to \frM^{\log}_{g,\alpha}(\sA)$ denote the natural projection.
\end{definition}

\begin{proposition}\label{prop:obstruction_theory_X_over_Pic}
Let $\rho_{0} \in \Sigma(1)$, and let $D_{\rho_{0}}$ denote the associated toric divisor. For $\rho \in \Sigma(1)$ let $\cO_{\cX}(D_{\rho})$ denote the associated line bundle on $\cX = [\AAA^M/\GG_m^s]$. Then 
\[
\EE_{X|D_{\rho_0}}:= \oplus_{\rho \neq \rho_{0} \in \Sigma(1)}\mathbf{R}\pi_*u^*\cO_{\cX}(D_{\rho})\oplus \mathbf{R}\pi_*\cO_{\cC}
\]
is a dual obstruction theory for $\nu:\cQ^{\log}_{g,\alpha}(X|D_{\rho_0},\beta)\to \frPic^{\log,s}_{g,\alpha,\underline{d}}(\sA)$. Moreover, this obstruction theory induces the same class as the one in \Cref{quasimaps obs}.
\end{proposition}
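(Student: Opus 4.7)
The plan is to mirror the CFK-style argument of \cite[Section 5.3]{ciocan2010moduli} for absolute quasimaps, adapted to the logarithmic setting using the techniques developed in the proofs of \Cref{prop : perfectness} and \Cref{prop:log_tangent_curlyW_A1}. The key is to identify an auxiliary ``log-Picard'' target stack $\cP$ together with a morphism $\cX \to \cP$ such that (i) the induced moduli of prestable log maps to $\cP$ over $\sA$ recovers $\frPic^{\log,s}_{g,\alpha,\underline{d}}(\sA)$ and the factorisation of $\nu$ through $\frM^{\log}_{g,\alpha}(\sA)$, and (ii) the relative log tangent complex $\TT^{\log}_{\cX/\cP}$ is concentrated in degree $0$ with the required explicit form.

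First I would take $\cP = B\GG_m^s \times \sA$, with $\sA = [\AAA^1/\GG_m]$ carrying its standard log structure and $B\GG_m^s$ treated as a strict log stack (with log structure pulled back from $\sA$ via the projection). The morphism $f: \cX \to \cP$ is obtained at the level of quotient presentations by projecting $\AAA^M \to \AAA^1$ onto the $\rho_0$-coordinate (coupled with the identity on groups); it is strict over $\sA$. The construction of \Cref{def:Pic_curly_A,def:basis_CFK_obs_th} then identifies the moduli of prestable log maps to $\cP$ with $\frPic^{\log,s}_{g,\alpha,\underline{d}}(\sA)$ and provides the desired factorisation of $\nu$.

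Next I would compute $\TT^{\log}_{\cX/\cP}$ from the exact triangle
\[
\TT^{\log}_{\cX/\cP} \to \TT^{\log}_{\cX/\sA} \to f^* \TT^{\log}_{\cP/\sA}.
\]
Since $\cP/\sA = B\GG_m^s$, its log relative tangent is $\cO^{\oplus s}[1]$. Combining this with the smooth-divisor analogue of \Cref{prop:log_tangent_curlyW_A1}, which yields $\TT^{\log}_{\cX/\sA} \simeq [\cO_{\cX}^{\oplus s} \to \oplus_{\rho \neq \rho_0} \cO_{\cX}(D_\rho) \oplus \cO_{\cX}]$ in degrees $[-1,0]$, the triangle gives
\[
\TT^{\log}_{\cX/\cP} \simeq \bigoplus_{\rho \neq \rho_0} \cO_{\cX}(D_\rho) \oplus \cO_{\cX}
\]
concentrated in degree $0$; the extra trivial $\cO_{\cX}$ factor (in place of $\cO_{\cX}(D_{\rho_0})$) arises because the log structure trivialises the $\rho_0$ direction, exactly as the trivial $\cO_{\cW}^{\oplus 2}$ factors appear for the two special rays in \Cref{prop:log_tangent_curlyW_A1}. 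Applying $\mathbf{R}\pi_* \mathbf{L}u^*$ as in \Cref{prop : perfectness} then yields a morphism
\[
\phi : \mathbf{R}\pi_*\bigl(\mathbf{L}u^* \LL^{\log}_{\cX/\cP} \otimes \omega_\pi\bigr) \to \LL_{\cQ^{\log}_{g,\alpha}(X|D_{\rho_0},\beta) / \frPic^{\log,s}_{g,\alpha,\underline{d}}(\sA)},
\]
whose dual is exactly $\EE_{X|D_{\rho_0}}$. That $\phi$ is a perfect obstruction theory follows by repeating the Olsson-style deformation-theoretic argument of \Cref{prop : perfectness}; perfect amplitude in $[-1,0]$ is immediate since $\TT^{\log}_{\cX/\cP}$ is locally free and $\pi$ is a relative curve.

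Finally, for the equality of virtual classes induced by $\EE_{X|D_{\rho_0}}$ and the Shafi obstruction theory of \Cref{quasimaps obs}, I would invoke functoriality of virtual pullback \cite[Theorem 4.8]{manolache12virtual}. The pushforward of the triangle in Step~2 supplies a compatibility triangle between $\EE_{X|D_{\rho_0}}$ (relative to $\frPic^{\log,s}_{g,\alpha,\underline{d}}(\sA)$) and the Shafi obstruction theory (relative to $\frM^{\log}_{g,\alpha}(\sA)$), whose third term is the (shifted) relative tangent of the smooth morphism $q : \frPic^{\log,s}_{g,\alpha,\underline{d}}(\sA) \to \frM^{\log}_{g,\alpha}(\sA)$. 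Virtual-pullback functoriality then gives the equality of induced virtual classes on $\cQ^{\log}_{g,\alpha}(X|D_{\rho_0},\beta)$. The main obstacle is Step~2: carefully verifying that the $\rho_0$ direction contributes a trivial $\cO_{\cX}$ summand rather than $\cO_{\cX}(D_{\rho_0})$ to $\TT^{\log}_{\cX/\cP}$. This requires precise bookkeeping of the log structures on $\cX$, $\sA$ and $\cP$, and of the identification $\TT^{\log}_{\cX/\sA} = \TT_{\cX/\sA}$ coming from strictness.
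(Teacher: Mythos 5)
Your proposal is correct and arrives at the same obstruction theory, but it is organised around a genuinely different pivot than the paper's proof. The paper works entirely with the logarithmic Euler sequence triangle for $\TT^{\log}_{\cX}$ (from \cite[Equation 13]{shafi2024logarithmic}), pushes forward along $\pi$, assembles a morphism of distinguished triangles between the tangent triangle of $\cQ^{\log} \to \frPic^{\log,s} \to \frM^{\log}$ and the pushed-forward Euler triangle, and then deduces that $\psi$ is an obstruction theory via the four lemma; the perfect amplitude is inherited from the known obstruction theory on the top row. You instead introduce an explicit auxiliary target $\cP = B\GG_m^s \times \sA$ realising $\frPic^{\log,s}_{g,\alpha,\underline{d}}(\sA)$ as moduli of log maps to $\cP$, and compute $\TT^{\log}_{\cX/\cP}$ from the triangle of $\cX \to \cP \to \sA$ to be a locally free sheaf in degree $0$; the obstruction theory is then $\mathbf{R}\pi_*\mathbf{L}u^*\TT^{\log}_{\cX/\cP}$, and you verify it by repeating the Olsson deformation-theoretic argument of \Cref{prop : perfectness} rather than by the four lemma. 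Computationally the two arguments are the same—your triangle for $\cX \to \cP \to \sA$ is exactly the rotation of the paper's Euler sequence triangle—but your repackaging makes it transparent why the relative obstruction theory is automatically in amplitude $[-1,0]$ (it is a pushforward of a sheaf), at the cost of two extra verifications: that maps to $\cP$ really recover $\frPic^{\log,s}_{g,\alpha,\underline{d}}(\sA)$ with the correct factorisation of $\nu$ and degree bookkeeping, and that the Olsson argument applies to the strict smooth morphism $\cX \to \cP$. Neither verification poses a real difficulty, but the paper sidesteps them by working directly with the fibre-product definition of $\frPic^{\log,s}$ and the already-established obstruction theory of \Cref{quasimaps obs}. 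The final step—functoriality of virtual pullbacks along $\cQ^{\log} \to \frPic^{\log,s} \to \frM^{\log}$—is identical in both proofs.
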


\begin{proof} The logarithmic Euler sequence ~\cite[Equation 13]{shafi2024logarithmic} induces a distinguished triangle
\[
 \cO_{\cX}^{\oplus s}\to
\oplus_{\rho \neq \rho_{0}}\cO_{\cX} (D_{\rho}) \oplus \cO_{\cX}\to 
\mathbb{T}_{\cX}^{\log}\to \cO_{\cX}^{\oplus s}[1].
\]
Taking the pullback via the universal map and then pushing down to the moduli space gives
\[
\mathbf{R}\pi_*u^*\cO_{\cX}^{\oplus s}\to \oplus_{\rho \neq \rho_{0}}\mathbf{R}\pi_*u^*\cO_{\cX}(D_{\rho})\oplus \mathbf{R}\pi_*\cO_{\cC}
 \to 
\mathbf{R}\pi_*\mathbf{L}u^*\mathbb{T}_{\cX}^{\log}.
\]
We see that we have a commutative diagram 
\[\xymatrix{
\mathbb{T}_{\cQ^{\log}_{g,\alpha}(X|D_{\rho_{0}},\beta)/\frM^{\log}_{g,\alpha}(\sA)}\ar[r]\ar[d]&\nu^*\mathbb{T}_{\frPic^{\log,s}_{g,\alpha, \underline{d}}(\sA)/\frM^{\log}_{g,n}(\sA)}\ar[d]\\
\mathbf{R}\pi_*\mathbf{L}u^*\mathbb{T}_{\cX}^{\log}\ar[r]&\mathbf{R}\pi_*\cO_{\cC}^{\oplus s}[1]
}
\]
This induces a morphism of distinguished triangles
\[\xymatrix{
\mathbb{T}_{\cQ^{\log}_{g,\alpha}(X|D_{\rho_0},\beta)/\frM^{\log}_{g,\alpha}(\sA)}\ar[r]\ar[d]&\nu^*\mathbb{T}_{\frPic^{\log,s}_{g,\alpha,\underline{d}}(\sA)/\frM^{\log,s}_{g,\alpha}(\sA)}\ar[d]\ar[r]\ar[d]&\mathbb{T}_{\cQ^{\log}_{g,\alpha}(X|D_{\rho_0},\beta)/\frPic^{\log,s}_{g,\alpha,\underline{d}}(\sA)}[1]\ar@{.>}[d]^{\psi}\\
\mathbf{R}\pi_*\mathbb{T}_{\cX}^{\log}\ar[r]&\mathbf{R}\pi_*\cO_{\cC}^{\oplus s}[1]\ar[r]& \mathbf{R}\pi_*\oplus_{\rho \neq \rho_0}u^*\cO_{\cX}(D_{\rho}) \oplus \mathbf{R}\pi_*\cO_{\cC}[1]
}
\]
Applying the four lemma to the long exact sequence in cohomology proves that $\psi$ is a (dual) obstruction theory.

By functoriality of virtual pull-backs applied to the composition
\[\begin{tikzcd}
\cQ^{\log}_{g,\alpha}(X|D_{\rho_0},\beta)\arrow[r,"\nu"]\ar[rr, bend left]&\frPic^{\log,s}_{g,\alpha,\underline{d}}(\sA)\arrow[r]&\frM^{\log}_{g,\alpha}(\sA)
\end{tikzcd}
\]
the result follows.
\end{proof}

Consider the stack $\frPic^{\log,s}_{g,n,\underline{d}}(\sA')$ obtained by replacing $\sA$ by $\sA'$ in \Cref{def:Pic_curly_A} and taking $X$ to be a double point degeneration in \Cref{def:basis_CFK_obs_th}. The following result has a proof analogous to that of \Cref{prop:obstruction_theory_X_over_Pic}.

\begin{proposition}\label{prop:POT_W_relative_Pic}
Let $W\to\AAA^1$ be a toric double point degeneration. Let $\rho_1,\rho_2\in\Sigma_W(1)$ correspond to the pieces of the central fibre as in \Cref{notation:degenerations}.
The complex 
\[
\mathbf{R}\pi_\ast \mathbf{L}u^\ast [
\begin{tikzcd}
    \oplus_{\rho\neq \rho_1,\rho_2\in\Sigma_W(1)}\cO_{\cW} (D_{\rho})  \oplus_{i=1}^{2} \cO_{\cW} \arrow{r}{f_2} & \cO_{\cW}
\end{tikzcd}
]
\]
is a dual obstruction theory for 
$\cQ^{\log}_{g,n}(W/\AAA^1,\beta)\to \frPic^{\log,s}_{g,n,\underline{d}}(\sA')$. Moreover, this obstruction theory induces the same class on $\cQ^{\log}_{g,n}(W/\AAA^1,\beta)$ as the one constructed in \Cref{prop : perfectness}.
\end{proposition}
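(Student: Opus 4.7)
The strategy is to adapt the proof of \Cref{prop:obstruction_theory_X_over_Pic}, replacing the logarithmic Euler sequence on $\cX$ by a relative analogue extracted from the three-term description of $\TT^{\log}_{\cW/\AAA^1}$ in \Cref{prop:log_tangent_curlyW_A1}. Writing $V = \bigoplus_{\rho \neq \rho_1,\rho_2 \in \Sigma_W(1)} \cO_{\cW}(D_\rho) \oplus \cO_{\cW}^{\oplus 2}$, we have $E^\bullet = [V \xrightarrow{f_2}\cO_{\cW}]$ in degrees $[0,1]$ and $\TT^{\log}_{\cW/\AAA^1} = [\cO_{\cW}^{\oplus s} \xrightarrow{f_1} V \xrightarrow{f_2} \cO_{\cW}]$. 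Since $f_2 \circ f_1 = 0$, the morphism $f_1$ defines a chain map $\cO_{\cW}^{\oplus s} \to E^\bullet$ (with source placed in degree $0$) whose cone is canonically quasi-isomorphic to $\TT^{\log}_{\cW/\AAA^1}$, producing the distinguished triangle
\[
\cO_{\cW}^{\oplus s} \to E^\bullet \to \TT^{\log}_{\cW/\AAA^1} \to \cO_{\cW}^{\oplus s}[1].
\]

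Applying $\mathbf{R}\pi_\ast \mathbf{L}u^\ast$ and rotating yields a distinguished triangle on $\cQ^{\log}_{g,n}(W/\AAA^1,\beta)$ whose third term is $\mathbf{R}\pi_\ast \cO_{\cC}^{\oplus s}[1]$. The latter is canonically identified with $\nu^\ast \TT_{\frPic^{\log,s}_{g,n,\underline{d}}(\sA')/\frM^{\log}_{g,n}(\sA'/\AAA^1)}$ via the standard description of the relative tangent of the Picard stack. Matching this against the tangent triangle for the composition $\cQ^{\log}_{g,n}(W/\AAA^1,\beta) \to \frPic^{\log,s}_{g,n,\underline{d}}(\sA') \to \frM^{\log}_{g,n}(\sA'/\AAA^1)$, with the middle vertical arrow being the dual obstruction theory of \Cref{prop : perfectness} and the right vertical the Picard identification, one obtains a commutative square. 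Completing to a morphism of distinguished triangles yields the dashed arrow $\psi : \TT_{\cQ^{\log}_{g,n}(W/\AAA^1,\beta)/\frPic^{\log,s}_{g,n,\underline{d}}(\sA')}[1] \to \mathbf{R}\pi_\ast \mathbf{L}u^\ast E^\bullet[1]$, and the four lemma applied to the long exact sequences in cohomology shows that $\psi$ is a dual obstruction theory.

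The comparison of virtual classes then follows from functoriality of virtual pullbacks \cite[Theorem 4.8]{manolache12virtual} applied to the above composition: since $\frPic^{\log,s}_{g,n,\underline{d}}(\sA') \to \frM^{\log}_{g,n}(\sA'/\AAA^1)$ is smooth, its virtual pullback reduces to the ordinary smooth pullback, and the compatibility of the obstruction theories with the tangent triangle ensures that the induced virtual class on $\cQ^{\log}_{g,n}(W/\AAA^1,\beta)$ agrees with the one of \Cref{prop : perfectness}. The main technical point is establishing the Euler-type triangle above, which boils down to verifying that $f_1$ defines a chain map into $E^\bullet$ (a consequence of $f_2 \circ f_1 = 0$) and identifying its cone with $\TT^{\log}_{\cW/\AAA^1}$; once this triangle is in place, the diagram chase and four-lemma argument proceed in direct parallel to the smooth divisor case.
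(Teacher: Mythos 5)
Your argument is correct and follows the route the paper itself indicates: the paper states that \Cref{prop:POT_W_relative_Pic} "has a proof analogous to that of \Cref{prop:obstruction_theory_X_over_Pic}," and you carry out precisely that analogy. The key observation — that $f_2 \circ f_1 = 0$ lets you form a chain map $\cO_{\cW}^{\oplus s}[0] \to E^\bullet$ whose cone recovers the three-term complex $\TT^{\log}_{\cW/\AAA^1}$ from \Cref{prop:log_tangent_curlyW_A1} — is the correct replacement for the logarithmic Euler sequence used in the smooth case, and the rest (push-pull, identification of $\mathbf{R}\pi_*\cO_\cC^{\oplus s}[1]$ with $\nu^*\TT_{\frPic^{\log,s}/\frM^{\log}}$, morphism of triangles, four lemma, and functoriality of virtual pullbacks) mirrors the smooth-divisor proof step for step.
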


\section{The degeneration formula}

In this section we prove the degeneration formula for quasimaps to a toric double point degeneration. We do this in two steps, decomposition and gluing. The steps follow \cite{abramovich2020decomposition} and then the arguments of \cite{kim2021degeneration}.

First we need to verify that the general fibre of $\cQ^{\log}_{g,n}(W/\AAA^1,\beta)$ recovers quasimaps to the general fibre of $W \rightarrow \AAA^1$.
\begin{lemma}\label{lemma : absolutegenericfibre}
    Let $W \rightarrow \AAA^1$ be a toric double point degeneration, let $t\in\AAA^1$ with $t\neq 0$ and let $\iota_t\colon t\to \AAA^1$ be the corresponding embedding. Then the following diagram is Cartesian
    \begin{equation*}
        \begin{tikzcd}
        {\cQ_{g,n}(X,\beta)} \arrow[d] \arrow[r] & {\cQ_{g,n}(W/\AAA^1,\beta)} \arrow[d] \\
        t \arrow[r,"\iota_t"]                            & \AAA^1               
        \end{tikzcd}    
    \end{equation*}
\end{lemma}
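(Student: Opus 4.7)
The plan is to identify the fibre product on the left-hand side explicitly. First I would observe that, as noted in the remark after \Cref{notation:degenerations}, $\cQ_{g,n}(W/\AAA^1,\beta)$ coincides with $\cQ_{g,n}(W,\beta)$, since any quasimap from a prestable curve to $W$ is automatically constrained to a single fibre of $W\to \AAA^1$. The natural morphism to $\AAA^1$ is obtained by composing a quasimap $u\colon C\to W$ with $W\to \AAA^1$: each fibre of the universal curve over the moduli space is connected and proper and $\AAA^1$ is separated, so the composition is constant on fibres and descends to a well-defined morphism. The fibre over $t$ then parametrises quasimaps to $W$ whose image lies in the scheme-theoretic fibre $X_t$ of $W\to\AAA^1$.

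Next I would work out this fibre in coordinates. The toric morphism $W\to \AAA^1$ pulls back the coordinate on $\AAA^1$ to the $\GG_m^s$-invariant monomial $F = x_{\rho_1}\cdot x_{\rho_2}$ on the presentation $\AAA^M$, since the reduced central fibre $X_1+X_2 = D_{\rho_1}+D_{\rho_2}$ forces this product to be the unique (up to normalisation) monomial of $\GG_m^s$-weight zero cutting out the correct divisor. Accordingly, a quasimap landing in $X_t$ is the data of line bundles $L_\rho$ and sections $x_\rho$ on the universal curve satisfying $x_{\rho_1}\otimes x_{\rho_2}=t$ in $L_{\rho_1}\otimes L_{\rho_2}$. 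For $t\neq 0$, this relation forces both $x_{\rho_1}$ and $x_{\rho_2}$ to be nowhere-vanishing, hence trivialising $L_{\rho_1}$ and $L_{\rho_2}$; equivalently, the $\GG_m\subset \GG_m^s$ acting with weights $(1,-1)$ on $(x_{\rho_1},x_{\rho_2})$ acts freely on the locus where these coordinates are nonzero, and the quotient reduces to $[\AAA^{M-2}/\GG_m^{s-1}]$, which is the standard toric GIT presentation of $X$. The construction is manifestly reversible: a quasimap to $X$ lifts canonically by adjoining trivial line bundles with constant sections whose product equals $t$. Compatibility with stability and non-degeneracy is routine, as the $\theta$-linearisation restricts appropriately and nowhere-vanishing of $x_{\rho_1},x_{\rho_2}$ leaves the base locus unchanged.

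The conceptual point I would emphasise, and the step that must be handled carefully in families, is the contrast with \Cref{example ghost 1} and \Cref{ex:P1_relative_infinity}: the induced GIT presentation of a toric subvariety of $W$ generally carries strictly more information than its standard presentation, producing ghost classes as in \Cref{deformationPN}. The content of the lemma is that this pathology does not occur for the generic fibre, precisely because the constraint $x_{\rho_1}\cdot x_{\rho_2}=t$ with $t\neq 0$ forces these coordinates to be invertible and the extra $\GG_m$-factor of the presentation to act freely. I expect the only real obstacle to be the clean formulation of this reduction of presentations in families; once that is in place, there is no further geometric input required.
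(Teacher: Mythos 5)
Your proof is correct and takes essentially the same route as the paper: both use the line-bundle/section description of a toric quasimap, observe that the fibre condition is $s_{\rho_1}\otimes s_{\rho_2}=t$ in the trivialisation of $L_{\rho_1}\otimes L_{\rho_2}$, and that $t\neq 0$ forces these sections to be nowhere-vanishing, trivialising $L_{\rho_1}, L_{\rho_2}$ so that modulo the residual $\GG_m$-gauge the remaining data is precisely a quasimap to $X$. The extra detail you supply — why the toric morphism is the monomial $x_{\rho_1}x_{\rho_2}$, the reduction of presentations $[\AAA^M/\GG_m^s]\rightsquigarrow[\AAA^{M-2}/\GG_m^{s-1}]$, and the contrast with the ghost-class phenomenon of \Cref{ex:P1_relative_infinity} — is consistent with, and a useful elaboration of, the paper's terser proof.
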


\begin{proof}
    A quasimap to $W$ consists of line-bundle section pairs $(L_\rho,s_\rho)$ for $\rho\in \Sigma_W(1)$. Let $\rho_1,\rho_2$ correspond to the pieces of the central fibre. An element of $\cQ_{g,n}(W/\AAA^1,\beta)\times_{\AAA^1} t$ is an element of $\cQ_{g,n}(W/\AAA^1,\beta)$ such that $s_{\rho_1}\otimes s_{\rho_2}$ is constant and equal to $t$, under the trivialisation of $L_{\rho_1}\otimes L_{\rho_2}$. This is enough to determine the pairs $(L_{\rho_1},s_{\rho_1})$ and $(L_{\rho_2},s_{\rho_2})$ up to a $\GG_m$-action. The remaining data is precisely the data of a quasimap to $X$. 
\end{proof}

\begin{lemma}\label{Cor : deformation invariance}
    Let $W \rightarrow \AAA^1$ be a toric double point degeneration, let $t\in\AAA^1$ with $t\neq 0$ and let $\iota_t\colon t\to \AAA^1$ be the corresponding embedding. Then the following diagram is Cartesian
       \begin{equation*}
    \begin{tikzcd}
     {\cQ_{g,n}(X,\beta)} \arrow[d] \arrow[r] & {\cQ_{g,n}^{\log}(W/\AAA^1,\beta)} \arrow[d] \\
t \arrow[r,"i_t"]                              & \AAA^1                         
    \end{tikzcd}
    \end{equation*}
Moreover, we have that $i_t^![\cQ^{\log}_{g,n}(W/\AAA^1,\beta)]^{\vir} = [\cQ_{g,n}(X,\beta)]^{\vir}$
\end{lemma}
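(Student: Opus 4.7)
The plan is to bootstrap from \Cref{lemma : absolutegenericfibre} by showing that the logarithmic structure is trivial away from the central fibre. First I would observe that the divisorial logarithmic structure on $W$ with respect to $X_0 = X_1 \cup X_2$ is trivial on the open locus $W \setminus X_0$, so that the map $\iota_t\colon t \hookrightarrow \AAA^1$ with $t \neq 0$ factors through the dense open point of $\sA = \agm$. Consequently, pulling back the square $\sA' = \sA^2 \times_{\sA} \AAA^1$ along $\iota_t$ gives a point (the fibre of $\sA^2 \to \sA$ over the open point of $\sA$ is a point), so $\frM^{\log}_{g,n}(\sA'/\AAA^1) \times_{\AAA^1} t = \frM_{g,n}(\sA'/\AAA^1) \times_{\AAA^1} t$. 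In particular, the forgetful morphism $\epsilon_2$ of \Cref{logquasimapdef} is an isomorphism after pullback along $\iota_t$, which combined with \Cref{lemma : absolutegenericfibre} gives the Cartesian square.

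For the virtual class identity, I would apply the functoriality of virtual pullbacks \cite{manolache12virtual}. The plan is to use the obstruction theory $\mathbf{R}\pi_*(\mathbf{L}u^*\LL^{\log}_{\cW/\AAA^1} \otimes \omega_\pi)$ from \Cref{prop : perfectness} relative to $\frM^{\log}_{g,n}(\sA'/\AAA^1)$, and show that its pullback along $\iota_t$ agrees with the standard absolute quasimap obstruction theory on $\cQ_{g,n}(X,\beta)$ over $\frM_{g,n}$. This compatibility follows from \Cref{prop:log_tangent_curlyW_A1}: restricting the complex there to $\cW|_t$ (where the sections defining $X_1$ and $X_2$ are non-zero) kills the two trivial $\cO_{\cW}$ summands paired with the $\cO_{\cW}$ in the last slot, leaving the complex $[\cO^{\oplus s-1} \to \oplus_{\rho \neq \rho_1,\rho_2} \cO(D_\rho)]$, which is precisely the tangent complex of the ambient GIT stack $\cX$ for $X$.

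Once this identification is in place, functoriality of virtual pullback applied to
\[
\cQ_{g,n}(X,\beta) \xrightarrow{\iota_t^!} \cQ^{\log}_{g,n}(W/\AAA^1,\beta) \longrightarrow \frM_{g,n}^{\log}(\sA'/\AAA^1),
\]
together with the fact that $\frM_{g,n}^{\log}(\sA'/\AAA^1)|_t = \frM_{g,n}|_t$ is smooth (so that $\iota_t^!$ on the base recovers ordinary pullback), yields $\iota_t^![\cQ^{\log}_{g,n}(W/\AAA^1,\beta)]^{\vir} = [\cQ_{g,n}(X,\beta)]^{\vir}$.

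The main obstacle is the identification of the two obstruction theories under restriction to $t$, i.e.\ tracing through the explicit description of $\TT^{\log}_{\cW/\AAA^1}$ from \Cref{prop:log_tangent_curlyW_A1} and checking that the cone computation used there degenerates correctly over the open locus. The Cartesian diagram part is essentially formal once the triviality of $\sA'|_t$ is noted.
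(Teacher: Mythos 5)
Your proof is correct and reaches the same conclusion by a genuinely (if only mildly) different route for the virtual class statement. For the Cartesian square, you and the paper make essentially the same observation — the divisorial log structure is trivial over $t \neq 0$, so $\sA'|_t$ is a trivial-log point, $\epsilon_2$ becomes an isomorphism after base change, and one concludes via \Cref{lemma : absolutegenericfibre}. (The paper phrases this by exhibiting $\frM_{g,n} \times \GG_m$ as the pullback over $\GG_m$; you restrict to the single point $t$.) For the virtual class, the paper instead invokes Olsson's base change theorem for logarithmic cotangent complexes, applied to the Cartesian square $\cX \times \GG_m \to \cW$ over $\GG_m \to \AAA^1$, which in one stroke identifies $\TT^{\log}_{\cW/\AAA^1}|_{\cW_{\GG_m}}$ with $\mathrm{pr}^*\TT_\cX$. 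You instead trace through the explicit three-term presentation of $\TT^{\log}_{\cW/\AAA^1}$ from \Cref{prop:log_tangent_curlyW_A1} and cancel the trivial summands by hand; this is more computational but avoids citing Olsson and keeps the argument self-contained. One small imprecision in your account: the reduction of $[\cO^{\oplus s} \to \oplus_{\rho\neq\rho_1,\rho_2}\cO(D_\rho)\oplus\cO^{\oplus 2} \to \cO]$ to $[\cO^{\oplus s-1} \to \oplus_{\rho\neq\rho_1,\rho_2}\cO(D_\rho)]$ in fact holds over all of $\cW$, not only over $\cW|_t$ — $f_2$ is surjective (identity on the trivial factors) and the component of $f_1(1_1)$ in $\ker(f_2)$'s trivial summand is a unit, so the Gaussian elimination applies everywhere. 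The restriction to $t$ enters only in the final identification of the residual two-term complex with $\TT_\cX$, via $\cW|_t \cong \cX = [\AAA^{M-2}/\GG_m^{s-1}]$ and $\cO_\cW(D_\rho)|_{\cW_t} \cong \cO_\cX(D_\rho)$. This does not affect the argument's validity, and the ensuing appeal to functoriality of virtual pullbacks is the standard deformation-invariance device, matching the paper's own use of \cite{manolache12virtual} in \Cref{remark: vfc central fibre}.
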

\begin{proof}
    There is a Cartesian diagram
\begin{equation*}
    \begin{tikzcd}
\GG_m \arrow[d] \arrow[r] & \sA' \arrow[d] \\
\GG_m \arrow[r]           & \AAA^1        
\end{tikzcd}
\end{equation*}
Since the logarithmic structure on $\GG_m$ is trivial this induces a Cartesian diagram 
\begin{equation*}
    \begin{tikzcd}
{\frM_{g,n} \times \GG_m} \arrow[d] \arrow[r] & {\frM_{g,n}^{\log}(\sA'/\AAA^1)} \arrow[d] \\
\GG_m \arrow[r]                               & \AAA^1                                    
\end{tikzcd}
\end{equation*}
Combining \Cref{lemma : absolutegenericfibre} with the definition of $\cQ^{\log}_{g,n}(W/\AAA^1,\beta)$ implies the first claim. The second claim follows from the fact that the restriction of $\TT^{\log}_{\cW/\AAA^1}$ to any non-zero fibre is $\TT_{\cX}$ which follows from \cite[1.1 (iv)]{olsson2003logarithmic} applied to the diagram (in the logarithmic category)
\begin{equation*}
        \begin{tikzcd}
\cX \times \GG_m \arrow[d] \arrow[r] & {\cW} \arrow[d] \\
\GG_m \arrow[r]                               & \AAA^1.  
\end{tikzcd}
\end{equation*}
\end{proof}
\subsection{Decomposition}
\Cref{Cor : deformation invariance} tells us that the virtual class of $\cQ^{\log}_{g,n}(W/\AAA^1,\beta)$ restricts to $[\cQ_{g,n}(X,\beta)]^{\vir}$ on any non-zero fibre. By deformation invariance and \Cref{remark: vfc central fibre}, this can be related to $[\cQ_{g,n}^{\log}(X_0,\beta)]^{\vir} = {i_0}^{!}[\cQ^{\log}_{g,n}(W/\AAA^1,\beta)]^{\vir}$. Now we use the decomposition formula of \cite{abramovich2020decomposition} to decompose this class into a sum of classes coming from combinatorics of how the components of the curve behave.  

\begin{definition}\label{definition : bipartite graphs}
    Fix non-negative integers $g,n$. A \emph{decorated bipartite graph}, is the following data:
    \begin{itemize}
        \item a connected graph $\Gamma$ with vertex set $V(\Gamma)$ and edge set $E(\Gamma)$;
        \item a function $r : V(\Gamma) \rightarrow \{1,2\}$;
        \item for each vertex $V$, a choice $(n_V,g_V)$ where $g_V \geq 0$ and $n_V \subset \{1,\dots,n\}$;
        \item for each edge $E \in E(\Gamma)$, a positive integer $w_E \geq 0$, called the \emph{weight};
    \end{itemize}
subject to the following conditions:
\begin{enumerate}
    \item for an edge connecting two vertices $V_1$ and $V_2$, $r(V_1) \neq r(V_2)$;
    \item $\cup_V n_V = \{1,\dots, n\}$ and  $\cap_V n_V = \emptyset$;
    \item $\sum_V g_V + g(\Gamma)= g$.
\end{enumerate}
Abusing notation, we will refer to such a decorated bipartite graph by $\Gamma$.
\end{definition}

Recall that the perfect obstruction theory on $\cQ^{\log}_{g,n}(W/\AAA^1,\beta)$ from \Cref{prop : perfectness} is relative to $\frM_{g,n}^{\log}(\sA'/\AAA^1)$.
If $\sA_0 = \sA' \times_{\AAA^1} 0$, then there is a Cartesian tower
    \begin{equation}\label{diagram : Cartesian tower}
\begin{tikzcd}
{\cQ^{\log}_{g,n}(X_0,\beta)} \arrow[d] \arrow[r] & {\cQ^{\log}_{g,n}(W/\AAA^1,\beta)} \arrow[d] \\
{\frM_{g,n}^{\log}(\sA_0)} \arrow[r] \arrow[d]    & {\frM_{g,n}^{\log}(\sA'/\AAA^1)} \arrow[d]   \\
0 \arrow[r, "i_0"]                                                  & \AAA^1.                                     
\end{tikzcd}
\end{equation}

In Diagram \eqref{diagram : Cartesian tower}, $\frM^{\log}_{g,n}(\sA_0)$ refers to minimal prestable logarithmic maps to $\sA_0$ \emph{over the standard logarithmic point}.  A consequence of Diagram \eqref{diagram : Cartesian tower} is that the perfect obstruction theory defining the virtual class $[\cQ^{\log}_{g,n}(X_0,\beta)]^{\vir} = i_{0}^![\cQ^{\log}_{g,n}(W/\AAA^1,\beta)]^{\vir}$ is relative to $\frM^{\log}_{g,n}(\sA_0)$. In \cite{abramovich2020decomposition}, the authors show that the fundamental class of $\frM_{g,n}^{\log}(\sA_0)$ decomposes into a sum of contributions coming from decorated bipartite graphs (introduced in \Cref{definition : bipartite graphs}). We denote by $\mathrm{Aut}(\Gamma)$ the subset of automorphisms of the underlying graph $\Gamma$ that respect the extra data $r, (n_V, g_V)_V$ and $(\omega_E)_E$. We let $m_\Gamma\in \NN\setminus \{0\}$ denote the projection of the generator of the monoid $Q_\Gamma^\vee\simeq \NN$ to $\Sigma(\mathrm{pt}_{\NN}) \simeq \RR_{\geq 0}$.   More explicitly, $m_\Gamma$ is the least common multiple of the weights $\{\omega_E\}$.

\begin{theorem}\label{Theorem : Universal Decomposition} ~\cite[Corollary 3.8 $\&$ Lemma 3.9]{abramovich2020decomposition}
\begin{equation*}
       [\frM_{g,n}^{\log}(\mathscr{A}_0)] = \sum_{\Gamma} \frac{m_{\Gamma}}{|\mathrm{Aut}(\Gamma)|} i_{\Gamma, *}[\frM_{g,n}^{\log}(\mathscr{A}_0,\Gamma)]
\end{equation*}
\end{theorem}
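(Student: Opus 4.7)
The plan is to interpret this as the standard decomposition of the moduli of log maps to a simple degeneration target via tropical combinatorics, à la Abramovich--Chen--Gross--Siebert. Since $\sA_0$ is the Artin fan of the nodal target $X_0$ over the standard log point, its tropicalisation is a pair of rays meeting at a vertex. A logarithmic map from a log curve $C$ to $\sA_0$ then corresponds, on the combinatorial side, to a map of tropical curves from the dual graph of $C$ into this tropicalisation. The discrete invariants of such a tropical map are exactly the data packaged by a decorated bipartite graph $\Gamma$ as in \Cref{definition : bipartite graphs}: the function $r:V(\Gamma)\to\{1,2\}$ records onto which ray a vertex is sent, the edges capture those nodes of $C$ sent to the central vertex (i.e., smoothed through the degenerating node of $X_0$), and the edge weights $\omega_E$ record contact orders at these nodes.

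The next step is to introduce, for each $\Gamma$, the locally closed substack $\frM_{g,n}^{\log}(\sA_0,\Gamma) \subset \frM_{g,n}^{\log}(\sA_0)$ parametrising log maps whose tropical type is exactly $\Gamma$, to check that these form a stratification, and to verify via a dimension count that only the top-dimensional strata (those indexed by graphs in the sense of \Cref{definition : bipartite graphs}) contribute to the fundamental class. One then has
\[
    [\frM_{g,n}^{\log}(\sA_0)] = \sum_\Gamma a_\Gamma \cdot i_{\Gamma,*}[\frM_{g,n}^{\log}(\sA_0,\Gamma)]
\]
and it remains to identify $a_\Gamma$ with $m_\Gamma/|\mathrm{Aut}(\Gamma)|$.

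To compute $a_\Gamma$, I would use the local structure of $\frM_{g,n}^{\log}(\sA_0)$ near the stratum indexed by $\Gamma$. Étale locally, the moduli is cut out inside a smooth ambient by a single monomial equation determined by the characteristic monoid $Q_\Gamma$ of the minimal log structure at $\Gamma$; concretely, $Q_\Gamma$ is assembled as the (dual of the) pushout of copies of $\NN$, one per edge of $\Gamma$ scaled by $\omega_E$, glued along the vertex relations. The pullback of the generator of $\NN \simeq \Sigma(\agm)$ along $\sA_0 \to \sA$ gives a distinguished element of $Q_\Gamma^\vee$ whose integer projection is the least common multiple of the weights $\{\omega_E\}$, which is precisely $m_\Gamma$ by definition. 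This integer is exactly the scheme-theoretic multiplicity with which the stratum appears in the fundamental class. The factor $|\mathrm{Aut}(\Gamma)|^{-1}$ appears because the natural map from the moduli of decorated stratified log maps down to $\frM_{g,n}^{\log}(\sA_0,\Gamma)$ is étale of that degree onto its image, so summing unadjustedly over isomorphism classes overcounts by $|\mathrm{Aut}(\Gamma)|$.

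The main obstacle I expect is the explicit determination of the minimal characteristic monoid $Q_\Gamma$ and verifying that the distinguished element really has the claimed $\NN$-saturation $m_\Gamma$; this is a delicate exercise in logarithmic bookkeeping with the balancing/compatibility conditions imposed along each edge. A secondary difficulty is ruling out contributions from non-generic graphs (those with contracted vertices or subdivided edges): this reduces to a dimension count, but one must make the count uniform across all strata simultaneously, and appeal to the description of $\frM^{\log}_{g,n}(\sA_0)$ as pure-dimensional of the expected dimension.
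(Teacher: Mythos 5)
The paper does not prove this statement at all: it is cited verbatim from \cite[Corollary 3.8 \& Lemma 3.9]{abramovich2020decomposition} (Abramovich--Chen--Gross--Siebert's decomposition formula), so there is no in-paper argument to compare against. What can be assessed is whether your sketch faithfully reproduces the argument from the cited reference, and it does so in broad outline: the stratification of $\frM_{g,n}^{\log}(\sA_0)$ by tropical type, the identification of the bipartite graphs with rigid tropical curves, and the two ingredients $m_\Gamma$ (local multiplicity) and $|\mathrm{Aut}(\Gamma)|$ (degree of the marking map) are all there.

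There are, however, a few places where your sketch is either imprecise or silently appeals to results you would need to re-derive. First, $m_\Gamma$ is defined in the paper (and in ACGS) as the image of the generator of $Q_\Gamma^\vee\simeq\NN$ under the induced map to $\Sigma(\mathrm{pt}_\NN)\simeq\RR_{\geq 0}$ — that is, a \emph{pushforward} along the structure map of the minimal monoid of the stratum to the base monoid, not a pullback along $\sA_0\to\sA$ as you state. The claim that this integer equals the scheme-theoretic multiplicity is not a one-line observation: it is precisely the content of ACGS Lemma 3.9, which requires a local description of $\frM^{\log}_{g,n}(\sA')$ near the stratum and an analysis of the log smooth structure over $\AAA^1$, rather than a naive intersection with a monomial divisor. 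Second, ruling out non-generic graphs is not a ``dimension count'' in the usual sense; it goes through the fact that $\frM^{\log}_{g,n}(\sA')$ is idealized log smooth over $\AAA^1$, from which pure-dimensionality and the identification of the irreducible components of the central fibre both follow. Your final paragraph honestly flags these as difficulties, so the sketch is self-aware; but as written it defers exactly the two computations that constitute the substance of the cited lemmas, so it should be read as a roadmap to the ACGS argument rather than a self-contained proof.
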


As in the case of stable maps ~\cite[Theorem 3.11]{abramovich2020decomposition} we will use \Cref{Theorem : Universal Decomposition} to decompose the virtual class $[\cQ^{\log}_{g,n}(X_0,\beta)]^{\vir}$ into contributions coming from quasimap moduli spaces associated to bipartite graphs with extra information about the quasimap degree on each curve component. We first make the formal definition.

\begin{definition}
For each decorated bipartite graph $\Gamma$ we define $\cQ^{\log}_{g,n}(X_0,\Gamma)$ via the Cartesian diagram
\begin{equation}\label{define : qlogGamma}
    \begin{tikzcd}
{\cQ^{\log}_{g,n}(X_0, \Gamma)} \arrow[d,"q"] \arrow[r,"j_{\Gamma}"] & {\cQ^{\log}_{g,n}(X_0, \beta)} \arrow[d,"p"] \\
{\frM_{g,n}^{\log}(\mathscr{A}_0,\Gamma)} \arrow[r,"i_{\Gamma}"]                           & {\frM_{g,n}^{\log}(\mathscr{A}_0)}.      
\end{tikzcd}
\end{equation}
\end{definition}
\begin{lemma}\label{Lemma : scheme theoretic gluing quasimaps}
    The space $\cQ^{\log}_{g,n}(X_0,\Gamma)$ decomposes as a disjoint union $\cup_{\widetilde{\Gamma} = (\Gamma,(\beta_V)_V)} \cQ^{\log}_{g,n}(X_0,\widetilde{\Gamma})$ where each $\beta_V$ is a quasimap degree of $W$ and 
    a point of $\cQ^{\log}_{g,n}(X_0,\widetilde{\Gamma})$ is
\begin{itemize}
    \item an $n$-pointed genus $g$ logarithmic quasimap from $C/\mathrm{pt}_Q$ to $W/\AAA^1$ which factors through the zero fibre, i.e. an element of $\cQ_{g,n}^{\log}(X_0,\beta)$;
    \item for each vertex $V$, a stable quasimap to $X_{r(V)}$, where the GIT presentation is induced from the embedding in $W$, from a curve $C_V$ with marked points $x_e$ for each edge containing $V$;
\end{itemize}
such that the gluing of the curves $C_V$ along the points corresponding to the edges in $E(\widetilde{\Gamma})$ give the underlying curve of $C$. Furthermore, the underlying quasimap of the element of $\cQ_{g,n}^{\log}(X_0,\beta)$ is \emph{glued} from the quasimaps from each vertex.

\end{lemma}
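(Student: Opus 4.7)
The plan is to separate the claim into two parts: the decomposition into a disjoint union over $\widetilde{\Gamma}$, and the concrete identification of the points. First I would unwind the Cartesian definition of $\cQ^{\log}_{g,n}(X_0,\Gamma)$: its points are pairs consisting of a stable logarithmic quasimap $(C/\mathrm{pt}_Q,u)$ to $W/\AAA^1$ factoring through the zero fibre, together with an induced logarithmic map to $\sA_0$ of combinatorial type $\Gamma$. The type $\Gamma$ pins down the dual graph of $C$ together with the function $r:V(\Gamma)\to\{1,2\}$ recording which component $X_{r(V)}$ of the central fibre each irreducible component $C_V\subset C$ maps into. Pulling back the line bundle--section pairs $\{(L_\rho,s_\rho)\}_{\rho\in\Sigma_W(1)}$ defining $u$ along each $C_V\hookrightarrow C$ gives a quasimap $u_V\colon C_V\to \cX_{r(V)}$, where $X_{r(V)}$ is given the GIT presentation $\AAA^{M-1}\GIT \GG_m^s$ induced from $\cW$. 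Its degree $\beta_V$ is locally constant in families, and additivity of degree under gluing at the nodes labelled by $E(\Gamma)$ forces $\sum_V \beta_V = \beta$ in $H_2(\cW,\ZZ)$. Since only finitely many such partitions of $\beta$ arise, this decomposes $\cQ^{\log}_{g,n}(X_0,\Gamma)$ into open-and-closed substacks indexed by $\widetilde{\Gamma}=(\Gamma,(\beta_V)_V)$.

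Next I would verify the two directions of the point-level description. The restriction direction is essentially the previous paragraph: given $(C,u)$ in the moduli space, the restrictions $u_V$ inherit non-degeneracy from $u$ (basepoints of $u_V$ are basepoints of $u$, hence finite and disjoint from nodes and markings), and the stability condition for each $(C_V,u_V)$ relative to the new markings $x_e$ follows from stability of $(C,u)$ together with the fact that the ample linearisation $L_\theta$ pulls back along $C_V\hookrightarrow C$. For the gluing direction, given a collection $(C_V,u_V)_V$ compatible at the edge-markings, I would first glue the underlying curves along the $x_e$ to obtain a prestable curve $C$ with dual graph $\Gamma$. Then for each ray $\rho\in\Sigma_W(1)$ I would glue the vertex-wise pairs $(L_{V,\rho},s_{V,\rho})$: compatibility at each $x_e$, provided by the agreement of the evaluation maps in $\cX_{r(V_1)}\cap \cX_{r(V_2)}$, supplies the transition isomorphisms and ensures the sections descend to a global pair on $C$. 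This assembles into a quasimap $u\colon C\to \cW$ which automatically factors through the zero fibre because each $u_V$ does, so $s_{\rho_1}\otimes s_{\rho_2}\equiv 0$.

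The step I expect to be the main obstacle is precisely the subtlety flagged in Examples \ref{ex:P1_relative_infinity} and \ref{example ghost 1}: the vertex quasimaps $u_V$ must be understood with the GIT presentation of $X_{r(V)}$ induced by $\cW$, not the standard toric presentation. Only with this presentation do the full data $\{(L_{V,\rho},s_{V,\rho})\}_{\rho\in\Sigma_W(1)}$ persist to each vertex and supply the information needed to reconstruct a quasimap to $\cW$. In particular the ``extra'' rays -- those whose corresponding line bundle is trivial under the standard toric presentation of $X_{r(V)}$ but not under the $\cW$-induced one -- carry the ghost components of $\beta_V$ in the sense of \Cref{def:ghost_class}. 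I would therefore be especially careful in the gluing step to check that the transition isomorphisms for these ``extra'' line bundles are uniquely determined by the matching of the $u_V$ at the edge-markings, so that restriction and gluing are mutually inverse on the level of points. Once this is in place, everything else reduces to a formal manipulation of Cartesian diagrams and additivity of quasimap degrees.
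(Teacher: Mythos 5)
Your proposal is correct and reaches the same conclusion as the paper, but by a more hands-on route. The paper's proof is a short, clean argument: the combinatorial type $\Gamma$ decomposes $C$ into components $C_V$, each carrying a map $C_V \to \sA$ compatible with the induced map $C \to \sA_0$, and the Cartesian square $\cX_{r(V)} = \cW_0 \times_{\sA_0}\sA$ then forces the restricted quasimap $C_V \to \cW_0$ to factor through $\cX_{r(V)}$. You unwind the same content in terms of the line bundle--section pairs: the tropical data tells you $s_{\rho_{r(V)}}|_{C_V}\equiv 0$, hence the restriction lands in the coordinate stack $\cX_{r(V)}$ with the $\cW$-induced presentation. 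These are two phrasings of one observation, and your identification of the GIT-presentation subtlety (the vertex quasimaps remember all $M$ line bundles, not just those of the standard toric presentation of $X_{r(V)}$) is exactly the point the paper is careful about; you make it explicit, the paper packages it into $\cX_{r(V)}$ being the fibre of the Cartesian square. Your observations about local constancy of degree and additivity $\sum_V \beta_V = \beta$, which justify the disjoint union over $\widetilde{\Gamma}=(\Gamma,(\beta_V)_V)$, are things the paper leaves implicit, so this is a useful addition.

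One caution on the gluing direction, which you correctly anticipate as the delicate step: your stated aim ``transition isomorphisms for these extra line bundles are uniquely determined by the matching of the $u_V$ at the edge-markings, so that restriction and gluing are mutually inverse on the level of points'' is stronger than the lemma claims and is, in general, false. A point of $\cQ^{\log}_{g,n}(X_0,\widetilde{\Gamma})$ includes the quasimap to $X_0$ as part of the data; the vertex quasimaps are obtained from it by restriction, not the other way around. The vertex data together with the matching at evaluations does \emph{not} uniquely determine the gluing -- this is precisely why the morphism $c_{\widetilde{\Gamma}}$ to $\bigtimes_V \cQ_V$ in the following proposition is \'etale of degree $\prod_E w_E/\mathrm{lcm}(w_E)$ rather than an isomorphism. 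For this lemma the gluing direction is not required, so the over-claim is harmless, but carrying it through literally would fail; when you reach the gluing theorem you will want to measure the ambiguity rather than show it vanishes.
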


\begin{proof}
      An element of $\cQ_{g,n}^{\log}(X_0,\Gamma)$ comes with the data of a decomposition of the underlying curve $C$ defining the quasimap into $C_V$ for each $V \in V(\Gamma)$, with marked points $x_e$ corresponding to the edge containing $V$ and morphisms $C_V \rightarrow \sA$ such that the gluing the $C_V$ according to the graph and gluing the morphisms $C_V \rightarrow \sA$ produces the underlying morphism of $C \rightarrow \sA_0$ induced by the logarithmic quasimap. On the other hand there is a Cartesian diagram 
\begin{equation*}
    \begin{tikzcd}
\cX_{r(V)} \arrow[d] \arrow[r] & \cW_0 \arrow[d] \arrow[r] & \cW \arrow[d] \\
\sA \arrow[r]      & \sA_0 \arrow[r]           & \sA'         
\end{tikzcd}
\end{equation*}
where $\cX_{r(V)}$ is the quotient stack inside of which $X_{r(V)}$ lives when the GIT presentation is induced by the embedding in $W$. Therefore the morphism $C_V \rightarrow \sA$ induces a morphism $C_V \rightarrow \cX_{r(V)}$, i.e. a quasimap to $X_{r(V)}$.
\end{proof}

Diagram \eqref{define : qlogGamma} implies that the spaces $\cQ^{\log}_{g,n}(X_0,\widetilde{\Gamma})$ admit a perfect obstruction theory over $\frM^{\log}_{g,n}(\sA_{0},\Gamma).$

\begin{proposition}\label{Prop: decomposition}
    $$[\cQ_{g,n}^{\log}(X_0,\beta)]^{\vir} = \sum_{\widetilde{\Gamma}= (\Gamma,(\beta_V)_V)} \frac{m_{\Gamma}}{|\mathrm{Aut}(\Gamma)|} j_{\Gamma, *}[\cQ_{g,n}^{\log}(X_0,\widetilde{\Gamma})]^{\vir}.$$
\end{proposition}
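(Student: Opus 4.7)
The plan is to lift the universal decomposition of \Cref{Theorem : Universal Decomposition} across the morphism $p\colon \cQ_{g,n}^{\log}(X_0,\beta) \to \frM_{g,n}^{\log}(\sA_0)$ using the perfect obstruction theory constructed via Diagram \eqref{diagram : Cartesian tower}. Concretely, from \Cref{remark: vfc central fibre} combined with \Cref{prop : perfectness} (or \Cref{prop:POT_W_relative_Pic}), the pulled back obstruction theory on $\cQ_{g,n}^{\log}(X_0,\beta)$ is perfect relative to $\frM_{g,n}^{\log}(\sA_0)$, and the associated virtual pullback satisfies
\[
    [\cQ_{g,n}^{\log}(X_0,\beta)]^{\vir} = p^{!}\,[\frM_{g,n}^{\log}(\sA_0)].
\]
Applying $p^{!}$ to both sides of the identity in \Cref{Theorem : Universal Decomposition} will produce a sum indexed by decorated bipartite graphs $\Gamma$, and the task is to identify each term with $j_{\Gamma,\ast}[\cQ_{g,n}^{\log}(X_0,\widetilde{\Gamma})]^{\vir}$ after further refining by curve class.

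First I would invoke functoriality of virtual pullback in the sense of~\cite[Theorem 4.1]{manolache12virtual} applied to the Cartesian square \eqref{define : qlogGamma}: since $i_{\Gamma}$ is proper and the obstruction theory on $\cQ_{g,n}^{\log}(X_0,\Gamma)$ relative to $\frM_{g,n}^{\log}(\sA_0,\Gamma)$ is the pullback of the one on $\cQ_{g,n}^{\log}(X_0,\beta)$ relative to $\frM_{g,n}^{\log}(\sA_0)$, we have $p^{!}\, i_{\Gamma,\ast} = j_{\Gamma,\ast}\, q^{!}$, whence
\[
    p^{!}\,[\frM_{g,n}^{\log}(\sA_0)] = \sum_{\Gamma} \frac{m_{\Gamma}}{|\mathrm{Aut}(\Gamma)|}\, j_{\Gamma,\ast}\, q^{!}[\frM_{g,n}^{\log}(\sA_0,\Gamma)] = \sum_{\Gamma} \frac{m_{\Gamma}}{|\mathrm{Aut}(\Gamma)|}\, j_{\Gamma,\ast}\,[\cQ_{g,n}^{\log}(X_0,\Gamma)]^{\vir}.
\]
This reduces matters to decomposing each $[\cQ_{g,n}^{\log}(X_0,\Gamma)]^{\vir}$ according to the quasimap degrees $(\beta_V)_V$ carried by the vertices.

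Second I would appeal to \Cref{Lemma : scheme theoretic gluing quasimaps}, which expresses $\cQ_{g,n}^{\log}(X_0,\Gamma)$ as a disjoint union over decorations $\widetilde{\Gamma} = (\Gamma,(\beta_V)_V)$ refining $\Gamma$, with total degree summing to $\beta$. Because the decomposition is as open-and-closed substacks (the degree is locally constant in families of quasimaps, and the constraint $\sum_V \beta_V = \beta$ is fixed), the perfect obstruction theory restricts componentwise, so virtual classes add:
\[
    [\cQ_{g,n}^{\log}(X_0,\Gamma)]^{\vir} = \sum_{\widetilde{\Gamma} = (\Gamma,(\beta_V)_V)} [\cQ_{g,n}^{\log}(X_0,\widetilde{\Gamma})]^{\vir}.
\]
Substituting this into the previous formula yields the statement.

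The main obstacle will be verifying that the pullback of the obstruction theory defining $[\cQ_{g,n}^{\log}(X_0,\beta)]^{\vir}$ along $i_{\Gamma}$ is indeed the one used to define $[\cQ_{g,n}^{\log}(X_0,\widetilde{\Gamma})]^{\vir}$, since the latter is built from Diagram \eqref{define : qlogGamma} and its compatibility with the tower in \eqref{diagram : Cartesian tower}. This is a cotangent-complex compatibility statement: one must check that $\LL^{\log}_{\cW/\AAA^1}\mid_{0}$ restricted along the graph stratum computes the relative cotangent on each stratum, so that~\cite[Theorem 4.1]{manolache12virtual} applies. Given \Cref{prop:log_tangent_curlyW_A1} and the fact that $\cQ_{g,n}^{\log}(X_0,\Gamma) \to \frM^{\log}_{g,n}(\sA_0, \Gamma)$ factors the morphism $p$ Cartesianly, this verification is formal but tedious; all other ingredients are either the universal decomposition or bookkeeping for the degree decomposition.
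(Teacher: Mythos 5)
Your proposal is correct and matches the paper's proof, which likewise proceeds by writing $[\cQ_{g,n}^{\log}(X_0,\beta)]^{\vir} = p^![\frM_{g,n}^{\log}(\sA_0)]$, applying \Cref{Theorem : Universal Decomposition}, and then using push--pull for virtual pullbacks across the Cartesian square \eqref{define : qlogGamma} followed by the degree decomposition of \Cref{Lemma : scheme theoretic gluing quasimaps}. The compatibility you flag at the end is automatic, since $[\cQ_{g,n}^{\log}(X_0,\widetilde{\Gamma})]^{\vir}$ is \emph{defined} via the obstruction theory pulled back along $i_\Gamma$ in \eqref{define : qlogGamma}, so no additional cotangent-complex check is needed.
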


\begin{proof}
We have that
\begin{align*}
   &\, [\cQ_{g,n}^{\log}(X_0,\beta)]^{\vir}  \\
  &=  p^{!}[\frM_{g,n}^{\log}(\mathscr{A}_0/0)] \\
   &= \sum_{\Gamma} \frac{m_{\Gamma}}{|\mathrm{Aut}(\Gamma)|} p^{!}i_{\Gamma, *}[\frM_{g,n}^{\log}(\mathscr{A}_0/0,\Gamma)]  \\
    &= \sum_{\widetilde{\Gamma} = (\Gamma, (\beta_V)_V)} \frac{m_{\Gamma}}{|\mathrm{Aut}(\Gamma)|} j_{\Gamma, *}[\cQ_{g,n}^{\log}(X_0,\widetilde{\Gamma})]^{\vir}.    \qedhere
\end{align*}
\end{proof}
\begin{remark}
  One may wonder why in \Cref{Lemma : scheme theoretic gluing quasimaps}, the data of an element of $\cQ^{\log}_{g,n}(X_0,\widetilde{\Gamma})$ is a quasimap to $X_{r(V)}$ \emph{with presentation induced by the embedding in $W$}. In fact $X_{r(V)}$ is a toric variety and its fan induces a GIT presentation. On the other hand a quasimap to $X_{r(V)}$ with this toric presentation contains strictly less information and hence \Cref{Lemma : scheme theoretic gluing quasimaps} would not be true with this presentation. We explain this phenomenon in \Cref{subsec:quasimaps_special_fibre}, see \Cref{ex:P1_relative_infinity}.
\end{remark}

\begin{remark}
    Analogously to \Cref{cfk obs} we have that the obstruction theory of $\cQ^{\log}_{g,n}(X_0,\widetilde{\Gamma})$ can be defined relative to the fibre product
\begin{equation*}
    \begin{tikzcd}
\frPic_{g,n,\underline{d}}^{\log}(\sA_0,\Gamma) \arrow[d] \arrow[r]                   & {\frPic_{g,n,\underline{d}}}(\sA_0) \arrow[d] \\
{\frM_{g,n}^{\log}(\sA_0,\Gamma)} \arrow[r] & {\frM_{g,n}^{\log}(\sA_0)}            
\end{tikzcd}
\end{equation*}
\end{remark}
\subsection{Gluing}
Fix a stable decorated bipartite graph $\tilde{\Gamma}$. Now we relate $[\cQ_{g,n}^{\log}(X_0,\widetilde{\Gamma})]^{\vir}$ to the logarithmic moduli spaces corresponding to the vertices of $\widetilde{\Gamma}$. More precisely, let $\Gamma_V$ denote the graph with a single vertex, with half edges for any edge with vertex $V$ recording the degree $\beta_V$, genus $g_V$ as well as the contact order at each half edge $\alpha_V = (w_{E} : V \in E)$. For each vertex we can form the moduli space $\cQ_{g_V,\alpha_V}^{\log}(X_V|D,\beta_V)$ as in \cite{shafi2024logarithmic}.

\begin{remark}
    Strictly speaking, we need an extension of the moduli spaces defined in \cite{shafi2024logarithmic} to allow for the presentation induced here by the degenerations (as well as non-proper targets), but this can be done and is addressed in~\cite[End of Section 1]{shafi2024logarithmic}.
\end{remark}
Observe that for each edge $E$ of $\tilde{\Gamma}$ there are two natural maps
$$\prod_{V}\cQ_{g_V,\alpha_V}^{\log}(X_V|D,\beta_V) \rightarrow D$$ because each edge is adjacent to two vertices. We build a map $$\prod_{V}\cQ_{g_V,\alpha_V}^{\log}(X_V|D,\beta_V) \rightarrow \prod_{E}D^2.$$
Define $\bigtimes_V \cQ_V$ via the Cartesian diagram
\begin{equation}\label{eq:definition_cross_V}
    \begin{tikzcd}
\bigtimes_V \cQ_V \arrow[d] \arrow[r]      & {\prod_{V \in \Gamma} \cQ^{\log}_{g_V,\alpha_V}(X_V|D,\beta_V)} \arrow[d] \\
\prod_{E} D \arrow[r, "\Delta"] & \prod_{E} D^2                            
\end{tikzcd}
\end{equation}
\begin{proposition}
    There is a morphism $$c_{\widetilde{\Gamma}} :\cQ^{\log}_{g,n}(X_0,\tilde{\Gamma}) \rightarrow \bigtimes_V \cQ_V$$ which is étale of degree $\frac{\prod_{E} w_E}{\mathrm{lcm}(w_E)_E}$. 
\end{proposition}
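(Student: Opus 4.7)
The plan is to divide the proof into three stages: constructing $c_{\widetilde{\Gamma}}$ explicitly, verifying étaleness via a comparison of obstruction theories, and computing the degree by a local analysis of minimal logarithmic structures at the nodes.

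First, I would construct $c_{\widetilde{\Gamma}}$ using the scheme-theoretic gluing from \Cref{Lemma : scheme theoretic gluing quasimaps}. Given an object of $\cQ^{\log}_{g,n}(X_0, \widetilde{\Gamma})$ over a base $S$, the source curve decomposes as a union of sub-curves $C_V$ meeting along nodes $q_E$ labeled by the edges of $\widetilde{\Gamma}$, and restriction produces a logarithmic quasimap $C_V\to X_{r(V)}$ with the GIT presentation induced from $W$. The prescribed tangency $w_E$ at each node $q_E$ becomes a half-edge marking of tangency $w_E$ on both sides, giving an object of each $\cQ^{\log}_{g_V,\alpha_V}(X_V|D,\beta_V)$. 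Since the two half-edge markings of an edge share the image of the corresponding node in $D$, the evaluation maps agree and the data assembles into a point of the fibre product $\bigtimes_V\cQ_V$ of \eqref{eq:definition_cross_V}.

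Second, I would establish étaleness by comparing relative obstruction theories over a common base. On one side, combining \Cref{prop : perfectness} with the Cartesian tower \eqref{diagram : Cartesian tower} produces an obstruction theory on $\cQ^{\log}_{g,n}(X_0,\widetilde{\Gamma})$ over $\frM^{\log}_{g,n}(\sA_0,\Gamma)$ given by $\mathbf{R}\pi_\ast u^\ast \mathbb{T}^{\log}_{\cW/\AAA^1}$ restricted to the central fibre. On the other side, the product of the obstruction theories from \Cref{quasimaps obs} on each $\cQ^{\log}_{g_V,\alpha_V}(X_V|D,\beta_V)$, combined with the diagonal $\Delta$ in \eqref{eq:definition_cross_V}, assembles into an obstruction theory on $\bigtimes_V\cQ_V$. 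Normalising the source curve along the nodes $\{q_E\}$ yields a distinguished triangle identifying the global log tangent complex with the direct sum of its restrictions to the $C_V$'s, modulo the matching conditions at the nodes; these matching conditions coincide exactly with the diagonal contribution. Hence the two obstruction theories become quasi-isomorphic and $c_{\widetilde{\Gamma}}$ is étale.

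Finally, for the degree, I would compute locally at each node. At $q_E$ with tangency $w_E$, the minimal base characteristic monoid contains an edge smoothing generator $\ell_E$ satisfying $\alpha_W = w_E\ell_E$, where $\alpha_W$ is pulled back from $0\in\AAA^1$. Combining these relations over all edges gives minimal base monoid $\NN\mu$ with $\alpha_W \mapsto \mathrm{lcm}(w_E)\mu$ and $\ell_E \mapsto (\mathrm{lcm}(w_E)/w_E)\mu$, and $c_{\widetilde{\Gamma}}$ forgets this logarithmic enhancement. A fibrewise count, following \cite[Section 5]{kim2021degeneration}, shows that the cardinality of a geometric fibre of $c_{\widetilde{\Gamma}}$ equals $\prod_E w_E/\mathrm{lcm}(w_E)$. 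The main obstacle will be to verify that this local count transfers unchanged from the stable-map setting to the quasimap setting; this should follow because the relevant log structure depends only on the source curve and the prescribed contact orders at the nodes, both of which agree between the two theories.
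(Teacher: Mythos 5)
Your construction of the morphism $c_{\widetilde{\Gamma}}$ in the first paragraph matches the paper's approach: both derive it from \Cref{Lemma : scheme theoretic gluing quasimaps}.

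The middle paragraph, however, does not prove what it claims. Comparing relative perfect obstruction theories across $c_{\widetilde{\Gamma}}$ is precisely the content of the subsequent \Cref{Thm : gluing}, and what that comparison yields is an equality of \emph{virtual} classes, not étaleness. Étaleness is an honest scheme-theoretic (more precisely, stack-theoretic) property, i.e.\ vanishing of the cotangent complex $\LL_{c_{\widetilde{\Gamma}}}$, and knowing that two perfect obstruction theories are compatible along the triangle $f^\ast\LL_{Y/M}\to\LL_{X/M}\to\LL_f$ does not force $\LL_f$ to vanish. Moreover, reaching for that comparison here would risk circularity, since the paper uses this proposition \emph{en route} to \Cref{Thm : gluing}. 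The paper instead makes a much more elementary observation: because of the non-degeneracy axiom in \Cref{absquasimapdef}, basepoints of a stable quasimap are disjoint from the nodes, so in a neighbourhood of each node $q_E$ the quasimap is an honest map to the target and the local structure of $c_{\widetilde{\Gamma}}$ is \emph{identical} to that in the stable-maps setting. The entire claim (finite, étale, of degree $\prod_E w_E/\mathrm{lcm}(w_E)_E$) then follows verbatim from \cite[Lemma 9.2]{kim2021degeneration}.

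Your third paragraph, analysing the minimal base monoid at the nodes, is essentially the right argument and is what underlies the cited stable-maps lemma; but this analysis is simultaneously the proof of étaleness and of the degree formula, not merely of the latter. The observation you make at the end, that the relevant log structure depends only on the source curve and the contact orders, is exactly the point, but it should be justified by the quasimap non-degeneracy condition (basepoints away from nodes) rather than left as something that ``should follow.'' The paper elevates that single remark to the whole proof.
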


\begin{proof}
    The existence of the morphism follows from the description of $\cQ^{\log}(X_0,\tilde{\Gamma})$ of \Cref{Lemma : scheme theoretic gluing quasimaps}. Since nodes are not basepoints, the rest of the result follows from the analogous statement for stable maps which is proved in ~\cite[Lemma 9.2]{kim2021degeneration}.
\end{proof}
\begin{theorem}[Gluing]\label{Thm : gluing}
    $$[\cQ^{\log}_{g,n}(X_0,\tilde{\Gamma})]^{\vir} = c_{\widetilde{\Gamma}}^*\Delta^{!}\prod_{V} [\cQ^{\log}_{g_V,\alpha_V}(X_{r(V)}|D,\beta_V)]^{\vir}$$
\end{theorem}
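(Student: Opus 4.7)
The strategy is to adapt Kim's gluing argument for log stable maps \cite{kim2021degeneration} to quasimaps, using the Ciocan-Fontanine--Kim obstruction theory developed in \Cref{cfk obs}. Since $c_{\widetilde\Gamma}$ is étale, it suffices to identify the obstruction theory on $\cQ^{\log}_{g,n}(X_0,\widetilde\Gamma)$ with the one pulled back from $\bigtimes_V\cQ_V$ via $c_{\widetilde\Gamma}$, where the latter carries the virtual class $\Delta^{!}\prod_V[\cQ^{\log}_{g_V,\alpha_V}(X_{r(V)}|D,\beta_V)]^{\vir}$ coming from the Cartesian square \eqref{eq:definition_cross_V}.

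First, I would apply \Cref{prop:obstruction_theory_X_over_Pic} (in the version adapted to a divisor pair and to the non-standard GIT presentation of $X_{r(V)}$ induced by $\cW$) to present each vertex obstruction theory via a complex of the form $\mathbf{R}\pi_{\ast} u^{\ast}(\oplus_{\rho\neq\rho_0}\cO(D_\rho)\oplus\cO)$ over the appropriate log Picard stack. In parallel, \Cref{prop:POT_W_relative_Pic} gives an analogous presentation on $\cQ^{\log}_{g,n}(X_0,\widetilde\Gamma)$ using the pullback of the same line bundles $\cO_{\cW}(D_\rho)$ from $\cW$ to the universal curve $\cC_{\widetilde\Gamma}$. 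It is essential here to use the presentation of $X_{r(V)}$ coming from $\cW$ rather than its standard toric one, in line with the discussion around \Cref{ex:P1_relative_infinity}; otherwise the vertex obstruction theories will not reassemble.

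Next, I would normalise $\cC_{\widetilde\Gamma}$ along the nodes $x_E$ prescribed by the edges of $\widetilde\Gamma$. Since nodes are never basepoints, $u$ sends each $x_E$ to a well-defined point of $D$, and after base change by $c_{\widetilde\Gamma}$ the normalisation decomposes the universal curve into the disjoint union of the vertex universal curves. Tensoring the normalisation sequence
\[
0\to\cO_{\cC_{\widetilde\Gamma}}\to\nu_{\ast}\cO_{\widetilde{\cC}_{\widetilde\Gamma}}\to\bigoplus_E\cO_{x_E}\to 0
\]
with each line bundle $u^{\ast}\cO_{\cW}(D_\rho)$ (and with the trivial bundle) and applying $\mathbf{R}\pi_{\ast}$ produces a distinguished triangle whose middle term is $c_{\widetilde\Gamma}^{\ast}$ of the product CFK complex, and whose third term is a direct sum, over the edges of $\widetilde\Gamma$, of fibres of line bundles at points of $D$; this last piece assembles into precisely the conormal input of $\Delta\colon\prod_E D\hookrightarrow\prod_E D^2$.

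This triangle yields a morphism between the two (dual) obstruction theories, and functoriality of virtual pull-back \cite{manolache12virtual} applied to the étale morphism $c_{\widetilde\Gamma}$ and to the regular embedding $\Delta$ then delivers the equality of classes. The main obstacle I anticipate is the normalisation step: one must verify that, under the $\cW$-induced presentation of $X_{r(V)}$, the normalisation sequence on line bundles really does decouple the obstruction theory into a product up to a clean diagonal error term, and correctly account for the logarithmic structure at the nodes, where the log tangent sheaf contributes an extra factor that must be shown to be already encoded in the constraint that the two sides of each edge land at the same point of $D$.
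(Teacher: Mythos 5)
Your overall strategy — normalise the universal curve, tensor with the pieces of the obstruction theory, apply $\mathbf{R}\pi_*$, and conclude via functoriality of virtual pullback — is the same strategy the paper follows, with the minor cosmetic difference that you prefer the Ciocan-Fontanine--Kim presentation relative to Picard stacks where the paper works directly with $\TT^{\log}_{\cW/\AAA^1}$ relative to the Artin-fan moduli $\frM^{\log}_{g,n}(\sA_0,\Gamma)$; these are established to give the same virtual class, so either can be used.

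There is, however, a genuine gap in the normalisation step. Tensoring the normalisation sequence with $u^*\cO_{\cW}(D_\rho)$ (and $\cO_{\cC}$) and pushing down gives a triangle whose third term is $\oplus_E \ev_E^*\bigl(\oplus_{\rho\neq\rho_1,\rho_2}\cO_D(D_\rho)\oplus\cO_D\bigr)$, which is a CFK-style presentation of $\TT^{\log}_{\cW/\AAA^1}|_D$ — \emph{not} of $T_D$, the conormal input of $\Delta\colon\prod_E D\hookrightarrow\prod_E D^2$. The two differ by a trivial summand $\cO_D$, and your proposal gives no mechanism to cancel it. The paper deals with this explicitly: it introduces the intermediate complex $\cF$ as the cone of a carefully constructed map $\oplus_V j_{V,*}\ell_V^*u_V^*\TT^{\log}_{\cX_{r(V)}} \to \oplus_E i_{E,*}\ev_E^*T_D$ (Notation~\ref{cone}), shows $\mathbb{F}=(\mathbf{R}\pi_*\cF)^\vee$ is a perfect obstruction theory for $\cQ_{\widetilde\Gamma}\to\prod_V\frM_V$ whose class is $c_{\widetilde\Gamma}^*\Delta^!\prod_V[\cQ_V]^{\vir}$, and only then uses the octahedron axiom to produce the triangle $\mathbb{F}\to\mathbb{E}\to\oplus_E\ev_E^*\cO_D[1]$, identifying the leftover $\oplus_E\ev_E^*\cO_D$ with $\mathbb{L}_\delta$ for the Artin-fan base change $\delta\colon\frM_\Gamma\to\prod_V\frM_V$ (via Kim's Lemma 9.2). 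The conclusion then follows from the compatible triple $(\mathbb{F},\mathbb{E},\mathbb{L}_\delta)$ and virtual pullback functoriality.

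Your last paragraph anticipates an extra factor from the log structure at the nodes, but misattributes it: you suggest it is ``already encoded in the constraint that the two sides of each edge land at the same point of $D$.'' That constraint is exactly the diagonal $\Delta$ and produces $T_D$; it does \emph{not} account for the extra $\cO_D$. The $\cO_D$ summand is the node-smoothing direction and is absorbed by the change of relative base from $\frM_\Gamma$ (resp.\ $\frPic^{\log,s}_{g,n,\underline d}(\sA_0,\Gamma)$) to $\prod_V\frM_V$ (resp.\ the product of vertex Picard stacks), not by the diagonal in $D^2$. Without separating these two ``error terms'' and supplying the octahedron/compatible-triple argument, the proposal does not close.
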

As in \cite{kim2021degeneration, bousseau2019tropical}, \Cref{Thm : gluing} will be proved by showing that both classes come from compatible perfect obstruction theories with respect to different bases. 

In the following we fix a decorated bipartite graph $\widetilde{\Gamma}$ and use the following simplified notation.
Let $\cQ_{V}$ denote $\cQ^{\log}_{g_V,\alpha_V}(X_{r(V)}|D,\beta_V)$. Similarly, we denote 
\begin{align}
\cQ_{\widetilde{\Gamma}} = \cQ^{\log}_{g,n}(X_0,\tilde{\Gamma}), \qquad &\prod_{V} \cQ_{V} = \prod_{V}\cQ^{\log}_{g_V,\alpha_V}(X_{r(V)}|D,\beta_V), \label{eq:Q_gamma_tilde}\\  
\frM_{\Gamma} = \frM_{g,n}^{\log}(\mathscr{A}_0,\Gamma), \qquad &\prod_V \frM_V = \prod_{V} \frM^{\log}_{g_V,\alpha_V}(\sA).
\end{align}
Let $\pi_V:\cC_V\to \cQ_{V}$ be the universal curve over $\cQ_V$, and let $\cC_{V,\widetilde{\Gamma}}$ denote the pieces of the partial normalisation of $\cC$ along the nodes corresponding to the edges of $\widetilde{\Gamma}$. With this, we have a commutative diagram
\begin{equation}
\begin{tikzcd}\label{diagram : big}
\cW                                          &                                                                   & \cX_{r(V)} \arrow[ll, "i_{r(V)}"]           \\
\cC \arrow[d, "\pi"] \arrow[u, "u"]        & {\cC_{V,\widetilde{\Gamma}}} \arrow[l, "j_V"] \arrow[r, "\ell_V"] & \cC_V \arrow[d, "\pi_V"] \arrow[u, "u_V"] \\
\cQ_{\widetilde{\Gamma}} \arrow[rr, "g_V"] &                                                                   & \cQ_V                                    
\end{tikzcd}
\end{equation}
as well as 
\begin{equation*}
    \begin{tikzcd}
\cQ_{\widetilde{\Gamma}} \arrow[d, "i_E", hook] \arrow[r, "\ev_E"] & {D \subset \cD} \arrow[d, "i", hook] \\
\cC \arrow[r, "u"]                                        & \cW                   
\end{tikzcd}
\end{equation*}
where $\ev_E$ and $i_E$ utilise the fact that the universal curve has a distinguished node for the edge $E$. 

Recall that by \Cref{quasimaps obs}, for each vertex $V \in V(\widetilde{\Gamma})$, the complex $(\mathbf{R}\pi_{V,*}u_V^*\TT^{\log}_{\cX_{r(V)}})^{\vee}$ is a perfect obstruction theory for the morphism $$\cQ^{\log}_{g_V,\alpha_V}(X_{r(V)}|D,\beta_V)\to\frM_{g_V,\alpha_V}^{\log}(\sA).$$

Let $\cE$ denote the complex $u^*\TT^{\log}_{\cW/\AAA^1}$ on the universal curve $\cC_{\widetilde{\Gamma}}$ over $\cQ^{\log}_{g,n}(X_0,\tilde{\Gamma})$. Let $$\mathbb{E}=(\mathbf{R}\pi_*\cE)^{\vee}.$$ By \Cref{prop : perfectness}, $\mathbb{E}$ defines a perfect obstruction theory of $\cQ^{\log}_{g,n}(X_0,\tilde{\Gamma})$ relative to $\frM_{g,n}^{\log}(\mathscr{A}_0,\Gamma)$. 
\begin{lemma}\label{seq e}
    There is a distinguished triangle
\begin{equation}\label{exact seq : E}
\cE \rightarrow  \oplus_{V \in V(\widetilde{\Gamma})}j_{V,*} \ell_V^*
u_V^*\TT^{\log}_{\cX_{r(V)}} \rightarrow \oplus_{E \in E(\widetilde{\Gamma})} i_{E,*}\ev_E^*\TT^{\log}_{\cW/\AAA^1}|_{D} \rightarrow \cE[1]
\end{equation}
where $i_E : \cQ_{\widetilde{\Gamma}} \rightarrow \cC_{\widetilde{\Gamma}}$ is the inclusion of the node corresponding to $E$ and and $\ev_E : \cQ_{\widetilde{\Gamma}} \rightarrow D$ is the evaluation at the node.
\end{lemma}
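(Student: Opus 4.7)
The plan is to derive the distinguished triangle from the partial normalisation sequence of the universal curve $\cC$ along the nodes indexed by the edges $E(\widetilde{\Gamma})$. Writing $\nu = \sqcup_V j_V$ for the partial normalisation, one has the standard short exact sequence of $\cO_{\cC}$-modules
\begin{equation*}
0 \to \cO_{\cC} \to \nu_* \cO_{\sqcup_V \cC_{V,\widetilde{\Gamma}}} \to \bigoplus_{E \in E(\widetilde{\Gamma})} i_{E,*} \cO_{\cQ_{\widetilde{\Gamma}}} \to 0.
\end{equation*}

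By \Cref{prop:log_tangent_curlyW_A1}, $\TT^{\log}_{\cW/\AAA^1}$ is represented by a bounded complex of locally free sheaves, so $\cE = u^* \TT^{\log}_{\cW/\AAA^1}$ is a perfect complex on $\cC$. Derived-tensoring the above short exact sequence with $\cE$ and applying the projection formula to the finite morphism $\nu$ and to the closed (regular, codimension one) embeddings $i_E$, I obtain a distinguished triangle
\begin{equation*}
\cE \to \bigoplus_V j_{V,*}(j_V^* \cE) \to \bigoplus_E i_{E,*}(i_E^* \cE) \to \cE[1].
\end{equation*}
No derived corrections are needed in the projection formula because $\cE$ is locally free as a complex.

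It remains to identify the two non-trivial terms with the expressions in the statement. For the middle term, diagram \eqref{diagram : big} yields $u \circ j_V = i_{r(V)} \circ u_V \circ \ell_V$, so $j_V^* \cE = \ell_V^* u_V^* (i_{r(V)}^* \TT^{\log}_{\cW/\AAA^1})$, and one invokes the identification $i_{r(V)}^* \TT^{\log}_{\cW/\AAA^1} \simeq \TT^{\log}_{\cX_{r(V)}}$, with $\cX_{r(V)}$ carrying the divisorial log structure from $D$. For the right term, the quasimap $u$ sends each node $i_E$ into $X_1 \cap X_2 = D$, so $u \circ i_E$ factors through $D \subset \cW$ and hence $i_E^* \cE = \ev_E^* (\TT^{\log}_{\cW/\AAA^1}|_D)$. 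The main obstacle is the tangent identification $i_{r(V)}^* \TT^{\log}_{\cW/\AAA^1} \simeq \TT^{\log}_{\cX_{r(V)}}$: the subtlety is that $\cX_{r(V)}$ is equipped here with the non-standard GIT presentation induced from $\cW$ (see \Cref{ex:P1_relative_infinity}), so one must check directly that restricting the explicit complex of \Cref{prop:log_tangent_curlyW_A1} along $D_{\rho_{r(V)}}$ recovers the logarithmic Euler sequence of $\cX_{r(V)}$ with the correct log structure, namely a computation in which the character and trivial summand corresponding to $\rho_{r(V)}$ drop out, leaving precisely the complex computing $\TT^{\log}_{\cX_{r(V)}}$.
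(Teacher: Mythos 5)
Your proof is correct and follows essentially the same route as the paper: start from the partial normalisation short exact sequence along the edge-nodes, derived-tensor with the perfect complex $\cE = u^*\TT^{\log}_{\cW/\AAA^1}$, apply the projection formula, and then identify the two nonzero terms using commutativity of Diagram \eqref{diagram : big} together with the restriction isomorphism $\TT^{\log}_{\cW/\AAA^1}|_{\cX_{r(V)}} \cong \TT^{\log}_{\cX_{r(V)}}$ (the paper cites an explicit description from the third author's thesis for this; you propose verifying it directly from \Cref{prop:log_tangent_curlyW_A1}, which is a reasonable substitute). The one thing to be careful of is your parenthetical describing $i_E$ as a regular codimension-one embedding: the node section lies in the singular locus of the family, so $i_E$ is not a regular embedding, but this does not affect the argument since $\cE$ being a complex of locally free sheaves is already enough for the projection formula.
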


\begin{proof}
There is a partial normalisation exact sequence on the universal curve  $\cC_{\widetilde{\Gamma}}$ given by 
$$0 \rightarrow \cO_{\cC_{\widetilde{\Gamma}}} \rightarrow \oplus_{V \in V(\widetilde{\Gamma})} j_{V,*}\cO_{\cC_{V,\widetilde{\Gamma}}} \rightarrow \oplus_{E \in E(\widetilde{\Gamma})} i_{E,*}\cO_{\cQ_{\widetilde{\Gamma}}} \rightarrow 0$$
We claim the result follows from tensoring the associated distinguished triangle with $\cE$. We have that $i_{E,*}\cO_{\cQ_{\widetilde{\Gamma}}}\otimes u^*\TT^{\log}_{\cW/\AAA^1} \cong i_{E,*}(i_{E}^* u^* \TT^{\log}_{\cW/\AAA^1}) \cong i_{E,*}\ev_E^* \TT^{\log}_{\cW/\AAA^1}|_D$. For the middle term we can notice from our explicit description above that $\TT^{\log}_{\cW/\AAA^1}|_{\cX_{r(V)}} \cong \TT^{\log}_{\cX_{r(V)}}$ using the explicit description for the latter given in ~\cite[3.5.20]{shafi2022enumerative}. We have 
\[
j_{V,*} \cO_{\cC_{V,\widetilde{\Gamma}}} \otimes u^* \TT^{\log}_{\cW/\AAA^1} \cong j_{V,*}j_{V}^*u^*\TT^{\log}_{\cW/\AAA^1} \cong j_{V}^* \ell_V^* u_V^* \TT^{\log}_{\cX_{r(V)}}
\]
by the projection formula and commutativity of Diagram \eqref{diagram : big}. This proves the claim.
\end{proof}
\begin{lemma}
We have a morphism 
\begin{equation}\label{map obs}
    \oplus_{V \in V(\widetilde{\Gamma})} 
 j_{V,*}\ell_{V}^*u_V^*\TT_{\cX_{r(V)}}^{\log} \rightarrow \oplus_{E \in E(\widetilde{\Gamma})}i_{E,*}\ev_E^* T_D.
\end{equation}
\end{lemma}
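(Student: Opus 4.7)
My strategy is to construct the morphism one edge-vertex incidence at a time, combining the Poincar\'e residue sequence for the log pair $(\cX_{r(V)}, D)$ with the natural restriction-to-the-node maps, and then sum with appropriate signs over all incidences of $\widetilde{\Gamma}$. The geometric input that makes everything work is that the quasimap sends each node corresponding to an edge $E$ into the shared divisor $D$, which is what allows the log tangent data to be compared across the two sides of the node.

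Fix a pair $(V, E)$ with $V \in V(\widetilde{\Gamma})$ incident to $E \in E(\widetilde{\Gamma})$. The node of $\cC$ associated to $E$ lifts to a marked point on the partial normalization piece $\cC_{V,\widetilde{\Gamma}}$, producing a section $\sigma_{V,E}\colon \cQ_{\widetilde{\Gamma}} \to \cC_{V,\widetilde{\Gamma}}$ with $j_V \circ \sigma_{V,E} = i_E$. Since the node is sent into the common divisor, the composition $u_V \circ \ell_V \circ \sigma_{V,E}$ factors as $\ev_E$ followed by the inclusion $D \hookrightarrow \cX_{r(V)}$. Next, the Poincar\'e residue sequence for the smooth log pair $(\cX_{r(V)}, D)$,
\begin{equation*}
0 \to \cO_D \to \TT^{\log}_{\cX_{r(V)}}\rest{D} \to T_D \to 0,
\end{equation*}
produces a canonical surjection $\TT^{\log}_{\cX_{r(V)}}\rest{D} \twoheadrightarrow T_D$. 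Pulling this back via $\ev_E$, composing with the adjunction unit $\ell_V^* u_V^* \TT^{\log}_{\cX_{r(V)}} \to \sigma_{V,E,*}\sigma_{V,E}^* \ell_V^* u_V^* \TT^{\log}_{\cX_{r(V)}}$, and then pushing forward along $j_V$ using $i_E = j_V \circ \sigma_{V,E}$, yields a morphism
\begin{equation*}
j_{V,*} \ell_V^* u_V^* \TT^{\log}_{\cX_{r(V)}} \to i_{E,*} \ev_E^* T_D
\end{equation*}
for each incidence $(V,E)$.

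The full morphism \eqref{map obs} is then obtained by summing these contributions over all incidences, with the same sign conventions as in the partial normalization sequence tensored with $\cE$ that yields \Cref{seq e}. By construction, this fits into a commutative square with the connecting map of \Cref{seq e} and the Poincar\'e residue on $\cW/\AAA^1$ (interpreted via the explicit description in \Cref{prop:log_tangent_curlyW_A1}), which will be precisely the compatibility needed downstream in \Cref{Thm : gluing}. The only delicate point I anticipate is checking that the two contributions coming from the two vertices adjacent to a given edge $E$ land in the \emph{same} copy of $\ev_E^* T_D$; this follows from the fact that $D \subset X_1$ and $D \subset X_2$ are identified along the central fibre of $W$, together with the naturality of the residue sequence.
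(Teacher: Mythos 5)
Your argument is essentially the same as the paper's: you restrict the log tangent complex through the node section using the factorization $u_V \circ \ell_V \circ \sigma_{V,E} = \iota_D \circ \ev_E$, apply a tangential projection $\TT^{\log}_{\cX_{r(V)}}\rest{D} \to T_D$, push forward by adjunction to land in $i_{E,*}\ev_E^*T_D$, and assemble the full morphism by taking differences over the two vertices of each edge. The one place where you should be more careful is the invocation of the ``Poincar\'e residue sequence $0\to\cO_D\to\TT^{\log}_{\cX_{r(V)}}\rest{D}\to T_D\to 0$'' as if $\TT^{\log}_{\cX_{r(V)}}$ and $T_D$ were sheaves: because $\cX_{r(V)}$ is a quotient stack, both are two-term complexes, and what you really need is a morphism of complexes, not a short exact sequence of sheaves. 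The paper constructs exactly this map term-by-term using the explicit Euler-type presentations $\TT^{\log}_{\cX_{r(V)}}\simeq[\cO^{\oplus s}\to\oplus_{\rho\ne\rho_1}\cO(\cD_\rho)\oplus\cO]$ and $T_\cD\simeq[\cO^{\oplus s}_\cD\to\oplus_{\rho\ne\rho_1}\cO_\cD(\cD_\rho)]$, where the map on degree-one terms is restriction plus projection away from the $\cO$ summand; that explicit construction is the residue map in disguise, and your closing remark about ``interpreting the residue via the explicit description'' shows you see the need for it, but it should be front and centre rather than an afterthought.
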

\begin{proof}
    Let $\cD_\rho$ be the toric divisors of $\cX_{r(V)}$ and without loss of generality we assume that $\cD=\cD_{\rho_1}$. The logarithmic tangent complex of $\cX_{r(V)}$ is  
 \[\cO^{\oplus s}_{\cX_{r(V)}}\to \oplus_{\rho\neq \rho_1} 
\cO_{\cX_{r(V)}}(\cD_\rho)\oplus\cO_{\cX_{r(V)}}
 \]
 and the tangent complex of $\cD$ is
\[\cO_{\cD}^{\oplus s}\to \oplus_{\rho\neq \rho_1} \cO_{\cD}(\cD_\rho)
\]
To construct the morphism \eqref{map obs} it suffices to construct the morphism for $\TT^{\log}_{\cX_{r(V)}}$ and $T_{D}$ replaced with either the first or second terms in the above complexes. Consider the diagram
\begin{equation*}
    \begin{tikzcd}
\cC                                                                            & {\cC_{V,\widetilde{\Gamma}}} \arrow[l, "j_V"', hook] \arrow[r, "\ell_V"] \arrow[rd, "u'_V"] & \cC_V \arrow[d, "u_V"] \\
\cQ_{\widetilde{\Gamma}} \arrow[r, "\ev_E"] \arrow[u, "i_E"] \arrow[ru, "j^E_V"] & D \arrow[r, hook]                                                                           & \cX_{r(V)}            
\end{tikzcd}
\end{equation*}
For the second term in the complex (the first is similar) we see that
\[
j_{V,*}{u'}^*_V \left(\oplus_{\rho\neq \rho_1}\cO_{\cX_{r(V)}}(\cD_\rho)\oplus\cO_{\cX_{r(V)}}\right) \rightarrow j_{V,*}{u'}^*_V \left(\oplus_{\rho\neq \rho_1} \cO_{\cD}(\cD_\rho)\right)
\]
by the restriction and the projection. And then we have a morphism 
\[
j_{V,*}{u'}^*_V \left(\oplus_{\rho\neq \rho_1} \cO_{\cD}(\cD_\rho)\right) \rightarrow j_{V,*}j^E_{V,*} j_{V}^{E,*}{u'}^*_V \left(\oplus_{\rho\neq \rho_1} \cO_{\cD}(\cD_\rho)\right) = i_{E,*}\ev_{E}^*\left(\oplus_{\rho\neq \rho_1} \cO_{D}(D_\rho)\right)
\]
By adjunction and commutativity of the diagram. This procedure builds a morphism like in \eqref{map obs}, but only for one summand of the source and target for a vertex $V$ contained in an edge. We form the morphism \eqref{map obs} by taking the difference of sections for every pair of vertices contained in an edge. 
\end{proof}
\begin{notation}\label{cone}
    Let $\cF$ be defined by the following distinguished triangle
     \[\cF \rightarrow \oplus_{V\in V(\widetilde{\Gamma})} j_{V,*} \ell_{V}^* u_V^*\TT_{\cX_{r(V)}}^{\log}  \rightarrow \oplus_{E(\widetilde{\Gamma})}i_{E,*}\ev_E^* T_{D} \rightarrow \cF[1]
     \]
    on the universal curve over $\cQ_{\widetilde{\Gamma}}$. Let $\mathbb{F}:=(\mathbf{R}\pi_*\cF)^{\vee}$. 
\end{notation}

\begin{lemma}
   In notation as above, we have that $\mathbb{F}$ is a perfect obstruction theory for the morphism 
   \[\cQ^{\log}_{g,n}(X_0,\widetilde{\Gamma}) \to\prod_{V} \frM^{\log}_{g_V,\alpha_V}(\sA).
   \]
   The induced class is $c_{\widetilde{\Gamma}}^*\Delta^!\prod_{V} [\cQ^{\log}_{g_V,\alpha_V}(X_V|D,\beta_V)]^{\vir}$.
\end{lemma}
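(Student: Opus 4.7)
The plan is to realise $\mathbb{F}$ as a perfect obstruction theory by pulling back the product obstruction theory along a two-step factorisation, and then to invoke functoriality of virtual pullback to identify the class.

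First I would start from the Cartesian diagram \eqref{eq:definition_cross_V} and assemble a compatible sequence of obstruction theories. By \Cref{quasimaps obs}, each factor $\cQ_V$ carries the POT $(\mathbf{R}\pi_{V,*}u_V^*\TT^{\log}_{\cX_{r(V)}})^{\vee}$ relative to $\frM_V$, so the product $\prod_V \cQ_V \to \prod_V \frM_V$ has POT $\bigoplus_V (\mathbf{R}\pi_{V,*}u_V^*\TT^{\log}_{\cX_{r(V)}})^{\vee}$. The vertical map $\bigtimes_V \cQ_V \hookrightarrow \prod_V \cQ_V$ is base-changed from the regular embedding $\Delta\colon \prod_E D \hookrightarrow \prod_E D^2$ with conormal $\bigoplus_E \ev_E^*\Omega_D$, and pulling back the product POT along it produces a POT on $\bigtimes_V \cQ_V \to \prod_V \frM_V$ fitting into a triangle against the conormal. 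Finally, $c_{\widetilde{\Gamma}}$ is \'etale so pulling back along it yields a POT on $\cQ_{\widetilde{\Gamma}} \to \prod_V \frM_V$.

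Next I would identify this POT with $\mathbb{F}$ by pushing the triangle of \Cref{cone} down along $\pi$. Since $\ell_V$ is (a component of) the normalisation along the nodes corresponding to edges of $\widetilde{\Gamma}$, the push-forward of $j_{V,*}\ell_V^*u_V^*\TT^{\log}_{\cX_{r(V)}}$ agrees with $g_V^*\mathbf{R}\pi_{V,*}u_V^*\TT^{\log}_{\cX_{r(V)}}$ by flat base change along the diagram \eqref{diagram : big}; and $\mathbf{R}\pi_* i_{E,*}\ev_E^*T_D=\ev_E^*T_D$ since $i_E$ is a section. Dualising, $\mathbb{F}$ is identified with the cocone of the difference-of-evaluations map
\begin{equation*}
    \bigoplus_V g_V^*\mathbb{E}_V \longrightarrow \bigoplus_E \ev_E^*\Omega_D[1],
\end{equation*}
which is precisely the POT obtained from the diagonal pullback construction.

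For the class identification, I would apply functoriality of virtual pullback~\cite[Theorem 4.8]{manolache12virtual} to the factorisation $\cQ_{\widetilde{\Gamma}} \xrightarrow{c_{\widetilde{\Gamma}}} \bigtimes_V \cQ_V \to \prod_V \cQ_V \to \prod_V \frM_V$: the virtual class $\prod_V [\cQ_V]^{\vir}$ pulls back to $\bigtimes_V\cQ_V$ by $\Delta^!$ and then to $\cQ_{\widetilde{\Gamma}}$ by the \'etale $c_{\widetilde{\Gamma}}^*$, giving $c_{\widetilde{\Gamma}}^*\Delta^!\prod_V [\cQ_V]^{\vir}$. The main obstacle will be checking the compatibility of the intermediate POTs carefully: one must verify that the POT coming from the diagonal pullback on $\bigtimes_V \cQ_V$ agrees after pulling back along $c_{\widetilde{\Gamma}}$ with $\mathbb{F}$, and that the restriction $\TT^{\log}_{\cW/\AAA^1}|_{\cX_{r(V)}} \cong \TT^{\log}_{\cX_{r(V)}}$ already observed in the proof of \Cref{prop:log_tangent_curlyW_A1} correctly identifies the vertex terms. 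This is essentially a diagram chase built on top of the partial normalisation sequence, and should go through as in~\cite[Section 9]{kim2021degeneration}.
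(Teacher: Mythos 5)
Your proposal follows essentially the same route as the paper: you build the POT on $\cQ_{\widetilde{\Gamma}} \to \prod_V \frM_V$ from the product POT $\mathbb{A}$ on $\prod_V\cQ_V$, the diagonal POT $\mathbb{B}$ coming from the regular embedding $\Delta$, and base change/normalisation identifications, then invoke the compatible-triple criterion and functoriality of virtual pullback, exactly as in the paper's argument. The only imprecision is a shift in your ``cocone'' description (the correct triangle is $g^*\mathbb{A}\to\mathbb{F}\to\mathbb{B}$ with $\mathbb{B}\simeq\bigoplus_E\ev_E^*\Omega_D[1]$, not $\mathbb{F}=\mathrm{cocone}(g^*\mathbb{A}\to\bigoplus_E\ev_E^*\Omega_D[1])$) and a misattribution of the identification $\TT^{\log}_{\cW/\AAA^1}|_{\cX_{r(V)}}\cong\TT^{\log}_{\cX_{r(V)}}$, which is invoked in \Cref{seq e} rather than in \Cref{prop:log_tangent_curlyW_A1}; neither affects the substance.
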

\begin{proof}
    Consider the diagram
\begin{equation*}
    \begin{tikzcd}
\cQ_{\widetilde{\Gamma}} \arrow[r,"c_{\widetilde{\Gamma}}"]\arrow[rr, "g", bend left] & \bigtimes_V \cQ_V \arrow[r,"\Delta_{\cQ}"] \arrow[d, "\ev_E"]                          & {\prod_{V}\cQ_V} \arrow[r] \arrow[d] & {\prod_{V}\frM_V} \\
            & \prod_{E \in E(\widetilde{\Gamma})} D_E \arrow[r, "\Delta"] & \prod_{E \in E(\widetilde{\Gamma})} D_E^2                                                              &                                                                     
\end{tikzcd}
\end{equation*}
The map $g:\cQ_{\widetilde{\Gamma}} \rightarrow \prod_V \cQ_V$ is the composition of the \'etale map $c_{\widetilde{\Gamma}}$ and $\Delta_{\cQ}$, so it admits a perfect obstruction theory induced by the regular embedding $\Delta$. Let
\[\mathbb{B}^{\vee}:= \oplus_{E \in E(\widetilde{\Gamma})} \ev_E^*[0 \rightarrow  T_{D}]
\]
in degrees $[0,1]$. Then $\mathbb{B}$ is a perfect obstruction theory for $g$. We know that $\prod_V \cQ_V \rightarrow \prod_V \frM_V$ admits a dual perfect obstruction theory 
\[
\mathbb{A}^{\vee}:=\oplus_{V}\mathbf{R}\pi_{V,*}u_{V}^*\TT^{\log}_{\cX_{r(V)}}. 
\]
In the following we use $\mathbb{A}$ and $\mathbb{B}$ to construct a perfect obstruction theory for $\cQ_{\widetilde{\Gamma}} \rightarrow \prod_{V} \frM_{V}$. 
We first prove that $g^*\mathbb{A}^{\vee}\cong \oplus_{V \in V(\widetilde{\Gamma})}\mathbf{R}\pi_*j_{V,*}\ell_V^* u_V^* \TT^{\log}_{\cX_{r(V)}}$. Consider the following commutative diagram with the  square being Cartesian
\[
\begin{tikzcd}
\cC \arrow[rd, "\pi"']        & {\cC_{V,\widetilde{\Gamma}}} \arrow[l, "j_V"'] \arrow[d, "\pi_{V,\widetilde{\Gamma}}"]\arrow[r, "\ell_V"] & \cC_V \arrow[d, "\pi_V"] \\
&\cQ_{\widetilde{\Gamma}} \arrow[r, "g_V"]                                                                   & \cQ_V.                                    
\end{tikzcd}
\]
The statement follows from $\pi_{V,\widetilde{\Gamma},*}=\pi_*j_{V,*}$ and cohomology and base change for $u_V^* \TT^{\log}_{\cX_{r(V)}}$ in the above square. 

Note that $\pi_*i_{E,*}\ev_E^* T_D=\ev^*_ET_D$. With these we can apply \Cref{map obs} to get a morphism $g^*\mathbb{A}^{\vee}[-1] \to \mathbb{B}^{\vee}$. By the definition of $\mathbb{F}$ in \Cref{cone}, we have a distinguished triangle
 \[\mathbb{F} \rightarrow \mathbb{B}  \rightarrow g^*\mathbb{A}[1] \rightarrow \mathbb{F}[1]
    \]
    Moreover, we obtain a morphism of distinguished triangles
\begin{equation*}
\begin{tikzcd}
\mathbb{B} \arrow[d] \arrow[r]                              & g^*\mathbb{A}[1] \arrow[d] \arrow[r]                                              & \mathbb{F}[1] \arrow[d] \arrow[r] & {} \\
\mathbb{L}_{{\cQ_{\tilde{\Gamma}}}/\prod \cQ_{V}} \arrow[r] & g^*\mathbb{L}_{\prod \cQ_V/\prod \frM_{\Gamma}}[1] \arrow[r] & {\mathbb{L}_{\cQ_{\widetilde{\Gamma}}/\prod \frM_V}[1]} \arrow[r]                  & {}
\end{tikzcd}
\end{equation*}
where the commutativity of left square above is commutativity of Diagram 9.10 in \cite{kim2021degeneration}. A repeated application of the Four Lemma implies that $\mathbb{F}$ is an obstruction theory for $\cQ_{\widetilde{\Gamma}} \rightarrow \prod_{V} \frM_{V}$. Moreover, the diagram above shows that we have a compatible triple of obstruction theories. By functoriality of pull-backs we have that the class induced by $\mathbb{F}$ is equal to the class 
\[c_{\widetilde{\Gamma}}^*\Delta^!\prod_{V} [\cQ^{\log}_{g_V,\alpha_V}(X_V|D,\beta_V)]^{\vir}. \qedhere
\]
\end{proof}

We have morphisms
\begin{equation*}
    \begin{tikzcd}
\cQ_{\tilde{\Gamma}} \arrow[r] & \frM_{\Gamma} \arrow[r, "\delta"] & \prod_V \frM_{V}.
\end{tikzcd}
\end{equation*}

\begin{proposition}
Let $\mathbb{E}^{\vee}$ be the complex $\mathbf{R}\pi_*u^*\TT^{\log}_{\cW/\AAA^1}$.
    The dotted arrow in the following diagram can be filled in to form a commutative diagram in the derived category of $\cQ_{\tilde{\Gamma}}$.
\begin{equation*}
\begin{tikzcd}
\mathbb{F} \arrow[d] \arrow[r, dashed]                              & \mathbb{E} \arrow[d] \arrow[r]                                              & \mathbb{L}_{\delta}[1] \arrow[d, Rightarrow] \arrow[r] & {} \\
\mathbb{L}_{{\cQ_{\tilde{\Gamma}}}/\prod \frM_{V}} \arrow[r] & \mathbb{L}_{{\cQ_{\tilde{\Gamma}}}/ \frM_{\Gamma}} \arrow[r] & {\mathbb{L}_{\delta}[1]} \arrow[r]                  & {}
\end{tikzcd}
\end{equation*}
\end{proposition}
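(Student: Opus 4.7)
The plan is to construct the dashed arrow $\mathbb{F}\to\mathbb{E}$ by first producing an underlying morphism of complexes on the universal curve $\cC_{\tilde\Gamma}$ and then applying $\mathbf{R}\pi_\ast(-)^\vee$. By Lemma \ref{seq e} and Notation \ref{cone}, both $\cE$ and $\cF$ fit into distinguished triangles sharing the common middle term $M=\oplus_V j_{V,\ast}\ell_V^\ast u_V^\ast\TT^{\log}_{\cX_{r(V)}}$, with third terms $N_E=\oplus_E i_{E,\ast}\ev_E^\ast\TT^{\log}_{\cW/\AAA^1}|_D$ and $N_F=\oplus_E i_{E,\ast}\ev_E^\ast T_D$ respectively. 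The first step is to exhibit a canonical morphism $N_E\to N_F$ arising from the natural surjection $\TT^{\log}_{\cW/\AAA^1}|_D \twoheadrightarrow T_D$, whose kernel is the line bundle $\cO_D$ corresponding to the node-smoothing direction of the log structure; this is checked locally using the explicit description of $\TT^{\log}_{\cW/\AAA^1}$ supplied by \Cref{prop:log_tangent_curlyW_A1}.

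Once $N_E\to N_F$ is in place, the compatibility with the maps $M\to N_E$ and $M\to N_F$ needs to be verified: the map $M\to N_E$ is essentially the \emph{difference-of-values at the two branches of a node} built from the partial-normalisation triangle in the proof of \Cref{seq e}, and its composite with $N_E\to N_F$ should coincide with the defining map $M\to N_F$, because projecting the log tangent of $\cW/\AAA^1$ at the node onto $T_D$ kills exactly the log-smoothing direction that the gluing-of-quasimaps data records. Taking fibres of the two triangles then yields a morphism $\cE\to\cF$, and the cofibre fits into a distinguished triangle
\[
\cE \to \cF \to K \to \cE[1], \qquad K=\oplus_E i_{E,\ast}\ev_E^\ast\cO_D.
\]
Applying $\mathbf{R}\pi_\ast$ and dualising reverses the direction and gives the dashed arrow $\mathbb{F}\to\mathbb{E}$ together with a candidate third term $(\mathbf{R}\pi_\ast K)^\vee$. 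Because each $i_E$ is a section of $\pi$ and $\ev_E^\ast\cO_D\cong\cO_{\cQ_{\tilde\Gamma}}$, this simplifies to a direct sum of trivial line bundles indexed by $E(\tilde\Gamma)$, shifted appropriately.

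The main obstacle will be the third step: identifying $(\mathbf{R}\pi_\ast K)^\vee$ with $\mathbb{L}_\delta[1]$. The morphism $\delta:\frM_\Gamma\to\prod_V\frM_V$ is gluing/smoothing at the bipartite nodes, and its cotangent complex is concentrated in a single degree with a one-dimensional contribution per node (the smoothing parameter of each log node). This identification is the logarithmic analogue of the stable-map case, and I would follow the argument of \cite[Section 9]{kim2021degeneration}, adapting it using our explicit description of $\TT^{\log}_{\cW/\AAA^1}$ and the fact that the log structures on the moduli spaces of logarithmic curves are pulled back from the Artin fans.

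Finally, the commutativity of the left square is a formal functoriality: the obstruction-theory maps $\mathbb{F}\to\LL_{\cQ_{\tilde\Gamma}/\prod\frM_V}$ and $\mathbb{E}\to\LL_{\cQ_{\tilde\Gamma}/\frM_\Gamma}$ both arise from a single Behrend--Fantechi-style construction on the universal curve (as in \Cref{prop : perfectness} and \Cref{quasimaps obs}), and the morphism $\cE\to\cF$ built above is precisely the one induced by restricting the log tangent of $\cW$ to the subvarieties $\cX_{r(V)}$ and recording the gluing data at nodes. Once the left square commutes, the five-lemma on the two triangles forces the induced map on third terms to be the canonical isomorphism labelled by the double arrow in the diagram, completing the construction.
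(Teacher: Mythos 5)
Your construction matches the paper's approach: you build the dashed arrow from the two distinguished triangles of \Cref{seq e} and \Cref{cone}, using the natural map on the third terms $\oplus_E i_{E,*}\ev_E^*\TT^{\log}_{\cW/\AAA^1}|_D \to \oplus_E i_{E,*}\ev_E^*T_D$ (with kernel $\oplus_E i_{E,*}\ev_E^*\cO_D$), obtain the third distinguished triangle from the octahedron axiom, and then identify the cofibre with $\mathbb{L}_\delta$ via \cite[Lemma 9.2]{kim2021degeneration}. The only genuine difference is cosmetic --- you work on the universal curve and apply $\mathbf{R}\pi_*(-)^\vee$ at the end, whereas the paper pushes forward first --- and the compatibility of the middle square that you flag as needing verification is built into the explicit ``restrict, project, take differences'' construction of \eqref{map obs} in the paper.

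The one step whose justification does not hold up is the final one. You invoke the five lemma to conclude that, once the left square commutes, the induced map on third terms is forced to be the canonical identification. The five lemma does not apply here: the outer vertical arrows are obstruction-theory morphisms, not isomorphisms, so nothing is forced, and even a morphism of distinguished triangles which is the identity on two of the terms need not be the identity on the third. What actually has to be checked --- and what the paper verifies by invoking the commutativity of Diagram 9.12 of \cite{kim2021degeneration} --- is that the morphism $\mathbb{F}\to\mathbb{E}$ produced by the octahedron is the same as the one obtained by completing the canonical morphism $\mathbb{E}\to\mathbb{L}_\delta[1]$ to a distinguished triangle. You already point to the right reference; the five-lemma rationale should be replaced by this comparison of the two constructions of the dashed arrow. (A small bookkeeping slip: what must be identified with $\mathbb{L}_\delta$ is $(\mathbf{R}\pi_*K)^\vee$, not $(\mathbf{R}\pi_*K)^\vee[1]$, since the rotated triangle reads $(\mathbf{R}\pi_*K)^\vee\to\mathbb{F}\to\mathbb{E}\to(\mathbf{R}\pi_*K)^\vee[1]$.)
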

\begin{proof}
If we take the pushforward of Equation \eqref{exact seq : E} under $\pi$ we obtain the distinguished triangle
    \begin{equation} \label{seq:e}
    \mathbf{R}\pi_*u^*\TT^{\log}_{\cW/\AAA^1} \rightarrow \oplus_{V\in V(\widetilde{\Gamma})} \mathbf{R}\pi_* j_{V,*} \ell_V^* u_V^*\TT^{\log}_{\cX_V}\rightarrow \oplus_{E\in E(\widetilde{\Gamma})}\ev_E^*\TT^{\log}_{\cW/\AAA^1} |_D \rightarrow  
    \end{equation}
    
To construct the dotted arrow we use the definition of $\mathbb{F}$, the distinguished triangle \eqref{seq:e} and \Cref{map obs}. More precisely, we have solid arrows which induce the dashed arrow making the following diagram commute
\begin{equation*}
\begin{tikzcd}
\mathbb{E}^{\vee} \arrow[d, dashed] \arrow[r]                              & \oplus_{V\in V(\widetilde{\Gamma})}  \mathbf{R}\pi_* j_{V,*} \ell_V^* u_V^*\TT^{\log}_{\cX_V} \arrow[d] \arrow[r]                                              & \oplus_{E\in E(\widetilde{\Gamma})}\ev_E^*T^{\log}_{\cW/\AAA^1}|_{D} \arrow[d] \arrow[r] & {} \\
\mathbb{F}^{\vee}\arrow[r] &  \oplus_{V\in V(\widetilde{\Gamma})}  \mathbf{R}\pi_* j_{V,*} \ell_V^* u_V^*\TT^{\log}_{\cX_V} \arrow[r] & { \oplus_{E\in E(\widetilde{\Gamma})}\ev_E^*T_{D}} \arrow[r]                  & {}
\end{tikzcd}
\end{equation*}
We can complete the diagram
\begin{equation*}
\begin{tikzcd}
&&\oplus_{E\in E(\widetilde{\Gamma})}\ev_E^*\cO_{D}\arrow[d]\\
\mathbb{E}^{\vee} \arrow[d] \arrow[r]                              & \oplus_{V\in V(\widetilde{\Gamma})} \mathbf{R}\pi_* j_{V,*} \ell_V^* u_V^*\TT^{\log}_{\cX_V} \arrow[d] \arrow[r]                                              & \oplus_{E\in E(\widetilde{\Gamma})}\ev_E^*T^{\log}_{\cW/\AAA^1}|_{D} \arrow[d] \arrow[r] & {} \\
\mathbb{F}^{\vee}\arrow[r] &  \oplus_{V\in V(\widetilde{\Gamma})} \mathbf{R}\pi_* j_{V,*} \ell_V^* u_V^*\TT^{\log}_{\cX_V} \arrow[r] & { \oplus_{E\in E(\widetilde{\Gamma})}\ev_E^*T_{D}} \arrow[r]                  & {}
\end{tikzcd}
\end{equation*}
The octahedron axiom gives a distinguished triangle
\[
\mathbb{F}\to\mathbb{E}\to \oplus_{E\in E(\widetilde{\Gamma})}\ev_{E}^*\cO_{D}[1]\to \mathbb{F}[1].
\]
By \cite{kim2021degeneration}, Lemma 9.2, we have $\oplus_{E\in E(\widetilde{\Gamma})}\ev_{E}^*\cO_{D}\simeq \mathbb{L}_{\delta}$. The only thing left to prove is that the map $\mathbb{F}\to \mathbb{E}$ obtained above is the same obtained by completing the morphism $\mathbb{E}\to \mathbb{L}_{\delta}$ to a distinguished triangle. This is the commutativity of diagram 9.12 in \cite{kim2021degeneration}.
\end{proof}
\begin{proof}[Proof of \Cref{Thm : gluing}]
We have a compatible triple and by functoriality of pull-backs we get the statement. 
\end{proof}

The following is the main result of this section. Let $W \rightarrow \AAA^1$ be a toric double point degeneration such that the associated quasimap moduli space is proper. Let $X_0 = X_1 \cup_D X_2$ denote the central fibre.
\begin{theorem}[Degeneration formula] \label{Th: degeneration}
    \begin{equation*}
        [\cQ_{g,n}^{\log}(X_0,\beta)]^{\vir} = \sum_{\widetilde{\Gamma}} \frac{m_{\widetilde{\Gamma}}}{|\mathrm{Aut}(\widetilde{\Gamma})|} j_{\widetilde{\Gamma},*} c_{\widetilde{\Gamma}}^* \, \Delta^! \left(\prod_{V \in V(\widetilde{\Gamma})} [\cQ^{\log}_{g_V,\alpha_V}(X_{r(V)}|D,\beta_V)]^{\vir}\right).
    \end{equation*}
\end{theorem}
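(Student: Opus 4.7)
The plan is to deduce the degeneration formula directly from the two main intermediate results already established, namely the decomposition \Cref{Prop: decomposition} and the gluing \Cref{Thm : gluing}. Since both have been proved earlier in the section, what remains is essentially a formal substitution, so the proposed proof will be short.

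First I would recall from \Cref{Prop: decomposition} that the virtual class of the central fibre moduli space decomposes as
\begin{equation*}
    [\cQ_{g,n}^{\log}(X_0,\beta)]^{\vir} = \sum_{\widetilde{\Gamma}} \frac{m_{\widetilde{\Gamma}}}{|\mathrm{Aut}(\widetilde{\Gamma})|} j_{\widetilde{\Gamma},*}[\cQ^{\log}_{g,n}(X_0,\widetilde{\Gamma})]^{\vir},
\end{equation*}
where the sum runs over decorated bipartite graphs $\widetilde{\Gamma} = (\Gamma, (\beta_V)_V)$ and the virtual class on the right is induced by the pullback perfect obstruction theory from $\frM_{g,n}^{\log}(\sA_0,\Gamma)$. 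Then I would apply \Cref{Thm : gluing} to each summand, rewriting
\begin{equation*}
    [\cQ^{\log}_{g,n}(X_0,\widetilde{\Gamma})]^{\vir} = c_{\widetilde{\Gamma}}^* \Delta^! \left(\prod_{V \in V(\widetilde{\Gamma})} [\cQ^{\log}_{g_V,\alpha_V}(X_{r(V)}|D,\beta_V)]^{\vir}\right).
\end{equation*}
Substituting this into the decomposition sum and pushing forward along $j_{\widetilde{\Gamma}}$ yields the desired formula.

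The only point where one has to be careful is that the individual summands have a well-defined meaning: the operation $\Delta^!$ takes values in the Chow group of the fibre product $\bigtimes_V \cQ_V$ defined in \eqref{eq:definition_cross_V}, then $c_{\widetilde{\Gamma}}^*$ pulls back to $\cQ^{\log}_{g,n}(X_0,\widetilde{\Gamma})$ via the étale morphism, and finally $j_{\widetilde{\Gamma},*}$ transports the class into $\cQ^{\log}_{g,n}(X_0,\beta)$. All of these operations commute with one another in the order required, which is immediate from the construction of the various pieces. The conceptual difficulty lay entirely in establishing the decomposition and gluing separately; once these are in hand the degeneration formula is their formal combination.
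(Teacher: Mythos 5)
Your proposal is correct and is essentially identical to the paper's own proof, which reads in full: ``The statement follows from \Cref{Prop: decomposition} together with \Cref{Thm : gluing}.'' You substitute the gluing identity into the decomposition sum and track where each operation lives, which is precisely what the one-line proof in the paper tacitly does.
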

\begin{proof} The statement follows from \Cref{Prop: decomposition} together with \Cref{Thm : gluing}.
\end{proof}
\begin{remark}
    In Section \ref{sec:log-local_correspondence} we will use the degeneration formula with a certain point condition imposed on the moduli space directly. The proof of this result is analogous to this section. For details, see for example~\cite[Section 7.3]{bousseau2019tropical}.
\end{remark}
\section{Local/logarithmic correspondence}\label{sec:log-local_correspondence}

In this section we prove the local/logarithmic correspondence for quasimaps. Mirroring the Gromov--Witten version \cite{vangarrel2019local}, we will use the degeneration formula proved in the previous section to prove this correspondence for projective space and a coordinate hyperplane. We will then use this to pull back the local/logarithmic correspondence to any very ample set up. We will then show that stronger results hold than in the stable maps setting, see \Cref{snclocallog} and \Cref{qloglocalcorner}. The proof of these theorems will utilise the same strategy to prove these correspondences for any very ample set up.

\subsection{Smooth divisor}\label{subsec:log_local_smooth_divisor}
\begin{theorem}\label{qlocallog}
Let $X$ be a GIT quotient $W \GIT G$ and let $D \subset X$ be a \emph{smooth, very ample} divisor satisfying Assumptions \ref{assumptions:absolute_quasimaps} and \ref{assumptions:log_quasimaps}.
Let $\beta$ be an effective curve class on $X$ and let $d = D \cdot \beta \geq 0$, then we have the following equality of virtual classes
      $$[\cQ_{0,(d,0)}^{\log}(X|D,\beta)]^{\vir} = (-1)^{d+1} \cdot \ev_2^*D \cap [\cQ_{0,2}(\cO_{X}(-D),\beta)]^{\vir}.$$
\end{theorem}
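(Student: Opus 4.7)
The plan is to adapt the strategy of \cite{vangarrel2019local} to quasimaps, using the degeneration formula \Cref{Th: degeneration}. The proof proceeds in two stages: first I would establish the identity for the model case $X = \PP^N$ with $D = H$ a coordinate hyperplane, and then propagate it to arbitrary very ample setups by virtual pullback along a projective embedding arising from \Cref{assumptions:log_quasimaps}.

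For the model case, I would work with the deformation to the normal cone $W = W(\PP^N, H) = \Bl_{H\times 0}(\PP^N \times \AAA^1)$ from \Cref{deformationPN}. This is a toric double point degeneration with central fibre $X_0 = X_1 \cup_D X_2$, where $X_1 \simeq \PP^N$ and $X_2$ is a $\PP^1$-bundle over $H$. By \Cref{Cor : deformation invariance}, the virtual class of $\cQ^{\log}_{0,2}(W/\AAA^1, \beta)$ specialises to $[\cQ_{0,2}(\PP^N,\beta)]^{\vir}$ on a non-zero fibre and to $[\cQ^{\log}_{0,2}(X_0,\beta)]^{\vir}$ on the central fibre via $i_0^!$. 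Applying \Cref{Th: degeneration} with the tangency profile $(d,0)$ then decomposes the central fibre class as a sum over decorated bipartite graphs $\widetilde{\Gamma}$.

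The combinatorial core is to show that a single graph $\widetilde{\Gamma}_0$ contributes: the one-vertex graph sitting on the $X_2$-side, of degree $\beta$, carrying both markings and a single outgoing half-edge of weight $d$ meeting $D$. Two inputs enter here. Standard degree/tangency balancing, as in \cite[Proof of Theorem 2]{vangarrel2019local}, restricts to graphs with trivial $H$-pairing on the $X_1$-side. However, because quasimaps to $X_1$ inherited from the embedding in $W$ carry strictly more information than quasimaps with the standard toric presentation (\Cref{example ghost 1}, \Cref{ex:P1_relative_infinity}), one must separately rule out decorations whose $X_1$-side degree is a ghost class in the sense of \Cref{def:ghost_class}; this is precisely what \Cref{no ghosts} does, by showing that the relevant excess line bundle is trivial so the Euler class computation forces vanishing. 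Once $\widetilde{\Gamma}_0$ is isolated, the right-hand side of \Cref{Th: degeneration} becomes a tautological expression on $\cQ^{\log}_{0,2}(X_2|D,\beta)$ over the $\PP^1$-bundle. I would then invoke \Cref{Prop : qlocal P1 comp}, whose proof rests on a localisation computation of Bryan--Pandharipande type, to identify this contribution with $(-1)^{d+1}\,\ev_2^*D \cap [\cQ_{0,2}(\cO_{\PP^N}(-D), \beta)]^{\vir}$. This finishes the projective space case.

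For the general case, I would use \Cref{assumptions:log_quasimaps} to realise $D$ as the pullback of a hyperplane from the ambient vector space quotient containing $X$. Both $[\cQ^{\log}_{0,(d,0)}(X|D,\beta)]^{\vir}$ and $[\cQ_{0,2}(\cO_X(-D),\beta)]^{\vir}$ admit Ciocan-Fontanine--Kim presentations (\Cref{prop:obstruction_theory_X_over_Pic}) over a common log Picard base, differing from their $\PP^N$-counterparts by the Euler class of the section cutting out $X$. Functoriality of virtual pullback, applied to the fibre square expressing $X$-quasimaps in terms of $\PP^N$-quasimaps, transfers the projective space identity to $X$. The main obstacle throughout is the ghost class analysis: it is the genuinely new quasimap phenomenon absent in Gromov--Witten theory, and it is where the subtle interaction between the GIT presentation on the central fibre and the standard toric presentation of its components must be handled carefully.
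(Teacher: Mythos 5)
Your high-level outline (deformation to the normal cone, degeneration formula, ghost-class exclusion, localisation, virtual pullback to pass from $\PP^N$ to a general very ample pair) identifies the correct tools, and you correctly flag the ghost-class analysis as the genuinely new quasimap phenomenon. However, there is a fundamental gap in the degeneration step: you apply \Cref{Th: degeneration} to $W(\PP^N,H) \to \AAA^1$, the deformation to the normal cone of $\PP^N$. Deforming $\PP^N$ gives $[\cQ_{0,2}(\PP^N,\beta)]^{\vir}$ on a general fibre, and the degeneration formula then relates this to logarithmic quasimap classes to the components $X_1 \simeq \PP^N$ and $X_2 \simeq \PP(\cO\oplus\cO(H)|_H)$ relative to $D$. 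Nowhere in that picture does the local class $[\cQ_{0,2}(\cO_{\PP^N}(-D),\beta)]^{\vir}$ arise, so the right-hand side of the theorem cannot be produced from your setup. The paper instead degenerates the \emph{local geometry}: it introduces $L = \cO(-\widetilde{H})$, the total space of a line bundle over $W(\PP^N,H)$, which is a toric double point degeneration of $\cO_{\PP^N}(-H)$, and applies \Cref{Th: degeneration} to $L \to \AAA^1$. The pushdown identity of \Cref{Lemma : local pushdown}, which is absent from your outline, is the step that identifies $P_*[\cQ^{\log}_{0,2}(L_0,(d,0))^{\widetilde{H}}]^{\vir}$ with $\ev_2^*H \cap [\cQ_{0,2}(\cO_{\PP^N}(-H),d)]^{\vir}$ --- that is the only place where the local invariant enters the argument.

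A second gap is your description of the contributing graph. The paper's $\widetilde{\Gamma}$ has \emph{two} vertices connected by a single edge of weight $d$: an $L_1$-side vertex of degree $(d,-d)$ (corresponding to a degree-$d$ curve in $\PP^N$ under the basis of \Cref{deformationPN}) carrying one marking, and an $L_2$-side vertex of degree $(0,d)$ (a multiple of a fibre class) carrying the other marking. It is the $L_1$-side vertex that produces the logarithmic space $\cQ^{\log}_{0,(d,0)}(\PP^N|H,d)$; the $L_2$-side vertex, after the localisation in \Cref{Prop : qlocal P1 comp}, contributes only the scalar $\frac{(-1)^{d+1}}{d}[H]$. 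Your proposal posits a one-vertex graph on the $X_2$-side of degree $\beta$ carrying both markings, with the localisation identifying the whole contribution with $(-1)^{d+1}\ev_2^*D \cap [\cQ_{0,2}(\cO_{\PP^N}(-D),\beta)]^{\vir}$. That is not a bipartite graph in the sense of the degeneration formula, it puts the interesting degree on the wrong side, and it asks the localisation to produce a virtual class when in fact it computes a numerical factor on the fibre class side. With these two points corrected --- degenerating $L$ rather than $W$, and the correct two-vertex graph --- your chain becomes the paper's argument.
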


As the degeneration formula for quasimaps in \Cref{Th: degeneration} only applies in a toric setting we will first use the degeneration formula to prove

\begin{theorem}\label{qlocallogtoric}
    Let $X=\PP^N$ for $N>1$ and $D=H$ a coordinate hyperplane, and let $d \in \NN$. Then
      $$[\cQ_{0,(d,0)}^{\log}(\PP^N|H,d)]^{\vir} = (-1)^{d+1} \cdot \ev_2^*H \cap [\cQ_{0,2}(\cO_{\PP^N}(-H),d)]^{\vir}.$$
\end{theorem}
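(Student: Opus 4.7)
The plan is to degenerate the target $\PP^N$ using the deformation to the normal cone of $H$, namely $W(\PP^N, H) = \Bl_{H \times 0}(\PP^N \times \AAA^1) \to \AAA^1$ from Example \ref{deformationPN}, and apply the degeneration formula of Theorem \ref{Th: degeneration}. The general fibre of $W\to \AAA^1$ is $\PP^N$, while the central fibre decomposes as $X_0 = X_1 \cup_D X_2$ with $X_1 \simeq \PP^N$, $X_2 \simeq \PP(\cO_H(1) \oplus \cO_H)$, and $D \simeq H$ sitting as the hyperplane in $X_1$ and as the zero section in $X_2$. The strict transform $\widetilde{H}$ of $H \times \AAA^1$ is a toric divisor of $W$ which restricts to $H$ on the general fibre and to the infinity section of $X_2$ in the central fibre, and we impose the maximal tangency condition along $\widetilde{H}$.

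First I would form a moduli space $\cQ^{\log}_{0,(d,0)}(W/\AAA^1, \beta_W)$ relative to $\widetilde{H}$, with $\beta_W$ the class pulled back from a line in $\PP^N$. By Lemma \ref{Cor : deformation invariance}, suitably extended to the tangency setting, its generic fibre recovers $\cQ^{\log}_{0,(d,0)}(\PP^N | H, d)$ with the correct virtual class, so by deformation invariance together with Remark \ref{remark: vfc central fibre} the LHS of the theorem equals the Gysin pullback to $0 \in \AAA^1$. Applying Theorem \ref{Th: degeneration} then decomposes this restriction as a sum over decorated bipartite graphs $\widetilde{\Gamma}$.

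Next I would cut down the contributing graphs. Since the marked point with contact order $d$ along $\widetilde{H}$ must specialise to the infinity section of $X_2$, this marking is forced onto a vertex of type $r = 2$. I expect the dominant contribution to come from the graph with two vertices $V_1, V_2$ joined by a single edge of weight $d$, where $V_1$ has type $r = 1$ and carries the remaining marking, while $V_2$ has type $r = 2$, carries the tangency marking at the infinity section, and the node at the zero section $D$. More complicated graphs must be excluded: those with unstable vertices by stability, those with several edges by dimension and contact-order counting, and those involving ghost classes on the $X_1$-side (as in Example \ref{example ghost 1}) by Proposition \ref{no ghosts}, whose proof amounts to showing the relevant excess line bundle is trivial.

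The main obstacle is the explicit evaluation of the $X_2$-side contribution. Using the Ciocan-Fontanine--Kim style obstruction theory of Proposition \ref{prop:obstruction_theory_X_over_Pic}, the moduli space of quasimaps to $X_2 = \PP(\cO_H(1) \oplus \cO_H)$ with maximal tangency $d$ at the infinity section and contact $d$ at the zero section fibres over the quasimap space to $H = \PP^{N-1}$, reducing the calculation to the fibrewise $\PP^1$ situation. A Bryan--Pandharipande style localisation computation, carried out in Proposition \ref{Prop : qlocal P1 comp}, then produces the sign $(-1)^{d+1}$ and the evaluation factor $\ev_2^* H$. Finally, the $X_1$-side contribution, obtained after the Gysin pullback $\Delta^!$ along the diagonal of $D$, identifies with $[\cQ_{0,2}(\cO_{\PP^N}(-H), d)]^{\vir}$ via the standard presentation of the local virtual class as the Euler class of $R^1\pi_* u^* \cO_{\PP^N}(-H)$ capped against $[\cQ_{0,2}(\PP^N, d)]^{\vir}$. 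Putting the two sides together gives the claimed identity.
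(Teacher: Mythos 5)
Your high-level strategy --- deformation to the normal cone, degeneration formula, reduction to a single contributing graph, Bryan--Pandharipande-style localisation on the $\PP^1$-bundle vertex, and excluding ghost classes via a vanishing excess bundle --- is the paper's strategy. However, there is a fundamental error in \emph{which target you degenerate}, and it breaks the argument.

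You propose to degenerate $W(\PP^N,H)\to\AAA^1$ and take logarithmic quasimaps to $W$ with maximal tangency to $\widetilde H$. Over the general fibre this recovers $\cQ^{\log}_{0,(d,0)}(\PP^N|H,d)$, i.e.\ the \emph{left}-hand side of the theorem. Now look at what the degeneration formula produces in the central fibre: the vertex over $X_1\simeq\PP^N$ contributes a \emph{logarithmic} quasimap space $\cQ^{\log}_{0,\alpha}(\PP^N|H,d)$ with tangency at the node --- again a space of the same type as the left-hand side --- not the local space $\cQ_{0,2}(\cO_{\PP^N}(-H),d)$. Your final paragraph asserts that the $X_1$-side contribution ``identifies with $[\cQ_{0,2}(\cO_{\PP^N}(-H),d)]^{\vir}$ via the standard presentation of the local virtual class,'' but this is false in your setup: the local line bundle never enters anywhere in the degeneration of $W$. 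Concretely, your degeneration formula would yield a relation expressing $[\cQ^{\log}_{0,(d,0)}(\PP^N|H,d)]^{\vir}$ in terms of a log quasimap class to $\PP^N$ and an $X_2$-factor, i.e.\ a tautological identity, not the local/logarithmic correspondence.

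The paper instead degenerates the \emph{local} target: it sets $L=\cO_{W(\PP^N,H)}(-\widetilde H)\to W\to\AAA^1$ and considers $\cQ^{\log}_{0,2}(L,(d,0))^{\widetilde H}$ with the constraint $\ev_2^*\widetilde H$ (\Cref{definition : infinity section}). Over the general fibre $L_t\simeq\cO_{\PP^N}(-H)$, so the general-fibre class is exactly $\ev_2^*H\cap[\cQ_{0,2}(\cO_{\PP^N}(-H),d)]^{\vir}$ --- the \emph{right}-hand side --- and this identification is \Cref{Lemma : local pushdown}, a step your proposal omits entirely. The central fibre $L_0=L_1\cup_{L_H}L_2$ then decomposes: $L_1=\PP^N\times\AAA^1$ has trivial excess direction, and its vertex contribution Gysin-restricts to $[\cQ^{\log}_{0,(d,0)}(\PP^N|H,d)]^{\vir}$, the left-hand side; $L_2=\cO(-H_\infty)$ over $X_2$ contributes the factor $(-1)^{d+1}/d$ via \Cref{Prop : qlocal P1 comp}. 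So the roles of general fibre and $X_1$/$L_1$-vertex are the opposite of what you describe. You have the right ingredients, but the degenerating family must be $L$, not $W$, for the local theory to appear on one side and the log theory on the other.
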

Assuming \Cref{qlocallogtoric}, the proof of \Cref{qlocallog} is straightforward.

\begin{proof}[Proof of \Cref{qlocallog}]
We have a commutative diagram
    \begin{equation}
        \begin{tikzcd}
{\cQ^{\log}_{0,(d,0)}(X|D,\beta)} \arrow[d,"p"] \arrow[r,"i"] & {\cQ_{0,(d,0)}^{\log}(\PP^N|H,d)} \arrow[d,"p'"] \\
{\cQ_{0,2}(X,\beta)} \arrow[r,"i'"]                        & {\cQ_{0,2}(\PP^N,d)}                       
\end{tikzcd}
    \end{equation}
The existence of the horizontal arrows follows similarly from \cite[Section 3.1]{ciocan2014wall}. We can use the fact ~\cite[Proposition 3.5.10]{shafi2022enumerative} that the virtual class on $\cQ^{\log}_{0,(d,0)}(X|D,\beta)$ can be defined relative to $\cQ_{0,(d,0)}^{\log}(\PP^N|H,d)$ and is in fact pulled back from the obstruction theory for $i'$, which defines the virtual class on $\cQ_{0,2}(X,\beta)$. Let $\mathscr{L}$ (resp. $\mathscr{L}_D$) denote the universal line bundle on the universal curve $\cQ_{0,2}(\PP^N,d)$ (resp. $\cQ_{0,2}(X,\beta)$) corresponding to $\cO_{\PP^N}(H)$ (resp. $\cO_{X}(D)$). We then have that
\begin{align*}
    &p_*[\cQ^{\log}_{0,(d,0)}(X|D,\beta)]^{\vir} \\
    &= p_* (i')^{!}[\cQ_{0,(d,0)}^{\log}(\PP^N|H,d)]  \\
    &= {i'}^{!}{p'}_{*}[\cQ_{0,(d,0)}^{\log}(\PP^N|H,d)] \\
    &= {i'}^{!}(-1)^{d+1} \cdot \ev^*H \cap [\cQ_{0,2}(\cO_{\PP^N}(-H),d)]^{\vir} \\
    &= {i'}^{!}(-1)^{d+1} \cdot \ev^*H \cap e(\textbf{R}^1\pi_*\mathscr{L}^{\vee}) \cap [\cQ_{0,2}(\PP^N,d)] \\
    &= (-1)^{d+1} \cdot \ev^*D \cap e(\textbf{R}^1\pi_*\mathscr{L}_D^{\vee}) \cap [\cQ_{0,2}(X,\beta)]^{\vir} \\
    &= (-1)^{d+1} \cdot \ev^*D \cap [\cQ_{0,2}(\cO_X(-D),\beta)]^{\vir}. \qedhere
\end{align*}
\end{proof}

\begin{remark}
    Although we only prove \Cref{qlocallogtoric} for projective spaces of dimension greater than one, \Cref{qlocallog} implies the corresponding result for $\PP^1$. The reason for the restriction is that the for $\PP^1$ the induced presentation of the central fibre components are slightly more complicated so we exclude this analysis. On the other hand, the fact that the local/logarithmic correspondence holds for quasimaps for $\PP^1$ may be surprising. For example, a consequence is that the Gromov--Witten/quasimap wall-crossing for $\cO_{\PP^1}(-1)$
is trivial, after capping with a hyperplane class. This follows from the fact that the logarithmic spaces are birational. This is not at all an obvious conclusion from ~\cite[Theorem 4.2.1]{ciocan2017higher}. On the other hand there is precendent for this, in ~\cite[Section 9]{marian2011moduli} the authors show that quasimap and Gromov--Witten invariants of $\cO_{\PP^1}{(-1)}\oplus\cO_{\PP^1}(-1)$ coincide in all genera.
\end{remark}

In the remaining of \Cref{subsec:log_local_smooth_divisor}, we prove \Cref{qlocallogtoric}. The proof, which is deferred until the end of \Cref{subsec:log_local_smooth_divisor}, is a combination of the equalities in \Cref{Lemma : local pushdown} and \Cref{thm : 1 graph contribution,Prop : qlocal P1 comp}. Furthermore, \Cref{thm : 1 graph contribution} is the result of applying the degeneration formula (\Cref{Th: degeneration}) and ruling out all the contributions except for the graph 
\[
    \xymatrix{
    \overset{V_1}{\bullet}
    \ar@{-}[r]^(-.1){}="a"^(1.1){}="b" \ar^E@{-} "a";"b"
    &\overset{V_2}{\bullet}
    }
\] 
with $V_1$ an $L_1$-vertex, $V_2$ an $L_2$-vertex, $\omega_E = d$, $\beta_{V_1} = (d,-d)$, $\beta_{V_2}=(0,d)$ and exactly one marking on each vertex. To do so, we follow the next steps:
\begin{enumerate}
    \item \Cref{no several edges 1} shows there must be two vertices with a single bounded edge connecting them.
    \item \Cref{no non-fibers} shows there are no contributions from graphs with \emph{at least one} $L_2$-vertex with degree different from a multiple class of a fibre. 
    \item \Cref{no ghosts} shows there are no contributions from graphs with one edge, but ghost classes. This uses \Cref{ghost w smooth}, which follows from \Cref{p h smooth}.
    \item \Cref{no purely p classes} shows there are no contributions from graphs contained in $L_1$.
\end{enumerate}
This argument follows \cite{vangarrel2019local}, with two alterations.

The first alteration is minor. The main theorem of \cite{vangarrel2019local} compares the one marked logarithmic space with the unmarked local space and involves a forgetful morphism. Since neither of these spaces exist in quasimap theory we use the alteration to the degeneration argument discussed in \cite{fan2021WDVV,tseng2023mirror}, in addition to adding a redundant marking.

The second alteration requires an extra argument. On the one hand the stability condition of quasimap theory allows us to exclude certain graphs without effort. On the other hand we require an additional vanishing of contributions where the degrees contains  ``ghost'' curve classes in the sense of \Cref{def:ghost_class}. These occur because, as in \Cref{ex:P1_relative_infinity,example ghost 1}, the involved spaces of quasimaps depend on the GIT presentation of the target space. 

\begin{notation}\label{bundle def}
With the notations of \Cref{deformationPN}, let $W(\PP^N,H) = \Bl_{H \times 0} (\PP^N \times \AAA^1)$ be the deformation to the normal cone of $H$ in $\PP^N$ for $N>1$. The central fibre is a union of $X_1\simeq X = \PP^N$ and $X_2 \simeq \PP(\cO \oplus \cO(H)|_H)$ glued along $H$.

Let $L =\cO(-\widetilde{H})$ denote the total space of the line bundle over $W(\PP^N,H)$, where $\widetilde{H}$ is the strict transform of $H \times \AAA^1$. Over a non-zero point of $\AAA^1$ the fibre is just $\cO_{\PP^N}(-H)$. The central fibre $L_0$ is a union of two components $L_{1} = \PP^N \times \AAA^1$ and $L_{2}=\cO(-H_{\infty})$ over $X_2 \simeq \PP(\cO \oplus \cO(H)|_H)$, glued along $L_H = H \times \AAA^1$. We follow the convention from \Cref{notation:degenerations} to denote ambient stacks with calligraphic letters, such as $\cL$ for the stack corresponding to $L$ and $\cL_0$ for $L_0$. 

The degree of a quasimap to $W(\PP^N,H)$, with its toric presentation, is given by $\Pic^{\GG_m^2}\AAA^5 \simeq \ZZ^2$. We choose the same basis $\{l,f\}$ as in \Cref{deformationPN}. 
\end{notation}

\begin{remark}\label{remark : (d,-d) toric presentation}
    Although the degeneration formula is written in terms of logarithmic quasimap spaces with presentation induced by the total space we have that $$\cQ^{\log}_{g,\alpha}(\PP^N|H,(d,-d)) \simeq \cQ^{\log}_{g,\alpha}(\PP^N|H,d)$$ where the former space uses the presentation $\PP^N = \AAA^{N+2} \GIT \GG_m^2$ induced by $W(\PP^N,H)$ and the latter uses the standard presentation of $\PP^N$. The same is true for $L_1|L_H$.
\end{remark}

\begin{definition}\label{definition : infinity section}
  Let $\ev_2$ denote the evaluation at the second marked point. We define $\cQ^{\log}_{0,2}(L,(d,0))^{\widetilde{H}}$ as the fibre product 
\begin{equation*}
    \begin{tikzcd}
{\cQ^{\log}_{0,2}(L,(d,0))^{\widetilde{H}}} \arrow[d] \arrow[r] & {\cQ^{\log}_{0,2}(L,(d,0))} \arrow[d,"\ev_2"] \\
\widetilde{H} \arrow[r, "i_{\widetilde{H}}", hook]                          & W(\PP^N,H)                        
\end{tikzcd}
\end{equation*}
\end{definition}

The moduli space $\cQ^{\log}_{0,2}(L,(d,0))^{\widetilde{H}}$ comes with a virtual class given by Gysin pullback by $i_{\widetilde{H}}$. The following follows from the argument of ~\cite[Lemma 2.2]{vangarrel2019local}.

\begin{lemma}\label{Lemma : local pushdown}
There is a morphism $P : \cQ^{\log}_{0,2}(L,(d,0))^{\widetilde{H}} \rightarrow \cQ_{0,2}(\PP^N,d)$. Moreover, 
$$P_{*}[\cQ^{\log}_{0,2}(L_0,(d,0))^{\widetilde{H}}]^{\vir} = \ev_2^*H \cap [\cQ_{0,2}(\cO_{\PP^N}(-H),d)]^{\vir}.$$
\end{lemma}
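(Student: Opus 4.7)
The strategy is to mirror the argument of \cite[Lemma 2.2]{vangarrel2019local}, replacing stable maps by quasimaps throughout, and to use deformation invariance to reduce the computation from the central fibre to the generic fibre, where the log structure disappears and the target becomes the concave line bundle $\cO_{\PP^N}(-H)$.

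\emph{Construction of $P$.} The morphism is obtained by composing the line bundle projection $\cL\to\cW$ with the natural morphism of quotient stacks $\cW\to [\AAA^{N+1}/\GG_m]$ induced by $W(\PP^N,H)\to \PP^N\times\AAA^1\to \PP^N$, and then forgetting the log structure. A log quasimap $f\colon C\to \cL$ of degree $(d,0)$ thereby yields an ordinary quasimap $C\to \PP^N$ of degree $d$, the class $(d,0)\in H_2(W,\ZZ)$ projecting to $d\in H_2(\PP^N,\ZZ)$ as in \Cref{deformationPN}. Non-degeneracy and stability are preserved under composition.

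\emph{Reduction to the generic fibre.} The total moduli space $\cQ^{\log}_{0,2}(L,(d,0))^{\widetilde{H}}$ lives over $\AAA^1$ and, by the argument of \Cref{prop : perfectness} adapted to $\cL$ in place of $\cW$ and refined by Gysin pullback along $\ev_2$ to impose $p_2\in\widetilde{H}$, carries a perfect obstruction theory. For $t\neq 0$, the log structure on $\cL_t$ is trivial and $L_t\simeq\cO_{\PP^N}(-H)$, while the $\widetilde{H}$-condition restricts to $p_2\in H$, interpreted inside the zero section of $\cO_{\PP^N}(-H)$ over $H\subset \PP^N$. Arguing as in \Cref{Cor : deformation invariance},
\[
i_t^![\cQ^{\log}_{0,2}(L,(d,0))^{\widetilde{H}}]^{\vir} = [\cQ_{0,2}(\cO_{\PP^N}(-H),d)^H]^{\vir}\quad (t\neq 0),
\]
while $i_0^!$ recovers the central fibre class $[\cQ^{\log}_{0,2}(L_0,(d,0))^{\widetilde{H}}]^{\vir}$. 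Since $P$ factors through $\cQ_{0,2}(\PP^N,d)\times\AAA^1$ as a morphism over $\AAA^1$, commutativity of proper pushforward with Gysin pullback from $\AAA^1$ yields
\[
P_*[\cQ^{\log}_{0,2}(L_0,(d,0))^{\widetilde{H}}]^{\vir} = P_*[\cQ_{0,2}(\cO_{\PP^N}(-H),d)^H]^{\vir}.
\]

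\emph{Pushforward over the generic fibre.} By concavity of $\cO_{\PP^N}(-H)$, the space $\cQ_{0,2}(\cO_{\PP^N}(-H),d)$ is identified set-theoretically with $\cQ_{0,2}(\PP^N,d)$, with virtual class $e(\mathbf{R}^1\pi_*\mathscr{L}^\vee)\cap [\cQ_{0,2}(\PP^N,d)]^{\vir}$ where $\mathscr{L}$ is the universal line bundle. Under this identification $P$ becomes the identity on the generic fibre, and the $H$-condition is precisely $\ev_2^*H$, so
\[
P_*[\cQ_{0,2}(\cO_{\PP^N}(-H),d)^H]^{\vir}=\ev_2^*H\cap[\cQ_{0,2}(\cO_{\PP^N}(-H),d)]^{\vir},
\]
which combined with the previous displayed identity gives the formula. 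The main technical subtlety is setting up the log quasimap moduli space and its perfect obstruction theory for the non-proper target $L$, and verifying that the log structure on $\cL$ trivialises appropriately on the generic fibre; beyond this the argument is a formal consequence of the standard compatibility of Gysin pullback, proper pushforward, and concavity.
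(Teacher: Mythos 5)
Your proof is correct and follows the same route the paper intends by its one-line citation to \cite[Lemma 2.2]{vangarrel2019local}: construct $P$ by composing with the projection $L\to W(\PP^N,H)\to\PP^N$, use that the quasimap virtual class on $\cQ^{\log}_{0,2}(L,(d,0))^{\widetilde{H}}$ lives in a family over $\AAA^1$ whose Gysin restrictions at $t\neq 0$ and at $0$ recover, respectively, the local class with the $\ev_2^*H$ constraint and the central-fibre class, and then invoke the fact that $P$ is proper over $\AAA^1$ so that pushforward commutes with specialization. Your argument correctly supplies the details the paper elides, including the identification of $\widetilde{H}$ with $H$ on the generic fibre and the observation that $(d,0)\mapsto d$ under the projection on curve classes.
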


Next we apply the degeneration formula. We will prove that the analogue of ~\cite[Theorem 2.3]{vangarrel2019local} also applies in this situation.

Let us first introduce the notation. Consider the following diagram 
\begin{equation}\label{prod-qmaps}
\begin{tikzcd}
\cQ^{\log}_{0,(d,0)}(L_1|L_H,d) \times_{L_H} \cQ^{\log}_{(d,0)}(L_2|L_H,(0,d))^{H_{\infty}} \arrow[d, "\mathrm{pr}_1"] & \cQ_{\widetilde{\Gamma}} \arrow[r, "j_{\widetilde{\Gamma}}"] \arrow[l, "c_{\widetilde{\Gamma}}"'] & {\cQ^{\log}_{0,2}(L,(d,0))^{\widetilde{H}}} \arrow[d, "P"] \\
{\cQ^{\log}_{0,(d,0)}(\PP^N|H,d)} \arrow[rr, "F"]                           &                                                                                                   & {\cQ_{0,2}(\PP^N,d)}.                         
\end{tikzcd}
\end{equation}
The notation $\cQ_{\widetilde{\Gamma}}$ was introduced in \Cref{eq:Q_gamma_tilde}. Here $\widetilde{\Gamma}$ is the graph with one $L_1$-vertex of degree $(d,-d)$, one $L_2$-vertex of degree $(0,d)$ (that is, $d$ times a fibre class), an edge of weight $d$ connecting the two vertices and an extra marking on each vertex. In addition there is a constraint to pass through $\widetilde{H}$ as in \Cref{definition : infinity section}, corresponding to the leg on the $L_2$-vertex. This graph determines the spaces $\cQ^{\log}_{0,(d,0)}(L_1|L_H,(d,-d))$ and $\cQ^{\log}_{0,(d,0)}(L_2|L_H,(0,d))^{H_{\infty}}$ by the degeneration formula. In general the former is a quasimap space with $L_1= \PP^N \times \AAA^1$ given by a non-toric presentation. However, by Remark \ref{remark : (d,-d) toric presentation}, for the degree $(d,-d)$ this space can be identified with the logarithmic quasimap space to $L_1$ with its toric presentation and with degree $d$.

\begin{proposition}\label{thm : 1 graph contribution} Let $\Delta:L_H\to L_H\times L_H$ be the diagonal embedding.
We have that  
    $$ [\cQ^{\log}_{0,2}(L_0,(d,0))^{\widetilde{H}}]^{\vir} = d \cdot j_{\widetilde{\Gamma},*}\left( c_{\widetilde{\Gamma}}^{*} \Delta^{!}\left([\cQ^{\log}_{0,(d,0)}(L_1|L_H,d)]^{\vir} \times [\cQ^{\log}_{0,(d,0)}(L_2|L_H,(0,d))^{H_{\infty}}]^{\vir}\right)\right),$$
\end{proposition}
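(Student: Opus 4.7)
The plan is to apply the degeneration formula \Cref{Th: degeneration} to the toric double point degeneration $L \to \AAA^1$ with the point condition along $\widetilde{H}$ built in, using the variant of the formula noted in the remark following \Cref{Th: degeneration}. This yields
\[
    [\cQ^{\log}_{0,2}(L_0,(d,0))^{\widetilde{H}}]^{\vir} = \sum_{\widetilde{\Gamma}'} \frac{m_{\widetilde{\Gamma}'}}{|\mathrm{Aut}(\widetilde{\Gamma}')|}\, j_{\widetilde{\Gamma}',*}\, c_{\widetilde{\Gamma}'}^*\,\Delta^!\!\left(\prod_{V} [\cQ^{\log}_{g_V,\alpha_V}(L_{r(V)}|L_H,\beta_V)]^{\vir}\right),
\]
the sum being over decorated bipartite graphs $\widetilde{\Gamma}'$ compatible with the discrete data $(g,n,\beta)=(0,2,(d,0))$ and with the point condition at $\widetilde{H}$. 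The remainder of the proof identifies $\widetilde{\Gamma}$ as the unique graph with a non-vanishing contribution.

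I would show that only $\widetilde{\Gamma}$ contributes in four independent vanishing steps, each deferred to a dedicated lemma later in the section. First, \Cref{no several edges 1} will restrict to graphs consisting of exactly two vertices, one $L_1$-vertex and one $L_2$-vertex, joined by a single edge, using rationality of the source curves and quasimap stability (which eliminates rational tails and forces connectivity to be minimal). Second, \Cref{no non-fibers} will force the $L_2$-vertex to have degree a multiple of the fibre class of $L_2 \to H$. Third, \Cref{no ghosts} will rule out contributions in which the $L_1$-vertex carries a ghost curve class in the sense of \Cref{def:ghost_class}. Fourth, \Cref{no purely p classes} will exclude graphs whose total class is supported on $L_1$. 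Together with the point condition, which forces the $L_2$-vertex to carry the $\widetilde{H}$-constrained marking, the degrees $(d,-d)$ and $(0,d)$ and the edge weight $d$ are then forced by the degree balance and the fact that the edge has contact order $d$ against $H$.

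Once only $\widetilde{\Gamma}$ survives, the coefficient is immediate: the two vertices are of different types and carry distinct markings, so $|\mathrm{Aut}(\widetilde{\Gamma})| = 1$, while $m_{\widetilde{\Gamma}} = d$ is the unique edge weight, producing the prefactor $d$. This matches the right-hand side up to the identification of $\cQ^{\log}_{0,(d,0)}(L_1|L_H,(d,-d))$ with $\cQ^{\log}_{0,(d,0)}(L_1|L_H,d)$ recorded in \Cref{remark : (d,-d) toric presentation}, yielding the claimed equality.

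I expect the main obstacle to be \Cref{no ghosts}. Unlike in the Gromov--Witten analogue of \cite{vangarrel2019local}, quasimaps to the components $L_i$ with the presentation induced by the embedding in $L$ carry strictly more information than quasimaps with the standard toric presentation (\Cref{ex:P1_relative_infinity}, \Cref{example ghost 1}), so ghost class decorations are a genuine combinatorial possibility which is not ruled out by degree bookkeeping alone. As announced in the introduction, the argument is expected to proceed by exhibiting a trivial factor of the excess obstruction bundle, coming from the pullback of $\cO(-\widetilde{H})$ along a quasimap that factors through $\widetilde{H}$, so that the associated Euler-class contribution vanishes.
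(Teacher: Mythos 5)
Your proposal matches the paper's proof: apply the degeneration formula (with the point condition along $\widetilde{H}$), then invoke \Cref{no several edges 1}, \Cref{no non-fibers}, \Cref{no ghosts}, and \Cref{no purely p classes} to kill every graph except $\widetilde{\Gamma}$, and compute the prefactor $\frac{m_{\widetilde{\Gamma}}}{|\mathrm{Aut}(\widetilde{\Gamma})|} = d$ using that $\widetilde{\Gamma}$ has trivial automorphisms and a single edge of weight $d$. You also correctly anticipate both the identification via \Cref{remark : (d,-d) toric presentation} and the mechanism behind \Cref{no ghosts} (a trivial excess line bundle coming from $\cO_\cC(Z)$ restricted to the node), so this is essentially the paper's argument.
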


\begin{proof}  
Applying the degeneration formula in \Cref{Th: degeneration} to $L$, we get
   \begin{equation*}
        [\cQ_{0,2}^{\log}(L_0,(d,0))^{\widetilde{H}}]^{\vir} = \sum_{\widetilde{\Gamma}} \frac{m_{\widetilde{\Gamma}}}{|\mathrm{Aut}(\widetilde{\Gamma})|} j_{\widetilde{\Gamma},*} c_{\widetilde{\Gamma}}^* \, \Delta^! \left(\prod_{V \in V(\widetilde{\Gamma})} [\cQ^{\log}_{0,\alpha_V}(L_{{r(V)}}|L_H,\beta_V)]^{\vir}\right)
    \end{equation*}
We now need to show that all terms except one vanish. 

This follows from combining \Cref{no several edges 1}, \Cref{no non-fibers}, \Cref{no ghosts}, \Cref{no purely p classes}. Finally, we see that for $\widetilde{\Gamma}$ the graph described above, we have $\frac{m_{\widetilde{\Gamma}}}{|\mathrm{Aut}(\widetilde{\Gamma})|} = d$ since there are no automorphisms and $m_{\widetilde{\Gamma}}$ can be given by the lowest common multiple of the weights.
\end{proof}

\begin{proposition}\label{Prop : qlocal P1 comp}
With the previous notations,
$$\ev_{1,*}\left([\cQ^{\log}_{(d,0)}(L_2|L_H,(0,d))^{H_{\infty}}]^{\vir}\right) = \frac{(-1)^{d+1}}{d}[H].$$
\end{proposition}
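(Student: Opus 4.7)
The plan is to reduce the statement to a local calculation on a single $\PP^1$-fibre of $X_2 \to H$ and then to carry out a virtual $\CC^*$-localisation on that fibre, following the Bryan--Pandharipande localisation referenced in the acknowledgements.

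Since the curve class $(0,d)$ is a pure fibre class of the projective bundle $X_2 \to H$, every quasimap of degree $(0,d)$ to $L_2$ projects constantly to $H$ and, because $H^0(\PP^1,\cO(-d))=0$ for $d\geq 1$, factors through the zero section $X_2\subset L_2$. This yields a morphism $\pi:\cQ^{\log}_{(d,0)}(L_2|L_H,(0,d))^{H_\infty}\to H$, and $\ev_1$ factors as $\cQ\xrightarrow{\pi} H\hookrightarrow L_H$ via the inclusion of the zero section. A dimension count using \Cref{prop:obstruction_theory_X_over_Pic}, together with the fact that $\cO(-H_\infty)$ restricts to $\cO_{\PP^1}(-1)$ on each fibre of $X_2\to H$, shows that the virtual dimension of the moduli space equals $\dim H=N-1$ and that $\pi$ is generically finite. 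Hence $\ev_{1,*}[\cQ]^{\vir}=c\cdot[H]$ for some rational number $c$ equal to the virtual degree over a general point of $H$.

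The fibre over $x\in H$ is the moduli of log quasimaps of degree $d$ to the $\PP^1$-fibre, with maximal tangency $d$ at $p_1$ to $\{0\}$ and a point condition $p_2\to\infty$, capped with the Euler class of the obstruction bundle $R^1\pi_* f^* \cO_{\PP^1}(-1)$ of rank $d-1$. The number $c$ is the degree of this zero-dimensional virtual class. I would compute it via virtual $\CC^*$-localisation, using the rotation $\nu\cdot[x_0:x_1]=[\nu x_0:x_1]$ of the target $\PP^1$ fixing $0$ and $\infty$. The unique fixed locus corresponds to the $d$-fold cover $[t_0:t_1]\mapsto[t_0^d:t_1^d]$, appearing as a stacky point with $\mu_d$-stabiliser. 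After passing to a $d$-fold cover of the equivariant parameter to resolve the fractional weights coming from the $\mu_d$-stabiliser, the tangent and obstruction weights at the fixed point both form the multiset $\{-1,-2,\ldots,-(d-1)\}$, and the virtual localisation formula with the sign convention appropriate for the dual obstruction bundle of a local geometry yields $c=(-1)^{d+1}/d$.

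The main obstacle is the localisation computation itself: one must identify the fixed locus together with its $\mu_d$-stacky structure, handle the fractional equivariant weights, and adopt the correct sign convention so that $e(\cE^\vee)$ rather than $e(\cE)$ appears, which is what produces the factor $(-1)^{d+1}$. This is the content of the Bryan--Pandharipande computation cited in the acknowledgements.
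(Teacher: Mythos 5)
Your overall strategy matches the paper's: reduce $\ev_{1,*}[\cQ]^{\vir}=c\cdot[H]$ to a degree computation over a generic point of $H$, then compute $c$ by $\CC^*$-localisation of a relative local $\PP^1$ invariant, citing the Bryan--Pandharipande computation. The first reduction is what the paper attributes to \cite[Proposition~2.4]{vangarrel2019local}, and your identification of the fibre as a log quasimap space to $\PP^1$ relative to $0$ with obstruction bundle $\mathbf{R}^1\pi_*f^*\cO_{\PP^1}(-1)$ is exactly the paper's formula \eqref{eq:localisation_computation_qmaps}.

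There is, however, a genuine gap you do not address. You propose to "compute it via virtual $\CC^*$-localisation" directly on the log quasimap space, but this is not immediately legitimate: the obstruction theory on $\cQ^{\log}_{(d,0)}(\PP^1|0,d)$ is relative to $\frM^{\log}_{0,(d,0)}(\sA)$, which is not smooth, so the virtual localisation theorem does not apply off the shelf. The paper handles this in \Cref{lemma:localisation_only_one_graph} by first passing to a logarithmic modification $\widetilde{\cQ^{\log}}_{0,(d,0)}(\PP^1|0,d)\to \cQ^{\log}_{0,(d,0)}(\PP^1|0,d)$ in the sense of \cite{ranganathan2022logarithmic}, showing the unique contributing fixed locus survives (with irreducible source), and then — crucially — arguing in \eqref{eq:localisation_maps=quasimaps} that since this fixed locus lies in the \emph{interior} of both the quasimap and the stable map moduli spaces, the localised contribution agrees with the stable map one, so the numerical answer can be read off from Bryan--Pandharipande. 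Your sketch skips both the modification step and the quasimap-to-stable-map comparison. As a secondary point, your statement that "the tangent and obstruction weights at the fixed point both form the multiset $\{-1,-2,\ldots,-(d-1)\}$" is at odds with the explicit weights the paper records (e.g. $e_\gamma^e=(-1)^d\,d!\,\hbar^d/d^d$ and an extra $1/d$ automorphism factor); as stated, identical tangent and obstruction weights would make the normal/obstruction contributions cancel to $\pm 1$, not $(-1)^{d+1}/d$, so this line needs to be corrected if you want a self-contained proof rather than a deferral to Bryan--Pandharipande.
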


\begin{proof}
    As in \cite[Proposition 2.4]{vangarrel2019local}, it is enough to show via localisation that 
    \begin{equation}\label{eq:localisation_computation_qmaps}
        \deg\left(\ev_2^*(H_\infty) \cap e(\mathbf{R}^1\pi_\ast u^* \cO_{\PP^1}(-1)) \cap [\cQ^{\log}_{(d,0)}(\mathbb{P}^1|0,d)]^{\vir}\right) = \frac{(-1)^{d+1}}{d}.
    \end{equation}    

    By \Cref{lemma:localisation_only_one_graph}, there is a unique graph $\gamma$ contributing to the localisation computation. By the proof of \cite[Theorem 5.1]{Bryan_Pandharipande}, the same holds for stable maps. Since the fixed locus corresponding to $\gamma$ lies in the interior of both moduli spaces, 
    we have that
    \begin{align}\label{eq:localisation_maps=quasimaps}
        \deg\left(\ev_2^*(H_\infty) \cap e(\mathbf{R}^1\pi_\ast u^* \cO_{\PP^1}(-1)) \cap [\cQ^{\log}_{(d,0)}(\mathbb{P}^1|0,d)]^{\vir}\right) = \\
        =\deg\left(\ev_2^*(H_\infty) \cap e(\mathbf{R}^1\pi_\ast f^* \cO_{\PP^1}(-1)) \cap [\overline{\cM}^{\log}_{(d,0)}(\mathbb{P}^1|0,d)]^{\vir}\right) \nonumber
    \end{align}
    Therefore it is enough to show via localisation that
    \begin{equation}\label{eq:localisation_computation_maps}
        \deg\left(\ev_2^*(H_\infty) \cap e(\mathbf{R}^1\pi_\ast f^* \cO_{\PP^1}(-1)) \cap [\overline{\cM}^{\log}_{(d,0)}(\mathbb{P}^1|0,d)]^{\vir}\right) = \frac{(-1)^{d+1}}{d}.
    \end{equation}
    
    The fixed locus associated to $\gamma$ is isomorphic to $[\pt/\ZZ_d]$. The equivariant Euler class of the virtual normal bundle can be written as a product of three terms corresponding to the vertices, edges and flags of $\gamma$, see \cite{graber1999localization,graber2005relative}. 
    We choose a 1-dimensional subtorus with equivariant parameter $\hbar$ and a lift of the action to $\cO_{\PP^1}(-1)$ analogous to those in the proof of \cite[Theorem 5.1]{Bryan_Pandharipande} (with the roles of 0 and $\infty$ reversed).
    We compute that
    \[
        e_{\gamma}^F = -\frac{1}{\hbar^2}, \quad
        e_{\gamma}^v = \hbar^2, \quad
        e_{\gamma}^e = (-1)^d \frac{d!\, \hbar^d}{d^{d}}.
    \]
    Therefore the equivariant Euler class of the virtual normal bundle at the locus corresponding to $\gamma$ is
    \begin{equation}\label{eq:localisation_normal_bundle}
        e(N_\gamma^{\vir}) =  e_{\gamma}^F e_{\gamma}^v e_{\gamma}^e = (-1)^{d+1} \frac{d!\, \hbar^{d}}{d^{d}}.
    \end{equation}
    The contribution of the obstruction bundle is
    \begin{equation}\label{eq:localisation_obstruction}
        e(\mathbf{R}^1\pi_\ast f^* \cO_{\PP^1}(-1)) = \frac{(d-1)! \hbar^{d-1}}{d^{d-1}}.
    \end{equation}
    Multiplying together \eqref{eq:localisation_obstruction}, the inverse of \eqref{eq:localisation_normal_bundle}, a factor $1/d$ coming from the automorphism group of the fixed point corresponding to $\gamma$ and a factor $\hbar$ coming from the condition that the second mark is mapped to a hyperplane class, we get \eqref{eq:localisation_computation_maps}. We conclude by combining \eqref{eq:localisation_computation_qmaps}, \eqref{eq:localisation_maps=quasimaps} and \eqref{eq:localisation_computation_maps}.
\end{proof}

\begin{lemma}\label{lemma:localisation_only_one_graph}
    Consider the setting of \Cref{Prop : qlocal P1 comp}. In the localisation computation of the invariant
    \[
        \deg\left(\ev_2^*(H_\infty) \cap e(\mathbf{R}^1\pi_\ast f^* \cO_{\PP^1}(-1)) \cap [\cQ^{\log}_{(d,0)}(\PP^1 | 0, d)]^{\vir}\right)
    \]
    the unique graph whose contribution is non-zero is 
    \[
        \gamma = \xymatrix{
    \underset{V_1}{\overset{\{1\}}{\bullet}}
    \ar@{-}[r]^(-.1){}="a"^(1.1){}="b" \ar^d@{-} "a";"b"
    &\underset{V_2}{\overset{\{2\}}{\bullet}}
    }
    \]
    with $V_1$ mapped to 0 and $V_2$ mapped to $\infty$.
\end{lemma}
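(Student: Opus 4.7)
The plan is to enumerate the decorated graphs indexing $\GG_m$-fixed loci of $\cQ^{\log}_{(d,0)}(\PP^1|0,d)$ and rule out all but $\gamma$ by combining the constraints coming from the positions of the two markings, the log contact order at marking $1$, and quasimap stability. I would work with the standard $\GG_m$-action on $\PP^1$ fixing $0$ and $\infty$, which induces an action on the moduli space. A $\GG_m$-fixed log quasimap $(C,p_1,p_2,u)$ has each irreducible component of $C$ either contracted to a fixed point (possibly with basepoints), or covering $\PP^1$ equivariantly as $z\mapsto z^k$ for some weight $k\geq 1$. Accordingly the fixed loci are labelled by bipartite decorated trees: vertices labelled by $\{0,\infty\}$ (each possibly carrying a contracted subcurve with basepoints), edges decorated with positive weights, and markings attached to vertices.

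The first reduction uses the integrand to locate the markings: $\ev_2^*(H_\infty)$ vanishes on any fixed locus on which marking $2$ does not map to $\infty$, so marking $2$ must sit on an $\infty$-vertex, and dually the log contact order $d$ at marking $1$ with respect to $0$ forces marking $1$ onto a $0$-vertex. Genus $0$ and connectedness then make the graph a tree, and the bipartite structure together with the placements of the two markings means any additional vertex would be an auxiliary one without a marking.

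The core step is the balance between the total degree and the contact order at marking $1$. If marking $1$ lies on a covering edge-component of weight $w$, the local multiplicity of $u^\ast s_0$ at marking $1$ is $w$; requiring $w=d$ together with the fact that edge weights and basepoint degrees together sum to $d$ forces this to be the unique edge, and leaves no degree available for auxiliary contracted components. Quasimap stability then rules these out: a contracted genus $0$ subcurve at a fixed point would need either at least three special points or a positive basepoint degree, neither of which is available in a tree with two markings and a fully-saturated edge. If instead marking $1$ lies on a contracted component $C_v$ at $0$, the contact order is supplied by the basepoint order of $u$ at marking $1$ on $C_v$; forcing this to equal $d$ exhausts the degree budget so no covering edge can exist, contradicting the requirement that some component of $C$ meets $\infty$ to carry marking $2$. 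The main obstacle is precisely this interplay between basepoints, log contact orders and quasimap stability, since the quasimap moduli space admits $\GG_m$-fixed strata with no analogue in stable-map theory; once these are handled as above, the only surviving configuration is $\gamma$.
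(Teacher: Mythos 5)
Your enumeration arrives at the correct graph, but there are two gaps that a careful reader would press you on.

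First, you take for granted that one may do virtual localisation directly on $\cQ^{\log}_{0,(d,0)}(\PP^1|0,d)$. The paper explicitly notes that this is not possible: the perfect obstruction theory for the logarithmic quasimap space is defined relative to a base which is not smooth, so the hypotheses of the virtual localisation theorem fail. The paper instead passes to a logarithmic modification $\widetilde{\cQ^{\log}}_{0,(d,0)}(\PP^1|0,d)\to \cQ^{\log}_{0,(d,0)}(\PP^1|0,d)$ (following Ranganathan's expanded degenerations) whose obstruction theory is defined relative to a smooth base, and then enumerates fixed loci there. Your argument silently assumes a setup that needs justification.

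Second, your case where marking $1$ sits on a contracted component at $0$ misreads the quasimap structure. If a component $C_v$ is contracted into the divisor $\{0\}$, then $u^*s_0$ vanishes identically on $C_v$, so there is no finite ``vanishing order of $s_0$'' at $p_1$ — the log contact order is encoded in the tropical/log combinatorics (or, in the modified space, by expansions), not by a section multiplicity. Also, by the non-degeneracy clause in the quasimap definition markings are never basepoints, so the phrase ``basepoint order of $u$ at marking $1$'' does not parse. The configurations you try to exclude in this branch are exactly the ones that make the log modification necessary, and the clean way to dispose of them is the route the paper takes.

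The paper's proof is more economical: one first notes that genus-$0$ log quasimaps with contact orders $(d,0)$ to $(\PP^1,0)$ of degree $d$ have irreducible source (quasimap stability excludes degree-$0$ components, and the maximal tangency $d$ concentrates the entire degree of $s_0$ at $p_1$, so no component can fall into the divisor), and the log modification does not change the underlying source. Irreducibility then immediately forces the fixed graph to have a single edge with two vertices, and the integrand $\ev_2^*(H_\infty)$ together with the tangency condition at $p_1$ pins down the markings. Your approach of enumerating all candidate bipartite graphs and killing them via the degree budget is a reasonable alternative in spirit — and your observation that quasimap stability eliminates contracted genus-$0$ components without enough special points or degree is one of the genuine simplifications quasimap theory buys — but as written the argument needs the two repairs above to be correct.
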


\begin{proof}
    Recall that, in the context of localisation, each vertex in a graph $\gamma$ corresponds to a connected components of the preimage of the fixed points on the target and each edge corresponds to a multiple cover of the target $\PP^1$. We decorate edges with the labels of the marked points and edges with the degree of the cover. 
    
    We would like to do the localisation calculation directly on $\cQ^{\log}_{0,(d,0)}(\PP^1|0,d)$. For a technical reason this is not possible, since the base of the perfect obstruction theory is not smooth. On the other hand one can use the procedure of ~\cite[Section 3.4]{ranganathan2022logarithmic} to produce a logarithmic modification 
    \begin{equation}\label{morphism : expand}
        \widetilde{\cQ^{\log}}_{0,(d,0)}(\PP^1|0,d) \rightarrow \cQ^{\log}_{0,(d,0)}(\PP^1|0,d)
    \end{equation}
    whose obstruction theory is defined relative to the space of logarithmic maps to expansions of $\sA$. 
    
    Consider a fixed locus $\cQ_\gamma$ in $\widetilde{\cQ^{\log}}_{0,(d,0)}(\PP^1|0,d)$. 
    
    Since elements in $\cQ^{\log}_{(d,0)}(\PP^1 |0, d)$ have irreducible source, the morphism \eqref{morphism : expand} forces quasimaps in $\cQ_\gamma$ to have irreducible source.
    
    The tangency condition $(d,0)$ ensures that the first marked point is mapped to 0. By the factor $\ev_2^*(H_\infty)$, the second marked point must be mapped to $\infty$. This is enough to conclude that the graph $\gamma$ in the statement is indeed the only contributing graph.   
\end{proof}

\begin{lemma}\label{no several edges 1}
    Let $\widetilde{\Gamma}$ be a decorated bipartite graph appearing in the degeneration formula. Then if $\widetilde{\Gamma}$ is not of the form, two vertices with a single edge connecting them, then $$[\cQ_{0,2}^{\log}(L_0,\widetilde{\Gamma})]^{\vir} = 0.$$  
\end{lemma}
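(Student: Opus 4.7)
The plan is to exploit the fact that $g=0$ and $\widetilde{\Gamma}$ is connected, so $\widetilde{\Gamma}$ is a tree with $|E(\widetilde{\Gamma})|=|V(\widetilde{\Gamma})|-1$. Hence the cases to rule out split into two: a single vertex with no edges, and a tree with at least two edges (hence at least three vertices).

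For the single-vertex case with colour 1, the entire quasimap factors through $L_1=\PP^N\times\AAA^1$, and the condition $\ev_2^{*}\widetilde{H}$ forces the second marking to lie in $\widetilde{H}\cap L_1$. Since $\widetilde{H}$ is the strict transform of $H\times\AAA^1$ in $W=\Bl_{H\times 0}(\PP^N\times\AAA^1)$, one reads off from the fan that $\widetilde{H}\cap X_1=\emptyset$; this is precisely the trivialisation $\cO_W(-\widetilde{H})|_{X_1}\cong\cO_{X_1}$ that identifies $L_1$ with $\PP^N\times\AAA^1$ in the first place. Pulling back along $L_1\to X_1$ gives $\widetilde{H}\cap L_1=\emptyset$, so the Cartesian square defining $\cQ^{\log}_{0,2}(L_0,\widetilde{\Gamma})$ is empty. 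For a single vertex of colour 2, the marking condition is satisfied on $\widetilde{H}\cap L_2 = L|_{H_\infty}$, so the moduli space can be non-empty; I would rule it out by noting that with no edges the total contact with the gluing divisor $L_H$ must vanish, whereas the balancing (applied via the log tangent complex of \Cref{prop:log_tangent_curlyW_A1}) against the degree $(d,0)$ forces a contradiction, or equivalently produces a trivial summand in the obstruction theory of \Cref{prop:POT_W_relative_Pic} that annihilates the virtual class.

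For the multi-edge case, I would pick a leaf $V$ of $\widetilde{\Gamma}$. Since there are only two markings to distribute and a tree with at least three vertices has at least two leaves, at least one leaf carries no marking; quasimap stability then forces $\beta_V\neq 0$. Combining the log balancing condition at the unique edge incident to $V$ with the effectivity and ghost-class analysis of \Cref{deformationPN} and the identity $\sum_V\beta_V=(d,0)$, the possibilities for the decoration become very restricted, and one checks that they cannot be simultaneously satisfied—this parallels the stable-maps argument of \cite[Theorem~2.3]{vangarrel2019local}, adapted to the presence of ghost classes which, here, do not yet need to be excluded (they are handled separately in \Cref{no ghosts}).

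The main obstacle will be the single-vertex-of-colour-2 subcase, because unlike the colour-1 subcase it is not settled by emptiness of the moduli space and requires a genuine vanishing of the virtual class; this will need a careful identification of trivial factors in the Ciocan-Fontanine--Kim-style obstruction theory of \Cref{cfk obs} when applied to maps supported entirely on $L_2$.
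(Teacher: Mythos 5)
Your proposal departs significantly from the paper's argument and contains gaps in both branches of the case analysis.

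The paper's proof rests on three ingredients: (i) the quasimap analogue of \cite[Lemma~3.1]{vangarrel2019local}, which gives vanishing for any graph containing an $L_1$-vertex with at least two adjacent bounded edges; (ii) genus-zero quasimap stability, which forces each vertex moduli space $\cQ^{\log}_{0,\alpha_V}(\cdot)$ to be empty unless $|\alpha_V|\geq 2$, so that the tight count $2(k-1)+2 = 2k$ forces each vertex to carry \emph{exactly} two special points; and (iii) the constraint $\ev_2^*\widetilde{H}$, which puts $p_2$ on an $L_2$-vertex. Together these pin down the graph to two vertices joined by a single edge. Your proposal omits ingredient (i) entirely; nothing in your multi-edge analysis substitutes for it.

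Concretely, your multi-edge argument has a gap: ``a tree with at least three vertices has at least two leaves, at least one leaf carries no marking'' is false when the tree is a path, which has \emph{exactly} two leaves and both can absorb the two markings. For instance $L_1 \text{---} L_2 \text{---} L_1$ with markings on the two $L_1$-leaves satisfies stability on every vertex; it is killed only because $p_2$ is forced onto an $L_2$-vertex, and the symmetric path $L_2 \text{---} L_1 \text{---} L_2$ is killed only by ingredient (i) applied to the interior $L_1$-vertex. Your proposed replacement --- ``log balancing'' plus effectivity of ghost classes --- does not close this hole; the effectivity considerations you cite constrain degrees but do not rule out a path whose decorations are degree-effective. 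Separately, the phrase ``quasimap stability then forces $\beta_V\neq 0$'' is wrong: a leaf $V$ with no marking has $|\alpha_V|=1$, and for $0+$ stability a genus-zero source with a single marking is unstable for \emph{any} degree, so $\cQ^{\log}_{0,\alpha_V}(\cdot)$ is simply empty; this is the statement the paper uses.

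For the single $L_2$-vertex case you offer two speculative resolutions joined by ``or equivalently'' without committing to either; neither one is what is needed, and the clean argument is more elementary. An $L_2$-vertex carries degree of the form $(0,e)$ (multiple of the fibre class, as in \Cref{deformationPN} and \Cref{no non-fibers}), while the total degree is $(d,0)$ with $d\geq 1$; for a single vertex these must coincide, which is impossible. Your single-$L_1$-vertex observation that $\widetilde{H}\cap L_1 = \emptyset$ is fine, but it is a special case of the marking constraint (iii) the paper uses systematically.
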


\begin{proof}
    The exact same argument of ~\cite[Lemma 3.1]{vangarrel2019local} tells us that if $\widetilde{\Gamma}$ contains an $L_1$-vertex with at least two adjacent bounded edges then $[\cQ_{0,2}^{\log}(L_0,\widetilde{\Gamma})]^{\vir} = 0$. On the other hand, quasimap spaces in genus zero need to have at least $2$ markings by stability. Since we have two markings in total, one of which is forced to correspond to an $L_2$ vertex, we have that any graph which contributes must have two vertices and a single bounded edge between them with the markings distributed to either side.
\end{proof}

\begin{lemma}\label{no non-fibers}
     If $V$ is an $L_2$-vertex with $p_*\beta_V \neq 0$ then the class $[\cQ^{\log}_{0,\alpha_V}(L_2|L_H,\beta_V)]^{\vir}$ pushes forward to $0$ in $\cQ_{0,n_V + r_V}(H,p_{*}\beta_V)$.
\end{lemma}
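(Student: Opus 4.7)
The strategy is to adapt \cite[Lemma 3.3]{vangarrel2019local} to the quasimap setting, exploiting the line-bundle structure on $L_2$ and the negativity of the bundle $\cO_{X_2}(-H_\infty)$ on horizontal curves.

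The plan is to factor the projection $p\colon L_2\to H$ as $L_2\xrightarrow{\pi} X_2\xrightarrow{q} H$, where $\pi$ is the line-bundle projection and $q$ is the $\PP^1$-bundle projection. Since $L_H=\pi^{-1}(H_0)$ (with $H_0=H\subset X_2$ the zero section of $q$), the log structure on $L_2$ relative to $L_H$ is pulled back along $\pi$. Composing with $\pi$ therefore yields a natural forgetful morphism
$$\bar\pi\colon \cQ^{\log}_{0,\alpha_V}(L_2|L_H,\beta_V)\to \cQ^{\log}_{0,\alpha_V}(X_2|H_0,\pi_*\beta_V).$$
Using the CFK-style presentation of the obstruction theory from \Cref{prop:obstruction_theory_X_over_Pic} together with the Euler sequence for the line bundle $\pi$, I would identify the obstruction complex of the left-hand space as an extension of $\bar\pi^*$ of the obstruction complex on the right by $\mathbf{R}\pi_*u^*\cO_{X_2}(-H_\infty)$, so that the virtual class on the source is a virtual pullback of the one on the target capped with the Euler class of this complex.

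The hypothesis $p_*\beta_V\neq 0$ translates into $q_*(\pi_*\beta_V)\neq 0$, i.e. $\pi_*\beta_V$ has nontrivial horizontal component. The line bundle $\cO_{X_2}(-H_\infty)$ pairs to $-1$ with any fibre class of $q$ and non-positively with $H_0$, so its restriction to the generic fibre of the universal curve $\cC\to\cQ^{\log}_{0,\alpha_V}(X_2|H_0,\pi_*\beta_V)$ has strictly negative total degree. Hence $\pi_*u^*\cO_{X_2}(-H_\infty)=0$ generically and $\mathbf{R}\pi_*u^*\cO_{X_2}(-H_\infty)$ is concentrated in degree $1$ as a locally free sheaf of positive rank. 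This produces a trivial $\cO$-summand in (the dual of) the obstruction bundle of $\bar\pi$, equivalently a cosection of the obstruction sheaf. A standard cosection localization argument, applied after pushforward to $\cQ_{0,n_V+r_V}(H,p_*\beta_V)$, then forces the class to vanish: the virtual dimension of the cosection-localised class is strictly less than the dimension of the pushforward class, so pushforward along the proper map to $\cQ_{0,n_V+r_V}(H,p_*\beta_V)$ is zero for dimension reasons.

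The main obstacle will be controlling the basepoint locus. In contrast to the stable map setting of \cite{vangarrel2019local}, a quasimap to $L_2$ may have basepoints, and I must confirm that basepoints lie only along the zero section of $\pi$ so that the degree of $u^*\cO_{X_2}(-H_\infty)$ on components projecting non-trivially to $H$ remains genuinely negative, rather than merely non-positive. This amounts to understanding the unstable locus in the GIT presentation of $L_2$ induced from $\cL$; once this is in place, the cosection extends across the moduli space and the argument goes through uniformly.
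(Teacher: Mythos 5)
The paper's proof is a direct dimension count, far simpler than what you propose: by the virtual dimension computation of \cite[Lemma 5.1]{vangarrel2019local}, $\operatorname{vdim}\cQ^{\log}_{0,\alpha_V}(L_2|L_H,\beta_V)$ is strictly greater than $\operatorname{vdim}\cQ_{0,n_V+r_V}(H,p_*\beta_V)$ whenever $p_*\beta_V\neq 0$, and since $H$ is a projective space, the target $\cQ_{0,n_V+r_V}(H,p_*\beta_V)$ is smooth of the expected dimension. Pushforward preserves the degree of a cycle, so a class of dimension greater than $\dim\cQ_{0,n_V+r_V}(H,p_*\beta_V)$ must push forward to zero. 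No cosection, factorisation through $X_2$, or analysis of the obstruction complex is needed.

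Beyond being unnecessary, your cosection step has a genuine gap. From the vanishing of $\pi_*u^*\cO_{X_2}(-H_\infty)$ and the fact that $\mathbf{R}^1\pi_*u^*\cO_{X_2}(-H_\infty)$ is locally free of positive rank, you cannot conclude that it contains a trivial $\cO$-summand; that is what you would need to get a nowhere-vanishing cosection. A locally free sheaf of positive rank need not have a trivial quotient or subsheaf, and without a nowhere-surjective cosection (or one surjective away from a controlled locus) the Kiem--Li machinery does not apply. The sentence asserting that cosection localization ``forces the class to vanish'' because ``the virtual dimension of the cosection-localised class is strictly less than the dimension of the pushforward class'' conflates two different things: cosection localization confines the support of the virtual class, it does not by itself drop its dimension, so the conclusion would still ultimately rest on the dimension comparison --- which, as the paper shows, suffices on its own. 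Finally, you correctly flag that basepoints of the quasimap (which may sit off the zero section) threaten the negativity claim, and this concern is left unresolved in your sketch; the dimension argument sidesteps it entirely since it only uses the well-known fact that genus-zero quasimap spaces to projective space are unobstructed.
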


\begin{proof}
By the computation of virtual dimensions in ~\cite[Lemma 5.1]{vangarrel2019local} we have that the virtual dimension of $\cQ^{\log}_{0,\alpha_V}(L_2|L_H,\beta_V)$ is strictly greater than the virtual dimension of $\cQ_{0, n_V + r_V}(H,p_* \beta_V)$ if $p_{*}\beta_V \neq 0$. On the other hand $\cQ_{0,n_V + r+V}(H,p_{*}\beta_V)$ is smooth of the expected dimension and so the result follows.
\end{proof}
\begin{notation}\label{gamma: one node}
Let $\widetilde{\Gamma}$ be a decorated bipartite graph with two vertices and a single edge $E$. We use the following (slightly abusive) notation for the universal curve and the irreducible components of the universal curve.
\[\xymatrix{\cC_{\widetilde{\Gamma}}\ar[d]^{\pi}\ar[r]^f& \cL_0\\ \cQ_{\widetilde{\Gamma}}}
\qquad
\xymatrix{\cC^i_{}\ar[d]^{\pi^i}\ar[r]^{f_i}&\cL_0\\ \cQ_{\widetilde{\Gamma}}}
\]
We denote by $P$ the section corresponding to the node.
\end{notation}

In the following, we want to exclude the contribution from ghost classes (see \Cref{def:ghost_class}). Thanks to \Cref{no several edges 1} and \Cref{no non-fibers}, we only need to rule out the following type of graphs. Below we use the basis $\{l,f\}$ of $H_2(W(\PP,H))$ introduced in \Cref{deformationPN}.

\begin{definition}
    Consider the double point degeneration $W(\PP^N,H)\to \AAA^1$. 
    Let $\widetilde{\Gamma}$ be a graph
    \[
        \xymatrix{
            \overset{V_1}{\bullet}
            \ar@{-}[r]^(-.1){}="a"^(1.1){}="b" \ar^E@{-} "a";"b"
            &\overset{V_2}{\bullet}
        }
    \] 
    with exactly one marking on each vertex, $V_1$ a $X_1$-vertex, $V_2$ an $X_2$-vertex, $\omega_E = e$ and with curve classes $\beta_{V_1} = (d,-e)$ and $\beta_{V_2}=(0,e)$ with $d\geq e \geq 0$. We say that $\widetilde{\Gamma}$ is a \textit{ghost graph} if $d\neq e$. Equivalently, $\widetilde{\Gamma}$ is a ghost graph if $\beta_{V_1}$ is a ghost class for the embedding $\PP^N\hookrightarrow W(\PP^N,H)$ through the special fibre, in the sense of \Cref{def:ghost_class}.
\end{definition}

\begin{lemma}\label{p h smooth}
Let $\widetilde{\Gamma}$ be a ghost graph.
The moduli space $\cQ_{0,2}^{\log}(W_0(\PP^N,H),\widetilde{\Gamma})$ is smooth over $\frPic^{\log,s}_{0,2,\underline{d}}(\mathscr{A}_0,\Gamma)$. 
\end{lemma}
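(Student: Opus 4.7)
The plan is to show that the perfect obstruction theory of $\cQ^{\log}_{0,2}(W_0(\PP^N,H),\widetilde{\Gamma})$ relative to $\frPic^{\log,s}_{0,2,\underline{d}}(\mathscr{A}_0,\Gamma)$ has vanishing $\mathbf{R}^1\pi_*$ contribution, which is equivalent to smoothness. By the analogue of \Cref{prop:POT_W_relative_Pic}, this obstruction theory is given by $\mathbf{R}\pi_* \mathbf{L}u^*$ of the two-term complex $[\oplus_{\rho\neq \rho_1,\rho_2}\cO_{\cW}(D_{\rho}) \oplus \cO_{\cW}^{\oplus 2} \xrightarrow{f_2} \cO_{\cW}]$ from \Cref{prop:log_tangent_curlyW_A1}. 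Since $f_2$ is the identity on the trivial summands, it is surjective, and the complex is quasi-isomorphic to its kernel $\oplus_{\rho\neq \rho_1,\rho_2}\cO_{\cW}(D_{\rho}) \oplus \cO_{\cW}$ placed in degree zero. Smoothness therefore reduces to proving $\mathbf{R}^1\pi_* u^*\cO_\cW(D_\rho) = 0$ for each $\rho \in \{\rho_0, \rho_{i_1},\ldots,\rho_{i_N}\}$; the vanishing of $\mathbf{R}^1\pi_*\cO_\cC$ is automatic in genus zero.

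The key step is a componentwise degree analysis on the universal curve $\cC = \cC^1 \cup_P \cC^2$, where each $\cC^i$ is a genus-$0$ tree of $\PP^1$'s mapping to $\cX_{r(V_i)}$. In the basis $\{[\widetilde{H}_1],[X_1]\}$ of $\Pic(\cW)$ dual to $\{l,f\}$ from \Cref{deformationPN}, the pullback $u^*\cO_\cW(\widetilde{H}_i)$ has total degree $d$ on $\cC^1$ and $0$ on $\cC^2$, while $u^*\cO_\cW(\widetilde{H})$ has total degree $d-e$ on $\cC^1$ and $e$ on $\cC^2$. All are non-negative since $d\geq e\geq 0$. I would promote non-negativity to every irreducible component as follows: on any component $C'\subset \cC$, the quasimap lands generically in the stable locus of $\cW$, and a direct computation in the GIT presentation shows that not all the coordinate sections $x_0, x_1,\ldots,x_N$ can identically vanish on $C'$ (the loci where they do are unstable in $\cW$, for instance because of non-closed orbits or infinite stabilisers). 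The canonical section $x_0$ of $u^*\cO_\cW(\widetilde{H})$ and some $x_j$ with $j\geq 1$ of $u^*\cO_\cW(\widetilde{H}_i)$ then force non-negative degree on $C'$.

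Given componentwise non-negativity, an induction on the number of irreducible components via the partial normalisation exact sequence shows that $H^1$ of each such line bundle vanishes on every fibre of $\pi$: on a leaf $\PP^1$-component the line bundle is globally generated, so the evaluation map $H^0(L|_{C'})\oplus H^0(L|_{\cC\setminus C'}) \to L_P$ is surjective at the adjacent node $P$. Cohomology and base change then yields $\mathbf{R}^1\pi_* u^*\cO_\cW(D_\rho) = 0$, completing the proof. The main technical obstacle is the componentwise degree analysis, particularly on $\cC^2$, where $\beta_{V_2} = (0,e)$ constrains the quasimap to sit inside a single fibre of $X_2\to H$ and components may degenerate in ways requiring a careful case analysis of which coordinate sections can identically vanish without violating GIT stability in the ambient $\cW$.
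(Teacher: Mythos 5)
Your overall strategy lines up with the paper's: both argue for unobstructedness by reducing to vanishing of $\mathbf{R}^1\pi_*$ of the line bundles $u^*\cO_{\cW}(D_\rho)$ appearing in the Ciocan-Fontanine--Kim style obstruction theory, and both run this through the partial normalisation of $\cC$ at the distinguished node $P$. The paper states the intersection-number computation \eqref{eq:intersection_numbers} for the pieces $\cC_1,\cC_2$ and then asserts the vanishing $\mathbf{R}^1\pi^i_*\cE_{X_i|H}=0$ from it, together with surjectivity of the gluing map $\phi$; you aim for the same conclusion but try to spell out the componentwise positivity directly.

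The part that needs repair is precisely your componentwise positivity argument. You write that ``not all the coordinate sections $x_0,x_1,\ldots,x_N$ can identically vanish on $C'$'' and conclude from this that $u^*\cO_\cW(\widetilde H)$ and $u^*\cO_\cW(\widetilde H_i)$ are non-negative on $C'$. But the line bundles $\cO_\cW(\widetilde H)$ and $\cO_\cW(\widetilde H_i)$ are \emph{different}: $x_0$ is a section of $L_1\otimes L_2 = \cO_\cW(\widetilde H)$, while $x_1,\ldots,x_N$ are sections of $L_1 = \cO_\cW(\widetilde H_i)$. Knowing that some one of $x_0,\ldots,x_N$ is non-zero on $C'$ does not control the degree of $\cO_\cW(\widetilde H)|_{C'}$ if only the $x_j$ with $j\geq 1$ survive. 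What one actually needs is a case analysis via the primitive collections of the fan of $W(\PP^N,H)$: for instance, $\{x_0,v_1\}$ is a primitive collection, so on a component of $\cC_1$ (where $v_1\equiv 0$) the section $x_0$ cannot vanish identically, giving $\deg(L_1\otimes L_2)|_{C'}\geq 0$ directly; while on components of $\cC_2$ one must combine the non-vanishing of some $x_j$ (giving $\deg L_1|_{C'}\geq 0$) with the non-vanishing of $v_1$ or $v_2$ (controlling $\deg L_2|_{C'}$) to bound $\deg(L_1\otimes L_2)|_{C'}$. There is also the case on $\cC_1$ where $x_1,\ldots,x_N$ all vanish identically but $v_2$ does not, which is not excluded by stability (the locus $\{x_1=\cdots=x_N=v_1=0\}$ lies in a cone of the fan), and there one needs $\deg L_2^{-1}|_{C'}\geq 0$ from $v_2$ together with $\deg(L_1\otimes L_2)|_{C'}\geq 0$ from $x_0$ to recover $\deg L_1|_{C'}\geq 0$. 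Without this finer bookkeeping the asserted componentwise non-negativity is not established.

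One smaller remark: the paper's proof also relies on surjectivity of the connecting map $\phi$ in the pushforward of the partial normalisation sequence; your evaluation-map argument on leaf components is the right idea, but you should note this is exactly the same ingredient and that it again uses the componentwise positivity.
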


\begin{proof} 
    We show that $\cQ_{0,2}^{\log}(W_0(\PP^N,H),\widetilde{\Gamma})$ is unobstructed over $\frPic^{\log,s}_{0,2,\underline{d}}(\mathscr{A}_0,\Gamma)$.
    Using the description of the toric divisors of $W(\PP^N,H)$ in \Cref{deformationPN}, we see that
    \begin{equation}\label{eq:intersection_numbers}
    \begin{aligned}[c]
    \widetilde{H}_1\cdot(d,-e)&=d\\
    \widetilde{H}\cdot(d,-e)&=d-e
    \end{aligned}
    \qquad\qquad
    \begin{aligned}[c]
    \widetilde{H}_1\cdot(0,e)&=0\\
    \widetilde{H}\cdot(0,e)&=e.
    \end{aligned}
    \end{equation}
    
    We consider the normalization sequence of $\cC$, the universal curve over $\cQ_{0,2}^{\log}(W_0(\PP^N,H),\widetilde{\Gamma})$,
    \[
        0\to\cO_{\cC}\to \cO_{\cC_1}\oplus \cO_{\cC_2}\to \cO_{P}\to 0.
    \]
    
    Given the obstruction theory in \Cref{prop:POT_W_relative_Pic}, we take the sheaf
    \begin{equation}\label{eq:def_sheaf_curly_E}
        \cE(W) \coloneqq \oplus_{i=1}^N\cO_{\cC}(\widetilde{H}_i)\oplus \cO_{\cC}(\widetilde{H})\oplus \cO_{\cC}.
    \end{equation}
    on $\cC$ and the sheaves
    \[
        \cE_{X_i|H} \coloneqq \oplus_{\rho \neq \rho_{0} \in \Sigma_{X_i}(1)} \cO_{\cC_i}(D_{\rho})\oplus \cO_{\cC_i}
    \]
    on $\cC_i$ for $i=1,2$, with $D_{\rho_0} = H$ in both cases.

    Tensoring the normalization sequence of $\cO_{\cC}$ with $\cE(W)$ and pushing forward to $\cQ_{0,2}^{\log}(W_0(\PP^N,H),\widetilde{\Gamma})$, we get a long exact sequence
    \begin{align*}
    0\to &\mathbf{R}^0\pi_* \cE(W) \to \mathbf{R}^0\pi^1_*\cE_{X_1|H} \oplus \mathbf{R}^0\pi^2_*\cE_{X_2|H} \stackrel{\phi}{\rightarrow} \mathbf{R}^0\pi_*\cE|_P\to\\
    \to &\mathbf{R}^1\pi_* \cE(W)\to \mathbf{R}^1\pi^1_*\cE_{X_1|H} \oplus \mathbf{R}^1\pi^2_*\cE_{X_2|H}\to 0.
    \end{align*}
    Since $\phi$ is surjective and since $\mathbf{R}^1\pi^1_*\cE_{X_1|H} \oplus \mathbf{R}^1\pi^2_*\cE_{X_2|H} = 0$ by \eqref{eq:intersection_numbers}, we conclude that $\mathbf{R}^1\pi_* \cE(W)=0$. This proves the claim.
\end{proof}

\begin{corollary}\label{ghost w smooth} 
Let $\tilde{\Gamma}$ be a ghost graph.
The moduli space $\cQ_{0,2}^{\log}(L_0,\tilde{\Gamma})$ is smooth over $\frPic^{\log,s}_{0,2,\underline{d}}(\mathscr{A}'_0,\Gamma)$.
\end{corollary}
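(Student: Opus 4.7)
The plan is to reduce this to \Cref{p h smooth} by identifying $\cQ_{0,2}^{\log}(L_0,\widetilde{\Gamma})$ with $\cQ_{0,2}^{\log}(W_0(\PP^N,H),\widetilde{\Gamma})$. The key observation is that $L$ is the total space of the line bundle $\cO_{W(\PP^N,H)}(-\widetilde{H})$, so writing $\pi\colon\cL\to\cW$ for the projection, the fan of $L$ has exactly one additional ray $\rho_L$ compared to that of $W$, corresponding to the zero section of $\pi$, with $\cO_\cL(D_{\rho_L})=\pi^*\cO_\cW(-\widetilde{H})$. Thus a quasimap $u\colon\cC\to\cL_0$ is the same data as a quasimap to $\cW_0$ together with one extra section $s_{\rho_L}$ of the pullback of $\cO_\cW(-\widetilde{H})$ along the underlying map.

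I would then show that $s_{\rho_L}$ is forced to vanish on every quasimap parametrised by $\cQ_{0,2}^{\log}(L_0,\widetilde{\Gamma})$. Using \eqref{eq:intersection_numbers} and the relation $\widetilde{H}=\widetilde{H}_1+\PP^N$ in $\Pic(W(\PP^N,H))$, on the two vertex pieces of the universal curve the line bundle $u^*\cO_\cL(D_{\rho_L})$ has degree
\[
-\widetilde{H}\cdot(d,-e)=e-d<0 \text{ on } \cC_1, \qquad -\widetilde{H}\cdot(0,e)=-e<0 \text{ on } \cC_2,
\]
under the ghost assumption $d>e\geq 1$. A component-wise argument of the same shape as in \Cref{p h smooth}, followed by the normalisation sequence at the node, then yields $H^0(\cC,u^*\cO_\cL(D_{\rho_L}))=0$; hence $s_{\rho_L}\equiv 0$, and every quasimap in $\cQ_{0,2}^{\log}(L_0,\widetilde{\Gamma})$ factors through the zero section of $\pi$, which is $\cW_0$.

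The projection $\pi$ therefore induces a natural isomorphism $\cQ_{0,2}^{\log}(L_0,\widetilde{\Gamma})\simeq \cQ_{0,2}^{\log}(W_0(\PP^N,H),\widetilde{\Gamma})$ of stacks over the common Picard base $\frPic^{\log,s}_{0,2,\underline{d}}(\sA'_0,\Gamma)$, and the smoothness transfers directly from \Cref{p h smooth}. The main subtle point worth flagging is that the \emph{canonical} obstruction theory on $\cQ_{0,2}^{\log}(L_0,\widetilde{\Gamma})$ acquires a non-trivial excess $\mathbf{R}^1\pi_\ast u^*\cO_\cL(D_{\rho_L})$, which does not vanish precisely because the pullback has negative degree on each component; the corollary should be understood as asserting smoothness of the underlying stack, and this excess line bundle is exactly what will be exploited in \Cref{no ghosts} to kill the ghost graph contributions to the degeneration formula.
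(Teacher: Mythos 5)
Your proof is correct and takes essentially the same approach as the paper: the paper's own argument is a one-line remark that quasimaps to $\cL$ factor through the zero section and then cites \Cref{p h smooth}, and you have simply unpacked why the extra section $s_{\rho_L}$ vanishes — namely that $u^*\cO_\cL(Z) = u^*\pi^*\cO_\cW(-\widetilde{H})$ has strictly negative degree $e-d$ and $-e$ on the two vertex components of a ghost graph, so each $H^0$ vanishes and the normalisation sequence forces $s_{\rho_L}=0$. Your closing remark that the resulting isomorphism $\cQ_{0,2}^{\log}(L_0,\widetilde{\Gamma})\simeq \cQ_{0,2}^{\log}(W_0(\PP^N,H),\widetilde{\Gamma})$ is one of underlying stacks only, with the excess $\mathbf{R}^1\pi_*u^*\cO_\cL(Z)$ surviving and being used in \Cref{no ghosts}, also matches the paper's use of the result exactly.
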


\begin{proof} Since $\widetilde{\cD}$ is effective, we have that maps from (nodal) curves to $\cL$ factor through the zero section. This shows that 
$\cQ_{0,2}^{\log}(W(\PP^N,H),\tilde{\Gamma})\simeq \cQ_{0,2}^{\log}(L/\AAA^1,\tilde{\Gamma})$.
\Cref{p h smooth} implies the claim.
\end{proof}

\begin{proposition}\label{no ghosts}
    Let $\tilde{\Gamma}$ be a ghost graph.
    Then
    \[[\cQ_{0,2}^{\log}(L_0,\tilde{\Gamma})]^\vir=0\]
\end{proposition}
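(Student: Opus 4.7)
The plan is to combine the smoothness established in \Cref{ghost w smooth} with an excess intersection argument. Since $\cQ^{\log}_{0,2}(L_0,\widetilde{\Gamma})$ is smooth over the Picard stack, its virtual class equals the Euler class of the obstruction sheaf of the perfect obstruction theory (the analogue of \Cref{prop:POT_W_relative_Pic} for $L$) capped with the fundamental class. The task therefore reduces to identifying the obstruction bundle and showing that its top Chern class vanishes.

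The first step will be to identify the obstruction bundle as $R^1\pi_*u^*\cO_W(-\widetilde{H})$. The total space $L \to W$ is a line bundle with $T_{L/W}|_W \cong \cO_W(-\widetilde{H})$, and by the argument in the proof of \Cref{ghost w smooth} every quasimap in $\cQ^{\log}_{0,2}(L_0,\widetilde{\Gamma})$ factors through the zero section $W \hookrightarrow L$. Comparing the POT for $L$ with that for $W$ (which is resolved to $R^1=0$ by \Cref{p h smooth}), the only residual contribution is $\mathbf{R}\pi_*u^*\cO_W(-\widetilde{H})$, corresponding to the extra toric ray for the zero section. A direct degree computation on the two components $\cC_1,\cC_2$ of the universal curve gives $u_1^*\cO_W(-\widetilde{H}) \cong \cO_{\PP^1}(-(d-e))$ and $u_2^*\cO_W(-\widetilde{H}) \cong \cO_{\PP^1}(-e)$, both of strictly negative degree since $d > e \geq 1$. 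Hence $R^0\pi_*u^*\cO_W(-\widetilde{H}) = 0$, and the obstruction bundle is the rank $d-1$ bundle $R^1\pi_*u^*\cO_W(-\widetilde{H})$.

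The heart of the argument will be to exhibit a trivial line sub-bundle. Tensoring the partial normalisation sequence for $\cC$ along the node section $P$ with $u^*\cO_W(-\widetilde{H})$ and applying $\mathbf{R}\pi_*$ yields
\begin{equation*}
0 \to \ev_P^*\cO_W(-\widetilde{H})|_D \to R^1\pi_*u^*\cO_W(-\widetilde{H}) \to R^1\pi^1_*u_1^*\cO_W(-\widetilde{H}) \oplus R^1\pi^2_*u_2^*\cO_W(-\widetilde{H}) \to 0.
\end{equation*}
The crucial geometric observation is that $\cO_W(\widetilde{H})|_{X_1}$ is trivial: using $[\widetilde{H}] = [\widetilde{H}_1] - [E]$ from \Cref{deformationPN} together with the fact that both $\widetilde{H}_1$ and $E$ restrict to $H = D$ as divisors on $X_1 \cong \PP^N$, we have $[\widetilde{H}]|_{X_1} = [H] - [H] = 0$. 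Further restricting to $D \subset X_1$ shows $\ev_P^*\cO_W(-\widetilde{H})|_D \cong \cO_{\cQ}$. The Whitney sum formula then gives $c(R^1\pi_*u^*\cO_W(-\widetilde{H})) = c(\cO_{\cQ})\cdot c(Q) = c(Q)$ for a quotient bundle $Q$ of rank $d-2$, so the top Chern class vanishes.

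The main care-requiring step, rather than a substantive obstacle, will be to precisely identify the excess bundle by carefully comparing the logarithmic tangent complex of $\cL$ versus $\cW$, in the style of \Cref{prop:log_tangent_curlyW_A1}, and confirming that the only additional summand in the POT is the pullback of $\cO_W(-\widetilde{H})$. The triviality of $\cO_W(-\widetilde{H})|_D$ is the special feature which distinguishes ghost-graph contributions and makes the argument go through.
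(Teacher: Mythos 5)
Your argument is correct and mirrors the paper's proof: both identify the virtual class with the Euler class of the excess obstruction bundle via \Cref{ghost w smooth}, apply the normalisation sequence along the node to exhibit a trivial line sub-bundle in that obstruction bundle, and conclude by Whitney. The only cosmetic difference is in how you trivialise $\ev_P^*\cO_W(-\widetilde H)|_D$: you compute $[\widetilde H]|_{X_1}=0$ in $\Pic(X_1)$ using the relations of \Cref{deformationPN}, whereas the paper observes that the canonical section of $p^*\cO_W(\widetilde H)$ is nowhere vanishing along the node section since $D$ and $\widetilde H$ are disjoint; both are valid and equivalent.
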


\begin{proof} 
In this proof we write $W$ for $W(\PP^N,H)$, as there is no risk of confusion. 

Let $\cC$ denote the universal curve over $\cQ_{0,2}^{\log}(L_0,\tilde{\Gamma}) \simeq \cQ_{0,2}^{\log}(W_0,\tilde{\Gamma})$. By \Cref{prop:POT_W_relative_Pic}, the obstruction theory on $\cQ_{0,2}^{\log}(W_0,\tilde{\Gamma})$ is determined by the sheaf $\cE(W)$ on $\cC$, defined in \eqref{eq:def_sheaf_curly_E}, while the one on $\cQ_{0,2}^{\log}(L_0,\tilde{\Gamma})$ is controlled by 
\begin{equation}\label{toric div for l}
    \cE(L) = \cE(W) \oplus \cO_{\cC}(Z).
\end{equation}
Tensoring the normalization sequence of $\cO_\cC$ with $\cE(L)$, we get the following long exact sequence on $\cQ_{0,2}^{\log}(L_0,\tilde{\Gamma})$
\begin{align}\label{long obs seq}
    0\to &\mathbf{R}^0\pi_* \cE(L) \to \mathbf{R}^0\pi^1_*\cE_{L_1|H} \oplus \mathbf{R}^0\pi^2_*\cE_{L_2|H} \stackrel{\phi}{\rightarrow} \mathbf{R}^0\pi_*\cE(L)|_P\to\\
    \to &\mathbf{R}^1\pi_* \cE(L)\to \mathbf{R}^1\pi^1_*\cE_{L_1|H} \oplus \mathbf{R}^1\pi^2_*\cE_{L_2|H}\to 0\nonumber
\end{align}
with 
\begin{equation}\label{eq:comparison_E_Li_E_Xi}
    \cE_{L_i|H} = \cE_{X_i|H} \oplus \cO_{\cC_i}(Z).
\end{equation}

By \Cref{ghost w smooth}, $\cQ_{0,2}^{\log}(W_0,\tilde{\Gamma}) \simeq  \cQ_{0,2}^{\log}(L_0,\tilde{\Gamma})$ is smooth over $\frPic^{\log,s}_{0,2,\underline{d}}(\mathscr{A}'_0)$ and it has $\mathbf{R}^1\pi_*\cE(L)$ as an obstruction bundle. This shows that
\begin{equation}\label{cap with euler}
    [\cQ_{0,2}^{\log}(L_0,\tilde{\Gamma})]^\vir=
    e(\mathbf{R}^1\pi_* \cE(L)) \cap 
    [\cQ_{0,2}^{\log}(W_0,\tilde{\Gamma})]^\vir.
\end{equation}
In the following we show that the Euler class of $\mathbf{R}^1\pi_* \cE$ is zero.

We first show that we have an exact sequence
\begin{equation} \label{r0 obs seq}
    \mathbf{R}^0\pi^1_*\cE_{L_1|H} \oplus \mathbf{R}^0\pi^2_*\cE_{L_2|H} \stackrel{\phi}{\rightarrow} 
    \mathbf{R}^0\pi_*\cE(L)|_P \to
    \mathbf{R}^0\pi_*\cO_{\cC}(Z)|_P \to 0.
\end{equation}
For that, we use \eqref{toric div for l} and \eqref{eq:comparison_E_Li_E_Xi} to re-write the morphism $\phi$ as follows
\[
    (\mathbf{R}^0\pi^1_*\cE_{X_1|H} \oplus \mathbf{R}^0\pi^2_*\cE_{X_2|H}) \oplus (\mathbf{R}^0\pi^1_*\cO_{\cC_1}(Z) \oplus \mathbf{R}^0\pi^2_*\cO_{\cC_2}(Z))\stackrel{\phi}{\rightarrow} 
    (\mathbf{R}^0\pi_*\cE(W)|_P) \oplus \mathbf{R}^0\pi_*\cO_{\cC}(Z)|_P.
\]
We have that
\[
    \mathbf{R}^0\pi^1_*\cE_{X_1|H} \oplus \mathbf{R}^0\pi^2_*\cE_{X_2|H} \to
    \mathbf{R}^0\pi_*\cE(W)|_P
\]
is surjective and by \Cref{deformationPN} we have that
\begin{equation}
(d,-e)\cdot [Z]=e-d<0 \qquad \text{and}\qquad (0,e)\cdot [Z]=-e<0,
\end{equation}
which shows that $\mathbf{R}^0\pi^1_*\cO_{\cC_1}(Z) \oplus \mathbf{R}^0\pi^2_*\cO_{\cC_2}(Z)=0$. This shows that the cokernel of $\phi$ is $\mathbf{R}^0\pi_*\cO_{\cC}(Z)|_P$, which proves the exactness of \eqref{r0 obs seq}.

Combining \eqref{r0 obs seq} and \eqref{long obs seq}, we get a short exact sequence
\begin{equation}\label{r1 obs seq}
0\to \mathbf{R}^0\pi_*\cO_{\cC}(Z)|_P\to 
\mathbf{R}^1\pi_* \cE(L)\to 
\mathbf{R}^1\pi^1_*\cE_{L_1|H} \oplus \mathbf{R}^1\pi^2_*\cE_{L_2|H}\to 0,
\end{equation}
which implies that
\begin{align}\label{proof euler}
e(\mathbf{R}^1\pi_*\cE(L)) = 
e(\mathbf{R}^0\pi_*\cO_{\cC}(Z)|_P) \cdot
e(\mathbf{R}^1\pi^1_*\cE_{L_1|H} \oplus \mathbf{R}^1\pi^2_*\cE_{L_2|H}).
\end{align}
Therefore, it is enough to show that the Euler class of $\mathbf{R}^0\pi_*\cO_{\cC}(Z)|_P$ is zero. 

For this, we notice that $[Z]=-p^*[\widetilde{H}]$ with $p\colon L\to W$ the natural projection. It follows that $e(\mathbf{R}^0\pi_*\cO_{\cC}(Z)|_P)=-\ev_P^*(p^*\widetilde{H})$. As $p^*\widetilde{H}$ has a universal section which does not vanish at $P$, we have that $(p^*\widetilde{H})|_P$ has a nowhere vanishing section and thus $\ev_P^*(p^*\widetilde{H})$ is a trivial line bundle. This shows that 
\begin{equation}\label{zero euler}
    e(\mathbf{R}^0\pi_*\cO_{\cC}(Z)|_P)=0.
\end{equation}
The equalities \eqref{cap with euler}, \eqref{proof euler} and \eqref{zero euler} conclude the proof.
\end{proof}

\begin{lemma}\label{no purely p classes}
   Graphs contained in $L_1$ do not contribute to the degeneration formula. 
\end{lemma}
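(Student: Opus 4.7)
After the reductions in \Cref{no several edges 1,no non-fibers,no ghosts}, any potentially contributing decorated bipartite graph $\widetilde{\Gamma}$ has one $L_1$-vertex $V_1$ of class $(a,-a)$ and one $L_2$-vertex $V_2$ of class $(0,c)$, joined by a single edge $E$, subject to $(a,-a)+(0,c)=(d,0)$; so $a=c$. Moreover marking $2$ must sit on $V_2$ because the constraint $\ev_2\in \widetilde{H}$ forces its image to lie in $\widetilde{H}\cap L_0\subset L_2$, and then marking $1$ must sit on $V_1$. Within this family of graphs, those ``contained in $L_1$'' are exactly the ones with $a=c=0$: both vertex classes vanish and the image of the whole quasimap collapses into the gluing divisor $L_H=L_1\cap L_2$.

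The plan is then to argue that for such a graph the moduli space $\cQ^{\log}_{0,2}(L_0,\widetilde{\Gamma})$ is empty, so its virtual class is automatically zero. Indeed, each vertex gives a $\PP^1$-component carrying exactly two special points (one marking and the node at $E$), while $\beta_V=0$ makes the $L_\theta$-degree on each component zero. The quasimap stability condition from \Cref{absquasimapdef} demands that $\omega_C(p_1+\cdots+p_n)\otimes L_\theta^{\epsilon}$ be ample for every $\epsilon\in\QQ_{>0}$, but on such a rational component the degree of this divisor is identically zero and therefore not ample for small $\epsilon$. Hence no stable quasimap can realise $\widetilde{\Gamma}$, and the contribution to the degeneration formula vanishes.

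The main obstacle is pinning down ``contained in $L_1$'' precisely enough to guarantee that nothing else slips through after the earlier reductions; this is mostly bookkeeping with the two possible curve-class types on either side of the degeneration together with the forced marking distribution. Once this is in place, no further geometric or obstruction-theoretic work is needed: the statement reduces to the direct stability check above.
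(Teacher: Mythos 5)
The paper's proof is a one-line intersection argument: the moduli space being studied carries the constraint $\ev_2^*\widetilde H$ (written $\ev^*D_\infty$ in the proof), and $\widetilde H\cap L_0 = H_\infty$ is entirely contained in $L_2$ and disjoint from $L_1$; so any class supported on graphs mapping into $L_1$ pairs to zero with this constraint. Your argument is different and, as written, does not prove the lemma.

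The first problem is a misreading of the statement. In the paper's bipartite-graph formalism, a ``graph contained in $L_1$'' means a decorated bipartite graph all of whose vertices carry $r(V)=1$; since edges only join an $L_1$-vertex to an $L_2$-vertex and the graph is connected, this forces a single $L_1$-vertex with no edges. You instead interpret it as the special two-vertex graph with $\beta_{V_1}=\beta_{V_2}=0$. That is a different object and is not what needs to be ruled out. Moreover, under your own accounting of degrees $((a,-a)+(0,c)=(d,0)$ forcing $a=c=d$), the case $a=c=0$ only occurs when $d=0$; so for $d>0$ your proposal establishes a vacuous claim and leaves the one-vertex graphs in $L_1$ untouched.

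There is also a logical subtlety you should be aware of: you begin ``after the reductions in \Cref{no several edges 1,no non-fibers,no ghosts}''; but the proof of \Cref{no several edges 1} already asserts that one of the two markings ``is forced to correspond to an $L_2$-vertex,'' i.e.\ it implicitly invokes exactly the fact that this lemma is meant to establish. So you cannot take \Cref{no several edges 1} as an input to prove the present lemma without risking circularity. The clean way to settle it is the paper's way: observe directly that if $\widetilde\Gamma$ is supported entirely in $L_1$, the universal quasimap cannot meet $\widetilde H$, and therefore the cap product with $\ev^*D_\infty$ (equivalently the Gysin pullback defining $\cQ^{\log}_{0,2}(L_0,(d,0))^{\widetilde H}$ in \Cref{definition : infinity section}) annihilates the contribution of that graph. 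Your stability observation for degree-zero components may be correct in isolation, but it is aimed at a statement the paper is not making.
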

\begin{proof}
Intersection of these with $\ev^*D_{\infty}$ is zero.
\end{proof}

\begin{proof}[Proof of \Cref{qlocallogtoric}]
By \Cref{thm : 1 graph contribution}, together with commutativity of diagram \eqref{prod-qmaps}, we have that
    \begin{align}
        &\quad \, P_{*}[\cQ_{0,2}^{\log}(L_0,(d,0))^{\widetilde{H}}]^{\vir}
        \nonumber\\
        &= d \cdot F_{*}\mathrm{pr}_{1,*}c_{\widetilde{\Gamma},*}c_{\widetilde{\Gamma}}^* \Delta^{!}\left([\cQ^{\log}_{0,(d,0)}(L_1|L_H,d)]^{\vir} \times [\cQ^{\log}_{0,(d,0)}(L_2|L_H,(0,d))^{H_{\infty}}]^{\vir}\right)\nonumber \\
        &= d \cdot F_{*}\mathrm{pr}_{1,*}\Delta^{!}\left([\cQ^{\log}_{0,(d,0)}(L_1|L_H,d)]^{\vir} \times [\cQ^{\log}_{0,(d,0)}(L_2|L_H,(0,d))^{H_{\infty}}]^{\vir}\right) \nonumber.
    \end{align}

On the other hand, $[\cQ^{\log}_{0,(d,0)}(L_1|L_H,d)]^{\vir}$ Gysin-restricts to $[\cQ^{\log}_{0,(d,0)}(\PP^N|H,d)]^{\vir}$ when confining the evaluation map to be in $H \times \{0\}$. Combining this with proposition \Cref{Prop : qlocal P1 comp} tells us that
 \begin{align*}\label{reformulation}
\mathrm{pr}_{1,*}\Delta^{!}\left([\cQ^{\log}_{0,(d,0)}(L_1|L_H,d)]^{\vir} \times [\cQ^{\log}_{0,(d,0)}(L_2|L_H,(0,d))^{H_{\infty}}]^{\vir}\right) =\frac{(-1)^{d+1} }{d}[\cQ^{\log}_{0,(d,0)}(\PP^N|H,d)]^{\vir}.
\end{align*}

Finally, combining this with \Cref{Lemma : local pushdown} tells us that 
\begin{equation*}
P_{*}[\cQ^{\log}_{0,2}(L_0,(d,0))^{\widetilde{H}}]^{\vir} = \ev_2^*H \cap [\cQ_{0,2}(\cO_{\PP^N}(-H),d)]^{\vir} = (-1)^{d+1} F_*[\cQ^{\log}_{0,(d,0)}(\PP^N|H,d)]^{\vir},
\end{equation*}
which completes the proof. 
\end{proof}

\subsection{Simple normal crossings divisors}
In \cite{vangarrel2019local} the authors conjecture a generalisation of the local/logarithmic correspondence to the simple normal crossings situation. In the strong form of the correspondence, this would be

\begin{equation*}
    [\overline{\cM}_{0,((d_1,\dots,d_r))}^{\log}(X|D,\beta)]^{\vir} = \prod_{i=1}^r(-1)^{d_i+1} \cdot \ev_i^* D_i \cap [\overline{\cM}_{0,r}(\oplus_{i=1}^r\cO_{X}(-D_i),\beta)]^{\vir}
\end{equation*}
where the left hand side indicates $r$-marked curves in $X$ with maximal tangency to $D_i$ at the $i^{\mathrm{th}}$ marking. The classes are viewed as living in $\overline{\cM}_{0,r}(X,\beta)$ but in this section we will omit the pushforward on the left-hand side. Although this generalisation often holds numerically, \cite{bousseau2022log,bousseau2024stable}, 
at the level of virtual classes it is not true in complete generality, due to counterexamples in \cite{nabijou2022gromov}. However, in ~\cite[Remark 5.4]{nabijou2022gromov} the authors note that the difference appears when there are components in the moduli space containing rational tails and so may not be present in the quasimap setting.

It will turn out that an easy corollary of the local/logarithmic correspondence for smooth divisors and properties of logarithmic quasimaps is the following. 
\begin{theorem}\label{snclocallog}

 Let $X$ be a GIT quotient $W \GIT G$ and let $D \subset X$ be a \emph{very ample} divisor satisfying Assumptions \ref{assumptions:absolute_quasimaps} and \ref{assumptions:log_quasimaps}.
 Suppose that $D=D_1 + D_2$ has two components, which are very ample. Let $\beta$ be a curve class on $X$, let $d_1 = D_1 \cdot \beta$ and let $d_2 = D_2 \cdot \beta$. Then we have the following equality of virtual classes
$$[\cQ_{0,((d_1,d_2))}^{\log}(X|D_1 + D_2,\beta)]^{\vir} = (-1)^{d_1+d_2}\prod_{i=1}^2 \ev_i^* D_i \cap [\cQ_{0,2}(\oplus_{i=1}^2\cO_{X}(-D_i),\beta)]^{\vir}.$$
\end{theorem}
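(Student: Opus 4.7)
The plan is to reduce this simple normal crossings statement to the smooth-divisor case \Cref{qlocallog}, in the same way that the body of the paper reduces \Cref{qlocallog} to \Cref{qlocallogtoric}. The key geometric input is that when the two markings carry tangency conditions with respect to \emph{different} components of $D = D_1+D_2$, the two tangency conditions decouple at the moduli level: a quasimap with contact order $d_1$ to $D_1$ at the first marking and $d_2$ to $D_2$ at the second is the same data as a pair of smooth-divisor logarithmic quasimaps on the same underlying curve. This is the referenced \Cref{thm:cartesian_log_qmap_snc}, which should provide an isomorphism
$$\cQ^{\log}_{0,((d_1,d_2))}(X|D_1+D_2,\beta) \simeq \cQ^{\log}_{0,(d_1,0)}(X|D_1,\beta) \times_{\cQ_{0,2}(X,\beta)} \cQ^{\log}_{0,(0,d_2)}(X|D_2,\beta).$$

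First, I would combine this Cartesian presentation with the compatibility of perfect obstruction theories. Since the log tangent complex $\TT^{\log}_{\cX}$ along $D_1+D_2$ splits (away from the contributions of the individual components) into pieces controlled by $D_1$ and $D_2$ separately, and since the obstruction theory of the absolute space $\cQ_{0,2}(X,\beta)$ is the common factor, this should upgrade to a virtual compatibility
$$[\cQ^{\log}_{0,((d_1,d_2))}(X|D_1+D_2,\beta)]^{\vir} = \Delta^{!}\Bigl([\cQ^{\log}_{0,(d_1,0)}(X|D_1,\beta)]^{\vir} \times [\cQ^{\log}_{0,(0,d_2)}(X|D_2,\beta)]^{\vir}\Bigr),$$
with $\Delta$ the diagonal of $\cQ_{0,2}(X,\beta)$.

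Second, I would apply \Cref{qlocallog} to each smooth factor. Each log class becomes
$$[\cQ^{\log}_{0,(d_i,0)}(X|D_i,\beta)]^{\vir} = (-1)^{d_i+1}\,\ev_i^{\ast} D_i \cap e\bigl(\mathbf{R}^1\pi_{\ast} u^{\ast}\cO_X(-D_i)\bigr) \cap [\cQ_{0,2}(X,\beta)]^{\vir}.$$
Because both factors are already pushed forward to the Chow group of $\cQ_{0,2}(X,\beta)$, the diagonal Gysin $\Delta^{!}$ simplifies to ordinary intersection, and the two Euler classes multiply to the Euler class of the direct sum $\mathbf{R}^1\pi_{\ast}u^{\ast}(\cO_X(-D_1)\oplus\cO_X(-D_2))$. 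The latter, capped with $[\cQ_{0,2}(X,\beta)]^{\vir}$, is by definition $[\cQ_{0,2}(\oplus_{i=1}^2\cO_X(-D_i),\beta)]^{\vir}$. The signs combine as $(-1)^{d_1+1}(-1)^{d_2+1} = (-1)^{d_1+d_2}$, yielding the claimed formula.

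The main obstacle is the virtual Cartesian presentation in the first step. Set-theoretically the decoupling of tangency conditions at distinct markings is transparent, but matching the minimal logarithmic structures and the obstruction theories is subtle: the log tangent complexes along $D_1+D_2$ and along the individual components differ in terms supported along $D_1\cap D_2$, and one must check these are absorbed correctly in the Gysin pullback along $\Delta$. As flagged in the introduction, this reduction for products of projective spaces relative to toric boundary is precisely the content of \Cref{thm:cartesian_log_qmap_snc}; once that is in hand, the remaining steps above are purely intersection-theoretic bookkeeping analogous to the proof of \Cref{qlocallog} from \Cref{qlocallogtoric}.
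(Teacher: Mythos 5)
Your overall plan—decompose the s.n.c.\ log space as a fibre product of smooth-divisor log spaces, apply \Cref{qlocallog} to each factor, and multiply the Euler classes—is essentially the right set of ideas, and your final paragraph correctly identifies that \Cref{thm:cartesian_log_qmap_snc} is a statement about products of projective spaces. But the middle of your argument as written has a genuine gap: you carry out the decomposition and virtual intersection directly on $X$, and this does not work. There are two problems. First, \Cref{thm:cartesian_log_qmap_snc} is stated and proved only for $X=\PP^{N_1}\times\PP^{N_2}$ with coordinate hyperplanes; you misattribute to it an analogous Cartesian presentation for general $X$ which the paper never establishes. Second, and more seriously, your proposed formula
\begin{equation*}
[\cQ^{\log}_{0,((d_1,d_2))}(X|D_1+D_2,\beta)]^{\vir} = \Delta^{!}\Bigl([\cQ^{\log}_{0,(d_1,0)}(X|D_1,\beta)]^{\vir} \times [\cQ^{\log}_{0,(0,d_2)}(X|D_2,\beta)]^{\vir}\Bigr)
\end{equation*}
cannot make sense for general $X$ because the diagonal of $\cQ_{0,2}(X,\beta)$ is not a regular embedding when $\cQ_{0,2}(X,\beta)$ is singular, so $\Delta^!$ is undefined; and even if one found a replacement, a ``virtual compatibility'' between the s.n.c.\ obstruction theory and the two smooth-divisor obstruction theories is not something the Cartesian diagram alone delivers.

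The paper's actual route avoids both issues by working the decomposition only on the unobstructed model. For $\PP=\PP^{N_1}\times\PP^{N_2}$, the space $\cQ_{0,2}(\PP,(d_1,d_2))$ is smooth, the smooth-divisor log quasimap spaces are smooth of expected dimension, and the Cartesian square of \Cref{thm:cartesian_log_qmap_snc} then gives the identity of \Cref{sncloglocalunobstructed} with no virtual technology: ``$\cdot$'' is literal intersection of classes in the Chow group of the smooth ambient space. The passage to general $X$ is then a single virtual pullback: one builds the map $i\colon \cQ^{\log}_{0,(d_1,d_2)}(X|D_1+D_2,\beta)\to \cQ^{\log}_{0,(d_1,d_2)}(\PP|H_1'+H_2',(d_1,d_2))$ from the very ample embedding, checks via \Cref{Lemma : POTpullback} that it carries a compatible perfect obstruction theory with $i^!$ recovering the virtual class, and then pushes/pulls both sides of the $\PP$-identity through the commuting square with $\cQ_{0,2}(X,\beta)\to\cQ_{0,2}(\PP,(d_1,d_2))$, using compatibility of Gysin pullback with proper pushforward and with Euler classes. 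That chain is the ``bookkeeping'' you allude to, but it has to be done in this order: first on $\PP$, where the fibre-product identity holds at the level of fundamental classes, and only afterwards transported to $X$. Establishing the virtual fibre-product identity on $X$ directly is precisely what the paper is careful \emph{not} to attempt.
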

In fact there is a much more general conjecture one could make for the local/logarithmic correspondence for simple normal crossings divisors \cite{tseng2023mirror}. The conjecture of \cite{vangarrel2019local} is one extreme. The other extreme, we call the \emph{corner} theory. On the logarithmic side the geometry is of curves with a single relative marking with maximal tangency to \emph{all} divisor components simultaneously. Using exactly the same techniques we prove that this generalisation holds for quasimap theory \emph{in any rank}.
\begin{figure}[ht]
    \centering
    \includegraphics[width=10.5cm]{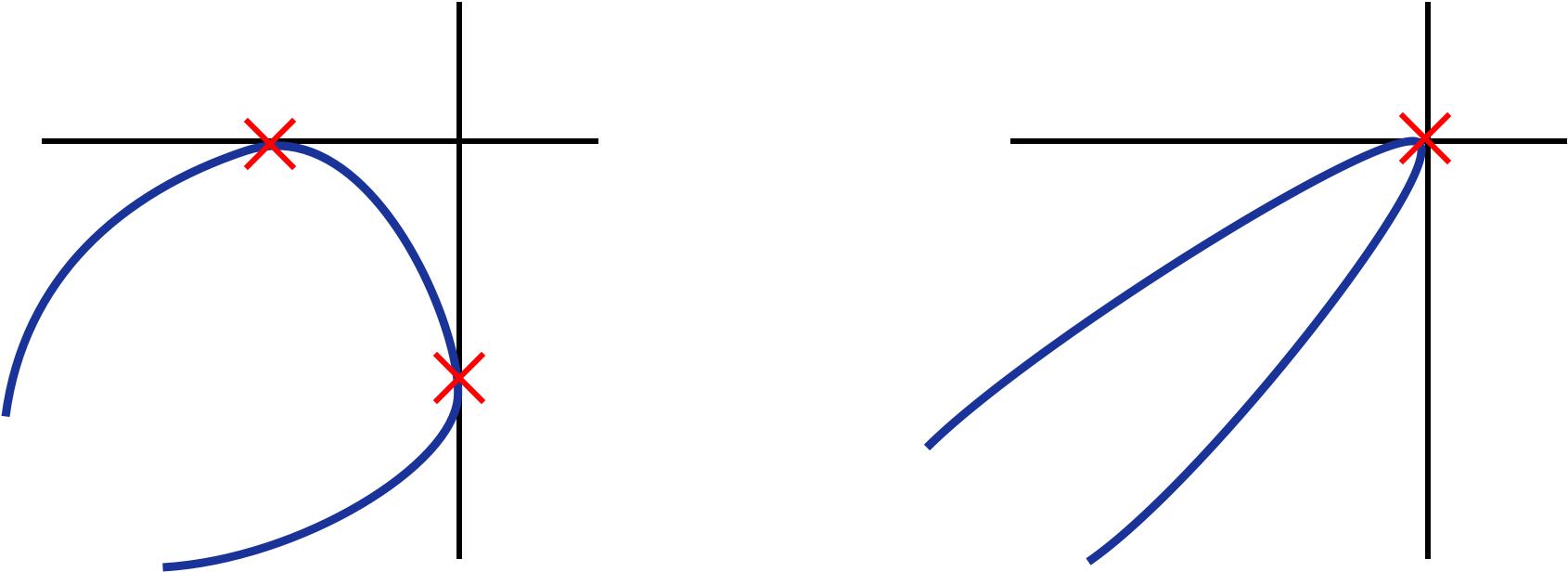}
    \caption{ The two s.n.c. maximal tangency geometries. Left-hand side depicts \cite{vangarrel2019local} version and right-hand side depicts the \emph{corner theory}.}\label{fig:cornervsvGGR}
\end{figure}

\begin{theorem}\label{qloglocalcorner}
   Let $X$ be a GIT quotient $W \GIT G$ and let $D = D_1 + \dots + D_r \subset X$ be a \emph{very ample} divisor satisfying Assumptions \ref{assumptions:absolute_quasimaps} and \ref{assumptions:log_quasimaps}
   with $D_1 \cap \dots \cap D_r \neq \emptyset$. Then 
\begin{equation*}
     [\cQ_{0,(\underline{d},0)}^{\log}(X|D,\beta)]^{\vir} = \prod_{i=1}^r(-1)^{d_i+1} \cdot \ev_1^* D_i \cap [\cQ_{0,2}(\oplus_{i=1}^r\cO_{X}(-D_i),\beta)]^{\vir}.
\end{equation*}
\end{theorem}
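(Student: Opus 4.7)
The plan is to reduce the corner statement to the smooth-divisor case \Cref{qlocallog} via a rank-reduction strategy. The argument proceeds in three steps, mirroring the strategy announced in the introduction.

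First I reduce to the toric target $\prod_{i=1}^r \PP^{N_i}$ with $D_i$ a coordinate hyperplane. Since each $D_i$ is very ample, the complete linear system $|D_i|$ furnishes a morphism $X \to \PP^{N_i}$ sending $D_i$ to a coordinate hyperplane $H_i$, and assembling these produces $X \to \prod_i \PP^{N_i}$. As in the proof of \Cref{qlocallog}, functoriality of virtual pullback along the resulting Cartesian squares on logarithmic and local quasimap moduli reduces the problem to the ambient toric target, since both sides of the claimed equality are compatible with the pullback along $X \to \prod_i \PP^{N_i}$.

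Second, I apply the Cartesian decomposition \Cref{thm:cartesian_log_qmap_snc_r} for products of projective spaces with toric boundary: under the corner hypothesis $\bigcap_i D_i \neq \emptyset$, together with the fact that tangency is concentrated at a single marked point, the logarithmic quasimap space decomposes as
\begin{equation*}
    \cQ^{\log}_{0,(\underline{d},0)}\Bigl(\textstyle\prod_i \PP^{N_i} \Bigm| \sum_i H_i, \underline{\beta}\Bigr) \;\cong\; \cQ^{\log}_{0,(d_1,0)}(\PP^{N_1}|H_1, \beta_1) \times_{B} \cdots \times_{B} \cQ^{\log}_{0,(d_r,0)}(\PP^{N_r}|H_r, \beta_r),
\end{equation*}
over a smooth base $B$ built from the underlying prestable curve stack, and the perfect obstruction theories are compatible. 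An analogous identification on the local side identifies the $\bigoplus_i \cO(-H_i)$-local moduli with the corresponding fibre product of $\cO(-H_i)$-local moduli. Then applying \Cref{qlocallog} (or its toric form \Cref{qlocallogtoric}) to each factor to replace $[\cQ^{\log}_{0,(d_i,0)}(\PP^{N_i}|H_i,\beta_i)]^{\vir}$ by $(-1)^{d_i+1}\ev^{*}H_i \cap [\cQ_{0,2}(\cO(-H_i),\beta_i)]^{\vir}$, taking the exterior product across $i$, and pulling back through the fibre product description yields the desired formula with overall sign $\prod_i(-1)^{d_i+1}$ and evaluation factor $\prod_i \ev_1^{*}D_i$.

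The main obstacle is Step 2: establishing the Cartesian decomposition at the level of virtual classes. While there is a natural morphism from the SNC logarithmic quasimap moduli to the fibre product (the tangency orders to distinct $H_i$ being controlled independently), matching the perfect obstruction theories is delicate because quasimap obstruction theories are sensitive to the GIT presentation, as highlighted in \Cref{subsec:quasimaps_special_fibre} and \Cref{ex:P1_relative_infinity}. The product structure of $\prod_i \PP^{N_i}$, however, allows the individual GIT presentations and their logarithmic enhancements to decouple cleanly, which is precisely what \Cref{thm:cartesian_log_qmap_snc_r} provides. Once this is granted, the remainder is a bookkeeping of Gysin classes through the fibre product and a factorwise invocation of the smooth correspondence.
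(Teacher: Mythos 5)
Your overall strategy---reduce to the toric target $\prod_i \PP^{N_i}$, decompose the SNC logarithmic quasimap space as a fibre product of smooth-divisor pieces via \Cref{thm:cartesian_log_qmap_snc_r}, apply the smooth correspondence factorwise, and then pull back to the general $X$ via the perfect obstruction theory of the inclusion---is exactly the paper's approach, and Step 1 matches the paper precisely.

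However, your description of the fibre product in Step 2 misstates what \Cref{thm:cartesian_log_qmap_snc_r} actually provides, and the difference is load-bearing. The paper's Cartesian diagram has factors $\cQ^{\log}_{0,(d_i,0)}(\PP\,|\,H_i',\underline{d})$---logarithmic quasimaps to the \emph{entire} product $\PP = \prod_j \PP^{N_j}$ of full degree $\underline{d}$, relative to the smooth pullback divisor $H_i'$---not your $\cQ^{\log}_{0,(d_i,0)}(\PP^{N_i}\,|\,H_i,\beta_i)$, which only records the quasimap to the single factor $\PP^{N_i}$. Correspondingly, the base of the fibre product is the absolute quasimap space $\cQ_{0,2}(\PP,\underline{d})$, not a stack of prestable curves. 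This matters for two reasons: first, the whole point of the construction is that every factor and the resulting Gysin class live over the \emph{same} $\cQ_{0,2}(\PP,\underline{d})$, so the smooth correspondence applied to each factor produces classes that can be multiplied there directly. With your factors the resulting classes land in different spaces $\cQ_{0,2}(\PP^{N_i},d_i)$ and would need an additional (and nontrivial to justify) Gysin pullback along a diagonal over $\frM_{0,2}$ to recombine them. Second, the argument that the ordinary fibre product agrees with the fine and saturated one (via combinatorial flatness, ~\cite[Proposition 1.7.3]{maulik2024logarithmic}) is formulated and verified relative to $\cQ_{0,2}(\PP,\underline{d})$; it is not clear it transfers verbatim to your base $B$. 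Relatedly, on the local side the paper does not identify the $\oplus_i\cO(-H_i')$-local moduli with a fibre product of local moduli spaces---it simply applies the Whitney sum formula $e(\oplus_i \mathbf{R}^1\pi_*\mathscr{L}_i^\vee) = \prod_i e(\mathbf{R}^1\pi_*\mathscr{L}_i^\vee)$ within $\cQ_{0,2}(\PP,\underline{d})$, which is cleaner. The concern you raise at the end about GIT presentations is not actually the issue here (the smooth-divisor factors in the paper already all use the same ambient GIT presentation for $\PP$); the real subtlety you would need to address is the one just described.
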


We start with \Cref{snclocallog}. We first prove the equality in the following special case.
\begin{theorem}\label{thm:cartesian_log_qmap_snc}
    Let $X= \PP^{N_1} \times \PP^{N_2}$. Let $H_1' = H_1 \times \PP^{N_2}$ for $H_1$ a coordinate hyperplane in $\PP^{N_1}$ and $H_2' = \PP^{N_1} \times H_2$ for $H_2$ a coordinate hyperplane in $\PP^{N_2}$. Then the following diagram is Cartesian in the category of ordinary stacks.
    \begin{equation*}
         \begin{tikzcd}
            {\cQ_{0,((d_1,d_2))}^{\log}(\PP^{N_1} \times \PP^{N_2}|H_1' + H_2',(d_1,d_2))} \arrow[d] \arrow[r] & {\cQ_{0,(d_1,0)}^{\log}(\PP^{N_1} \times \PP^{N_2}|H_1',(d_1,d_2))} \arrow[d] \\
            {\cQ_{0,(0,d_2)}^{\log}(\PP^{N_1} \times \PP^{N_2}|H_2',(d_1,d_2))} \arrow[r]                   & {\cQ_{0,2}(\PP^{N_1} \times \PP^{N_2},(d_1,d_2))}                         
        \end{tikzcd}
    \end{equation*}
\end{theorem}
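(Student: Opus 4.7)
The plan is to reduce the Cartesian property of the diagram of log quasimap spaces to an analogous Cartesian property of moduli spaces of minimal logarithmic maps to Artin fans, and then exploit the fact that the Artin fan of a simple normal crossings divisor with transverse smooth components is a product of the Artin fans of the components.

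First, since $H_1'$ and $H_2'$ are smooth, by \Cref{quasimaps obs} (and its extension to s.n.c.~targets as in \cite{shafi2024logarithmic}) each of the four spaces in the diagram is a fibre product
\[
\cQ^{\log}_{0,\alpha}(X|D,\beta) \;=\; \cQ_{0,2}(X,\beta)\times_{\frM_{0,2}(\sA_D)} \frM^{\log}_{0,\alpha}(\sA_D),
\]
where $\sA_D$ denotes the Artin fan of the logarithmic pair $(X,D)$. For the smooth divisors we have $\sA_{H_i'} = \sA = \agm$, while for the s.n.c.~divisor we have $\sA_{H_1'+H_2'} = \sA\times\sA$, with the projections corresponding to the two components. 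Since the underlying quasimap space $\cQ_{0,2}(X,(d_1,d_2))$ is common to all four corners of the square, the Cartesian property in the statement is equivalent, by compatibility of fibre products, to the corresponding Cartesian property of the associated square of Artin fan moduli spaces over $\frM_{0,2}$:
\[
\begin{tikzcd}
\frM^{\log}_{0,((d_1,d_2))}(\sA\times\sA) \arrow[d] \arrow[r] & \frM^{\log}_{0,(d_1,0)}(\sA) \arrow[d] \\
\frM^{\log}_{0,(0,d_2)}(\sA) \arrow[r] & \frM_{0,2}.
\end{tikzcd}
\]

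The horizontal arrow on top sends a log map to $\sA\times\sA$ to its composition with the first projection, and similarly for the vertical arrows via the second projection. An underlying map $C\to\sA\times\sA$ is exactly a pair of maps to $\sA$, so the Cartesian property at the level of underlying stacks reduces to showing that the minimal log structure on the base of a log map $C\to\sA\times\sA$ with contact orders $((d_1,0),(0,d_2))$ agrees with the pushout of the minimal log structures for the two projections. The key combinatorial input is that the contact data is separated across markings: marking $i$ is tangent to at most the $i$-th factor, so the tropical type at each marking splits cleanly between the two factors, and the minimal base monoid decomposes as a direct sum of contributions, one from each smooth divisor.

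The main obstacle is precisely this last step: verifying that the minimal log structure on the base for the combined log map is the amalgamation of the minimal log structures for each projection. This requires unpacking the Gross--Siebert / Abramovich--Chen description of minimal monoids, but the separation of contact orders across distinct markings (and the fact that any nodes contribute tropical edges which decorate each factor independently) makes the decomposition essentially mechanical: there is no interaction between the $H_1'$-tropical data and the $H_2'$-tropical data that could obstruct the splitting. Once this identification is in place, the Cartesian square at the Artin fan level follows, and pulling back via the underlying quasimap space yields the statement.
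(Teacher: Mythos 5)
Your approach differs genuinely from the paper's, but contains a gap at its crucial final step.

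\textbf{Comparison with the paper.} The paper's argument is short: it observes that the s.n.c.\ log quasimap space is the \emph{fine and saturated} fibre product of the two smooth-divisor spaces over $\cQ_{0,2}(\PP^{N_1}\times\PP^{N_2},(d_1,d_2))$ (Lemma~2.9 of \cite{shafi2024logarithmic}), and then shows this coincides with the \emph{ordinary} fibre product by invoking a combinatorial flatness criterion (\cite[Proposition~1.7.3]{maulik2024logarithmic}). The verification of combinatorial flatness rests on a concrete structural fact about the quasimap moduli space: its boundary divisors parametrise curves with at most two components. You instead try to reduce everything to a Cartesian square of Artin-fan moduli spaces and then argue directly about minimal monoids of log maps to $\sA\times\sA$. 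These are different strategies, and yours, if completed, would arguably be more explicit; but as written it is not complete.

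\textbf{The gap.} The reduction to a square of Artin-fan moduli spaces is fine in spirit (modulo the minor imprecision that the bottom-right corner should be $\frM_{0,2}(\sA\times\sA)$, or at least the appropriate target through which $\cQ_{0,2}(X,\beta)$ maps, rather than bare $\frM_{0,2}$). The real problem is the claim that the minimal monoid for a log map to $\sA\times\sA$ decomposes as the amalgamation (pushout over the curve monoid) of the minimal monoids for the two projections, and that this is ``essentially mechanical'' because the contact orders at the markings are separated between the two factors. The heuristic ``there is no interaction between the $H_1'$-tropical data and the $H_2'$-tropical data'' is not justified, and indeed is false as a general principle: the interaction happens at the \emph{nodes}, not the markings. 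A node of the source curve can carry a nonzero contact order to $H_1'$ and to $H_2'$ simultaneously, so the corresponding edge of the tropical curve in $\RR_{\geq 0}^2$ generally has a slope with nonzero components in both directions. Those mixed slopes are precisely the mechanism by which the fine-and-saturated fibre product can fail to agree with the ordinary one. Showing that this does not cause trouble here is the entire content of the theorem, and it is exactly what the paper's combinatorial flatness argument addresses. You have reduced the theorem to the hard step and then declared that step mechanical without proving it. To salvage the argument you would need to actually unpack the minimal-monoid construction for $\sA\times\sA$ (or more efficiently, verify the Maulik--Ranganathan flatness criterion directly, as the paper does), and in doing so you would find that the constraint coming from quasimap stability on the boundary of the moduli space is what makes the computation go through.
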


\begin{proof}
By stability, the boundary divisors of $\cQ_{0,(d_1,0)}^{\log}(\PP^{N_1} \times \PP^{N_2}|H_1',(d_1,d_2))$ consist of quasimaps from curves with at most two components ~\cite[Example 3.1]{shafi2024logarithmic}. As a consequence the right-hand vertical morphism (or indeed the bottom horizontal morphism) is such that the morphism on tropicalisations is combinatorially flat. Therefore the fine and saturated fibre product coincides with ordinary fibre product ~\cite[Proposition 1.7.3]{maulik2024logarithmic}, but the fine and saturated fibre product is just $\cQ_{0,((d_1,d_2))}^{\log}(\PP^{N_1} \times \PP^{N_2}|H_1' + H_2',(d_1,d_2))$, ~\cite[Lemma 2.9]{shafi2024logarithmic}.
\end{proof}

\begin{proposition}\label{sncloglocalunobstructed}
    With the notation of \Cref{thm:cartesian_log_qmap_snc}, we have the following equality of (virtual) classes
    $$[\cQ_{0,((d_1,d_2))}^{\log}(\PP^{N_1} \times \PP^{N_2}|H_1' + H_2',(d_1,d_2))]^{\vir} = (-1)^{d_1 + d_2}\prod_{i=1}^2 \ev_i^* H_i' \,\cap \, [\cQ_{0,2}(\oplus_{i=1}^2\cO_{\PP^{N_1} \times \PP^{N_2}}(-H_i'),(d_1,d_2))]^{\vir}$$
\end{proposition}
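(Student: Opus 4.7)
The plan is to combine the Cartesian diagram of \Cref{thm:cartesian_log_qmap_snc} with the smooth local/logarithmic correspondence \Cref{qlocallog}. Write $X = \PP^{N_1}\times\PP^{N_2}$, $\beta = (d_1,d_2)$, set $\cQ = \cQ_{0,2}(X,\beta)$, and let $\cQ^{\log}_{sn}$ and $\cQ^{\log}_i$ denote the s.n.c.\ and smooth logarithmic quasimap spaces appearing in \Cref{thm:cartesian_log_qmap_snc}, with forgetful maps $F\colon\cQ^{\log}_{sn}\to\cQ$ and $p_i\colon\cQ^{\log}_i\to\cQ$.

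The key preliminary is a compatibility of obstruction theories on the Cartesian square
\[
\begin{tikzcd}
\cQ^{\log}_{sn}\arrow{r}{\tilde p_1}\arrow{d}{\tilde p_2} & \cQ^{\log}_1\arrow{d}{p_1}\\
\cQ^{\log}_2\arrow{r}{p_2} & \cQ
\end{tikzcd}
\]
namely that the relative perfect obstruction theory of $\tilde p_2$, constructed as in \Cref{prop : perfectness}, agrees with the pullback to $\cQ^{\log}_1$ of the relative POT of $p_2$. I would verify this using a distinguished triangle on $X$ expressing $\mathbb{T}^{\log}_{X|H_1'+H_2'}$ as a homotopy fibre product of the two smooth logarithmic tangent complexes over $\mathbb{T}_X$, which reflects the fact that $H_1'$ and $H_2'$ are pulled back from independent factors of $X$. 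By functoriality of virtual pullbacks~\cite[Theorem 4.8]{manolache12virtual}, this yields $[\cQ^{\log}_{sn}]^{\vir} = \tilde p_2^{\,!}[\cQ^{\log}_1]^{\vir}$.

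With this in hand, I would push forward to $\cQ$, apply base change of virtual pullbacks, and use \Cref{qlocallog}:
\begin{align*}
F_*[\cQ^{\log}_{sn}]^{\vir}
&= p_2^{\,!}\, p_{1,*}[\cQ^{\log}_1]^{\vir}\\
&= p_2^{\,!}\Bigl((-1)^{d_1+1}\,\ev_1^*H_1'\cap [\cQ_{0,2}(\cO_X(-H_1'),\beta)]^{\vir}\Bigr).
\end{align*}
Since both $\ev_1^*H_1'$ and $\Euler(\mathbf{R}^1\pi_*u^*\cO_X(-H_1'))$ are pulled back from $\cQ$, the projection formula, combined with a second application of \Cref{qlocallog} to the identity $p_2^{\,!}[\cQ]^{\vir} = [\cQ^{\log}_2]^{\vir}$ and the convexity identification $[\cQ_{0,2}(\oplus_i\cO_X(-H_i'),\beta)]^{\vir} = \prod_i \Euler(\mathbf{R}^1\pi_*u^*\cO_X(-H_i')) \cap [\cQ]^{\vir}$, produces the claimed right-hand side.

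The main obstacle is the obstruction-theoretic compatibility in the first step. This is where the very ample toric hypothesis and the independence of the two divisor components are crucial: the two smooth logarithmic Euler sequences from \Cref{prop:obstruction_theory_X_over_Pic} combine transversally exactly because $H_1'$ and $H_2'$ are pulled back from independent factors of $X$, which is also what underlies the fact that the square of \Cref{thm:cartesian_log_qmap_snc} is Cartesian already at the level of ordinary stacks. Once this compatibility is pinned down, the remainder of the argument is routine manipulation with the projection formula.
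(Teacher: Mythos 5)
Your proposal is correct and follows essentially the same route as the paper: factor the local virtual class into a product of Euler classes, use the Cartesian square of \Cref{thm:cartesian_log_qmap_snc} to identify the s.n.c.\ logarithmic virtual class with the refined intersection of the two smooth-pair logarithmic classes, and conclude via two applications of \Cref{qlocallog}. You make explicit the obstruction-theoretic compatibility step (the homotopy-fibre-product decomposition of $\TT^{\log}_{X|H_1'+H_2'}$) that the paper leaves implicit when it asserts the last identity ``follows from \Cref{thm:cartesian_log_qmap_snc}''; the only slip is notational, since $F_*[\cQ^{\log}_{sn}]^{\vir}$ lives on $\cQ$ whereas $p_2^{!}p_{1,*}[\cQ^{\log}_1]^{\vir}$ lives on $\cQ^{\log}_2$, so a further $p_{2,*}$ is needed on the right-hand side.
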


\begin{proof}
    The local virtual class is $$[\cQ_{0,2}(\oplus_{i=1}^2\cO_{\PP^{N_1} \times \PP^{N_2}}(-H_i'),(d_1,d_2))]^{\vir} =  e(\oplus_{i=1}^2 \mathbf{R}^1\pi_*(\mathscr{L}_i^{\vee}))\cap[\cQ_{0,2}(\PP^{N_1} \times \PP^{N_2},(d_1,d_2)]$$
where $\mathscr{L}_i$ denotes the line bundle over the universal curve of $\cQ_{0,2}(\PP^{N_1} \times \PP^{N_2},(d_1,d_2))$ corresponding to $\cO_{\PP^{N_1} \times \PP^{N_2}}(H_i')$. But because of the Whitney sum formula, and the fact that pushforward commutes with direct sum, we have that 
\begin{align*}
    &e(\oplus_{i=1}^2\mathbf{R}^1\pi_*(\mathscr{L}_i^{\vee}))\cap[\cQ_{0,2}(\PP^{N_1} \times \PP^{N_2},(d_1,d_2))] \\
    = &\prod_{i=1}^2 e(\mathbf{R}^1 \pi_* \mathscr{L}_i^{\vee}) \cap [\cQ_{0,2}(\PP^{N_1} \times \PP^{N_2},(d_1,d_2))] \\
    = &\prod_{i=1}^2 [\cQ_{0,2}(\cO_{\PP^{N_1} \times \PP^{N_2}}(-H_i'),(d_1,d_2))]^{\vir}.
\end{align*}
By \Cref{qlocallog} 
\begin{align*}\hspace{-9em}
    &[\cQ_{0,(d_1,0)}^{\log}(\PP^{N_1} \times \PP^{N_2}|H'_1,(d_1,d_2))] \cdot [\cQ_{0,(0,d_2)}^{\log}(\PP^{N_1} \times \PP^{N_2}|H_2',(d_1,d_2))]\\
    &= (-1)^{(d_1+d_2)}\prod_{i=1}^2 \ev_i^* H_i \cdot [\cQ_{0,2}(\oplus_{i=1}^2\cO_{\PP^{N_1} \times \PP^{N_2}}(-H_i),(d_1,d_2))]^{\vir}
\end{align*}
Therefore it suffices to show that 
\begin{align*}\hspace{-3em}
    &[\cQ_{0,((d_1,d_2))}^{\log}(\PP^{N_1} \times \PP^{N_2}|H_1' + H_2',(d_1,d_2))]^{\vir}\\
    &= [\cQ_{0,(d_1,0)}^{\log}(\PP^{N_1} \times \PP^{N_2}|H_1',(d_1,d_2))] \cdot [\cQ_{0,(0,d_2)}^{\log}(\PP^{N_1} \times \PP^{N_2}|H_2',(d_1,d_2))].
\end{align*}
This follows from \Cref{thm:cartesian_log_qmap_snc}.
\end{proof}

\begin{remark}
    The natural generalisation of this result for rank greater than $2$ will fail. The introduction of additional markings will allow for boundary divisors in the logarithmic quasimap space where the source curve contains more than two components and so the fine and saturated product and the ordinary fibre product may differ \cite[Remark 3.7.6]{shafi2022enumerative}.
\end{remark}

\begin{lemma}\label{Lemma : POTpullback}
   Let $X = Z \GIT G$ be a GIT quotient and $D$ be a simple normal crossings divisor on $X$ satisfying Assumptions \ref{assumptions:absolute_quasimaps} and \ref{assumptions:log_quasimaps}.
   In addition suppose that $D=D_1 + D_2$ has two components, which are very ample. Let $j: X \rightarrow \PP = \PP^{N_1} \times \PP^{N_2}$ be the induced morphism with $j^{-1}(H_1 \times \PP^{N_2})= D_1$ and $j^{-1}(\PP^{N_1} \times H_2) = D_2$ and assume $H_1,H_2$ are coordinate hyperplanes. Then there is an induced morphism $$i : \cQ^{\log}_{0,\alpha}(X|D_1 + D_2, \beta) \rightarrow \cQ_{0,\alpha}^{\log}(\PP|H_1' + H_2',(d_1,d_2))$$ where $H_1' = H_1 \times \PP^{N_2}$, $H_2' = \PP^{N_1} \times H_2$ and $i_{*}\beta = (d_1,d_2)$. Moreover, this morphism admits a perfect obstruction theory and the virtual pullback of the class $[\cQ_{0,\alpha}^{\log}(\PP|H_1' + H_2',(d_1,d_2))]$ coincides with the virtual class $[\cQ^{\log}_{0,\alpha}(X|D_1 + D_2, \beta)]^{\vir}$ as in ~\cite[Theorem 14]{shafi2024logarithmic}. 
\end{lemma}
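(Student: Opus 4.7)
The argument follows the template of the one-component case in \cite[Proposition 3.5.10]{shafi2022enumerative} together with \cite[Theorem 14]{shafi2024logarithmic}. I would proceed in three steps.

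\emph{Construction of $i$.} Given a stable logarithmic quasimap $u\colon C \to [Z/G]$ to $(X,D)$ with contact data $\alpha$, post-compose with the morphism of quotient stacks induced by $j$ to obtain a candidate $\tilde u\colon C \to \cP$, where $\cP$ denotes the ambient quotient stack of $\PP$. Three properties need checking. First, contact orders are preserved: since $j^{-1}H_i' = D_i$ as Cartier divisors, the order of vanishing at each marking of the section pulled back from $H_i'$ agrees with that of the section defining $D_i$. Second, the degrees match by the projection formula, $\tilde u_{\ast}[C] \cdot H_i' = u_{\ast}[C]\cdot D_i = d_i$. Third, very ampleness of the $D_i$ ensures that $j^{\ast}\cO_{\PP}(1,1) = \cO_X(D_1+D_2)$ dominates a positive multiple of the GIT linearisation on $X$, so quasimap stability of $u$ implies stability of $\tilde u$.

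\emph{Relative perfect obstruction theory.} Both moduli spaces map to the same base $\frM^{\log}_{0,\alpha}(\sA^2)$ and carry POTs there by the s.n.c.~version of \Cref{quasimaps obs} proved in \cite{shafi2024logarithmic}, of the shape $(\mathbf{R}\pi_{\ast} u^{\ast} \TT^{\log}_{\cX})^{\vee}$ and $(\mathbf{R}\pi_{\ast} \tilde u^{\ast} \TT^{\log}_{\cP})^{\vee}$. Since the schematic equality $j^{-1}(H_1'+H_2') = D_1+D_2$ ensures compatibility of the logarithmic structures, we obtain an exact triangle
\begin{equation*}
\TT^{\log}_{\cX/\cP}\to \TT^{\log}_{\cX}\to j^{\ast} \TT^{\log}_{\cP}.
\end{equation*}
Applying $\mathbf{R}\pi_{\ast} u^{\ast}(-)$ and dualising yields a morphism $(\mathbf{R}\pi_{\ast} u^{\ast} \TT^{\log}_{\cX/\cP})^{\vee}\to \LL_i$, the candidate relative obstruction theory for $i$.

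\emph{Virtual pullback identity.} Perfectness of this candidate POT in amplitude $[-1,0]$ follows from the argument of \cite[Theorem 14]{shafi2024logarithmic}: $X$ is cut out inside the ambient vector space GIT quotient by sections of a convex bundle, providing an explicit two-term resolution of $\TT^{\log}_{\cX/\cP}$ by line bundles whose higher direct images vanish in genus zero by non-degeneracy of the quasimap on the source curve. The three POTs above are compatible by construction, so functoriality of virtual pullbacks \cite[Theorem 4.8]{manolache12virtual} gives
\begin{equation*}
[\cQ^{\log}_{0,\alpha}(X|D_1+D_2,\beta)]^{\vir} = i^{!}[\cQ^{\log}_{0,\alpha}(\PP|H_1'+H_2',(d_1,d_2))],
\end{equation*}
which is the claimed equality. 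The principal obstacle is this perfectness verification; the rest of the argument is formal once one accepts that the logarithmic structures on $\cX$ and $\cP$ pull back compatibly under $j$.
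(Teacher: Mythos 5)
Your proposal follows essentially the same route as the paper's proof: construct the morphism $i$ from the quotient-stack morphism induced by $j$; pull back the distinguished triangle of logarithmic tangent complexes along the universal quasimap; push forward by $\mathbf{R}\pi_*$ and dualise to obtain a compatible triple of obstruction theories over $\frM^{\log}_{0,\alpha}([\AAA^2/\GG_m^2])$; and conclude by functoriality of virtual pullbacks \cite[Theorem 4.8]{manolache12virtual}. The extra care you take in verifying that $i$ exists (contact orders, degrees, stability) is glossed over in the paper, which simply points to \cite[Section 5.10]{shafi2024logarithmic}.

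The one place where your justification drifts from the paper's is perfectness of the relative obstruction theory for $i$. You appeal to a concrete convexity argument, resolving $\TT^{\log}_{\cX/\cP}$ by line bundles coming from ``$X$ cut out inside the ambient vector space GIT quotient by sections of a convex bundle.'' But that embedding is $X \hookrightarrow V\GIT G$ (with the \emph{same} group $G$, from Assumption~\ref{assumptions:log_quasimaps}), whereas the morphism in play here is $j\colon X \to \PP^{N_1}\times\PP^{N_2}$, a map of quotient stacks with a different group $\GG_m^2$ on the target. The complex $\TT^{\log}_{\cX/\cP}$ is the relative tangent complex of $j$, not of the closed embedding into $V\GIT G$, so the convex-bundle resolution you describe does not literally apply to it. The paper sidesteps this by instead invoking \cite[Construction 3.13 and Remark 3.15]{manolache12virtual}: given the morphism of distinguished triangles with the two perfect obstruction theories for $\cQ^{\log}(X)\to\frM^{\log}$ and $\cQ^{\log}(\PP)\to\frM^{\log}$ in the first two slots, the cone $(\mathbf{R}\pi_*\mathbf{L}u^*\TT_j)^{\vee}$ is automatically a (perfect) obstruction theory for $i$, with no need for a geometric resolution. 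Your overall architecture still lands on the correct conclusion; just replace the convexity heuristic with the compatible-triple argument.
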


\begin{proof}
    This proof is similar to \cite[5.10]{shafi2024logarithmic}.The morphism $j$ morphism induces a morphism of quotient stacks, which we also denote $j$, $$j: [Z/G] \rightarrow [\AAA^{N_1+N_2 + 2}/\GG^2_m]$$ such that $j^{-1}(\bar{H_1'}) = \bar{D_1}$ and $j^{-1}(\bar{H_2'})$. The divisors $\bar{D_1} + \bar{D_2},\bar{H_1'} + \bar{H_2'}$ define logarithmic structures via morphisms to $[\AAA^2/\GG^2_m]$ such that there is a commutative diagram
	\begin{equation*}
		\begin{tikzcd}
		{[Z/G]} \arrow[r, "j"] \arrow[rr, "\bar{D_1} + \bar{D_2}", bend left] & {[\AAA^{N_1+N_2 + 2}/\GG^2_m]} \arrow[r, "\bar{H_1'
        } + \bar{H_2'}"] & \agm.
		\end{tikzcd}
	\end{equation*}

Taking the induced distinguished triangle (on tangent complexes) gives
    \begin{equation*}
\TT_{j} \rightarrow \TT^{\log}_{[Z/G]} \rightarrow \mathbf{L}j^*\TT^{\log}_{[\AAA^{N_1+N_2 + 2}/\GG^2_m]} \rightarrow \TT_{j}[1].
\end{equation*}

Next, consider the diagram
\begin{equation}\label{diagram0}
	\begin{tikzcd}
	\cC_{X} \arrow[d, "\pi"] \arrow[r] \arrow[rr, "u", bend left] & \cC_{\PP^N} \arrow[d, "\pi_{\PP}"] \arrow[rr, "u_{\PP}", bend right] & {[Z/G]} \arrow[r, "j"] & {[\AAA^{N_1+N_2 + 2}/\GG^2_m]} \\
	\cQ^{\log}_{0,\alpha}(X|D,\beta) \arrow[r, "i"]                                              & {\cQ^{\log}_{0,\alpha}(\PP|H_1' + H_2',(d_1,d_2))}.                                          &                                   &                     
	\end{tikzcd}
\end{equation}
If we apply $\mathbf{R}\pi_{*} \circ \mathbf{L}u^{*}$ and dualise, we get a distinguished triangle in $\cQ^{\log}_{0,\alpha}(X|D,\beta)$
\begin{equation*}\hspace{-3em}
\mathbf{L}i^*(\mathbf{R}(\pi_{\PP})_{*}\mathbf{L}u_{\PP}^*\TT^{\log}_{[\AAA^{N_1+N_2 + 2}/\GG^2_m]})^{\vee} \rightarrow 	(\mathbf{R}\pi_{*}\mathbf{L}u^*\TT^{\log}_{[Z/G]})^{\vee} \rightarrow (\mathbf{R}\pi_{*}\mathbf{L}u^*\TT_{j})^{\vee} \rightarrow \mathbf{L}i^*(\mathbf{R}(\pi_{\PP})_{*}Lu_{\PP}^*\TT^{\log}_{[\AAA^{N_1 + N_2 +2}/\GG_m^2]})^{\vee}[1].
\end{equation*}
Note that the first two complexes in the triangle define the obstruction theories for $\cQ^{\log}_{0,\alpha}(X|D,\beta)$ and $\cQ^{\log}_{0,\alpha}(\PP|H_1' + H_2',(d_1,d_2))$.

We also have the commutative diagram
	\begin{equation*}
\begin{tikzcd}
{\cQ^{\log}_{0,\alpha}(X|D,\beta)} \arrow[r, "i"] \arrow[rr, bend left] & {\cQ^{\log}_{0,\alpha}(\PP|H_1' + H_2',(d_1,d_2))} \arrow[r] & \frM^{\log}_{0,\alpha}([\AAA^2/\GG_m^2])
\end{tikzcd}
\end{equation*}
inducing a distinguished triangle $$\mathbf{L}i^*\LL_{\cQ^{\log}(\PP)/\frM^{\log}} \rightarrow \LL_{\cQ^{\log}(X)/\frM^{\log}} \rightarrow \LL_{i} \rightarrow \mathbf{L}i^*\LL_{\cQ^{\log}(\PP)/\frM^{\log}}[1]$$
where $\cQ^{\log}(X) = \cQ^{\log}_{0,\alpha}(X|D,\beta), \, \cQ^{\log}(\PP) = \cQ^{\log}_{0,\alpha}(\PP|H_1' + H_2',(d_1,d_2))$ and $\frM^{\log} = \frM^{\log}_{0,\alpha}([\AAA^2/\GG_m^2])$. Putting these together we get
\begin{equation*}
	\begin{tikzcd}
{\mathbf{L}i^*(\mathbf{R}(\pi_{\PP})_{*}\mathbf{L}u_{\PP}^*\TT^{\log}_{[\AAA^{N_1 + N_2 + 2}/\GG^2_m]})^{\vee}} \arrow[r] \arrow[d] & {(\mathbf{R}\pi_{*}\mathbf{L}u^*\TT^{\log}_{[Z/G]})^{\vee}} \arrow[r] \arrow[d] & (\mathbf{R}\pi_{*}\mathbf{L}u^*\TT_{j})^{\vee} \arrow[r, "{[1]}"] \arrow[d] & {} \\
\mathbf{L}i^*\LL_{\cQ^{\log}(\PP)/\frM^{\log}} \arrow[r]                                                                & \LL_{\cQ^{\log}(X)/\frM^{\log}} \arrow[r]                                                  & \LL_{i} \arrow[r, "{[1]}"]                                                  & {}.
\end{tikzcd}
\end{equation*}

The first two (and last) vertical arrows come from the perfect obstruction theory from \cite{shafi2024logarithmic}. By ~\cite[Construction 3.13]{manolache12virtual} and ~\cite[Remark 3.15]{manolache12virtual} it follows that $(\mathbf{R}\pi_{*}\mathbf{L}u^*\TT_{i})^{\vee}$ defines a perfect obstruction theory. By ~\cite[Theorem 4.8]{manolache12virtual} we have that $i^{!}[\cQ^{\log}_{0,\alpha}(\PP|H_1' + H_2',(d_1,d_2))] = [\cQ^{\log}_{0,\alpha}(X|D,\beta)]^{\vir}$. 
\end{proof}
\begin{remark}
    One can show that this virtual pullback coincides with the pullback coming form a perfect obstruction theory of the morphism $\cQ_{0,2}(X,\beta) \rightarrow \cQ_{0,2}(\PP,(d_1,d_2))$.
\end{remark}

\begin{proof}[Proof of \Cref{snclocallog}]
    \begin{equation}
        \begin{tikzcd}
{\cQ_{0,((d_1,d_2))}^{\log}(X|D_1 + D_2,\beta)} \arrow[d,"p"] \arrow[r,"i"] & {\cQ_{0,((d_1,d_2))}^{\log}(\PP|H_1' + H_2',(d_1,d_2))} \arrow[d,"p'"] \\
{\cQ_{0,2}(X,\beta)} \arrow[r,"i'"]                        & {\cQ_{0,2}(\PP,(d_1,d_2))}                       
\end{tikzcd}
    \end{equation}
Let $\mathscr{L}_i$ (resp. $\mathscr{L}_{D_i}$) denote the universal line bundle on the universal curve $\cQ_{0,2}(\PP,\underline{d})$ (resp. $\cQ_{0,2}(X,\beta)$) corresponding to $\cO_{\PP}(H_i')$ (resp. $\cO_{X}(D_i)$).
\begin{align*}
    &p_*[\cQ_{0,((d_1,d_2))}^{\log}(X|D_1 + D_2,\beta)]^{\vir} \\
    &= p_* (i')^{!}[\cQ_{0,((d_1,d_2))}^{\log}(\PP|H_1' + H_2',(d_1,d_2))]  \\
    &= {i'}^{!}{p'}_{*}[\cQ_{0,((d_1,d_2))}^{\log}(\PP|H_1' + H_2',(d_1,d_2))] \\
    &= {i'}^{!}(-1)^{d_1 + d_2}\prod_{i=1}^2 \ev_i^* H_i' \cap [\cQ_{0,2}(\oplus_{i=1}^2\cO_{\PP^{N_1} \times \PP^{N_2}}(-H_i'),(d_1,d_2))]^{\vir} \\
    &= (-1)^{d_1 + d_2}\prod_{i=1}^2 \ev_i^* D_i \cap {i'}^{!}[\cQ_{0,2}(\oplus_{i=1}^2\cO_{\PP^{N_1} \times \PP^{N_2}}(-H_i'),(d_1,d_2))]^{\vir} \\
    &=  (-1)^{d_1 + d_2}\prod_{i=1}^2 \ev_i^* D_i \cap {i'}^!e(\oplus_{i=1}^2\textbf{R}^1\pi_*\mathscr{L}_i^{\vee}) \cap [\cQ_{0,2}(\PP,(d_1,d_2))] \\
    &= (-1)^{d_1 + d_2}\prod_{i=1}^2 \ev_i^* D_i \cap e(\oplus_{i=1}^2\textbf{R}^1\pi_*\mathscr{L}_{D_i}^{\vee}) \cap [\cQ_{0,2}(X,\beta)] \\
    &= (-1)^{d_1 + d_2}\prod_{i=1}^2 \ev_i^* D_i \cap [\cQ_{0,2}(\oplus_{i=1}^2\cO_{X}(-D_i),\beta)]^{\vir}.\qedhere
\end{align*}
\end{proof}

Finally, we move on to proving \Cref{qloglocalcorner}. 
With the same argument of \Cref{thm:cartesian_log_qmap_snc}, we get the following

\begin{theorem}\label{thm:cartesian_log_qmap_snc_r}
    Let $\PP = \prod_{i=1}^{r}\PP^{N_i}$ and let $H_i' = H_i \times \prod_{j \neq i} \PP^{N_j}$ for $H_i$ a coordinate hyperplane in $\PP^{N_i}$ and $H' = \sum_{i=1}^r H'_i$. There is a Cartesian diagram in the category of ordinary stacks
    \begin{equation*}
    \begin{tikzcd}
\cQ^{\log}_{0,(\underline{d},0)}(\PP|H',d) \arrow[d] \arrow[r]            & {\prod_{i=1}^r \cQ_{0,(d_i,0)}^{\log}(\PP|H'_i,d)} \arrow[d] \\
{\cQ_{0,2}(\PP,\underline{d})} \arrow[r, "\Delta"] & {\prod_{i=1}^r \cQ_{0,2}(\PP,\underline{d})}                                     
\end{tikzcd}
\end{equation*}
\end{theorem}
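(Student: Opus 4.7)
The plan is to adapt the argument of \Cref{thm:cartesian_log_qmap_snc} from two components to $r$ components. The essential geometric input, namely that genus-zero stable quasimaps with only two markings have source curves with at most two irreducible components (see ~\cite[Example 3.1]{shafi2024logarithmic}), depends only on the genus and number of markings, not on the rank of the divisor, so it carries over to the $r$-component setting without modification.

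Concretely, I would first identify $\cQ^{\log}_{0,(\underline{d},0)}(\PP|H',d)$ with the iterated fine and saturated fibre product over $\cQ_{0,2}(\PP,\underline{d})$ of the single-divisor logarithmic spaces $\cQ_{0,(d_i,0)}^{\log}(\PP|H_i',d)$. This follows from the fact that the divisorial logarithmic structure of $(\PP,H')$ factors through the Artin fan $[\AAA^r/\GG_m^r]$ which splits as a product of $[\AAA/\GG_m]$-factors each carrying the logarithmic data of one component $H_i'$, combined with the compatibility of logarithmic quasimap spaces with such product decompositions, exactly as in ~\cite[Lemma 2.9]{shafi2024logarithmic}.

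The second step is to verify combinatorial flatness of each forgetful morphism $\cQ_{0,(d_i,0)}^{\log}(\PP|H_i',d) \to \cQ_{0,2}(\PP,\underline{d})$, exactly as in the two-component case. Since sources in the boundary have at most two irreducible components, the tropical picture consists of source tropical curves with at most one bounded edge, and flatness of the induced map on tropicalisations is immediate. By ~\cite[Proposition 1.7.3]{maulik2024logarithmic}, combinatorial flatness implies that fine and saturated base change coincides with ordinary base change; applying this iteratively over the $r$ factors identifies the fs fibre product with the ordinary fibre product, producing the Cartesian square in the category of ordinary stacks.

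The main subtlety to check is that combinatorial flatness is preserved under each successive fine and saturated base change, so that ~\cite[Proposition 1.7.3]{maulik2024logarithmic} can be invoked at every stage of the iteration. Because the constraint ``at most two components in the source'' continues to hold after each base change (it is a property of the underlying moduli of two-pointed genus-zero curves, not of the logarithmic target), the simple tropical combinatorics persist throughout, and the inductive argument goes through unchanged.
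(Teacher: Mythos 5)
Your proposal is correct and follows the same route the paper takes: the paper's proof of this theorem consists precisely of the sentence ``with the same argument of Theorem~\ref{thm:cartesian_log_qmap_snc},'' and you have correctly spelled out why that argument transfers — the crucial fact is that both markings are retained in the corner theory, so genus-zero stability still bounds the source at two components (compare the remark following Proposition~\ref{sncloglocalunobstructed}, which warns the argument would fail with $r$ markings), and hence combinatorial flatness and~\cite[Proposition 1.7.3]{maulik2024logarithmic} apply at each stage of the iterated fs fibre product supplied by~\cite[Lemma 2.9]{shafi2024logarithmic}. Your extra care about flatness persisting under each successive base change is a worthwhile addition that the paper leaves implicit.
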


\begin{proposition}\label{Prop : rank reduction r}
    With the notations of \Cref{thm:cartesian_log_qmap_snc_r}, we have the following equality of (virtual) classes
    $$[\cQ^{\log}_{(\underline{d},0)}(\PP|H',\underline{d})]^{\vir} = \prod_{i=1}^r(-1)^{d_i + 1} \cdot \ev_1^*H'_i \cap [\cQ_{0,2}(\oplus_{i=1}^r \cO_{\PP}(-H'_i),\underline{d})]^{\vir}.$$
\end{proposition}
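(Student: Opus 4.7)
The plan is to follow the template of Proposition \ref{sncloglocalunobstructed} but for arbitrary rank $r$. The Cartesian diagram of Theorem \ref{thm:cartesian_log_qmap_snc_r} realises $\cQ^{\log}_{0,(\underline{d},0)}(\PP|H',\underline{d})$ as the fibre product of the $r$ rank-one logarithmic quasimap spaces $\cQ^{\log}_{0,(d_i,0)}(\PP|H'_i,\underline{d})$ over the diagonal of $\prod_{i=1}^r \cQ_{0,2}(\PP,\underline{d})$. Since in this toric setting $\cQ_{0,2}(\PP,\underline{d})$ is smooth of expected dimension, this exhibits the target of interest as a scheme-theoretic intersection of the $r$ rank-one log quasimap spaces inside the ambient $\cQ_{0,2}(\PP,\underline{d})$.

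First, I would apply the rank-one quasimap local/logarithmic correspondence (Theorem \ref{qlocallog}) to each smooth, very ample divisor $H'_i \subset \PP$ individually, obtaining
\[
[\cQ_{0,(d_i,0)}^{\log}(\PP|H'_i,\underline{d})]^{\vir} = (-1)^{d_i+1}\, \ev^* H'_i \cap e(\mathbf{R}^1\pi_*\mathscr{L}_i^\vee) \cap [\cQ_{0,2}(\PP,\underline{d})],
\]
where $\mathscr{L}_i$ is the universal line bundle on the universal curve pulled back from $\cO_{\PP}(H'_i)$ and we have used $[\cQ_{0,2}(\cO_{\PP}(-H'_i),\underline{d})]^{\vir} = e(\mathbf{R}^1\pi_*\mathscr{L}_i^\vee)\cap[\cQ_{0,2}(\PP,\underline{d})]$.

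Next, exactly as in the proof of Proposition \ref{sncloglocalunobstructed}, I would identify the virtual class of the fibre product with the intersection product of the rank-one virtual classes inside the Chow ring of the smooth stack $\cQ_{0,2}(\PP,\underline{d})$; this identification is a direct consequence of the Cartesian diagram of Theorem \ref{thm:cartesian_log_qmap_snc_r}. Multiplying the $r$ expressions above and applying the Whitney sum formula (together with the fact that $\mathbf{R}^1\pi_*$ commutes with direct sums) rewrites $\prod_{i=1}^r e(\mathbf{R}^1\pi_*\mathscr{L}_i^\vee)$ as $e\bigl(\oplus_{i=1}^r \mathbf{R}^1\pi_*\mathscr{L}_i^\vee\bigr)$, which repackages the product of rank-one local classes as the single local virtual class $[\cQ_{0,2}(\oplus_{i=1}^r \cO_{\PP}(-H'_i),\underline{d})]^{\vir}$ on the right-hand side.

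The main technical point is verifying that the iterated fibre product identifies with the iterated intersection product; this is the inductive extension of the key step in the rank-two case and uses smoothness of $\cQ_{0,2}(\PP,\underline{d})$ combined with the Cartesian diagram in Theorem \ref{thm:cartesian_log_qmap_snc_r} (which, crucially, is Cartesian in ordinary stacks rather than merely as fine and saturated logarithmic stacks). Once this identification is in hand, the remaining work is routine manipulation of signs and Euler classes.
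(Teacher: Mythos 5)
Your proposal is correct and follows essentially the same route as the paper: the product of rank-one local classes is repackaged via the Whitney sum formula, the rank-one correspondence (\Cref{qlocallog}) is applied to each $H'_i$, and the Cartesian diagram of \Cref{thm:cartesian_log_qmap_snc_r} (over the diagonal of $\prod_i\cQ_{0,2}(\PP,\underline d)$) identifies the simple normal crossings log quasimap virtual class with the intersection product of the rank-one log virtual classes. This is precisely the logic of the rank-two \Cref{sncloglocalunobstructed}, which the paper's proof explicitly invokes and generalises.
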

\begin{proof}
    As in \Cref{sncloglocalunobstructed}, we have $$[\cQ_{0,2}(\oplus_{i=1}^r \cO_{\PP}(-H'_i),\underline{d})]^{\vir} = \prod_{i=1}^r [\cQ_{0,2}( \cO_{\PP}(-H'_i),\underline{d})]^{\vir}$$
so that, similar to before, it suffices to show that $$[\cQ^{\log}_{(\underline{d},0)}(\PP|H',\underline{d})]^{\vir} = \prod_{i=1}^r [\cQ^{\log}_{0,(d_i,0)}(\PP|H'_i,\underline{d})].$$
As before this follows from \Cref{thm:cartesian_log_qmap_snc_r}.
\end{proof}

A similar argument to \Cref{Lemma : POTpullback} shows

\begin{lemma}
    Let $X=Z \GIT G$ be a GIT quotient and $D = D_1 + \dots + D_r$ a simple normal crossings divisor, satisfying Assumptions \ref{assumptions:absolute_quasimaps} and \ref{assumptions:log_quasimaps}, with $D_1 \cap \dots \cap D_r \neq \emptyset$ and each component very ample. Let $j : X \rightarrow \prod_{i=1}^r \PP^{N_i}$ be the induced morphism with $j^{-1}(H_i \times \prod_{k \neq i} \PP^{N_k}) = D_i$, and assume $H_i$ are coordinate hyperplanes. Then there is an induced morphism $$i : \cQ^{\log}_{0,\alpha}(X|D,\beta) \rightarrow \cQ^{\log}_{0,\alpha}(\PP|H',\underline{d})$$ with $i_{*}\beta = \underline{d}$. Moreover, this morphism admits a perfect obstruction theory and the virtual pullback of the class $[\cQ^{\log}_{0,\alpha}(\PP|H',\underline{d})]$ coincides with the virtual class $[\cQ^{\log}_{0,\alpha}(X|D,\beta)]^{\vir}$ as in ~\cite[Theorem 14]{shafi2024logarithmic}.
\end{lemma}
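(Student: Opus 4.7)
The plan is to follow exactly the strategy of \Cref{Lemma : POTpullback}, with $\PP^{N_1}\times \PP^{N_2}$ and $H_1'+H_2'$ replaced throughout by $\PP=\prod_{i=1}^r \PP^{N_i}$ and $H'=\sum_{i=1}^r H_i'$, and $[\AAA^2/\GG_m^2]$ replaced by $[\AAA^r/\GG_m^r]$. First, by Assumptions \ref{assumptions:absolute_quasimaps} and \ref{assumptions:log_quasimaps}, the embedding $j\colon X\hookrightarrow \PP$ lifts to a morphism of ambient quotient stacks $[Z/G]\to [\AAA^{N_1+\cdots+N_r+r}/\GG_m^r]$ such that each $D_i$ is the preimage of $\bar H_i'$. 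Passing to the Artin fans, the divisor $D$ factors as the composition $[Z/G]\to [\AAA^{N_1+\cdots+N_r+r}/\GG_m^r]\xrightarrow{H'}\argmr$, so the log structures match. This produces the morphism $i:\cQ^{\log}_{0,\alpha}(X|D,\beta) \to \cQ^{\log}_{0,\alpha}(\PP|H',\underline d)$ on moduli spaces at the level of functors of points, using that quasimap degrees are compatible under $j_*$.

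Next, I would construct the relative obstruction theory for $i$. The standard exact triangle of tangent complexes for $j\colon [Z/G]\to [\AAA^{N_1+\cdots+N_r+r}/\GG_m^r]$ gives
\begin{equation*}
\TT_j \to \TT^{\log}_{[Z/G]} \to \mathbf{L}j^* \TT^{\log}_{[\AAA^{N_1+\cdots+N_r+r}/\GG_m^r]} \to \TT_j[1].
\end{equation*}
Pulling back along the universal map $u$ on the universal curve over $\cQ^{\log}_{0,\alpha}(X|D,\beta)$ and applying $\mathbf{R}\pi_*$, then dualising, produces a distinguished triangle in which the outer two terms are the absolute obstruction theories constructed in \Cref{quasimaps obs} for the logarithmic quasimap spaces of $X$ and of $\PP$ (the latter pulled back along $i$). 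The third term $(\mathbf{R}\pi_* \mathbf{L} u^* \TT_j)^\vee$ is then our candidate relative obstruction theory for $i$.

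To verify that this candidate is actually a perfect obstruction theory for $i$, I would set up the usual commutative diagram of distinguished triangles with the second row being the cotangent triangle for
\begin{equation*}
\cQ^{\log}_{0,\alpha}(X|D,\beta)\xrightarrow{i} \cQ^{\log}_{0,\alpha}(\PP|H',\underline d) \to \frM^{\log}_{0,\alpha}(\argmr),
\end{equation*}
and invoke \cite[Construction 3.13 and Remark 3.15]{manolache12virtual}: since the outer two vertical arrows are perfect obstruction theories by \Cref{quasimaps obs}, the four lemma on cohomology forces the middle arrow (for $i$) to be one as well, and perfectness in $[-1,0]$ follows from the two-out-of-three property on the triangle. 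The main substantive point is to check that the outer arrows defined by $\mathbf{L}u^*\TT^{\log}$ really do induce the obstruction theories of \cite{shafi2024logarithmic} in the rank-$r$ case; this is not essentially new since those obstruction theories are defined in exactly this way.

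Finally, the equality $i^!\,[\cQ^{\log}_{0,\alpha}(\PP|H',\underline d)]^{\vir} = [\cQ^{\log}_{0,\alpha}(X|D,\beta)]^{\vir}$ follows from functoriality of virtual pullback \cite[Theorem 4.8]{manolache12virtual} applied to the compatible triple of obstruction theories just constructed. The only genuine obstacle I anticipate is bookkeeping: verifying that the rank-$r$ version of the logarithmic Euler sequence and the induced obstruction theories behave well under $j^*$ requires that $D$ is pulled back from $H'$ component by component, which is exactly what Assumption \ref{assumptions:log_quasimaps} and the hypothesis $j^{-1}(H_i\times \prod_{k\neq i}\PP^{N_k})=D_i$ guarantee; everything else is formally parallel to the $r=2$ case in \Cref{Lemma : POTpullback}.
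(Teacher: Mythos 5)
Your proposal is correct and takes essentially the same approach the paper intends: the paper itself presents this lemma without a dedicated proof, noting only that ``a similar argument to \Cref{Lemma : POTpullback} shows'' the result, and your argument is precisely that rank-$r$ generalisation, tracking the same ambient quotient stacks, the same triangle of log tangent complexes, the same compatible triple of obstruction theories, and the same appeal to \cite[Construction 3.13, Remark 3.15, Theorem 4.8]{manolache12virtual}. (One small slip in exposition: in the distinguished triangle the pullback of the obstruction theory for $\PP$ and the obstruction theory for $X$ are the \emph{first two} terms, with the candidate relative obstruction theory $(\mathbf{R}\pi_*\mathbf{L}u^*\TT_j)^\vee$ as the third; you call the first and second the ``outer two,'' but the terms you identify are the right ones.)
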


\begin{proof}[Proof of \Cref{qloglocalcorner}]
        \begin{equation}
        \begin{tikzcd}
{\cQ_{0,(\underline{d},0)}^{\log}(X|D,\beta)} \arrow[d,"p"] \arrow[r,"i"] & {\cQ_{0,(\underline{d},0)}^{\log}(\PP|H',\underline{d})} \arrow[d,"p'"] \\
{\cQ_{0,2}(X,\beta)} \arrow[r,"i'"]                        & {\cQ_{0,2}(\PP,\underline{d})}.                       
\end{tikzcd}
    \end{equation}
Let $\mathscr{L}_i$ (resp. $\mathscr{L}_{D_i}$) denote the universal line bundle on the universal curve $\cQ_{0,2}(\PP,\underline{d})$ (resp. $\cQ_{0,2}(X,\beta)$) corresponding to $\cO_{\PP}(H_i')$ (resp. $\cO_{X}(D_i)$).
\begin{align*}
    &p_*[\cQ_{0,(\underline{d},0)}^{\log}(X|D,\beta)]^{\vir} \\
    &= p_* (i')^{!}[\cQ_{0,(\underline{d},0)}^{\log}(\PP|H',(\underline{d}))]  \\
    &= {i'}^{!}{p'}_{*}[\cQ_{0,(\underline{d},0)}^{\log}(\PP|H',(\underline{d}))] \\
    &= {i'}^{!}\prod_{i=1}^r(-1)^{d_i + 1} \cdot \ev_1^*H'_i \cap [\cQ_{0,2}(\oplus_{i=1}^r \cO_{\PP}(-H'_i),(\underline{d}))]^{\vir} \\
    &= \prod_{i=1}^r(-1)^{d_i + 1} \cdot \ev_1^*{D_i} \cap {i'}^![\cQ_{0,2}(\oplus_{i=1}^r \cO_{\PP}(-H'_i),(\underline{d}))]^{\vir} \\
    &=  \prod_{i=1}^r(-1)^{d_i + 1} \cdot \ev_1^*{D_i} \cap {i'}^!e(\oplus_{i=1}^r\textbf{R}^1\pi_*\mathscr{L}_i^{\vee}) \cap [\cQ_{0,2}(\PP,\underline{d})] \\
    &= \prod_{i=1}^r(-1)^{d_i + 1} \cdot \ev_1^*{D_i} \cap e(\oplus_{i=1}^r\textbf{R}^1\pi_*\mathscr{L}_{D_i}^{\vee}) \cap [\cQ_{0,2}(X,\beta)] \\
    &= \prod_{i=1}^r(-1)^{d_i + 1} \cdot \ev_1^*{D_i} \cap [\cQ_{0,2}(\oplus_{i=1}^r\cO_{X}(-D_i),\beta)]^{\vir}.\qedhere
\end{align*}
\end{proof}
\bibliography{references}

\newcommand{\etalchar}[1]{$^{#1}$}
\begin{thebibliography}{BFGW21}

\bibitem[AC14]{abramovich2014stable}
Dan Abramovich and Qile Chen.
\newblock Stable logarithmic maps to {D}eligne-{F}altings pairs {II}.
\newblock {\em Asian J. Math.}, 18(3):465--488, 2014.

\bibitem[ACGS20]{abramovich2020decomposition}
Dan Abramovich, Qile Chen, Mark Gross, and Bernd Siebert.
\newblock Decomposition of degenerate {G}romov-{W}itten invariants.
\newblock {\em Compos. Math.}, 156(10):2020--2075, 2020.

\bibitem[ACGS21]{abramovich2021punctured}
Dan Abramovich, Qile Chen, Mark Gross, and Bernd Siebert.
\newblock Punctured logarithmic maps.
\newblock arXiv preprint
  \href{https://arxiv.org/abs/2009.07720}{arXiv:2009.07720}, 2021.

\bibitem[ACW17]{abramovich2017relative}
Dan Abramovich, Charles Cadman, and Jonathan Wise.
\newblock Relative and orbifold {G}romov-{W}itten invariants.
\newblock {\em Algebr. Geom.}, 4(4):472--500, 2017.

\bibitem[AF16]{abramovich2016orbifold}
Dan Abramovich and Barbara Fantechi.
\newblock Orbifold techniques in degeneration formulas.
\newblock {\em Ann. Sc. Norm. Super. Pisa Cl. Sci. (5)}, 16(2):519--579, 2016.

\bibitem[AW18]{abramovich2018birational}
Dan Abramovich and Jonathan Wise.
\newblock Birational invariance in logarithmic {G}romov-{W}itten theory.
\newblock {\em Compos. Math.}, 154(3):595--620, 2018.

\bibitem[BBvG21]{bousseau2021stable}
Pierrick Bousseau, Andrea Brini, and Michel van Garrel.
\newblock Stable maps to {L}ooijenga pairs: orbifold examples.
\newblock {\em Lett. Math. Phys.}, 111(4):Paper No. 109, 37, 2021.

\bibitem[BBvG22]{bousseau2022log}
Pierrick Bousseau, Andrea Brini, and Michel van Garrel.
\newblock On the log-local principle for the toric boundary.
\newblock {\em Bull. Lond. Math. Soc.}, 54(1):161--181, 2022.

\bibitem[BBvG24]{bousseau2024stable}
Pierrick Bousseau, Andrea Brini, and Michel van Garrel.
\newblock Stable maps to {L}ooijenga pairs.
\newblock {\em Geom. Topol.}, 28(1):393--496, 2024.

\bibitem[BF97]{behrend1997intrinsic}
Kai Behrend and Barbara Fantechi.
\newblock The intrinsic normal cone.
\newblock {\em Invent. Math.}, 128(1):45--88, 1997.

\bibitem[BFGW21]{bousseau2021holomorphic}
Pierrick Bousseau, Honglu Fan, Shuai Guo, and Longting Wu.
\newblock Holomorphic anomaly equation for {$(\Bbb P^2,E)$} and the
  {N}ekrasov-{S}hatashvili limit of local {$\Bbb P^2$}.
\newblock {\em Forum Math. Pi}, 9:Paper No. e3, 57, 2021.

\bibitem[BN21]{battistella2021relative}
Luca Battistella and Navid Nabijou.
\newblock Relative quasimaps and mirror formulae.
\newblock {\em Int. Math. Res. Not. IMRN}, (10):7885--7931, 2021.

\bibitem[BNR23]{battistella2023gromovwitten}
Luca Battistella, Navid Nabijou, and Dhruv Ranganathan.
\newblock {G}romov-{W}itten theory via roots and logarithms, 2023.
\newblock arXiv preprint
  \href{https://arxiv.org/abs/2203.17224v4}{arXiv:2203.17224v4}.

\bibitem[BNTY23]{battistella2023local}
Luca Battistella, Navid Nabijou, Hsian-Hua Tseng, and Fenglong You.
\newblock The local-orbifold correspondence for simple normal crossing pairs.
\newblock {\em J. Inst. Math. Jussieu}, 22(5):2515--2531, 2023.

\bibitem[Bou19]{bousseau2019tropical}
Pierrick Bousseau.
\newblock Tropical refined curve counting from higher genera and lambda
  classes.
\newblock {\em Invent. Math.}, 215(1):1--79, 2019.

\bibitem[BP05]{Bryan_Pandharipande}
Jim Bryan and Rahul Pandharipande.
\newblock Curves in {Calabi}-{Yau} threefolds and topological quantum field
  theory.
\newblock {\em Duke Math. J.}, 126(2):369--396, 2005.

\bibitem[Cad07]{cadman2007using}
Charles Cadman.
\newblock Using stacks to impose tangency conditions on curves.
\newblock {\em Amer. J. Math.}, 129(2):405--427, 2007.

\bibitem[CFK10]{ciocan2010moduli}
Ionu\c{t} Ciocan-Fontanine and Bumsig Kim.
\newblock Moduli stacks of stable toric quasimaps.
\newblock {\em Adv. Math.}, 225(6):3022--3051, 2010.

\bibitem[CFK14]{ciocan2014wall}
Ionu\c{t} Ciocan-Fontanine and Bumsig Kim.
\newblock Wall-crossing in genus zero quasimap theory and mirror maps.
\newblock {\em Algebr. Geom.}, 1(4):400--448, 2014.

\bibitem[CFK17]{ciocan2017higher}
Ionu\c{t} Ciocan-Fontanine and Bumsig Kim.
\newblock Higher genus quasimap wall-crossing for semipositive targets.
\newblock {\em J. Eur. Math. Soc. (JEMS)}, 19(7):2051--2102, 2017.

\bibitem[CFKM14]{ciocan2014stable}
Ionu\c{t} Ciocan-Fontanine, Bumsig Kim, and Davesh Maulik.
\newblock Stable quasimaps to {GIT} quotients.
\newblock {\em J. Geom. Phys.}, 75:17--47, 2014.

\bibitem[Che14a]{chen2014degeneration}
Qile Chen.
\newblock The degeneration formula for logarithmic expanded degenerations.
\newblock {\em J. Algebraic Geom.}, 23(2):341--392, 2014.

\bibitem[Che14b]{chen2014stable}
Qile Chen.
\newblock Stable logarithmic maps to {D}eligne-{F}altings pairs {I}.
\newblock {\em Ann. of Math. (2)}, 180(2):455--521, 2014.

\bibitem[CLS22]{coates2022abelian}
Tom Coates, Wendelin Lutz, and Qaasim Shafi.
\newblock The abelian/nonabelian correspondence and {G}romov-{W}itten
  invariants of blow-ups.
\newblock {\em Forum Math. Sigma}, 10:Paper No. e67, 33, 2022.

\bibitem[Cox95]{cox1995functor}
David~A. Cox.
\newblock The functor of a smooth toric variety.
\newblock {\em Tohoku Math. J. (2)}, 47(2):251--262, 1995.

\bibitem[FW21]{fan2021WDVV}
Honglu Fan and Longting Wu.
\newblock Witten-{D}ijkgraaf-{V}erlinde-{V}erlinde equation and its application
  to relative {G}romov-{W}itten theory.
\newblock {\em Int. Math. Res. Not. IMRN}, (13):9834--9852, 2021.

\bibitem[Gat03]{gathmann2003relative}
Andreas Gathmann.
\newblock Relative {G}romov-{W}itten invariants and the mirror formula.
\newblock {\em Math. Ann.}, 325(2):393--412, 2003.

\bibitem[Giv98]{givental1998mirror}
Alexander Givental.
\newblock A mirror theorem for toric complete intersections.
\newblock In {\em Topological field theory, primitive forms and related topics
  ({K}yoto, 1996)}, volume 160 of {\em Progr. Math.}, pages 141--175.
  Birkh\"{a}user Boston, Boston, MA, 1998.

\bibitem[GP99]{graber1999localization}
T.~Graber and R.~Pandharipande.
\newblock Localization of virtual classes.
\newblock {\em Invent. Math.}, 135(2):487--518, 1999.

\bibitem[Gra22]{graefnitz2022tropical}
Tim Graefnitz.
\newblock Tropical correspondence for smooth del {P}ezzo log {C}alabi-{Y}au
  pairs.
\newblock {\em J. Algebraic Geom.}, 31(4):687--749, 2022.

\bibitem[Gro18]{gross2018intersection}
Andreas Gross.
\newblock Intersection theory on tropicalizations of toroidal embeddings.
\newblock {\em Proc. Lond. Math. Soc. (3)}, 116(6):1365--1405, 2018.

\bibitem[Gro23]{gross2023remarks}
Mark Gross.
\newblock Remarks on gluing punctured logarithmic maps, 2023.
\newblock arXiv preprint
  \href{https://arxiv.org/abs/2306.02661}{arXiv:2306.02661}.

\bibitem[GS13]{gross2013logarithmic}
Mark Gross and Bernd Siebert.
\newblock Logarithmic {G}romov-{W}itten invariants.
\newblock {\em J. Amer. Math. Soc.}, 26(2):451--510, 2013.

\bibitem[GS22]{gross2022canonical}
Mark Gross and Bernd Siebert.
\newblock The canonical wall structure and intrinsic mirror symmetry.
\newblock {\em Invent. Math.}, 229(3):1101--1202, 2022.

\bibitem[GV05]{graber2005relative}
Tom Graber and Ravi Vakil.
\newblock Relative virtual localization and vanishing of tautological classes
  on moduli spaces of curves.
\newblock {\em Duke Math. J.}, 130(1):1--37, 2005.

\bibitem[HMP{\etalchar{+}}22]{holmes2022logarithmic}
D.~Holmes, S.~Molcho, R.~Pandharipande, A.~Pixton, and J.~Schmitt.
\newblock Logarithmic double ramification cycles, 2022.
\newblock arXiv preprint
  \href{https://arxiv.org/abs/2207.06778}{arXiv:2207.06778}.

\bibitem[KLR21]{kim2021degeneration}
Bumsig Kim, Hyenho Lho, and Helge Ruddat.
\newblock The degeneration formula for stable log maps.
\newblock {\em Manuscripta Math.}, 2021.

\bibitem[Li01]{Li_relative_morphisms}
Jun Li.
\newblock Stable morphisms to singular schemes and relative stable morphisms.
\newblock {\em Journal of differential geometry}, 57(3):509--578, 2001.

\bibitem[Li02]{Li_degeneration_formula}
Jun Li.
\newblock A degeneration formula of {GW}-invariants.
\newblock {\em Journal of differential geometry}, 60(2):199--293, 2002.

\bibitem[LP18]{lho2018stable}
Hyenho Lho and Rahul Pandharipande.
\newblock Stable quotients and the holomorphic anomaly equation.
\newblock {\em Adv. Math.}, 332:349--402, 2018.

\bibitem[Man12]{manolache12virtual}
Cristina Manolache.
\newblock Virtual pull-backs.
\newblock {\em J. Algebraic Geom.}, 21(2):201--245, 2012.

\bibitem[MOP11]{marian2011moduli}
Alina Marian, Dragos Oprea, and Rahul Pandharipande.
\newblock The moduli space of stable quotients.
\newblock {\em Geom. Topol.}, 15(3):1651--1706, 2011.

\bibitem[MP06]{maulik2006topological}
D.~Maulik and R.~Pandharipande.
\newblock A topological view of {G}romov-{W}itten theory.
\newblock {\em Topology}, 45(5):887--918, 2006.

\bibitem[MR20]{mandel2020descendant}
Travis Mandel and Helge Ruddat.
\newblock Descendant log {G}romov-{W}itten invariants for toric varieties and
  tropical curves.
\newblock {\em Trans. Amer. Math. Soc.}, 373(2):1109--1152, 2020.

\bibitem[MR23]{molcho2023case}
Sam Molcho and Dhruv Ranganathan.
\newblock A case study of intersections on blowups of the moduli of curves,
  2023.
\newblock arXiv preprint
  \href{https://arxiv.org/abs/2106.15194}{arXiv:2106.15194}.

\bibitem[MR24]{maulik2024logarithmic}
Davesh Maulik and Dhruv Ranganathan.
\newblock Logarithmic {D}onaldson-{T}homas theory.
\newblock arXiv preprint
  \href{https://arxiv.org/abs/2006.06603}{arXiv:2006.06603}, 2024.

\bibitem[NR22]{nabijou2022gromov}
Navid Nabijou and Dhruv Ranganathan.
\newblock Gromov-{W}itten theory with maximal contacts.
\newblock {\em Forum Math. Sigma}, 10:Paper No. e5, 34, 2022.

\bibitem[NS06]{nishinou2006toric}
Takeo Nishinou and Bernd Siebert.
\newblock Toric degenerations of toric varieties and tropical curves.
\newblock {\em Duke Math. J.}, 135(1):1--51, 2006.

\bibitem[Ols03]{olsson2003logarithmic}
Martin~C. Olsson.
\newblock Logarithmic geometry and algebraic stacks.
\newblock {\em Ann. Sci. \'{E}cole Norm. Sup. (4)}, 36(5):747--791, 2003.

\bibitem[OP09]{okounkov2009gromov}
A.~Okounkov and R.~Pandharipande.
\newblock Gromov-{W}itten theory, {H}urwitz numbers, and matrix models.
\newblock In {\em Algebraic geometry---{S}eattle 2005. {P}art 1}, volume~80 of
  {\em Proc. Sympos. Pure Math.}, pages 325--414. Amer. Math. Soc., Providence,
  RI, 2009.

\bibitem[PP17]{pandharipande2017gromovpairs}
R.~Pandharipande and A.~Pixton.
\newblock Gromov-{W}itten/{P}airs correspondence for the quintic 3-fold.
\newblock {\em J. Amer. Math. Soc.}, 30(2):389--449, 2017.

\bibitem[Ran22]{ranganathan2022logarithmic}
Dhruv Ranganathan.
\newblock Logarithmic {G}romov-{W}itten theory with expansions.
\newblock {\em Algebr. Geom.}, 9(6):714--761, 2022.

\bibitem[RK23]{ranganathan2023logarithmic}
Dhruv Ranganathan and Ajith~Urundolil Kumaran.
\newblock Logarithmic gromov-witten theory and double ramification cycles,
  2023.
\newblock arXiv preprint
  \href{https://arxiv.org/abs/2212.11171}{arXiv:2212.11171}.

\bibitem[Sha22]{shafi2022enumerative}
Qaasim Shafi.
\newblock {\em Enumerative Geometry of GIT Quotients}.
\newblock PhD thesis, Imperial College London, 2022.

\bibitem[Sha24]{shafi2024logarithmic}
Qaasim Shafi.
\newblock Logarithmic quasimaps.
\newblock {\em Adv. Math.}, 438:109469, 2024.

\bibitem[Tak01]{takahashi2001log}
Nobuyoshi Takahashi.
\newblock Log mirror symmetry and local mirror symmetry.
\newblock {\em Comm. Math. Phys.}, 220(2):293--299, 2001.

\bibitem[TY20]{tseng2020higher}
Hsian-Hua Tseng and Fenglong You.
\newblock Higher genus relative and orbifold {G}romov-{W}itten invariants.
\newblock {\em Geom. Topol.}, 24(6):2749--2779, 2020.

\bibitem[TY23a]{tseng2023gromov}
Hsian-Hua Tseng and Fenglong You.
\newblock A {G}romov-{W}itten theory for simple normal-crossing pairs without
  log geometry.
\newblock {\em Comm. Math. Phys.}, 401(1):803--839, 2023.

\bibitem[TY23b]{tseng2023mirror}
Hsian-Hua Tseng and Fenglong You.
\newblock A mirror theorem for multi-root stacks and applications.
\newblock {\em Selecta Math. (N.S.)}, 29(1):Paper No. 6, 33, 2023.

\bibitem[vGGR19]{vangarrel2019local}
Michel van Garrel, Tom Graber, and Helge Ruddat.
\newblock Local {G}romov-{W}itten invariants are log invariants.
\newblock {\em Adv. Math.}, 350:860--876, 2019.

\bibitem[vGNS24]{vangarrel2024gromovwitten}
Michel van Garrel, Navid Nabijou, and Yannik Schuler.
\newblock Gromov-witten theory of bicyclic pairs, 2024.
\newblock arXiv preprint
  \href{https://arxiv.org/abs/2310.06058}{arXiv:2310.06058}.

\bibitem[You22]{you2022relative}
Fenglong You.
\newblock Relative quantum cohomology under birational transformations, 2022.

\bibitem[Zho22]{zhou2022quasimap}
Yang Zhou.
\newblock Quasimap wall-crossing for {GIT} quotients.
\newblock {\em Invent. Math.}, 227(2):581--660, 2022.

\end{thebibliography}
\bibliographystyle{alpha}

\footnotesize

\end{document}